\documentclass[oneside]{book}

\usepackage{lipsum}

\usepackage{booktabs}

\usepackage{graphicx}
\setkeys{Gin}{width=\linewidth,totalheight=\textheight,keepaspectratio}
\graphicspath{{graphics/}}
\usepackage{pgfplots}
\usepgfplotslibrary{colormaps}
 
\pgfplotsset{
    compat=newest,
    colormap={mycolormap}{color=(lightgray) color=(white) color=(lightgray)}}

\usepackage{fancyvrb}
\fvset{fontsize=\normalsize}

\usepackage{xspace}

\usepackage{tikz}
\usepackage{tikz-cd}
\usetikzlibrary{calc}

\usepackage{hyperref} 

\newcommand{\monthyear}{%
  \ifcase\month\or January\or February\or March\or April\or May\or June\or
  July\or August\or September\or October\or November\or
  December\fi\space\number\year
}

\usepackage{units}

\newcommand{\hlred}[1]{\textcolor{Maroon}{#1}}
\newcommand{\hangleft}[1]{\makebox[0pt][r]{#1}}

\providecommand{\XeLaTeX}{X\lower.5ex\hbox{\kern-0.15em\reflectbox{E}}\kern-0.1em\LaTeX}

\newcommand{\tuftebs}{\symbol{'134}}
\newcommand{\doccmddef}[2][]{%
  \hlred{\texttt{\tuftebs#2}}\label{cmd:#2}%
  \ifthenelse{\isempty{#1}}%
    {
      \index{#2 command@\protect\hangleft{\texttt{\tuftebs}}\texttt{#2}}
    }%
    {
      \index{#2 command@\protect\hangleft{\texttt{\tuftebs}}\texttt{#2} (\texttt{#1} package)}
      \index{#1 package@\texttt{#1} package}\index{packages!#1@\texttt{#1}}
    }%
}
\newcommand{\doccmd}[2][]{%
  \texttt{\tuftebs#2}%
  \ifthenelse{\isempty{#1}}%
    {
      \index{#2 command@\protect\hangleft{\texttt{\tuftebs}}\texttt{#2}}
    }%
    {
      \index{#2 command@\protect\hangleft{\texttt{\tuftebs}}\texttt{#2} (\texttt{#1} package)}
      \index{#1 package@\texttt{#1} package}\index{packages!#1@\texttt{#1}}
    }%
}

\usepackage{makeidx}
\makeindex

\usepackage{amssymb, amsmath, amscd, amsthm,mathrsfs,amsfonts,bm,mathtools}

\newcommand{\K}{\mathbb{K}}
\newcommand{\R}{\mathbb{R}}
\newcommand{\Z}{\mathbb{Z}}
\newcommand{\N}{\mathbb{N}}
\newcommand{\p}{\varphi}

\newcommand{\eps}{\varepsilon}
\newcommand{\dgm}{\mathrm{Dgm}}

\newcommand{\myemph}[1]{\index{#1}\emph{#1}}%

\newcommand{\Imm}{\mathrm{Im}\ }
\newcommand{\id}{\mathrm{id}}

\newcommand{\GENEO}{\mathrm{GENEO}}

\newcommand\mycom[2]{\genfrac{}{}{0pt}{}{#1}{#2}}

\newtheorem{theorem}{Theorem}
\newtheorem{lemma}{Lemma}
\newtheorem{definition}{Definition}
\newtheorem{ex}{Example}
\newtheorem{exercise}{Exercise}
\newtheorem{remark}{Remark}
\newtheorem{proposition}{Proposition}
\newtheorem{corollary}{Corollary}
\newtheorem{assumption}{Assumption}

\newcommand{\match}{\mathrm{match}}
\newcommand{\Homeo}{{\mathrm{Homeo}}}

\begin{document}

\nocite{Sp95}
\author{Patrizio Frosini, Ulderico Fugacci, Nicola Quercioli, Francesca Tombari}
\title{Persistent Homology and Equivariance \\  in Data Analysis \\[1ex] \large A Topological Introduction}
\date{}
\maketitle

\frontmatter

\clearpage
\thispagestyle{empty}
\null\vfill
\begin{center}
  This draft manuscript is currently under review for publication as a book.\\
  We would be grateful to readers who report any errors, inaccuracies, or misprints they may encounter to the following email address:\\
\href{patrizio.frosini@unipi.it}{patrizio.frosini@unipi.it}.
\end{center}

\chapter*{Abstract}
This new book is intended as a first elementary introduction to Topological Data Analysis for mathematics students seeking a rigorous account of the foundations of persistent homology, as well as for computer scientists interested in its theoretical underpinnings. The exposition is as self-contained as possible: all the required background is recalled when needed, and only a few standard results are cited without proof. One section of the book, devoted to monodromy in biparameter persistence (Section 4.4), requires more advanced knowledge of algebraic topology.

Persistent homology can be introduced from different perspectives, reflecting the variety of mathematical languages that have shaped its development over the years. Some approaches emphasize the algebraic foundations of the theory, while others highlight its topological essence. In this book, we adopt the latter viewpoint - the one that historically marked the birth of the subject - because we believe it offers both conceptual clarity and pedagogical effectiveness, making it particularly suitable for undergraduate and early graduate students.

This book differs from existing introductory texts in several respects. First, it adopts a functional viewpoint: rather than representing data as finite (pseudo-)metric spaces, it treats them as functions encoding the information to be analyzed. This interpretative framework allows data to be viewed as measurable objects and highlights the role of observers and their equivariances in the analysis process. Second, this perspective provides a natural bridge between Topological Data Analysis and machine learning through the theory of Group Equivariant Non-Expansive Operators (GENEOs), which offers a mathematically grounded framework for incorporating symmetries and invariances into learning systems.

\chapter{Preface}
Topological Data Analysis (TDA) is a rapidly evolving mathematical discipline that explores the topological structure of data — that is, the shape of data — in a metric, multilevel, stable, and reparametrization-invariant way.
Its remarkable growth stems from a simple but powerful observation: data often encode meaningful geometric information that can be organized, simplified, computed, and ultimately used for analysis and comparison.

Among the core techniques of TDA, persistent homology stands out as one of the most influential.
Its distinctive feature, compared to classical homology theories, lies in its ability to detect and quantify both local and global features of a topological space.

Persistent homology can be introduced through different perspectives, reflecting the variety of mathematical languages that have shaped its development over the years.
Some approaches emphasize the algebraic foundations of the theory, while others highlight its topological essence.
In this book, we adopt the latter viewpoint — the one that historically marked the birth of the subject — because we believe it offers both conceptual clarity and pedagogical effectiveness, making it particularly suitable for undergraduate and early graduate students.

This book also differs from existing introductory texts in several ways.
First, it adopts a functional viewpoint: instead of representing data as finite (pseudo-) metric spaces, it regards them as functions encoding the information to be analyzed.
This interpretative key allows data to be treated as measurable objects and underscores the role of observers and their equivariances in the analysis process.
This perspective also provides the conceptual foundation for many modern applications of TDA in machine learning.

The book opens with two preliminary chapters (Chapters \ref{Chapter0} and \ref{CC}), which recall some basic notions about pseudo-metrics and Euclidean submanifolds and introduce simplicial and singular homology.
The core of the text begins in Chapter \ref{ChapterHC}, where persistent homology groups, the rank invariant, and persistence diagrams of a function on a topological space are defined.
After presenting these fundamental objects, the Representation Theorem establishes the correspondence between rank invariants and persistence diagrams, while the Stability Theorem relates variations in persistence diagrams to changes in the associated filtering functions.

Chapter \ref{MPH} extends these ideas to the study of rank invariants for functions valued in $\mathbb{R}^2$, introducing the concept of Extended Pareto Grid.
Its main result, the Position Theorem, provides a geometric tool for localizing points in persistence diagrams associated with planar lines of positive slope intersecting the grid.
This chapter also presents a generalization of the Stability Theorem to the biparametric setting and analyzes the phenomenon of monodromy, a distinctive feature of the two-parameter model.

Chapter \ref{NPD&GENEOs} explores the interaction between persistence theory and equivariant machine learning.
Here, we show how transformation groups acting on data can be incorporated into the framework through the notions of natural pseudo-distance and Group Equivariant Non-Expansive Operators (GENEOs).
The chapter concludes with proofs of compactness and convexity for the space of GENEOs.

A list of further recommended readings and bibliographic references on TDA is provided at the end of the text.

This book is intended both as a reference and a learning companion for mathematics students seeking a rigorous introduction to the foundations of persistent homology, and for computer scientists interested in its theoretical underpinnings.
To this end, numerous examples and exercises are included to support understanding and encourage active exploration.
The exposition is as self-contained as possible: all the required background is recalled when needed, and only a few standard results are cited without proof.
As such, the text is accessible to students who has completed the first year of a degree program in mathematics, physics, computer science, or engineering, with elementary notions of 
linear algebra and general topology.
One section of this book, devoted to monodromy in biparameter persistence (Section~\ref{Monodromy}), requires more advanced knowledge in algebraic topology. 
This may be of interest to a more expert audience. 
However, this section is designed to possibly be omitted for a first exposure to the topic.

Finally, the material is flexible enough to accommodate different teaching needs.
The text can serve as the basis for a semester-long introductory course on TDA, but it can also be tailored for shorter modules by selectively focusing on specific chapters.
For example, a concise course may omit the preliminary material on pseudo-metric spaces and smooth manifolds (Chapter 0) and focus on Chapters 1 and 2, while a course devoted to the interaction between TDA and the theory of group actions may centre on Chapter 4, where persistent homology naturally integrates with the concept of equivariant operators.

\vspace{\baselineskip}
\begin{flushright}\noindent
March 2026
\hfill {\it Patrizio Frosini}\\
\hfill {\it Ulderico Fugacci}\\
\hfill {\it Nicola Quercioli}\\
\hfill {\it Francesca Tombari}\\
\end{flushright}

\tableofcontents

\mainmatter


\chapter{Preliminary notions}\label{Chapter0}
In this preliminary chapter, we illustrate the definitions of (extended pseudo-) metric space, and the basics about Euclidean manifolds. 
These are some of the main objects of study throughout the rest of the chapters.

\section{Pseudo-metric spaces}
In this section we recall some definitions on (extended pseudo-)metric spaces, that will be relevant for the rest of the book.  

\begin{definition}\label{pseudo}
A \myemph{metric space} is a pair $(X, d)$ where $X$ is a set and $d\colon X\times X\to [0,\infty)$ is a function such that, for every $x, x', x''$ in $X$,
\begin{enumerate}
\item $d(x, x)=0$, 
\item $d(x, x')=0$ implies $x=x'$,
\item $d(x, x')=d(x', x)$, 
\item $d(x,x'')\le d(x,x')+d(x',x'')$. 
\end{enumerate}
The function $d$ is referred to as a \myemph{metric} on $X$. 
If $d\colon X\times X\to [0,\infty]$, then we say that $d$ is an \myemph{extended metric}, and $(X, d)$ is an \myemph{extended metric space}.
If (2) is not required to hold, then we say that $d$ is an (extended) \myemph{pseudo-metric}, and $(X, d)$ is an (extended) \myemph{pseudo-metric space}.  
\end{definition}

From now on, we will avoid specifying when a metric is extended unless it is relevant to the context.

\begin{remark} If $x,x'$ and $x''$ are elements of a pseudo-metric space $(X,d)$, then the reverse triangle inequality holds: 
$\lvert d(x,x'')-d(x',x'')\rvert\le d(x,x')$.
\end{remark}

An example of a metric space that will be used throughout the book is the following. 
\begin{ex}
Let $X$ be a set and $(\R^k, \lVert \cdot\rVert_\infty)$ be the normed vector space, where we set $\lVert(v_1, \dots, v_k)\rVert_\infty=\max _{i=1}^k\lvert v_i \rvert$. 
The set of functions $\p=(\p_1, \dots, \p_k)\colon X\to \R^k$ is an extended metric space with the uniform metric 
\[
\lVert\p- \psi\rVert_\infty=\sup _{x\in X}\lVert \p(x)-\psi(x)\rVert_\infty=\sup _{x\in X}\max _{i=1}^k\lvert \p_i(x)-\psi_i(x)\rvert. 
\] 
Observe that if $X$ is compact and the functions are continuous, 
then the uniform metric assumes finite values, and, hence, is a metric. 
\end{ex}

\begin{ex}\label{ex_pseudo}
Consider the function $d\colon \R^2\times \R^2\to \R$ defined by setting $$d((v_1, v_2), (v'_1,v'_2))=\lvert v_1-v'_1\rvert.$$
This satisfies properties 1,3, and 4 of Definition~\ref{pseudo}, so it is a pseudo-metric. 
However, it does not satisfy 2 because 
$d((v_1, v_2), (v_1,v'_2))=0$
even if $v_2\neq v'_2$.
Thus, it is not a metric.
\end{ex}

\begin{definition}
Consider two (pseudo-)metric spaces $(X,d_X)$ and $(Y,d_Y)$. 
A function $f\colon X\to Y$ is \myemph{non-expansive} if, for every $x,x'\in X$,
\[
d_Y(f(x), f(x'))\le d_X(x, x').
\]
\end{definition}




\begin{definition}
Let $(X, d_X)$ and $(Y, d_Y)$ be pseudo-metric spaces. A bijective map
\(
f\colon X \to Y
\)
is called an \emph{isometry} if for all $x, x' \in X$,
\(
d_Y\big(f(x), f(x')\big) = d_X(x, x').
\)
\end{definition}

We observe that a subspace $K$ of a pseudo-metric space $(X, d_X)$ is compact 
if and only if it is sequentially compact, i.e., every sequence in $K$ admits a convergent subsequence.
Equivalently, $K$ is compact if and only if it is complete and totally bounded.

In this book, the following definition will also be useful:

\begin{definition}
The \myemph{uniform extended (pseudo-)metric} between two functions
$f, gc$ from $X$ to $Y$, where $X$ is a set and $(Y, d)$ is a (pseudo-)metric space,
is defined by
\[
D(f,g) = \sup_{x \in X} d\bigl(f(x), g(x)\bigr).
\]
If $f$ and $g$ are bounded, then the above formula defines a (pseudo-)metric in the strict sense.
\end{definition}
In this context, the associated notion of convergence is
the \myemph{uniform convergence}.
In particular, a sequence
$(f_n)_n$ converges uniformly to a function $f$
if and only if
\[
\lim_{n \to \infty} D(f_n,f) = 0.
\]
If \( (Y,\lVert\cdot\rVert) \) is a normed vector space, then the norm $\lVert\cdot\rVert$ naturally induces a metric.
In this setting, 
the value of the extended metric between two functions
$f, g \colon X \to Y$ remains well defined and is given by
\[
D(f,g) = \sup_{x \in X} \lVert f(x) - g(x)\rVert.
\]

\section{Smooth manifolds}
This section provides a brief introduction to selected notions from differential geometry, focusing on submanifolds embedded in Euclidean spaces. Rather than aiming for completeness, the objective is to establish a minimal theoretical framework that will support the development of subsequent chapters.
For more details, refer to~\cite{milnor1997topology,munkres2018analysis, tu2011manifolds}.

Recall that if $U\subseteq \R^n$ and $V\subseteq \R^m$ are open subsets, a function $F\colon U\to V$ is smooth if its partial derivatives of all orders exist.
Let $X\subseteq \R^n$ and $Y\subseteq \R^m$ be arbitrary subsets. 
A function $F\colon X\to Y$ is called smooth if for every $x\in X$ there exist an open set $U\subseteq \R^n$, with $x\in U$, and a smooth function $\bar F\colon U\to \R^m$ such that $\bar F_{\mid U\cap X}=F_{\mid U\cap X}$. 
A function $F\colon X\to Y$ is a \myemph{homeomorphism} if it is invertible and both $F$ and $F^{-1}$ are continuous. 
A function $F\colon X\to Y$ is called a \myemph{diffeomorphism} if it is 
invertible
and both $F$ and $F^{-1}$ are smooth. 


\begin{definition}\label{def_manifold}
A subset $M \subseteq \mathbb{R}^n$ is called a \myemph{smooth manifold} of dimension $k$
if for each $x \in M$ there exist two open sets $U \subseteq \mathbb{R}^n$ and
$V \subseteq \mathbb{R}^k$, with $x \in U$, and a diffeomorphism $\alpha \colon U\cap M \to V$, called a \myemph{coordinate chart}. The map $\alpha^{-1}$ is called a local \myemph{parametrisation}.
\end{definition}

\begin{ex}
The Euclidean space $\mathbb{R}^n$ can be viewed as a smooth manifold of dimension $n$ where a coordinate chart, for every $x$, is given by $\id\colon \R^n\to \R^n$.
\end{ex} 

\begin{ex}
Let $V \subseteq \mathbb{R}^n$ be an open set, and let $f \colon V \to \mathbb{R}^m$ be a smooth function. 
The graph of $f$, $G(f) = \{(x, f(x)) \in V \times \mathbb{R}^m\}$, is a smooth manifold of 
dimension $n$, 
where a coordinate chart on $G(f)$ is given by $\alpha \colon G(f)\to U$ with $\alpha(x, f(x))=x$, and a parametrisation of $G(f)$ is its inverse $\alpha^{-1}\colon U\to G(f)$, $\alpha^{-1}(x)=(x, f(x))$.
\end{ex}

For a smooth function $F\colon U\to V$, with $U\subseteq \R^n$ and $V\subseteq \R^m$ open, the \myemph{directional derivative} at $x\in U$ with respect to $v\in \R^n$ is the vector $dF_x(v)$ defined by 
\[
\mathrm{d}F_x(v)=\lim_{t\to 0} \frac{F(x+tv)-F(x)}{t}. 
\]
The directional derivative $\mathrm{d}F_x\colon \R
^n\to \R^m$ is a linear map. 
Choosing the basis associated with the standard coordinates of $\R^n$, denoted as $r^1, \dots, r^n$, this linear map corresponds to the Jacobian matrix of $F=(F^1, \dots, F^m)$ at $x$:
\[
\mathbf{J}_{F} (x)=
\begin{bmatrix}
\frac{\partial F^1}{\partial r^1}(x) & \cdots & \frac{\partial F^1}{\partial r^n}(x) \\
\vdots & \ddots & \vdots \\
\frac{\partial F^m}{\partial r^1}(x) & \cdots & \frac{\partial F^m(x)}{\partial r^n}(x)
\end{bmatrix}.
\]

\begin{definition}\label{def_tangent}
Let $M\subseteq \R^n$ be a smooth manifold of dimension $h$ and $\alpha^{-1}\colon U\to M$ a parametrisation of $\alpha^{-1}(U)$ containing a point $x\in M$, with $\alpha(x)=u$. 
The \myemph{tangent space} of $M$ at $x$ is $TM_x=\Imm \mathrm{d}\alpha^{-1}_u(\R^h)$.
\end{definition}

It is important to notice that Definition~\ref{def_tangent} does not depend on the chosen parametrisation. 
Furthermore, for every $x\in M$, the tangent space $TM_x$ is a well-defined vector space of dimension $h$.

Now we consider two smooth manifolds $M\subseteq \R^n$ and $N\subseteq \R^m$, and a smooth function $F\colon M\to N$ with $F(x)=y$. 
The \myemph{directional derivative} 
\[
\mathrm{d}F_x\colon TM_x\to TN_y
\]
is defined in the following way. 
Since $F$ is smooth there exists $U\subseteq \R^n$, with $x\in U$, and a smooth function $\bar F\colon U\to \R^m$ such that $\bar F_{\mid U\cap M}=F$. 
For every $v\in TM_x$, we define the directional derivative at $x$ with respect to $v$, again denoted $\mathrm{d}F_x(v)$, to be the vector $\mathrm{d}\bar F_x(v)$. 
One can see that the image of $\mathrm{d}F_x$ is, in fact, contained in $TN_y$, and that it does not depend on the choice of $\bar F$.

\begin{definition}
Let $F\colon M\to \R$ be a smooth function. 
The \myemph{gradient} of $F$ at $x\in M$ is the unique vector $\nabla F(x)$ such that $\langle \nabla F(x), v\rangle=\mathrm{d}F_x(v)$, where $\langle \cdot, \cdot\rangle$ denotes the standard dot product in $\R^h$.
\end{definition}

\begin{definition}
Consider a smooth function $F\colon M\to \R^m$.  
A point $x\in M$ is \myemph{regular} if $dF_x$ has maximal rank, otherwise it is \myemph{critical}. 
An element $c\in \R^m$ is called a \myemph{regular value} if 
$F^{-1}(c)$ is empty or all its points are regular.
The image of a critical point is called a \myemph{critical value}.
\end{definition}

A smooth function $F\colon M\to \R^m$ from a smooth manifold of dimension $h$ is called an \myemph{immersion} if the rank of $\mathrm{d}F_x$ is $h$, for every $x\in M$. 

The following theorem, which we report without proof, provides a method to produce examples of smooth manifolds, without the need of checking the diffeomorphism condition for every point of the manifold. 
\begin{theorem}
Let $F \colon N \to \mathbb{R}$ be a smooth function on a smooth manifold of dimension $k$.
If $c\in F(N)$ is a regular value, then the level set $M = F^{-1}(c)$ is a smooth manifold of dimension $k-1$.
\end{theorem}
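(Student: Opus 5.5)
The plan is to reduce the statement to the classical implicit function theorem in Euclidean space by working in local coordinates. The defining property of a smooth manifold of dimension $k-1$ (Definition~\ref{def_manifold}) is local. So it suffices to fix $x_0\in M=F^{-1}(c)$ and produce an open set $W\subseteq\R^n$ containing $x_0$, an open set $V'\subseteq\R^{k-1}$, and a diffeomorphism $W\cap M\to V'$.

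First, since $N$ is a $k$-dimensional manifold, we choose an open $U\ni x_0$ in $\R^n$, an open $V\subseteq\R^k$, and a chart $\alpha\colon U\cap N\to V$. We set $u_0=\alpha(x_0)$ and $f=F\circ\alpha^{-1}\colon V\to\R$. This $f$ is smooth on the open set $V$, as a composition of smooth maps; we use here the local smooth extension of $F$ together with the smoothness of $\alpha^{-1}$. Next, $x_0$ is a regular point, so $\mathrm{d}F_{x_0}\colon TN_{x_0}\to\R$ has maximal rank $1$. By Definition~\ref{def_tangent}, $\mathrm{d}\alpha^{-1}_{u_0}$ maps $\R^k$ onto $TN_{x_0}$, and the chain rule gives $\mathrm{d}f_{u_0}=\mathrm{d}F_{x_0}\circ\mathrm{d}\alpha^{-1}_{u_0}$. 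Hence $\mathrm{d}f_{u_0}\neq 0$, and after permuting coordinates we may assume $\partial f/\partial r^k(u_0)\neq0$.

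The core step is the implicit function theorem applied to $f(u',u_k)=c$ near $u_0=(u_0',u_{0,k})$. It yields open sets $A\ni u_0'$ in $\R^{k-1}$ and $B\ni u_{0,k}$ in $\R$, with $A\times B\subseteq V$, and a smooth $g\colon A\to B$ such that $f^{-1}(c)\cap(A\times B)$ is exactly the graph of $g$. The map $\pi\colon f^{-1}(c)\cap(A\times B)\to A$, $(u',u_k)\mapsto u'$, is then a diffeomorphism: it is smooth as the restriction of a linear map, and its inverse $u'\mapsto(u',g(u'))$ is smooth. This is exactly the graph example given earlier.

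Finally, we transport this back to $M$. Since $\alpha$ is continuous on $U\cap N$, the set $\alpha^{-1}(A\times B)$ is open in $U\cap N$, so it equals $W\cap N$ for some open $W\subseteq U$ of $\R^n$. On it, $\alpha$ restricts to a bijection $W\cap M\to f^{-1}(c)\cap(A\times B)$. The composition $\pi\circ\alpha\colon W\cap M\to A$ is therefore a diffeomorphism, with inverse $\alpha^{-1}\circ(\id,g)$, and both directions are smooth as compositions of smooth maps between subsets. This is the required chart of dimension $k-1$ around $x_0$.

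The main obstacle is the bookkeeping of smoothness for maps defined on non-open subsets, where smoothness is defined through local extensions. Two facts need checking: that a composition of smooth maps in this sense is smooth, and that the chain rule holds, which gives $\mathrm{d}f_{u_0}\ne0$. I would handle both by composing the local extensions on suitable open sets. Beyond this, the only external ingredient is the implicit function theorem, which I would cite.
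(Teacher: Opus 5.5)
The paper does not prove this theorem. It is explicitly ``reported without proof'' as a standard fact, and the reader is sent to the differential-geometry references cited at the start of that section. So there is no argument from the authors to compare yours against.

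Judged on its own, your proof is correct and is the standard one. Each step goes through under the paper's definitions:
\begin{itemize}
\item passing to $f=F\circ\alpha^{-1}$ on the open set $V\subseteq\mathbb{R}^k$;
\item getting $\mathrm{d}f_{u_0}\neq 0$ from the chain rule through a local extension $\bar F$, together with $\mathrm{Im}\,\mathrm{d}\alpha^{-1}_{u_0}=TN_{x_0}$;
\item applying the Euclidean implicit function theorem;
\item pulling the graph chart back via $\alpha^{-1}(A\times B)=W\cap N$.
\end{itemize}
Smoothness on non-open sets, defined through local extensions, is preserved under restriction and composition exactly as you indicate.

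One small remark. Your phrase ``maximal rank $1$'' tacitly uses $k\ge 1$. For $k=0$ the tangent spaces are $\{0\}$, so $\mathrm{d}F_x$ is the zero map, whose maximal possible rank is $0$. The paper's definition would then make every point regular, and the conclusion (a manifold of dimension $-1$) would be meaningless. This is an implicit hypothesis of the statement, not a gap in your argument. The case $k=1$ is handled correctly by your proof, with $A=\mathbb{R}^0$ and $M$ locally a single point.

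For comparison, the usual textbook route avoids charts and the coordinate permutation. It picks a linear $L\colon\mathbb{R}^n\to\mathbb{R}^{k-1}$ that is injective on $\ker\mathrm{d}F_{x_0}$, and applies the inverse function theorem to $(F,L)$ to flatten $M$ onto a slice $\{c\}\times\mathbb{R}^{k-1}$. That version is more intrinsic, but it needs the inverse function theorem on manifolds. Yours needs only the Euclidean implicit function theorem, which suits the paper's embedded, chart-based setup.
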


\begin{ex}
Let $r$ be a positive real number. 
Consider the smooth function $F\colon \mathbb{R}^3 \to \mathbb{R}$ defined by $F(x, y, z) = x^2 + y^2 + z^2$.
Then the 2-sphere of radius $r>0$ centred at the origin is the level set $\mathbb{S}^2 := F^{-1}(r^2)$.
This level set is a smooth manifold of dimension 2 in $\mathbb{R}^3$ because $r^2$ is a regular value of the smooth function $F$. 
Indeed, choosing the basis associated with the standard coordinates for $\R^3$, $\nabla F{(x,y,z)}$ is the vector $(2x,2y,2z)$, which is non-zero, for every $(x,y,z)\in \mathbb{S}^2$.
\end{ex}

\begin{ex}
Let $R > r$ be two positive real numbers.
Consider the smooth function $F \colon \mathbb{R}^3 \to \mathbb{R}$ defined by $F(x, y, z) = \left(x^2 + y^2 + z^2 + R^2 - r^2\right)^2 - 4R^2(x^2 + y^2)$.
Then the level set $\mathbb{T}^2 := F^{-1}(0)$ is a 2-torus in $\mathbb{R}^3$.
Intuitively, the constant $R$ represents the distance from the centre of the tube to the centre of the torus, while $r$ is the radius of the tube.
This surface is a smooth manifold of dimension 2 in $\mathbb{R}^3$ because $0$ is a regular value of the function $F$. In particular, for every point $(x, y, z) \in \mathbb{T}^2$, the vector $\nabla F{(x,y,z)}$ is non-zero.
\end{ex}

\chapter{Homology}\label{CC}

In this chapter, we introduce the notion of singular homology of a topological space which intuitively captures its number of ``holes''. 
Specifically, homology provides a collection of vector spaces indexed by integers: the 0-th homology corresponds 
to the connected components of the topological space, 
while for positive indices it captures higher-dimensional holes such as loops, tunnels, cavities, and so on.
Furthermore, we introduce the notion of simplicial homology which, while retaining the same information as singular homology, is more computationally tractable thanks to its combinatorial nature.


\section{Chain complexes and homology}


In this section, we define chain complexes and homology in an algebraic and general setting. 
Their use will allow us to define simplicial and singular homology groups and to formalize the intuitive notion of the number of holes in a geometric object represented by a simplicial complex or a topological space, as will be explained in Sections \ref{SimpH} and \ref{SingH}.

For more details on homology theories, we refer to~\cite{Munkres,Ha02}.
For simplicity, we will restrict ourselves to considering homology with coefficients in $\Z_2$.

\begin{definition}\label{def_chain_complex}
A \myemph{chain complex} $\mathcal{C}:=(C_k,\partial_k)_k$ is a family of $\Z_2$-vector spaces $C_k$ over $\mathbb{Z}_2$
and linear maps $\partial_k\colon C_k\to C_{k-1}$, indexed by integer numbers, such that $\partial_{k}  \partial_{k+1}$ is the null map, for every $k\in\Z$. 
The map $\partial_k$ is called \myemph{$k$-boundary map}.
The elements of $C_k$, $\ker \partial_k$, and $\Imm \partial_{k+1}$ are respectively called \myemph{$k$-chains}, \myemph{$k$-cycles}, and \myemph{$k$-boundaries}.
\end{definition}

Assuming $\partial_{k}  \partial_{k+1}$ is the null map is equivalent to assuming $\Imm \partial_{k+1}\subseteq \ker \partial_{k}$,
and hence the quotient vector space $\ker \partial_{k}/\Imm \partial_{k+1}$ is well-defined.
Sometimes, we will use the symbols $Z_k(\mathcal{C})$ and $B_k(\mathcal{C})$ to denote the vector spaces $\ker \partial_k$ and $\Imm \partial_{k+1}$, respectively.


\begin{exercise}\label{ex_chain}
    Check if the indexed families $\mathcal{C}=(C_k,\partial_k)_k$ defined as follows are chain complexes:
    \begin{enumerate}
        \item $C_k:=\mathbb{Z}_2$ and $\partial_k$ is the null map, for any $k$;
        \item $C_k:=\mathbb{Z}_2$ and $\partial_k$ is the identity map, for any $k$;
        \item $C_k:=\mathbb{Z}_2$, for any $k$, and $\partial_k$ is the identity map for $k$ even and the null map for $k$ odd;
        \item $C_k:=\begin{cases}
            \mathbb{Z}_2 & \text{ for } k=i\\
            0 & \text{ otherwise}
        \end{cases}$, for a chosen integer $i$, and $\partial_k$ is the null map, for any $k$;
        \item $C_k:=\begin{cases}
            \mathbb{Z}_2 & \text{ for } k=i, i+1\\
            0 & \text{ otherwise}
        \end{cases}$, for a chosen integer $i$, and $\partial_k$ is the identity map for $k=i+1$ and the null map otherwise.
    \end{enumerate}
\end{exercise}



\begin{definition}
Given a chain complex $\mathcal{C}=(C_k,\partial_k)_k$, we define the \myemph{$k$-th homology group} (or homology group in degree $k$) of $\mathcal{C}$ as the vector space $\ker \partial_{k}/\Imm \partial_{k+1}$ over $\mathbb{Z}_2$. We use the symbol $H_k(\mathcal{C})$ to denote such a vector space.
\end{definition}

In this case, $H_k(\mathcal{C})$ is not just a group, but a vector space. 
However, the homology of a chain complex, and the notion of chain complex itself, can be defined in a wider setting suitably replacing $\mathbb{Z}_2$ with a field or a ring. In this latter case, the homology is a group and not a vector space and this is why the homology is usually referred as a group.

Throughout this book, the expression ``to compute the homology'' will often be used to refer to the computation of the homology groups in all their degrees.

\begin{remark}
Note that $H_k(\mathcal{C})=0$ if and only if $\ker \partial_{k}=\Imm \partial_{k+1}$.
\end{remark}

\begin{exercise}\label{ex_hom}
Compute the homology of the indexed families $\mathcal{C}=(C_k,\partial_k)_k$ in Exercise \ref{ex_chain} that are chain complexes.
\end{exercise}

\begin{exercise}
Prove that for every indexed family $(V_k)_k$ of vector spaces, there exists a chain complex $\mathcal{C}=(C_k,\partial_k)_k$ such that $H_k(\mathcal{C})=V_k$, for every $k$.
\end{exercise}



\begin{definition}
Let $\mathcal{C}=(C_k,\partial_k)_{k}$ and $\mathcal{C}'=(C'_k,\partial'_k)_{k}$ be two chain complexes.
An indexed family of linear maps $F=(F_k\colon C_k\to C'_k)_{k}$ is called
a \myemph{chain map} from $\mathcal{C}$ to $\mathcal{C}'$ if $\partial'_k  F_k=F_{k-1}  \partial_k$ for every integer $k$.
In other terms, $F$ is a chain map if, for any $k$, the following diagram commutes:
\[
\begin{tikzcd}
C_k \arrow[r, "\partial_k"] \arrow[d, "F_k"'] & C_{k-1} \arrow[d, "F_{k-1}"] \\
C'_k \arrow[r, "\partial'_k"'] & C'_{k-1}
\end{tikzcd}
\]
If each map $F_k$ is an isomorphism, we say that the chain map $F = (F_k \colon C_k \to C'_k)_k$ is a \myemph{chain isomorphism}.
\end{definition}


A chain map between chain complexes uniquely defines a linear map between their homology groups, as shown in the next result. 
In what follows, an element of the quotient $H_k(\mathcal{C})$ is represented as $[z]$, where $z$ is a $k$-cycle, i.e., $\partial_k(z)=0$.

\begin{proposition}\label{prop_map_induced_by_a_chain_map}
Let $F=(F_k\colon C_k\to C'_k)_{k}$ be a chain map from $\mathcal{C}=(C_k,\partial_k)_{k}$ to $\mathcal{C}'=(C'_k,\partial'_k)_{k}$.
Then the \myemph{induced map} 
$F_{k*}\colon H_k(\mathcal{C})\to H_k(\mathcal{C}')$ defined by setting $F_{k*}([z]):=[F_k(z)]$ is a well-defined linear map for every $k\in\Z$. 
If $F$ is an isomorphism between chain complexes, then each linear map $F_{k*}\colon H_k(\mathcal{C})\to H_k(\mathcal{C}')$ is an isomorphism.
\end{proposition}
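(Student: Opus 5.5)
The argument is a direct verification in three steps: the map is well defined on representatives, it is linear, and it is functorial enough to handle the isomorphism case.

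\emph{Step one: $F_k$ sends cycles to cycles and boundaries to boundaries.} If $z\in\ker\partial_k$, the chain map identity $\partial'_k F_k=F_{k-1}\partial_k$ gives
\[
\partial'_k F_k(z)=F_{k-1}(\partial_k z)=F_{k-1}(0)=0,
\]
so $F_k(z)$ is a $k$-cycle of $\mathcal{C}'$ and $[F_k(z)]$ makes sense. If $b=\partial_{k+1}(c)$ is a boundary, the same identity in degree $k+1$ gives
\[
F_k(b)=F_k\partial_{k+1}(c)=\partial'_{k+1}F_{k+1}(c),
\]
so $F_k(b)\in\Imm\partial'_{k+1}$. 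Independence of the representative follows. If $[z]=[z']$, then $z-z'=b$ is a boundary. By linearity of $F_k$, $F_k(z)-F_k(z')=F_k(b)$, which we just showed is a boundary, so $[F_k(z)]=[F_k(z')]$.

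\emph{Step two: linearity.} Over $\Z_2$ this is immediate from the linearity of $F_k$ and the vector space structure on the quotient:
\[
F_{k*}([z]+[w])=[F_k(z+w)]=[F_k(z)]+[F_k(w)].
\]
Scalars in $\Z_2$ are handled the same way.

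\emph{Step three: the isomorphism case.} Suppose each $F_k$ is invertible. The cleanest route is to show that $G=(F_k^{-1})_k$ is again a chain map. From $\partial'_kF_k=F_{k-1}\partial_k$, composing with $F_{k-1}^{-1}$ on the left and $F_k^{-1}$ on the right yields
\[
F_{k-1}^{-1}\partial'_k=\partial_kF_k^{-1}.
\]
By steps one and two, $G$ induces linear maps $G_{k*}$. Directly from the definition of the induced map on representatives,
\[
G_{k*}F_{k*}([z])=[F_k^{-1}F_k(z)]=[z],
\]
and symmetrically $F_{k*}G_{k*}=\id$. Hence $F_{k*}$ is an isomorphism with inverse $G_{k*}$.

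There is no real obstacle. The only point needing care is the well-definedness in step one, which requires the chain map identity in two consecutive degrees, $k$ and $k+1$. In step three one must remember to check that the inverse family is itself a chain map before invoking the first part; alternatively, one could argue injectivity and surjectivity of $F_{k*}$ directly, but the inverse chain map avoids any case analysis.
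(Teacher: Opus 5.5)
Your proof is correct and follows the same route as the paper: cycles go to cycles, the chain map identity in degree $k+1$ sends boundaries to boundaries (which gives well-definedness), linearity is inherited from $F_k$, and in the isomorphism case the maps induced by $F$ and $F^{-1}$ are mutually inverse. You also check explicitly that $(F_k^{-1})_k$ is itself a chain map; the paper uses this without stating it, so including it is a small improvement.
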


\begin{proof}
First of all, 
for any $z\in C_k$,
$\partial'_k(F_k(z))=F_{k-1}(\partial_k(z))=F_{k-1}(0)=0$, and hence $F_k(z)\in Z_k(\mathcal{C}')$. Therefore,
$F_k(z)$ identifies an element $[F_k(z)]$ of $H_k(\mathcal{C}')$.
Furthermore, if $[z']=[z]$ we have that $z'-z$ is a $k$-boundary, i.e., there exists a $(k+1)$-chain $c$ in $C_{k+1}$ such that $z'-z=\partial_{k+1}(c)$.
It follows that $F_k(z')-F_k(z)=F_k(z'-z)=F_{k}(\partial_{k+1}(c))=\partial'_{k+1}(F_{k+1}(c))$,
and hence $F_k(z')-F_k(z)$ is a $k$-boundary.
This implies that $[F_k(z')]=[F_k(z)]$ in $H_k(\mathcal{C}')$.
Thus, the map $F_{k*}$ is well-defined.
The linearity of $F_{k*}$ follows immediately from the linearity of $F_{k}$.
If $F$ is an isomorphism between chain complexes and $[z]\in H_k(\mathcal{C})$,
then $(F^{-1})_{k*} F_{k*}([z])=[F_k^{-1} F_{k}(z)]=[z]$ and $F_{k*}(F^{-1})_{k*}([z])=[F_{k}F_k^{-1} (z)]=[z]$, hence $F_{k*}$ is an isomorphism, for every $k\in \Z$.
\end{proof}

\section{Simplicial homology}
\label{SimpH}



In this section, we introduce the homology of a simplicial complex. 
A simplicial complex, in this context, is a suitable collection of subsets of $\R^n$, called simplices, which can be seen as the elementary bricks of the simplicial complex. 
We show how to associate a chain complex with a given simplicial complex, and how to define and compute its homology groups. 

\subsection{Simplicial complexes}


Given a finite set of points $V:= \{v_0,\dots, v_k\} \subseteq \mathbb{R}^n$, the \myemph{convex hull} of $V$, denoted as $\langle v_0, \ldots,v_{k}\rangle$ or $\mathrm{conv}(V)$, is the (unique) minimal convex set containing $V$. 
Equivalently, $\mathrm{conv}(V)$ is the set of all linear combinations $\sum_{i=0}^k a_iv_i$, with $\sum_{i=0}^k a_i=1$ and $a_i\ge 0$, of the points $v_i$ in $V$. 


The set $V$ is called \myemph{affinely independent} if the vectors $v_1 - v_0, \dots, v_k - v_0$ are linearly independent in $\R^n$.
We note that if $k=0$ the set $V=\{v_0\}$ is affinely independent because the empty set is linearly independent. 
Examples of affinely independent points are given by any set consisting of two distinct points, of three non-collinear points, of four non-coplanar points, and so on.

\begin{definition}
The \myemph{$k$-simplex} $\sigma = \langle v_0, \ldots, v_k\rangle$ spanned by a non-empty, finite and affinely independent subset $V=\{ v_0, \dots, v_k \}$ of $\R^n$ is the convex hull of $V$.

\end{definition}


\begin{figure}[!htbp]
\begin{center}
\includegraphics[width=8cm]{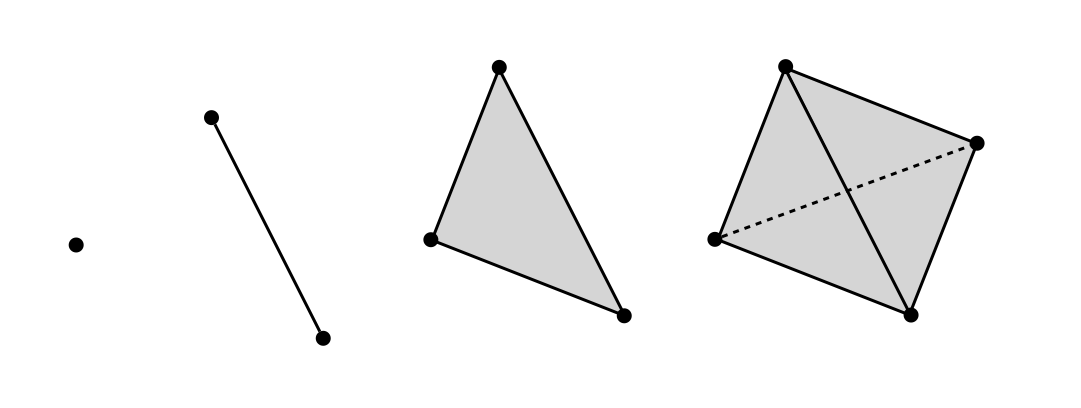}
\caption{Examples of simplices of dimension 0, 1, 2, 3.}
\label{fig:simplices}
\end{center}
\end{figure}

As depicted in Figure \ref{fig:simplices}, a 0-simplex is a vertex, a 1-simplex is an edge, a 2-simplex is a triangle, a
3-simplex is a tetrahedron, and so on.

The points $v_0, \ldots, v_k$ spanning a $k$-simplex $\sigma$ are called the \myemph{vertices} of $\sigma$.
The value $k$ is called the \myemph{dimension} of $\sigma$ and denoted as $\dim(\sigma)$. 
Any simplex $\tau$ spanned by a non-empty subset of $V$ is called a \myemph{face} of $\sigma$. 
Conversely, $\sigma$ is called a \myemph{coface} of $\tau$.
If $\tau$ is a face of $\sigma$, we write $\tau\preceq \sigma$.

\begin{definition}
A \myemph{simplicial complex} $\Sigma$ in $\R^n$ is a non-empty finite set of simplices in $\R^n$ such that:
\begin{itemize}
    \item if $\sigma\in \Sigma$ and 
    $\tau\preceq \sigma$, then $\tau\in \Sigma$;
    \item if $\sigma_1, \sigma_2\in \Sigma$ 
    with $\sigma_1\cap \sigma_2\neq \varnothing$, then $\sigma_1\cap \sigma_2\preceq \sigma_1$ and $\sigma_1\cap \sigma_2\preceq \sigma_2$.
\end{itemize}
\end{definition}





We define the \myemph{dimension} of a simplicial complex $\Sigma$ in $\R^n$, denoted by $\dim(\Sigma)$, to be the maximum dimension of the simplices of $\Sigma$.
A subset of $\Sigma$
that is itself a simplicial complex is called a \myemph{subcomplex} of $\Sigma$.

\begin{exercise}\label{ex:simpl_complexes}
    Check that the following sets of simplices in $\R^2$ are not simplicial complexes and, for each of them, build the smallest simplicial complex containing it:
    
    \begin{itemize}
        \item $\Sigma:=\{\{(x,0)\mid -1\le x\le 1 \}\}$;
        \item $\Sigma:=\{\{(-1, 0)\}, \{(1,0)\}, \{(0, -1)\}, \{(0,1)\}, \{(x,0)\mid -1\le x\le 1 \}, \{(0,y)\mid -1\le y\le 1 \}\}.$
    \end{itemize}
\end{exercise}

One way to endow a simplicial complex $\Sigma$ with a topology is to consider it as a subspace of $\R^n$.
Specifically, we define the \myemph{underlying space} of $\Sigma$ the subset $|\Sigma|$ of $\R^n$ defined as the union of the simplices of $\Sigma$ and endowed with the subspace topology with respect to $\R^n$. 
    

\begin{exercise}
    Determine the underlying space of the simplicial complexes obtained in Exercise \ref{ex:simpl_complexes}.
\end{exercise}

\begin{exercise}
    Define a simplicial complex $\Sigma$ such that its underlying space is homeomorphic to the $2$-sphere $\mathbb{S}^2$.
\end{exercise}


\subsection{Homology of a simplicial complex}

Given a simplicial complex $\Sigma$, one defines its simplicial homology by associating it with a chain complex $\mathcal{C}(\Sigma)=(C_k(\Sigma), \partial_k)_k$ and considering its homology.

Refer to~\cite{Greub} for more details on the following definition.

\begin{definition}
Let $S$ be a set, the \myemph{free vector space} with coefficients in $\mathbb{Z}_2$ over $S$ is the vector space over $\mathbb{Z}_2$ whose elements are the finitely supported function $f:S\to \mathbb{Z}_2$ endowed with the usual sum of functions and the usual multiplication by elements of the field $\mathbb{Z}_2$.
\end{definition}

Each element $f$ of the free vector space with coefficients in $\mathbb{Z}_2$ over a set $S$ can be represented by the formal linear combination $\sum_{i=1}^{k}a_i \sigma_i$, where $\sigma_i$ belongs to $S$ and $a_i$ equals $f(\sigma_i)$. 
In simple terms, the linear combination $\sum_{i=1}^{k} a_i \sigma_i$ can be viewed as selecting the set of elements $\sigma_i$ corresponding to the coefficients for which $a_i = 1$, thereby identifying the function $f$ with its support.

For every integer $k$, define $C_k(\Sigma)$ as the free vector space with coefficients in $\mathbb{Z}_2$ over the set of $k$-simplices in $\Sigma$, if $k\geq 0$,
and as the null vector space over $\mathbb{Z}_2$, otherwise.
Furthermore, define the \myemph{boundary map}
$\partial_k\colon  C_k(\Sigma)\to C_{k-1}(\Sigma)$ as the linear map which is null if $k\leq 0$
and which takes the $k$-simplex $\sigma=\langle v_0, \ldots, v_k \rangle$ in $\Sigma$
to the formal sum $\sum_{i=0}^{k} \langle v_0, \ldots, \hat v_i \ldots, v_k \rangle$, where
$\langle v_0,\ldots,\hat v_i,\ldots, v_{k}\rangle$ denotes $\mathrm{conv}\left(\{v_0,\ldots, v_{k}\}\setminus \{v_i\}\right)$, otherwise.
Figure \ref{fig:simplicial_boundary} depicts the boundary of the 2-simplex $\langle v_0, v_1, v_2 \rangle$ which is the sum $\langle v_1, v_2 \rangle + \langle v_0, v_2 \rangle + \langle v_0, v_1 \rangle$.


\begin{figure}[!htbp]
\begin{center}
\includegraphics[width=8cm]{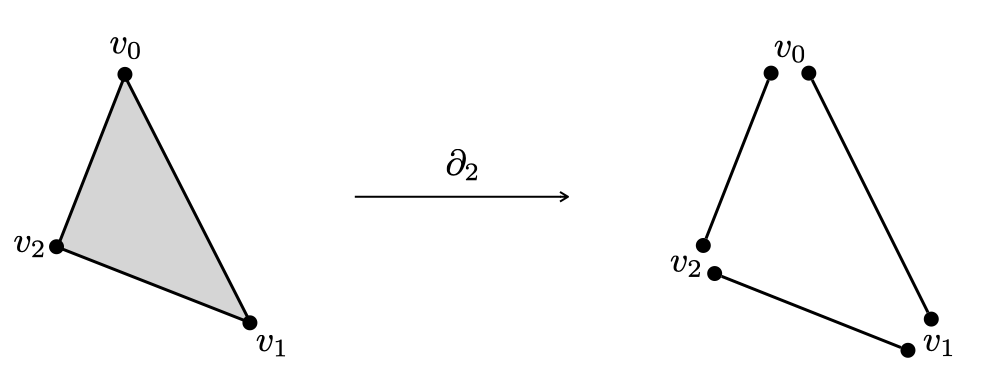}
\caption{The boundary map $\partial_2$ applied to the $2$-simplex $\langle v_0, v_1, v_2 \rangle$.}
\label{fig:simplicial_boundary}
\end{center}
\end{figure}

\begin{proposition}\label{prop_simpl_chain_complex}
Let $\Sigma$ be a simplicial complex, the indexed family $\mathcal{C}(\Sigma):=(C_k(\Sigma),\partial_k)_{k}$ is a chain complex.
\end{proposition}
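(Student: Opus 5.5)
To prove Proposition~\ref{prop_simpl_chain_complex}, we will check the conditions of Definition~\ref{def_chain_complex} directly. Each $C_k(\Sigma)$ is a $\Z_2$-vector space by construction. Each $\partial_k$ is linear, since it is defined on the basis of $k$-simplices and extended by linearity. So the only thing left is the identity $\partial_k\partial_{k+1}=0$ for every $k\in\Z$. Because both maps are linear, it is enough to check this on a basis of $C_{k+1}(\Sigma)$, that is, on single $(k+1)$-simplices.

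We first dispose of the trivial degrees. If $k\le 0$, then $\partial_k$ is the null map, so the composition vanishes. (When $k+1\le 0$, the map $\partial_{k+1}$ is null as well.) One point needs care in the definition of $\partial_k$: every face $\langle v_0,\ldots,\hat v_i,\ldots,v_k\rangle$ belongs to $\Sigma$, because $\Sigma$ is closed under taking faces. It is also a genuine $(k-1)$-simplex, because a subset of an affinely independent set is again affinely independent. Hence $\partial_k$ really does take values in $C_{k-1}(\Sigma)$.

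The main step is $k\ge 1$. Take a $(k+1)$-simplex $\sigma=\langle v_0,\ldots,v_{k+1}\rangle$. By definition,
\[
\partial_k\partial_{k+1}(\sigma)=\sum_{i=0}^{k+1}\ \sum_{j\neq i}\mathrm{conv}\bigl(\{v_0,\ldots,v_{k+1}\}\setminus\{v_i,v_j\}\bigr).
\]
Each term in this sum is indexed by an ordered pair $(i,j)$ with $i\neq j$. The terms for $(i,j)$ and $(j,i)$ are the same $(k-1)$-simplex, obtained by deleting the same two vertices. Distinct unordered pairs $\{i,j\}$ give distinct simplices, since a simplex determines its vertex set. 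So every $(k-1)$-face of $\sigma$ appears exactly twice, and its coefficient is $1+1=0$ in $\Z_2$. This gives $\partial_k\partial_{k+1}(\sigma)=0$, and extending by linearity gives $\partial_k\partial_{k+1}=0$ on all of $C_{k+1}(\Sigma)$.

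The only step that needs real attention is this pairing argument, specifically the bookkeeping that shows each $(k-1)$-face arises exactly twice. Working over $\Z_2$ removes the sign issues, so the argument reduces to counting the two orders in which a pair of vertices can be removed.
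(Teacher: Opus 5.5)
Your proof is correct and takes essentially the same route as the paper's. You reduce to a single $(k+1)$-simplex, dispose of $k\le 0$ trivially, and observe that the face obtained by deleting $v_i$ and $v_j$ arises once for each order of deletion, so it cancels over $\Z_2$; the paper encodes the same count with the symbols $\sigma_{ij}$ and an index swap. Your check that $\partial_k$ really lands in $C_{k-1}(\Sigma)$ is a welcome detail the paper leaves implicit, while the distinctness of faces for distinct pairs $\{i,j\}$ is true but not needed for the cancellation.
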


\begin{proof}
For every integer $k$, $C_k(\Sigma)$ is a vector space over $\mathbb{Z}_2$ by construction.
It remains to prove that, for every $k$, $\partial_{k}  \partial_{k+1}$ is the null map.
This statement is trivial for $k\leq 0$, therefore let us assume that $k>0$.
Given a $(k+1)$-simplex $\sigma=\langle v_0, \ldots, v_{k+1} \rangle$ of $\Sigma$, let us define the symbol $\sigma_{ij}$ denoting the chain
$\langle v_0,\ldots,\hat v_i,\ldots,\hat v_j,\ldots,v_{k+1}\rangle$ in $C_{k-1}(\Sigma)$ , if $i<j$, and the null chain, otherwise.
We have that
\begin{align*}
            \partial_{k}  \partial_{k+1}(\sigma)&= \partial_{k}\left(\sum_{i=0}^{k+1}\langle v_0,  \ldots, \hat v_i, \ldots, v_{k+1} \rangle\right)\\
            &= \sum_{i=0}^{k+1}\partial_{k}\left(\langle v_0,  \ldots, \hat v_i, \ldots, v_{k+1} \rangle\right)\\
            &= \sum_{i=0}^{k+1}\left(\sum_{j=0}^{i-1}\langle v_0,\ldots,\hat v_j,\ldots,\hat v_i,\ldots,v_{k+1}\rangle\right.+\\
            &\left.\sum_{j=i+1}^{k+1}\langle v_0,\ldots,\hat v_i,\ldots,\hat v_j,\ldots,v_{k+1}\rangle\right)\\
            &= \sum_{i=0}^{k+1}\left(\sum_{j=0}^{i-1}\sigma_{ji}+\sum_{j=i+1}^{k+1}\sigma_{ij}\right)\\
            &= \sum_{i=0}^{k+1}\left(\sum_{j=0}^{k+1}\sigma_{ji}+\sum_{j=0}^{k+1}\sigma_{ij}\right)\\
            &= \sum_{i=0}^{k+1}\sum_{j=0}^{k+1}\sigma_{ji}+\sum_{i=0}^{k+1}\sum_{j=0}^{k+1}\sigma_{ij}\\
            &= \sum_{i=0}^{k+1}\sum_{j=0}^{k+1}\sigma_{ij}+\sum_{i=0}^{k+1}\sum_{j=0}^{k+1}\sigma_{ij}\\
            &= 0.
\end{align*}

\end{proof}

As a consequence of Proposition \ref{prop_simpl_chain_complex}, one can consider the $k$-th homology group $H_k(\mathcal{C}(\Sigma))$ defined as $\ker\partial_k/\Imm\partial_{k+1}$, for every $k\in\Z$.
This vector space is denoted by $H_k(\Sigma)$, and called the $k$-th \myemph{simplicial homology group of $\Sigma$}.
In the following, for any given simplicial complex $\Sigma$ and every $k\in \Z$ we will adopt the symbols $Z_k(\Sigma)$ and $B_k(\Sigma)$  to denote $Z_k(\mathcal{C}(\Sigma))=\ker\partial_k$ and $B_k(\mathcal{C}(\Sigma))=\Imm\partial_{k+1}$, respectively.
The elements of $Z_k(\Sigma)$ and $B_k(\Sigma)$ are called, respectively, (simplicial) $k$-\myemph{cycles} and (simplicial) $k$-\myemph{boundary} of $\Sigma$.

Figure~\ref{fig:simplicial_homology} shows a simplicial complex of dimension 2. 
The edges in green and orange represent two 1-cycles that are not 1-boundaries. 
Thus, they belong to non-trivial equivalence classes of $H_1(\Sigma)$.
Moreover, these two 1-cycles, $z_1$ and $z_2$, belong to the same equivalence class because $z_1-z_2\in \Imm \partial_2$.
Furthermore, the edges depicted in blue represent a 1-cycle which is also a 1-boundary. So, it equivalence class in $H_1(\Sigma)$ is the trivial one. 

\begin{figure}[!htbp]
\begin{center}
\includegraphics[width=8cm]{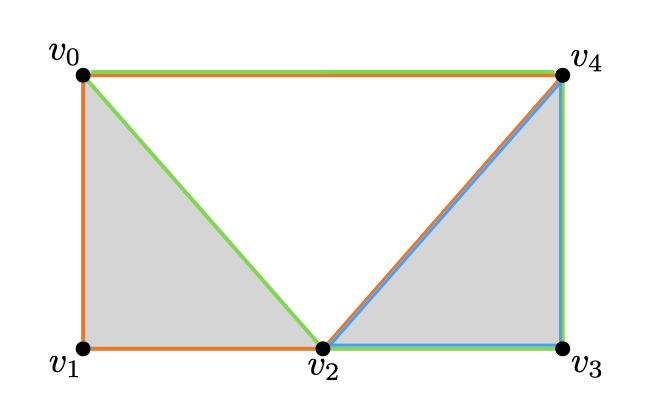}
\caption{A simplicial complex in which three 1-cycles are highlighted by using different colors. The one in blue is also a 1-boundary while the ones in green and orange are not.}
\label{fig:simplicial_homology}
\end{center}
\end{figure}


\begin{exercise}\label{ex:simplicial_homology}
Compute the simplicial homology groups of the following simplicial complexes:
\begin{itemize}
    \item 
    $\Sigma$ contains all the proper faces of a 3-simplex;
    \item $\Sigma$ depicted in Figure \ref{fig:simplicial_homology}.
\end{itemize}
\end{exercise}


The effective computation of the homology of simplicial complexes with a large number of simplices is by no means straightforward and requires algorithmic tools. This topic lies beyond the scope of this book; however, interested readers may find further references in \cite{CT_Edels}.

\section{Singular homology of a topological space}
\label{SingH}

In this section, we introduce the notion of homology of a topological space. 
Similarly to simplicial complexes, for any topological space $X$ a chain complex $\mathcal{S}(X)=(S_k(X), \partial_k)_k$ can be defined.

First, let us focus on the definition of the vector spaces $S_k(X)$.
For $k\geq 0$, let us consider the $k$-simplex $\Delta_k:=\langle e_1\ldots,e_{k+1}\rangle\subseteq\R^{k+1}$, where $e_i$ is the $i$-th vector in the canonical basis of $\R^{k+1}$.
This is called the \myemph{standard} $k$-simplex.

\begin{definition}
Given a topological space $X$ and $k\ge 0$, a continuous map $\sigma\colon\Delta_k \to X$ is called a \myemph{singular $k$-simplex} in $X$.
\end{definition}

We emphasize that a singular simplex is not the image of a map, but the map itself.
Figure \ref{fig_simplex} shows an example of a singular $2$-simplex in 
a torus.
\begin{figure}[!htbp]
\begin{center}
\includegraphics[width=8cm]{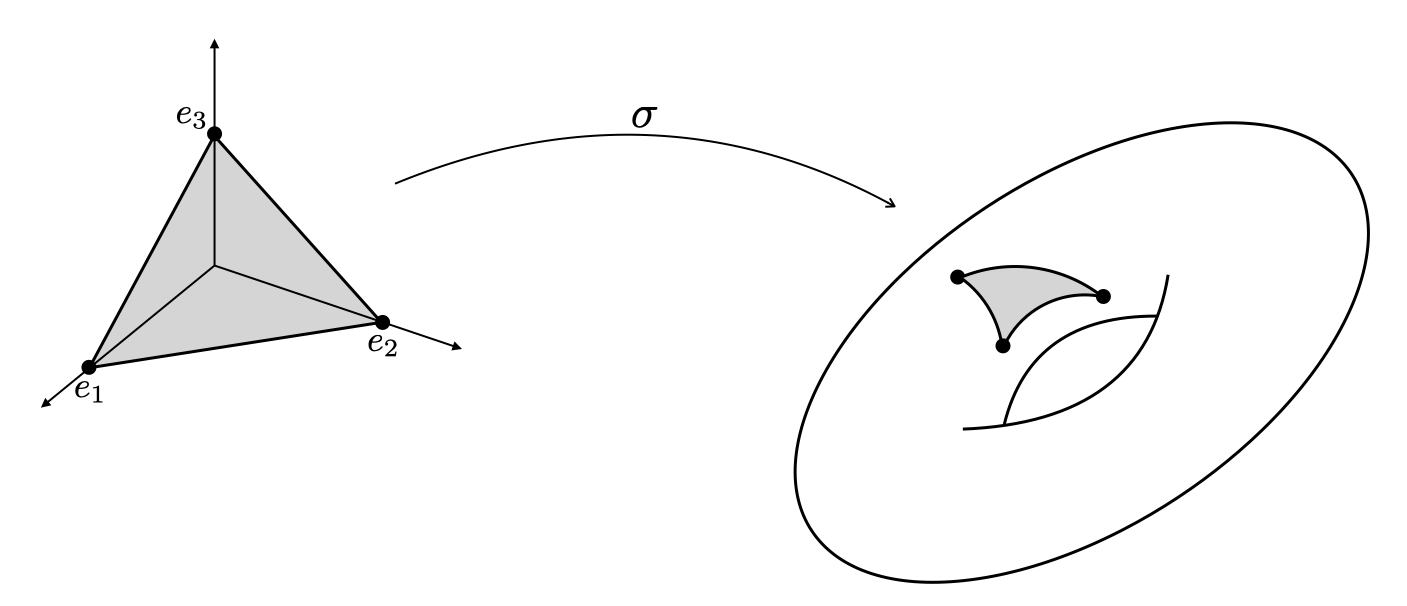}
\caption{A singular 2-simplex $\sigma$ in a torus $X$.}
\label{fig_simplex}
\end{center}
\end{figure}

For every integer $k$, we define $S_k(X)$ as the free vector space with coefficients in $\mathbb{Z}_2$ over the set of singular $k$-simplices in $X$, if $k\geq 0$, and as the null vector space over $\mathbb{Z}_2$, otherwise.

Let us now focus on the definition of the linear maps $\partial_k: S_k(X) \to S_{k-1}(X)$.
Similarly to the simplicial case, we use the notation  $\langle e_1,\ldots,\hat e_i,\ldots, e_{k+1}\rangle$ for $\mathrm{conv}\left(\{e_1,\ldots,e_{k+1}\}\setminus \{e_i\}\right)$, when $k>0$.
With a slight abuse of notation, in the following, we will identify $\Delta_{k-1}$ with the face $\langle e_1,\ldots,\hat e_i,\ldots, e_{k+1}\rangle$ of $\Delta_k$ by the affine map
$\pi^{k-1}_i\colon \Delta_{k-1}\to\langle e_1,\ldots,\hat e_i,\ldots, e_{k+1}\rangle$ such that
$\pi^{k-1}_i(e'_j):=e_j$ if $j<i$ and  $\pi^{k-1}_i(e'_j):= e_{j+1}$ if $j\geq i$, where $e'_j$ is the $j$-th vector in the canonical basis of $\R^{k}$ (see Figure \ref{fig_boundary} for an example).
As a consequence, for each singular $k$-simplex $\sigma:\Delta_k\to X$ with $k>0$ we will write
${\sigma}_{\vert \langle e_1,\ldots,\hat e_i,\ldots,e_{k+1}\rangle}$ to indicate
${\sigma}  \pi^{k-1}_i$.

\begin{figure}[!htbp]
\begin{center}
\includegraphics[width=\textwidth]{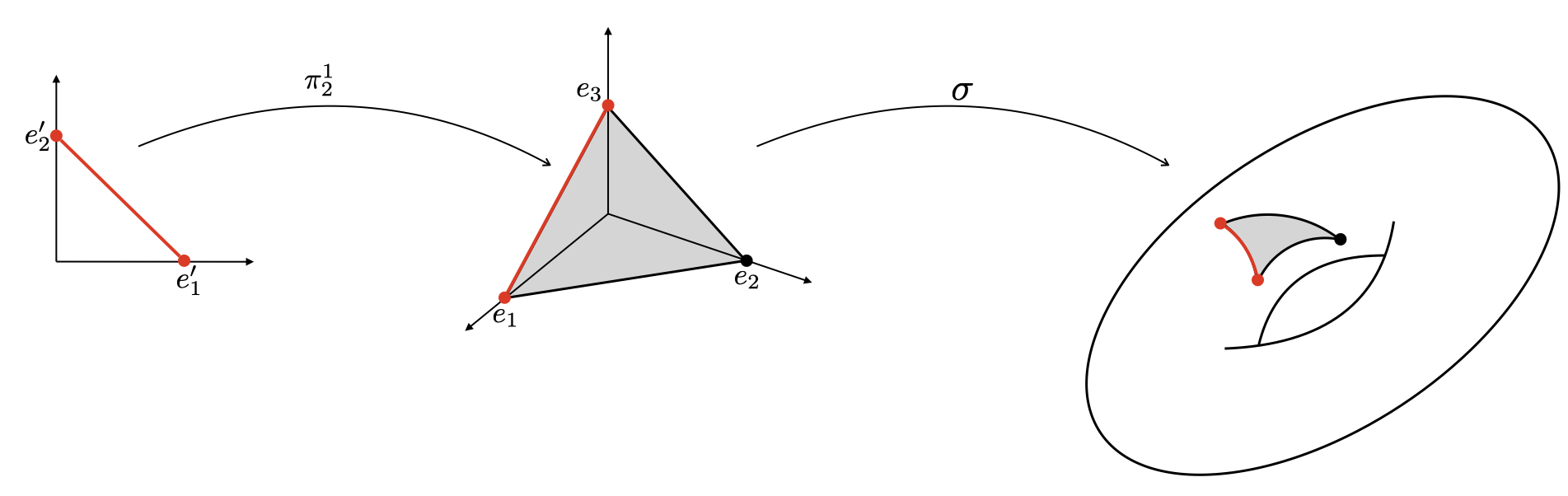}
\caption{The restriction ${\sigma}_{\vert \langle e_1,\hat e_2, e_3 \rangle}$ of the singular 2-simplex $\sigma$ in a torus $X$.}
\label{fig_boundary}
\end{center}
\end{figure}


For every integer $k$, we define 
$\partial_k\colon S_k(X)\to S_{k-1}(X)$ as the linear map which is null if $k\leq 0$ and which takes the $k$-simplex $\sigma\colon\Delta_k\to X$
to the chain $\sum_{i=1}^{k+1}{\sigma}_{\vert \langle e_1,\ldots,\hat e_i,\ldots,e_{k+1}\rangle}$, otherwise.

\begin{proposition}\label{prop_chain_complex}
If $X$ is a topological space, then the indexed family $\mathcal{S}(X):=(S_k(X),\partial_k)_{k}$ is a chain complex.
\end{proposition}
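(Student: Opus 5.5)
The plan mirrors the proof of Proposition~\ref{prop_simpl_chain_complex}. Each $S_k(X)$ is a $\Z_2$-vector space by construction, so I only need $\partial_k\partial_{k+1}=0$. For $k\le 0$ this is immediate, since $\partial_k$ is null there. For $k>0$, both maps are linear, so it suffices to check the identity on a single singular $(k+1)$-simplex $\sigma\colon\Delta_{k+1}\to X$.

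Applying the definition twice gives
\[
\partial_k\partial_{k+1}(\sigma)=\sum_{i=1}^{k+2}\sum_{j=1}^{k+1}\sigma\,\pi^{k}_i\,\pi^{k-1}_j .
\]
Each term is a singular $(k-1)$-simplex obtained by composing $\sigma$ with an affine embedding $\pi^k_i\pi^{k-1}_j\colon\Delta_{k-1}\to\Delta_{k+1}$. Such an embedding maps the standard basis vectors, in increasing order, onto the vertices of $\Delta_{k+1}$ with two of them removed. The key step is the combinatorial identity
\[
\pi^{k}_i\,\pi^{k-1}_j=\pi^{k}_{j}\,\pi^{k-1}_{i-1}\qquad\text{for } j<i .
\]
Both sides are affine, so I will verify it on the basis vectors $e''_l$ of $\R^{k}$ using the definition of $\pi$. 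Both sides omit the vertices $e_j$ and $e_i$ of $\Delta_{k+1}$ and send $e''_l$ to the $l$-th remaining vertex, in the order-preserving way. This gives the case checks $l<j$, $j\le l<i-1$, and $l\ge i-1$.

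With this identity, the terms of the double sum are paired bijectively. The index pairs $(i,j)$ with $j<i$ are matched with the pairs $(i',j')=(j,i-1)$, which satisfy $j'\ge i'$. Every pair falls into exactly one class: the pairs with $j\ge i$ correspond to $i'=j$, $j'=i-1$ with $j'<\dots$. So every singular simplex in the sum appears exactly twice. Since coefficients lie in $\Z_2$, these occurrences cancel and the sum is $0$.

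The main obstacle is the index bookkeeping in this identity, in particular the shift from $i$ to $i-1$ and checking that $(i,j)\mapsto(j,i-1)$ is a bijection between $\{j<i\}$ and $\{j\ge i\}$ within the ranges $1\le i\le k+2$, $1\le j\le k+1$. I would settle this by describing each composite $\pi^k_i\pi^{k-1}_j$ as ``the unique order-preserving affine embedding of $\Delta_{k-1}$ onto the face omitting a given pair of vertices''. Each unordered pair of omitted vertices arises from exactly two choices of $(i,j)$, which gives the pairing without explicit casework.

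A subtle point is that two formally different pairs could yield the same singular simplex by accident. This does not break the argument: the pairing already cancels the terms in $\Z_2$, and any further coincidences only add further cancelling pairs.
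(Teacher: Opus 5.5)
Your proposal is correct and follows essentially the same route as the paper: the paper writes $\partial_k\partial_{k+1}(\sigma)$ as a sum of the restrictions $\sigma_{ij}$ of $\sigma$ to the faces omitting two vertices, observes that each appears twice, and cancels them modulo $2$. The only difference is that the paper takes the face-of-a-face identification for granted through its restriction notation, whereas your identity $\pi^{k}_i\pi^{k-1}_j=\pi^{k}_{j}\pi^{k-1}_{i-1}$ for $j<i$ makes it explicit. One small fix: replace the incomplete sentence ending in ``$j'<\dots$'' with the bijection $(i,j)\mapsto(j,i-1)$ from $\{j<i\}$ to $\{j\ge i\}$, which you state correctly in the following paragraph.
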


\begin{proof}
For every integer $k$, $S_k(X)$ is a vector space over $\mathbb{Z}_2$ by construction.
It remains to prove that, for every $k$, $\partial_{k}  \partial_{k+1}$ is the null map.
This statement is trivial for $k\leq 0$, therefore let us assume that $k>0$.
Analogously to Proposition \ref{prop_simpl_chain_complex}, given a singular $(k+1)$-simplex $\sigma\colon\Delta_{k+1} \to X$, let us define the symbol $\sigma_{ij}$ denoting the chain 
$\sigma_{\vert \langle e_1,\ldots,\hat e_i,\ldots,\hat e_j,\ldots,e_{k+2}\rangle}$ in $S_{k-1}(X)$, if $i<j$, and the null chain, otherwise.
We have that
\begin{align*}
            \partial_{k}  \partial_{k+1}(\sigma)&= \partial_{k}\left(\sum_{i=1}^{k+2}{\sigma}_{\vert \langle e_1,\ldots,\hat e_i,\ldots,e_{k+2}\rangle}\right)\\
            &= \sum_{i=1}^{k+2}\partial_{k}\left({\sigma}_{\vert \langle e_1,\ldots,\hat e_i,\ldots,e_{k+2}\rangle}\right)\\
            &= \sum_{i=1}^{k+2}\left(\sum_{j=1}^{i-1}{\sigma}_{\vert \langle e_1,\ldots,\hat e_j,\ldots,\hat e_i,\ldots,e_{k+2}\rangle}+\sum_{j=i+1}^{k+2}{\sigma}_{\vert \langle e_1,\ldots,\hat e_i,\ldots,\hat e_j,\ldots,e_{k+2}\rangle}\right)\\
            &= \sum_{i=1}^{k+2}\left(\sum_{j=1}^{i-1}\sigma_{ji}+\sum_{j=i+1}^{k+2}\sigma_{ij}\right)\\
            &= \sum_{i=1}^{k+2}\left(\sum_{j=1}^{k+2}\sigma_{ji}+\sum_{j=1}^{k+2}\sigma_{ij}\right)\\
            &= \sum_{i=1}^{k+2}\sum_{j=1}^{k+2}\sigma_{ji}+\sum_{i=1}^{k+2}\sum_{j=1}^{k+2}\sigma_{ij}\\
            &= \sum_{i=1}^{k+2}\sum_{j=1}^{k+2}\sigma_{ij}+\sum_{i=1}^{k+2}\sum_{j=1}^{k+2}\sigma_{ij}\\
            &= 0.
\end{align*}
\end{proof}

By Proposition~\ref{prop_chain_complex}, $\mathcal{S}(X)$ is a chain complex, so the $k$-th homology group $H_k(\mathcal{S}(X))=\ker\partial_k/\Imm\partial_{k+1}$
is defined, for every $k\in\Z$. This vector space is denoted by the symbol $H_k(X)$, and called the $k$-th \myemph{singular homology group} of $X$.
Similarly to the simplicial case, in the following, for any given topological space $X$ and every $k\in \Z$, we will use the symbols $Z_k(X)$ and $B_k(X)$  to denote $Z_k(\mathcal{S}(X))=\ker\partial_k$ and $B_k(\mathcal{S}(X))=\Imm\partial_{k+1}$, respectively.
The elements of $Z_k(X)$ and $B_k(X)$ are called, respectively, (singular) $k$-\myemph{cycles} and (singular) $k$-\myemph{boundaries} of $X$. 


\begin{exercise}
Find a topological space 
$X$ whose singular homology groups match those given in each of the following cases:
\begin{itemize}
\item $H_0(X)=\Z_2$, $H_i(X)=0$, for every $i\neq 0$;
\item $H_i(X)=\Z_2$, for $i=0, 1$, and $H_i(X)=0$, for every $i\neq 0,1$;
\item $H_i(X)=\Z_2$, for $i=0, 2$, and $H_i(X)=0$, for every $i\neq 0,2$.
\end{itemize}
\end{exercise}

The next result ensures that a continuous function between topological spaces induces a chain map between the associated chain complexes.




\begin{proposition}\label{prop_a_chain_map}
Let $f\colon X\to Y$ be a continuous function between two topological spaces.
For every $k\ge 0$, consider the linear map $f_{k\#}\colon S_k(X)\to S_k(Y)$ defined by setting, for every singular $k$-simplex  $\sigma\colon \Delta_k\to X$, $f_{k\#}(\sigma):=f  \sigma$.
For $k<0$, set $f_{k\#}\colon S_k(X)\to S_k(Y)$ equal to the null map.
The indexed family $f_{\#}:=(f_{k\#}\colon S_k(X)\to S_k(Y))_{k}$ is a chain map from $\mathcal{S}(X)$ to $\mathcal{S}(Y)$.
\end{proposition}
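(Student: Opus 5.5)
The plan is to check the chain map condition $\partial'_k f_{k\#} = f_{k-1\#}\partial_k$ for every integer $k$, where $\partial$ and $\partial'$ are the boundary maps of $\mathcal{S}(X)$ and $\mathcal{S}(Y)$. First, each $f_{k\#}$ has to be a well-defined linear map. For $k\ge 0$ and a singular $k$-simplex $\sigma\colon\Delta_k\to X$, the composition $f\sigma$ is continuous, being a composite of continuous maps, so it is a singular $k$-simplex in $Y$. Since $S_k(X)$ is the free vector space over the singular $k$-simplices, prescribing $f_{k\#}$ on this basis determines a unique linear map $S_k(X)\to S_k(Y)$. For $k<0$ the map is null by definition.

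By linearity, it suffices to verify the identity on basis elements, and I will split into cases. If $k\le 0$, the boundary maps $\partial_k$ and $\partial'_k$ are both null. Hence both sides of the identity are the null map. This covers $k=0$, where $\partial'_0 f_{0\#}=0=f_{-1\#}\partial_0$, as well as all negative $k$.

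For $k>0$, fix a singular $k$-simplex $\sigma$ and compute both sides. On one side,
\[
\partial'_k\bigl(f_{k\#}(\sigma)\bigr)=\sum_{i=1}^{k+1}(f\sigma)_{\vert\langle e_1,\ldots,\hat e_i,\ldots,e_{k+1}\rangle}=\sum_{i=1}^{k+1} f\sigma\pi^{k-1}_i.
\]
On the other side,
\[
f_{k-1\#}\bigl(\partial_k\sigma\bigr)=\sum_{i=1}^{k+1} f\bigl(\sigma\pi^{k-1}_i\bigr).
\]
Here I use the linearity of $f_{k-1\#}$ to pass it through the sum. Associativity of composition of maps shows that the two sums agree term by term.

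No step is a real obstacle. The only point needing care is to make the notation for faces explicit: the restriction of $\sigma$ to a face is defined as $\sigma\pi^{k-1}_i$, so the restriction of $f\sigma$ to that face is $(f\sigma)\pi^{k-1}_i=f(\sigma\pi^{k-1}_i)$. Once this is written out, the proof reduces to associativity and linearity.
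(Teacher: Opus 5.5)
Your proposal is correct and follows the paper's argument. You check $\partial'_k f_{k\#}=f_{(k-1)\#}\partial_k$ on singular simplices, where both sides reduce to $\sum_{i=1}^{k+1} f\sigma\pi^{k-1}_i$ by linearity and associativity, and continuity of $f$ guarantees that $f\sigma$ is a singular simplex. Treating $k\le 0$ separately is slightly more careful than the paper, which applies the face-sum formula for all $k\ge 0$ even though $\partial_0$ is defined to be null.
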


\begin{proof}
For every $k\ge 0$
and every singular $k$-simplex $\sigma$ in $X$,
\begin{align*}
            f_{(k-1)\#}(\partial_k(\sigma))
            &= f_{(k-1)\#}\left(\sum_{i=1}^{k+1}\sigma_{\vert\langle e_1,\ldots,\hat e_i,\ldots,e_{k+1}\rangle}\right)\\
            &= \sum_{i=1}^{k+1}f_{(k-1)\#}\left(\sigma_{\vert\langle e_1,\ldots,\hat e_i,\ldots,e_{k+1}\rangle}\right)\\
            &= \sum_{i=1}^{k+1}f \sigma_{\vert\langle e_1,\ldots,\hat e_i,\ldots,e_{k+1}\rangle}\\
            &= \partial'_k(f \sigma)\\
            &= \partial'_k(f_{\#k}(\sigma)),
\end{align*}
where, for every $k$, $\partial_k$ and $\partial'_k$ denote the boundary maps of $\mathcal{S}(X)$ and of $\mathcal{S}(Y)$, respectively.
Note that the continuity of the function $f$ ensures that $f\sigma$ is a singular $k$-simplex in $Y$.
The same equality trivially holds for $k<0$.
\end{proof}

The following theorem states one of the most important properties of homology.

\begin{theorem}\label{thm_homology_functor}
If $\id\colon X\to X$ is the identity on a topological space $X$, then 
$\id_{k\#*}\colon H_k(X)\to H_k(X)$ 
(induced by the chain map 
$\id_{k\#}$ 
defined in Proposition~\ref{prop_a_chain_map}) is the identity.
If $f\colon X\to Y$, $g\colon Y\to Z$ are two continuous functions  between topological spaces, then $(g  f)_{\#*}=g_{\#*}  f_{\#*}$.
\end{theorem}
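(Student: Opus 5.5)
The plan is to reduce both claims to identities at the chain level and then pass to homology using the definition of the induced map in Proposition~\ref{prop_map_induced_by_a_chain_map}. Recall that for a chain map $F$ the induced map is $F_{k*}([z])=[F_k(z)]$. So it suffices to identify the chain maps involved, degree by degree, and then read off the effect on classes of cycles.

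For the first assertion, I would first show that $\id_{k\#}$ is the identity of $S_k(X)$ for every $k$. For $k\ge 0$ and any singular $k$-simplex $\sigma$, we have $\id_{k\#}(\sigma)=\id\circ\sigma=\sigma$. Since the singular simplices form a basis of $S_k(X)$ and $\id_{k\#}$ is linear, $\id_{k\#}$ is the identity on $S_k(X)$. For $k<0$, $S_k(X)$ is the null space, so the null map coincides with the identity. Then, for any $k$-cycle $z$, $\id_{k\#*}([z])=[\id_{k\#}(z)]=[z]$, so $\id_{k\#*}$ is the identity of $H_k(X)$.

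For the second assertion, I would show the chain-level equality $(g\circ f)_{k\#}=g_{k\#}\circ f_{k\#}$ for every $k$. For $k\ge 0$, the equality is checked on the basis of singular simplices: $(g\circ f)_{k\#}(\sigma)=(g\circ f)\circ\sigma=g\circ(f\circ\sigma)=g_{k\#}(f_{k\#}(\sigma))$, and linearity extends it to all chains. For $k<0$, all maps are null. On homology, for a $k$-cycle $z$ in $X$, we get
$(g\circ f)_{k\#*}([z])=[g_{k\#}(f_{k\#}(z))]$. Moreover, $f_{k\#}(z)$ is a cycle in $Y$ (as shown in the proof of Proposition~\ref{prop_map_induced_by_a_chain_map}), so this class equals $g_{k\#*}([f_{k\#}(z)])=g_{k\#*}(f_{k\#*}([z]))$.

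No step is a real obstacle. The only point needing care is the last one: applying $g_{k\#*}$ to the class $[f_{k\#}(z)]$ uses the fact that $f_{k\#}(z)$ is a cycle, together with well-definedness of induced maps. Both are already guaranteed by Proposition~\ref{prop_map_induced_by_a_chain_map}, since $f_\#$ and $g_\#$ are chain maps by Proposition~\ref{prop_a_chain_map}.
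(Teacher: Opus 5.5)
Your proposal is correct and follows the paper's proof essentially step for step: you verify $\id_{k\#}=\id$ and $(g f)_{k\#}=g_{k\#} f_{k\#}$ on singular simplices, then pass to homology via $F_{k*}([z])=[F_k(z)]$ from Proposition~\ref{prop_map_induced_by_a_chain_map}. Your added remarks (linear extension from the basis of simplices, the case $k<0$, and $f_{k\#}(z)$ being a cycle) only spell out details the paper leaves implicit.
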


\begin{proof}

By construction, $\id_{k\#}\colon S_k(X)\to S_k(X)$ is the identity, for every $k$.
It follows from Proposition~\ref{prop_map_induced_by_a_chain_map} that, for every $[z]\in H_k(X)$, $\id_{k\#*}([z])=[\id_{k\#}(z)]=[z]$.

If $\sigma$ is a singular $k$-simplex in $X$, then
$(g  f)_{k\#}(\sigma)=(g  f)  \sigma=g  (f  \sigma)=g_{k\#}(f_{k\#}(\sigma))$.
By this and Proposition~\ref{prop_map_induced_by_a_chain_map}, if $[z]\in H_k(X)$, then
\begin{align*}
            (g  f)_{k\#*}([z]) &= [(g  f)_{k\#}(z)] \\
            &= [g_{k\#}(f_{k\#}(z))] \\
            &= g_{k\#*}([f_{k\#}(z)]) \\
            &= g_{k\#*}(f_{k\#*}([z])) \\
            &= (g_{k\#*}f_{k\#*})([z]).
\end{align*}

\end{proof}

As a corollary of the above theorem, we have the following crucial result.

\begin{proposition}\label{homeohomologyiso}
Let $f\colon X\to Y$ be a homeomorphism. 
Then, for every $k\in \mathbb{Z}$, $f_{k\#*}\colon H_k(X)\to H_k(Y)$ is an isomorphism. 
\end{proposition}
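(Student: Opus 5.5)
The plan is to derive the statement directly from the functoriality in Theorem~\ref{thm_homology_functor}. Since $f$ is a homeomorphism, the inverse map $f^{-1}\colon Y\to X$ exists and is continuous. By Proposition~\ref{prop_a_chain_map}, $f^{-1}$ therefore induces a chain map $(f^{-1})_{\#}$ from $\mathcal{S}(Y)$ to $\mathcal{S}(X)$. By Proposition~\ref{prop_map_induced_by_a_chain_map}, this chain map in turn induces linear maps $(f^{-1})_{k\#*}\colon H_k(Y)\to H_k(X)$ for every $k$.

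The key step applies the composition rule of Theorem~\ref{thm_homology_functor} twice. First take the pair $f\colon X\to Y$ and $f^{-1}\colon Y\to X$. This gives
\[
(f^{-1})_{k\#*}\, f_{k\#*}=(f^{-1} f)_{k\#*}=(\id_X)_{k\#*}.
\]
By the first part of the same theorem, $(\id_X)_{k\#*}$ is the identity of $H_k(X)$. Symmetrically, the pair $f^{-1}$ and $f$ gives
\[
f_{k\#*}\,(f^{-1})_{k\#*}=(\id_Y)_{k\#*}=\id_{H_k(Y)}.
\]
Hence $f_{k\#*}$ has a two-sided inverse, namely $(f^{-1})_{k\#*}$, and so it is an isomorphism of vector spaces.

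One bookkeeping point has to be checked separately. For $k<0$, all the spaces $S_k(X)$ and $S_k(Y)$ are null, so $H_k(X)=H_k(Y)=0$, and $f_{k\#*}$ is trivially an isomorphism between zero spaces. This case is also covered by the identities above, since the maps are defined degreewise for every $k$.

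No step is really an obstacle. The only point needing care is that Theorem~\ref{thm_homology_functor} is applied with the correct spaces in each composition, first with $Z=X$ and then with $Z=Y$. This bookkeeping is what yields both one-sided inverses rather than only one of them.
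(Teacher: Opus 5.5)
Your proof is correct and is the same argument as the paper's. You use the continuity of $f^{-1}$ and the two functoriality statements of Theorem~\ref{thm_homology_functor} to show that $(f^{-1})_{k\#*}$ is a two-sided inverse of $f_{k\#*}$, and you place the compositions on the correct spaces. The paper has a small slip here: it takes $[z]\in H_k(X)$ when checking $f_{k\#*}(f^{-1})_{k\#*}=\id$, where $H_k(Y)$ is meant.
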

\begin{proof}
Since $f$ is a homeomorphism, $f^{-1}\colon Y\to X$ exists and it is a continuous function such that $f f^{-1}=\id$ and $f^{-1} f=\id$.
We prove that, for every $k\in \mathbb{Z}$, $f_{k\#*}\colon H_k(X)\to H_k(Y)$ is an isomorphism by showing that $\left(f^{-1}\right)_{k\#*}\colon H_k(Y)\to H_k(X)$ is its inverse.
The first above equation ensures that, for every $k$,
$$(f f^{-1})_{k\#*}=\id_{k\#*}.$$
Thus, by Theorem~\ref{thm_homology_functor}, for every $[z]\in H_k(X)$ and for every $k$,
\begin{align*}
    (f_{k\#*}\left(f^{-1}\right)_{k\#*})([z]) &= (f  f^{-1})_{k\#*}([z]) \\ 
    &= \id_{k\#*}([z]) \\
    &= [z].
\end{align*}

Therefore, $f_{k\#*}\left(f^{-1}\right)_{k\#*}$ is the identity map.
Analogously, by the fact that $f^{-1} f=\id$, one can prove that $\left(f^{-1}\right)_{k\#*}f_{k\#*}$ is the identity map.

\end{proof}

\subsection{Isomorphism between simplicial and singular homology}

With the exception of specific cases (such as when $X$ has finite cardinality), the number of singular $k$-simplices in a topological space $X$ is not finite. 
This represents one aspect, but not the only one, that prevents to ``manually'' compute the homology of
topological spaces.
Such a limitation does not occur for simplicial complexes, making their homology easier to compute.
The following fundamental result ensures that the simplicial homology groups of a simplicial complex are isomorphic to the singular homology groups of its underlying space.

\begin{theorem}\label{thm:isomorphism}
Let $\Sigma$ be a simplicial complex.
For every integer $k$, the $k$-th simplicial homology group of $\Sigma$, $H_k(\Sigma)$, is isomorphic to the $k$-th singular homology group of $|\Sigma|$, $H_k(|\Sigma|)$.
\end{theorem}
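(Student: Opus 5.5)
The plan is the classical argument: build an explicit chain map from simplicial to singular chains, then show it is an isomorphism in homology by induction on the number of simplices, using long exact sequences of pairs and the five lemma.

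\textbf{Step 1: the chain map.} Fix a total order on the vertices of $\Sigma$. For a $k$-simplex $\sigma=\langle v_0,\dots,v_k\rangle$ with $v_0<\dots<v_k$, define $\theta_k(\sigma)\colon\Delta_k\to|\Sigma|$ as the affine map sending $e_{i+1}\mapsto v_i$. Extend linearly, and set $\theta_k=0$ for $k<0$.

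Since the vertex order is respected, the restriction of $\theta_k(\sigma)$ to the face $\langle e_1,\ldots,\hat e_{i+1},\ldots,e_{k+1}\rangle$ (via $\pi^{k-1}_{i+1}$) is exactly $\theta_{k-1}(\langle v_0,\ldots,\hat v_i,\ldots,v_k\rangle)$. Hence $\partial\theta=\theta\partial$, and Proposition~\ref{prop_map_induced_by_a_chain_map} gives induced maps $\theta_{k*}\colon H_k(\Sigma)\to H_k(|\Sigma|)$. The goal is to show that each $\theta_{k*}$ is an isomorphism.

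\textbf{Step 2: relative homology and long exact sequences.} For a subcomplex $L\subseteq\Sigma$, define relative chains $C_k(\Sigma)/C_k(L)$ and $S_k(|\Sigma|)/S_k(|L|)$. The short exact sequences of chain complexes $0\to C(L)\to C(\Sigma)\to C(\Sigma,L)\to0$ and $0\to S(|L|)\to S(|\Sigma|)\to S(|\Sigma|,|L|)\to 0$ give long exact sequences via the snake/zig-zag construction of the connecting homomorphism, which I will prove directly. The map $\theta$ is compatible with both sequences, and the connecting maps commute with $\theta_*$.

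\textbf{Step 3: induction on the number of simplices.} Write $\Sigma=L\cup\{\sigma\}$, where $\sigma$ is a maximal simplex of dimension $n$. By induction $\theta_*$ is an isomorphism for $L$. By the five lemma it then suffices to show that $\theta_*\colon H_k(\Sigma,L)\to H_k(|\Sigma|,|L|)$ is an isomorphism. Simplicially, $C(\Sigma,L)$ is $\Z_2$ concentrated in degree $n$ with zero differential, so the relative group is $\Z_2$ in degree $n$ and $0$ otherwise.

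\textbf{Step 4: the main obstacle.} The hard part is the singular side. One must show that $H_k(|\Sigma|,|L|)\cong H_k(\sigma,\partial\sigma)$ by excision, and then that $H_k(\sigma,\partial\sigma)$ is $\Z_2$ in degree $n$, generated by the class of the affine simplex $\theta(\sigma)$.

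Neither homotopy invariance, excision, nor the Mayer–Vietoris sequence has been established yet. I would therefore first prove homotopy invariance via the prism operator, then the subdivision lemma via barycentric subdivision, from which excision follows. With these in hand, $H_*(\sigma,\partial\sigma)$ is computed by induction on $n$: since $\sigma$ is contractible, the long exact sequence gives $H_k(\sigma,\partial\sigma)\cong \tilde H_{k-1}(\partial\sigma)$. Further excision identifies $\partial\sigma$ relative to one face with $(\Delta_{n-1},\partial\Delta_{n-1})$, and one tracks the generator $\theta(\sigma)\mapsto\partial\theta(\sigma)$ through these isomorphisms.

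The subdivision/excision step is the genuinely laborious core. Given the book's level, it may be acceptable to cite it, for instance from \cite{Ha02}, Theorem~2.27.
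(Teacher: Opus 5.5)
The paper does not prove this theorem: it declares the proof beyond the scope of the book and refers to \cite[Theorem 34.3]{Munkres}. Your outline is a correct sketch of the classical argument (essentially Hatcher's, with induction on simplices instead of on skeleta), so the comparison is between citing and reconstructing.

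Citing fits the book's decision not to develop homotopy invariance, excision and long exact sequences, all of which your route must build first. What your route buys is the explicit chain map $\theta$. With one global vertex order it commutes with the inclusions $C(L)\hookrightarrow C(\Sigma)$ and $S(|L|)\hookrightarrow S(|\Sigma|)$. The resulting isomorphism is therefore natural with respect to inclusions of subcomplexes. That is what one actually needs to compute persistent homology of a filtration by subcomplexes simplicially, and the bare statement $H_k(\Sigma)\cong H_k(|\Sigma|)$ does not provide it.

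Two points need tightening if you write it out.

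First, the identification $H_k(\sigma,\partial\sigma)\cong H_k(|\Sigma|,|L|)$ is not a direct application of excision, since $\partial\sigma$ is not contained in the interior of $|L|$. Pass instead through the open set $N=|L|\cup(\sigma\setminus\{b\})$, with $b$ the barycentre of $\sigma$; it deformation retracts onto $|L|$. Then excise $|\Sigma|\setminus\sigma$ from $(|\Sigma|,N)$.

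Second, in computing $H_*(\sigma,\partial\sigma)$ the auxiliary pair is $(\partial\sigma,\Lambda)$, with $\Lambda$ the union of all $(n-1)$-faces but one, not $\partial\sigma$ relative to a single face. You can even skip this computation. The boundary complex of $\sigma$ has fewer simplices than $\Sigma$, so $\theta_*$ is an isomorphism for it by induction. The full simplex has the homology of a point on both sides, with $\theta_*$ nonzero in degree $0$. The five lemma for that pair then gives the relative isomorphism directly.

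Also, the base case (a single vertex) needs the singular homology of a point. Over $\Z_2$ this is immediate, since the unique singular $k$-simplex has boundary $(k+1)$ times the unique $(k-1)$-simplex.

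Finally, Theorem~2.27 of \cite{Ha02} is the simplicial--singular isomorphism itself; excision is Theorem~2.20 there. Citing 2.27 for the ``core'' therefore reduces your proof to exactly what the paper does.
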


The proof of the above result is out of the scope of this book. We refer the interested reader to \cite[Theorem 34.3]{Munkres}.

We are now able to infer the homology of a topological space $X$ from the homology of a suitable simplicial complex. 
The strategy works as follows: 
find a simplicial complex $\Sigma$, whose underlying space is homeomorphic to $X$. 
By Theorem~\ref{thm:isomorphism}, $H_k(\Sigma)$ is isomorphic to $H_k(\lvert \Sigma \rvert)$, and by Proposition~\ref{homeohomologyiso}, $H_k(\lvert \Sigma\vert)$ is isomorphic to $H_k(X)$. 
Thus, $H_k(\Sigma)$ is also isomorphic to $H_k(X)$.


\begin{exercise}
Compute the singular homology groups of the $2$-sphere $\mathbb{S}^2$.
\end{exercise}

%
%
%
\chapter{Persistent homology}\label{ChapterHC}

Every real-valued function induces a filtration of its domain defined by its sublevel sets. 
Each pair of sublevel sets is associated with a persistent homology group which gives a quantitative summary of the topology of the filtered object. 
In this chapter, we introduce persistent homology and describe a convenient way to represent it via persistent Betti numbers functions and persistence diagrams.  
We also define a metric between persistent Betti numbers functions and show that they are stable with respect to the $L^\infty$-distance between filtering functions.

Every real-valued function induces a filtration of its domain defined by its sublevel sets. 
Pairs of sublevel sets are associated with persistent homology groups which give a quantitative summary of the topology of the filtered object. 
In this chapter, we introduce persistent homology and describe a convenient way to represent it via persistent Betti numbers functions and persistence diagrams.  
We also define a metric between persistent Betti numbers functions and show that they are stable with respect to the uniform metric between filtering functions.

\section{Persistent Betti numbers functions}\label{sec:PBNF}

Persistent homology catches the homological variations occurring in the sublevel sets of a real-valued function. This information is encoded by a collection of functions called persistent Betti numbers. 
In this section, we investigate some basic properties of such functions and, after providing their domain with a topology (Section \ref{subsec:metric_delta*}), we analyse their discontinuities.\\

Given a continuous function $\p\colon X\to\R$ on a non-empty compact space $X$, 
a filtration $\{X^\p_v\}_{v\in \R}$ is defined, where, for any $v\in\R$, $X^\p_v$ is the sublevel set $\{x\in X\mid\p(x)\le v\}$ (see Figure \ref{figexFunction}). 
The compactness of $X$ ensures that the sequence of spaces of the filtration $\{X^\p_v\}_{v\in \R}$ stabilizes for a sufficiently high value of $v$. 
Explicitly, for any $v\ge \max\p$, $X^\p_v = X^\p_{\max\p}=X$. In the following, the symbol $X^\p_{\infty}$ denotes $X^\p_{\max\p}$. 

\begin{figure}[htb!]\label{fig_filt}
\begin{center}
\includegraphics[width=0.6\textwidth]{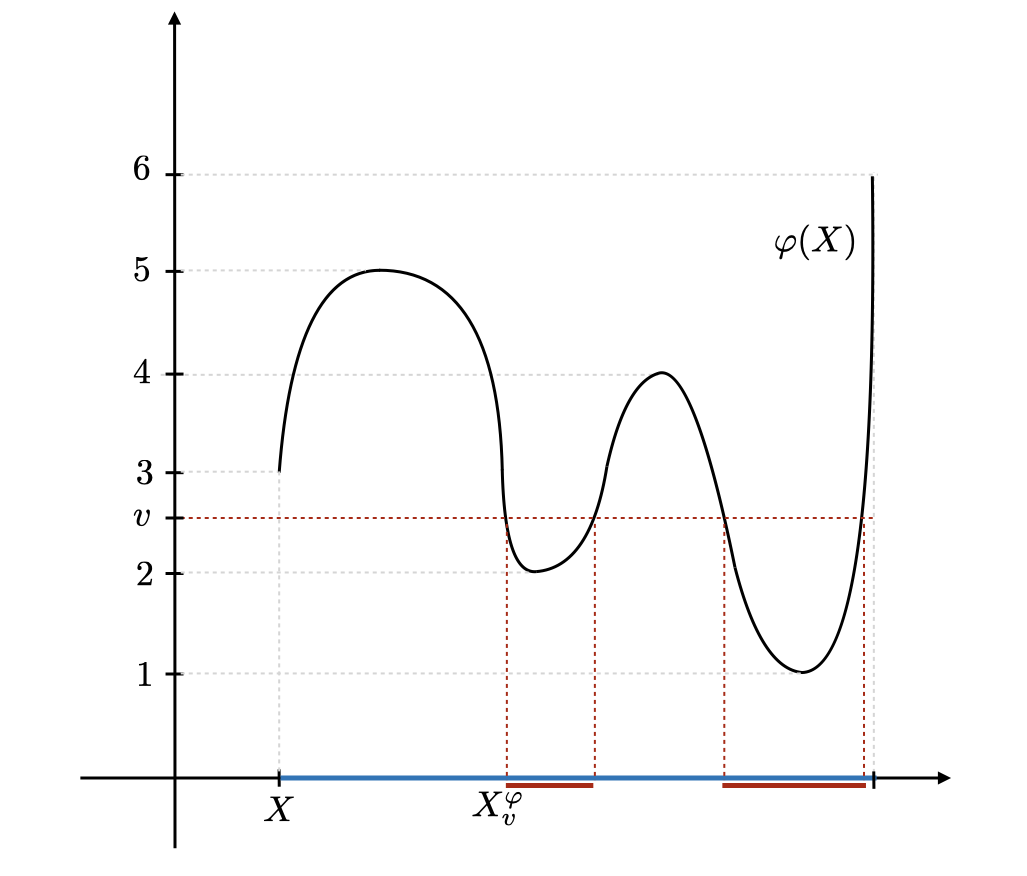}
\caption{
The function $\p\colon X\to\R$ is defined on the closed interval $X$ depicted in blue. The sublevel set $X^\p_v$ is depicted in red.}
\label{figexFunction}
\end{center}
\end{figure}



After fixing a degree $k\in \Z$, for every $(u,v)\in (\R\cup\{\infty\})^2$ with $u\le v$, we can consider the linear map $i^{\p*}_{u,v}\colon H_k(X^\p_u)\to H_k(X^\p_v)$ induced by the inclusion $i^\p_{u,v}\colon X^\p_u\hookrightarrow X^\p_v$.
The map $i^{\p*}_{u,v}$ takes the homology class of a cycle in $X^\p_u$ to the homology class of the same cycle in $X^\p_v$.
Notice that Theorem~\ref{thm_homology_functor} implies $i^{\p*}_{v,w} i^{\p*}_{u,v}= i^{\p*}_{u,w}$ for $u\le v\le w$.

\begin{definition}[Persistent homology group]\label{PHG}
Let $k\in \Z$. The subspace $\mathrm{Im\ }i^{\p*}_{u,v}\subseteq H_k(X^\p_v)$, with $(u,v)\in (\R\cup\{\infty\})^2$ such that $u\le v$, is denoted by the symbol $PH_k^\p{(u,v)}$ and called
the \myemph{$k$-th persistent homology group} at $(u, v)$. 
\end{definition}
Recall that, since we consider homology with coefficients in a field, $H_k(X^\p_u)$ is a vector space for every $u$. 
In particular, $PH_k^\p{(u,v)}$ is also a vector space. 
The name persistent homology group is, however, preferred here for coherence with the literature and for generality purposes. 

As an example, consider the filtration of $X$ given in Figure \ref{fig_filt}. 
The induced linear maps in homology of degree 0 are
\[
  \cdots \xrightarrow[]{}\mathbb{Z}_2 \xrightarrow[]{i^{\p*}_{1,2}}  \mathbb{Z}_2^2 \xrightarrow[]{i^{\p*}_{2,3}}\mathbb{Z}_2 ^3\xrightarrow[]{i^{\p*}_{3,4}}\mathbb{Z}_2^2 \xrightarrow[]{i^{\p*}_{4,5}}\mathbb{Z}_2 \xrightarrow[]{}\dots
\]
where the vector space represented correspond to $H_0(X_v^\varphi)$, with $v=1,2,3,4,5$. 
The persistent homology group at $(u,v)$ tells us if the homological features present at time $u$ are still alive at time $v$. 
In this example, $PH_0^\varphi (2,3)\cong \mathbb{Z}_2^2$ tells us that the two connected components present at $2$ are still there at time $3$, and $PH_0^\varphi (2,4)\cong \mathbb{Z}_2$ corresponds to the fact that the two connected components present at time $2$ merge at a time smaller or equal to $4$.


From now on, the following will hold:
\begin{assumption}\label{ass_fingen}
For every $v\in\R$, the vector space $H_k(X^\p_v)$ is finitely generated. 
\end{assumption}

\begin{exercise}\label{exercize:assumption_1}
Find a continuous function $\p\colon X\to\R$ on a non-empty compact space $X$ which does not satisfy Assumption \ref{ass_fingen}.
\end{exercise}

Let $\Delta^+$ denote the open half-plane $\{(x,y) \in \R^2 \mid x < y \}$ and $\Delta^*$ the union $ \Delta^+\cup\{(x,\infty)\mid x\in\R\}$.

\begin{definition}[Persistent Betti numbers function]\label{PBNF}
Fixed a degree $k\in \Z$, the $k$-th \myemph{persistent Betti numbers function} (PBNF) (or \myemph{rank invariant} \cite{CaZo09}) of $\p$ is defined as the function $\beta_k^\p\colon \Delta^* \to \N$ assigning to each $(u,v)\in \Delta^*$ the dimension $\beta_k^\p{(u,v)}$ of $PH_k^\p{(u,v)}$.
\end{definition}
Because of Assumption~\ref{ass_fingen}, PBNFs assume a finite value, and thus are well-defined for every $(u,v)$ in $\Delta^*$. 

\begin{figure}[htb!]
\begin{center}
\includegraphics[width=7cm]{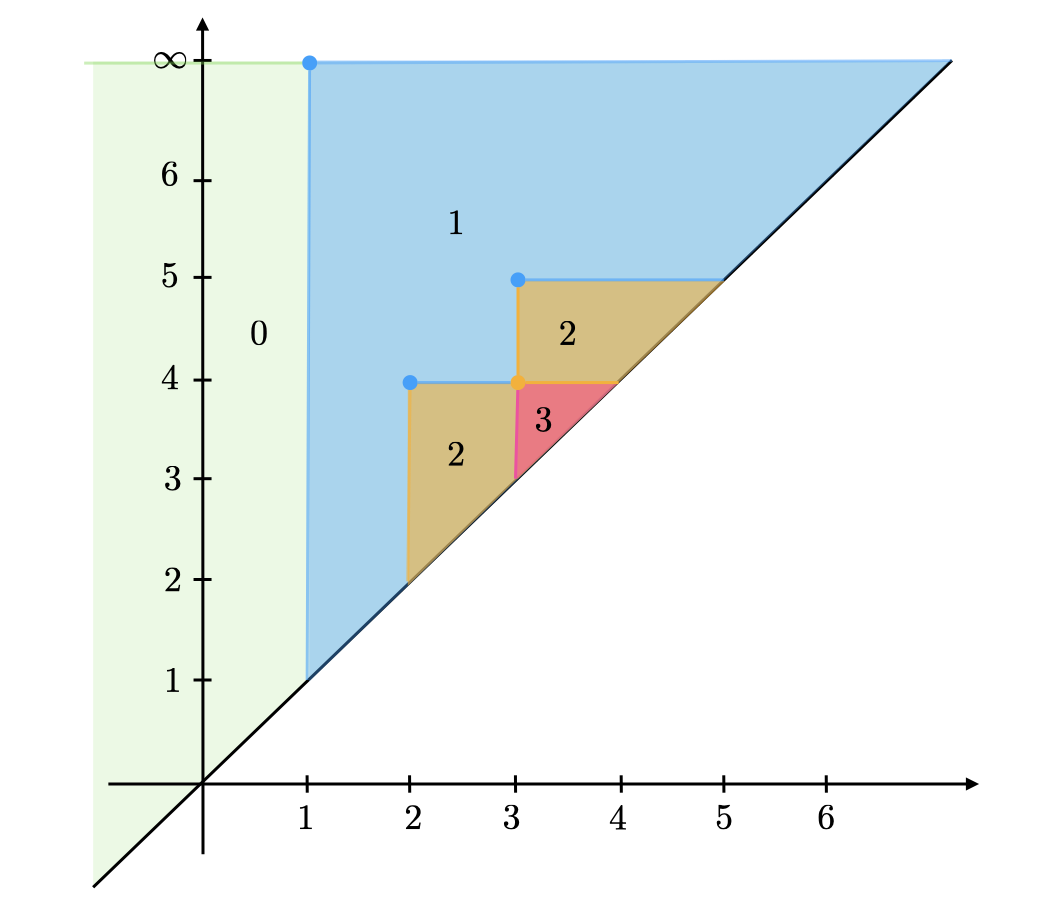}
\caption{The 0-th PBNF of the function $\p$ depicted in Figure \ref{figexFunction}. The numbers associated with the different colors correspond to the values of the PBNF.}
\label{figexPBNF}
\end{center}
\end{figure}

The 0-th PBNF of the function $\p$ considered in Figure \ref{figexFunction} is represented in Figure \ref{figexPBNF}.

\begin{exercise}\label{exercise:1Dcircle}
    Let $X$ be the unit circle $X:=\{(x,y)\in \R^2 \, |\, x^2+y^2=1\}$ and $\p\colon X \to \R$ be the function defined as $\p(x,y)=y$ for any $(x,y)\in X$.
    Determine the PBNFs of $\p$ and of $-\p$.
\end{exercise}

\begin{exercise}\label{exercise:eight}
    Let $X$ be the figure-eight curve in $\R^2$ described as $X:=\{(x,y)\in \R^2 \, |\, x^4-x^2+y^2=0\}$ and $\p\colon X \to \R$ and $\p'\colon X \to \R$ be the functions defined as $\p(x,y)=y$ and $\p'(x,y)=x$ for any $(x,y)\in X$.
    Determine the PBNFs of $\p$ and of $\p'$.
\end{exercise}

\begin{exercise}\label{exercise:1DcircleDistance}
    Let $X$ be the closed square of $\R^2$ defined as $X:=\{(x,y)\in \R^2 \, |\, -2\leq x \leq 2, -2\leq y \leq 2\}$ and $\p\colon X \to \R$ be the Euclidean distance from the circle in $\R^2$ centred at the origin and of radius 1.
    Determine the PBNFs of $\p$ and of $-\p$.
\end{exercise}


\subsection{Properties of PBNFs}\label{subsec:PBNF_properties}

After their definition, we discuss some elementary properties of the PBNFs. Specifically, their increasing/decreasing behavior with respect to the variables and their invariance under precomposition with homeomorphisms.

\begin{remark}\label{remmaxphi}
We recall that $X^\p_{v}=X^\p_{\max\p}=X^\p_{\infty}$, and hence
$$\beta_k^\p{(\cdot,v)}\equiv\beta_k^\p{(\cdot,\max\p)}
\equiv \beta_k^\p{(\cdot,\infty)},$$ for every $v\ge\max\p$.
\end{remark}

The following linear algebra statement holds.

\begin{lemma}\label{lemmaImgf}
Let $U,V,W$ be finite dimensional vector spaces over the field $\K$. If $f:U\to V$ and $g:V\to W$ are linear maps, then
$\dim \mathrm{Im\ } g f = \dim \mathrm{Im\ } f-\dim \ker g_{\vert\mathrm{Im\ } f}$.
\end{lemma}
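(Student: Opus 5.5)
The plan is to apply the rank--nullity theorem to the restriction of $g$ to the subspace $\mathrm{Im\ } f\subseteq V$. Set $h:=g_{\vert\mathrm{Im\ } f}\colon \mathrm{Im\ } f\to W$. This is a linear map defined on a finite dimensional space, since $\mathrm{Im\ } f$ is a subspace of the finite dimensional space $V$.

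The first step is to identify the image of $h$ with the image of the composition. For every $w\in W$ we have $w\in \mathrm{Im\ } gf$ if and only if $w=g(f(u))$ for some $u\in U$. This holds if and only if $w=g(v)$ for some $v\in \mathrm{Im\ } f$, that is, if and only if $w\in \mathrm{Im\ } h$. Hence $\mathrm{Im\ } gf=\mathrm{Im\ } h$ as subspaces of $W$.

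The second step applies rank--nullity to $h$:
\[
\dim \mathrm{Im\ } f=\dim \ker h+\dim \mathrm{Im\ } h .
\]
By definition $\ker h=\ker g_{\vert\mathrm{Im\ } f}$. Substituting $\mathrm{Im\ } h=\mathrm{Im\ } gf$ and rearranging gives
\[
\dim \mathrm{Im\ } gf=\dim \mathrm{Im\ } f-\dim \ker g_{\vert\mathrm{Im\ } f}.
\]
All quantities are finite, so the subtraction is legitimate.

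There is no real obstacle here. The only point needing care is the set equality $\mathrm{Im\ } gf=\mathrm{Im\ } g_{\vert \mathrm{Im\ } f}$, which is a direct unwinding of the definitions. Finite dimensionality of $U$ and $W$ is not actually needed; only that of $\mathrm{Im\ } f$ matters.
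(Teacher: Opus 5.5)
Your proof is correct and follows the paper's argument: the paper also applies rank--nullity to $g_{\vert\mathrm{Im\ } f}\colon \mathrm{Im\ } f\to W$. It uses the identity $\mathrm{Im\ } g_{\vert\mathrm{Im\ } f}=\mathrm{Im\ } gf$ without comment, whereas you verify it explicitly.
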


\begin{proof}
The rank–nullity theorem applied to the linear map $g_{\vert\mathrm{Im\ } f}:\mathrm{Im\ } f\to W$ states that
$\dim \mathrm{Im\ } f=\dim \mathrm{Im\ } g f +\dim \ker g_{\vert\mathrm{Im\ } f}$.
\end{proof}

\begin{proposition}\label{propmonotonicity}
The function $\beta_k^\p$ is non-decreasing in the first variable and non-increasing in the second variable.

\end{proposition}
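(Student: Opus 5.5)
The plan is to write the relevant maps as compositions of inclusion-induced maps, using the functoriality $i^{\p*}_{v,w}\, i^{\p*}_{u,v}= i^{\p*}_{u,w}$ that follows from Theorem~\ref{thm_homology_functor}. Then Lemma~\ref{lemmaImgf}, or the elementary fact that composing with a linear map cannot increase rank, finishes each case. All dimensions are finite by Assumption~\ref{ass_fingen}, so rank comparisons make sense.

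\emph{Monotonicity in the first variable.} Fix $v$ and take $u\le u'$ with $(u,v),(u',v)\in\Delta^*$, so that $u\le u'<v$ (or $v=\infty$). Functoriality gives
\[
i^{\p*}_{u,v}=i^{\p*}_{u',v}\, i^{\p*}_{u,u'},
\]
hence $\mathrm{Im\ } i^{\p*}_{u,v}\subseteq \mathrm{Im\ } i^{\p*}_{u',v}$. Both images are subspaces of the same space $H_k(X^\p_v)$, so $\beta_k^\p(u,v)\le\beta_k^\p(u',v)$. Equivalently, apply Lemma~\ref{lemmaImgf} with $f=i^{\p*}_{u,u'}$ and $g=i^{\p*}_{u',v}$: this gives $\dim\mathrm{Im\ } gf\le\dim\mathrm{Im\ } g$, since $\mathrm{Im\ } gf=g(\mathrm{Im\ } f)\subseteq \mathrm{Im\ } g$.

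\emph{Monotonicity in the second variable.} Fix $u$ and take $u<v\le v'$, allowing $v'=\infty$. Functoriality gives
\[
i^{\p*}_{u,v'}=i^{\p*}_{v,v'}\, i^{\p*}_{u,v}.
\]
Lemma~\ref{lemmaImgf} with $f=i^{\p*}_{u,v}$ and $g=i^{\p*}_{v,v'}$ then yields
\[
\beta_k^\p(u,v')=\beta_k^\p(u,v)-\dim\ker g_{\vert \mathrm{Im\ } f}\le \beta_k^\p(u,v).
\]
Lemma~\ref{lemmaImgf} requires finite-dimensional spaces, and $H_k(X^\p_u)$, $H_k(X^\p_v)$ are finite-dimensional by Assumption~\ref{ass_fingen}. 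The target $W$ need not be finite-dimensional, since only $\mathrm{Im\ } f$ enters the rank–nullity argument; in any case $H_k(X^\p_{v'})$ is also finite-dimensional.

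The only point needing care is the value $\infty$. By Remark~\ref{remmaxphi}, $X^\p_\infty=X^\p_{\max\p}$, so every map involving $\infty$ is an ordinary inclusion-induced map and functoriality still applies. I expect no real obstacle: the proof is essentially the inclusion of images for the first variable and Lemma~\ref{lemmaImgf} for the second.
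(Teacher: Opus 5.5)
Your proof is correct and is essentially the paper's argument: the factorization $i^{\p*}_{u,v}=i^{\p*}_{u',v}\, i^{\p*}_{u,u'}$ gives the inclusion of images for the first variable, and Lemma~\ref{lemmaImgf} with $f=i^{\p*}_{u,v}$, $g=i^{\p*}_{v,v'}$ gives the second. One small slip is in your ``equivalently'' aside for the first variable: Lemma~\ref{lemmaImgf} compares $\dim\mathrm{Im}\, gf$ with $\dim\mathrm{Im}\, f$, not with $\dim\mathrm{Im}\, g$, so that remark rests on the inclusion $g(\mathrm{Im}\, f)\subseteq\mathrm{Im}\, g$ you state, not on the lemma.
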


\begin{proof}
If $u\le u'$, then $i^{\p*}_{u,v}=i^{\p*}_{u',v} i^{\p*}_{u,u'}$.
This implies that $\Imm i^{\p*}_{u,v}\subseteq \Imm i^{\p*}_{u',v}$\ , i.e., $PH_k^\p{(u,v)}\subseteq PH_k^\p{(u',v)}$. Hence
$\dim PH_k^\p{(u,v)}\le \dim PH_k^\p{(u',v)}$.
On the other hand, by applying Lemma~\ref{lemmaImgf} for $f=i^{\p*}_{u,v}$ and $g=i^{\p*}_{v,v'}$ with $v'\ge v$ we get that
$\dim \Imm i^{\p*}_{u,v'}\le \dim \Imm i^{\p*}_{u,v}$\ , i.e.,
$\dim PH_k^\p{(u,v')}\le \dim PH_k^\p{(u,v)}$.
\end{proof}

The following result illustrates an interesting invariance property of the PBNFs with respect to a homeomorphism $g\colon X\to Y$ between two topological spaces. 

\begin{proposition}\label{propinvariancebeta}
For every homeomorphism $g\colon X\to Y$, the functions $\beta_k^\p$ and $\beta_k^{\p g^{-1}}$ coincide.
\end{proposition}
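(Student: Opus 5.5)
The plan is to show that $g$ maps each sublevel set of $\p$ homeomorphically onto the corresponding sublevel set of $\psi:=\p g^{-1}\colon Y\to\R$. The resulting homology isomorphisms commute with the inclusion-induced maps, so the images $PH_k^\p(u,v)$ and $PH_k^{\psi}(u,v)$ are isomorphic for every $(u,v)\in\Delta^*$. (Note that $Y$ is compact and $\psi$ is continuous, and that Assumption~\ref{ass_fingen} transfers from $\p$ to $\psi$ once the sublevel sets are identified, so $\beta_k^{\psi}$ is well defined.)

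\textbf{Step 1: sublevel sets.} For every $v\in\R$,
\[
Y^{\psi}_v=\{y\in Y\mid \p(g^{-1}(y))\le v\}=g(X^\p_v).
\]
Also $\max\psi=\max\p$, so the same identity holds for $v=\infty$. Hence $g_v:=g_{\vert X^\p_v}\colon X^\p_v\to Y^\psi_v$ is a homeomorphism: it is a continuous bijection whose inverse is the restriction of $g^{-1}$, which is continuous for the subspace topologies. By Proposition~\ref{homeohomologyiso}, $g_{v\#*}\colon H_k(X^\p_v)\to H_k(Y^\psi_v)$ is an isomorphism.

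\textbf{Step 2: commutativity.} For $u\le v$ we have $i^\psi_{u,v}\, g_u=g_v\, i^\p_{u,v}$ as maps $X^\p_u\to Y^\psi_v$, since both send $x\mapsto g(x)$. Theorem~\ref{thm_homology_functor} then gives
\[
i^{\psi*}_{u,v}\, g_{u\#*}=g_{v\#*}\, i^{\p*}_{u,v}.
\]

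\textbf{Step 3: conclusion.} Because $g_{u\#*}$ is surjective,
\[
\Imm i^{\psi*}_{u,v}=\Imm\bigl(i^{\psi*}_{u,v}g_{u\#*}\bigr)=\Imm\bigl(g_{v\#*}i^{\p*}_{u,v}\bigr)=g_{v\#*}\bigl(\Imm i^{\p*}_{u,v}\bigr).
\]
Since $g_{v\#*}$ is injective, this last space has the same dimension as $PH_k^\p(u,v)$. Therefore $\beta_k^\psi(u,v)=\beta_k^\p(u,v)$.

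No step is a real obstacle. The only point needing care is Step 1: one must check that the restricted map is a homeomorphism between the subspaces, and that the case $v=\infty$ is handled via $\max\psi=\max\p$.
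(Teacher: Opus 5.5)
Your proposal is correct and follows essentially the same route as the paper. The paper also restricts $g$ to homeomorphisms between sublevel sets and uses the commutation $i^{\p g^{-1}*}_{u,v}\,g_{u\#*}=g_{v\#*}\,i^{\p*}_{u,v}$; it then shows that the restriction of $g_{v\#*}$ to $PH_k^\p(u,v)$ is surjective and injective, by means of two commutative squares, which is exactly your image computation in Step~3. Your explicit handling of the case $v=\infty$ via $\max \p g^{-1}=\max\p$ is a small detail that the paper leaves implicit.
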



\begin{proof}
Since $g$ is a homeomorphism, its restriction $g_u\colon X_u^{\p}\to Y_u^{\p g^{-1}}$ is also a homeomorphism, and it induces an isomorphism $g_{p*, u}\colon H_k(X_u^{\p})\to H_k(Y_u^{\p g^{-1}})$ by Proposition~\ref{homeohomologyiso}, for every $u$.
The map $g_{p*, u}$ sends the homology class of the singular chain $\sum_{i=1}^ra_i\sigma_i$ to the homology class of the singular chain $\sum_{i=1}^ra_i g\sigma_i$.
The map $\rho\colon PH_k^\p(u,v)\to PH_k^{\p g^{-1}}(u,v)$ is the restriction of $g_{p*, v}$ to $PH_k^\varphi(u,v)$, meaning that it fits into the commutative diagram on the right below.
It is well-defined and it satisfies $i_{u,v}^{\p g^{-1} *}g_{p *, u}=\rho i_{u,v}^{\p *}$, meaning that it
fits into the commutative diagram on the left below,

\[
\begin{tikzcd}
H_k(X_u^\p)\arrow[d, two heads,  "i_{u,v}^{\p *}"]\arrow[r, "g_{p *, u}", "\cong"'] & H_k(Y_u^{\p g^{-1}})\arrow[d, two heads, "i_{u,v}^{\p g^{-1} *}"]\\
PH^\p_k(u,v)\arrow[r,"\rho"] & PH^{\p g^{-1}}_k(u,v)
\end{tikzcd}
\;
\begin{tikzcd}
PH^\p_k(u,v)\arrow[r,"\rho"]\arrow[d, hook] & PH^{\p g^{-1}}_k(u,v)\arrow[d,hook]\\
H_k(X_v^\p)\arrow[r, "g_{p *, v}", "\cong"'] & H_k(Y_v^{\p g^{-1}})
\end{tikzcd}
\]
In particular, the first diagram implies that $\rho$ is surjective and the second that it is injective, hence $\rho$ is an isomorphism.
\end{proof}

\subsection{The metric space $\Delta^*$}\label{subsec:metric_delta*}

To discuss the continuity of PBNFs, $\Delta^*$ has to be a topological space. 
Thus, we provide $\Delta^*$ with a metric $\tilde d$ which will induce a topology on it.\\

Consider the $L^\infty$-distance on $(\mathbb{R}\cup\{\infty\})^2$
\[
d_\infty(A,B):=\max\{\lvert x_A-x_B\rvert, \lvert y_A-y_B\rvert\},
\]
where $\infty-\infty=0$ and, if $v\in \mathbb{R}$, $\infty-v=\infty$.

Define on $\Delta^*$ 
\[
\tilde d(A, B):=\min\left\{\max\left\{|x_A-x_B|,|y_A-y_B|\right\},\max\left\{\frac{|x_A-y_A|}{2},\frac{|x_B-y_B|}{2}\right\}\right\},
\]
and note that it can be written as 
\[
\tilde d(A, B)=\min\left\{d_\infty(A, B), \max\left\{d_\infty(A, A_\Delta), d_\infty(B, B_\Delta)\right\}\right\},
\]
where $A_\Delta$ and $B_\Delta$ are, respectively, the projections of $A$ and $B$ onto the diagonal $\Delta:=\{(x,y) \in \R^2 \mid x = y \}$ and, hence, are the points $A_\Delta=(\frac{x_A+y_A}{2}, \frac{x_A+y_A}{2})$ and $B_\Delta=(\frac{x_B+y_B}{2}, \frac{x_B+y_B}{2})$ (see Figure~\ref{fig_metric}). 

\begin{figure}
    \centering
\includegraphics[width=6cm]{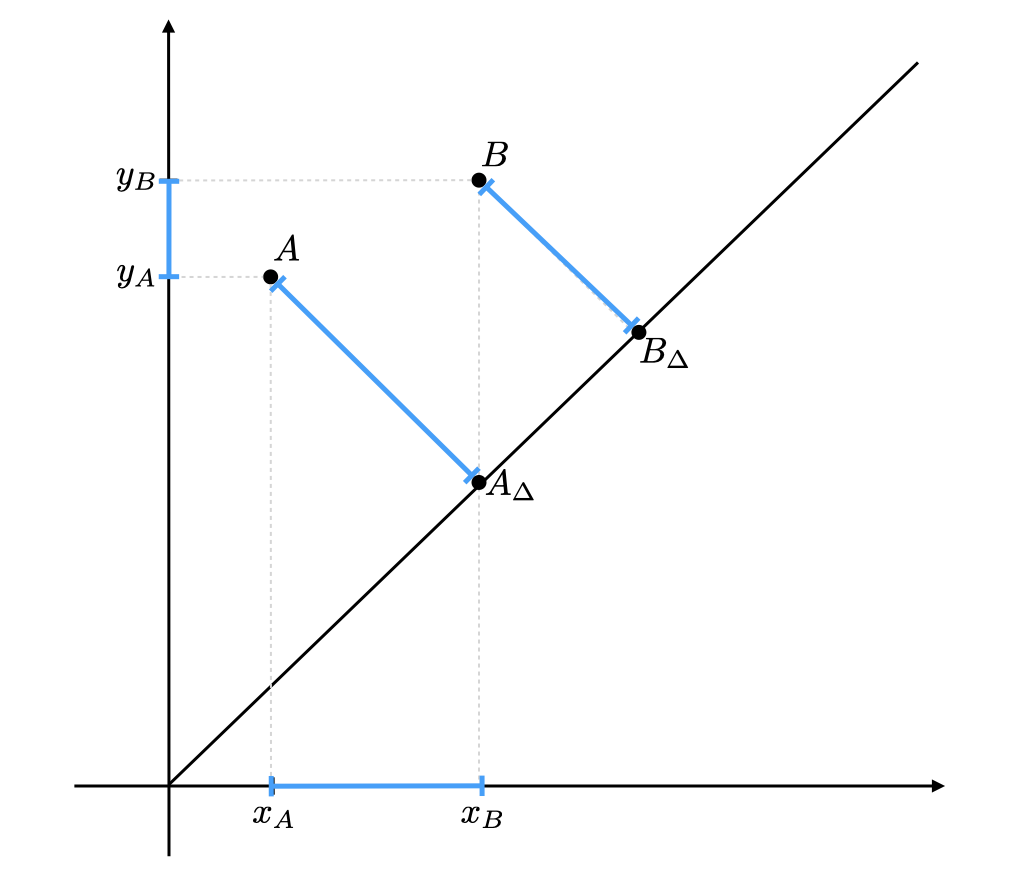}
    \caption{The distance $\tilde d(A,B)$ between two points in $\Delta^*$ is given by the minimum between the distance of the two points from one another according to $d_\infty$, and their maximal distance from the diagonal. }
    \label{fig_metric}
\end{figure}

\begin{proposition}
$(\Delta^*, \tilde d)$ is a metric space. 
\end{proposition}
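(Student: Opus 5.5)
To prove that $(\Delta^*,\tilde d)$ is a metric space, I will check the four conditions of Definition~\ref{pseudo} directly. I will work with the second expression,
\[
\tilde d(A,B)=\min\{d_\infty(A,B),\max\{\delta(A),\delta(B)\}\},
\]
where I set $\delta(P):=d_\infty(P,P_\Delta)=\frac{|x_P-y_P|}{2}$. For points at infinity, $\delta(P)=\infty$.

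First I check that $\tilde d$ is real-valued, since the definition of metric requires values in $[0,\infty)$. If either $A$ or $B$ lies in $\Delta^+$, then the max-term is still possibly $\infty$, but then look at $d_\infty$. If both $A=(x_A,\infty)$ and $B=(x_B,\infty)$, the convention $\infty-\infty=0$ gives $d_\infty(A,B)=|x_A-x_B|<\infty$. If exactly one is at infinity, $d_\infty=\infty$, but the max-term is also $\infty$. So I must handle this case honestly: there $\tilde d=\infty$. I will therefore interpret the statement in the extended sense, as the paper explicitly allows ("we will avoid specifying when a metric is extended"). Symmetry, $\tilde d(A,A)=0$ (since $d_\infty(A,A)=0$), and nonnegativity are immediate.

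For positivity, suppose $\tilde d(A,B)=0$. Either $d_\infty(A,B)=0$, which forces $A=B$; on $\Delta^*$ this uses the convention for $\infty$ coordinates, and $|y_A-y_B|=0$ with one infinite coordinate is impossible unless both are infinite. Or $\max\{\delta(A),\delta(B)\}=0$, which would force $x_A=y_A$. This is impossible in $\Delta^*$, because $\Delta^+$ is open ($x<y$) and points at infinity have $\delta=\infty$. This is exactly where excluding the diagonal matters.

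The main step is the triangle inequality $\tilde d(A,C)\le\tilde d(A,B)+\tilde d(B,C)$. I will use two facts. First, $d_\infty$ satisfies the triangle inequality on $(\R\cup\{\infty\})^2$ with the stated conventions. Second, $\delta$ is $1$-Lipschitz for $d_\infty$: $|\delta(P)-\delta(Q)|\le d_\infty(P,Q)$, since $\frac12\bigl||x_P-y_P|-|x_Q-y_Q|\bigr|\le\frac12(|x_P-x_Q|+|y_P-y_Q|)$. I then split into four cases according to which term realizes each of $\tilde d(A,B)$ and $\tilde d(B,C)$.

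If both are realized by $d_\infty$, the claim follows from the triangle inequality for $d_\infty$. If both are realized by the max-term, then the right-hand side is at least $\max\{\delta(A),\delta(B)\}+\max\{\delta(B),\delta(C)\}\ge\max\{\delta(A),\delta(C)\}\ge\tilde d(A,C)$. In the mixed case, $\tilde d(A,B)=d_\infty(A,B)$ and $\tilde d(B,C)=\max\{\delta(B),\delta(C)\}$. Here the Lipschitz property gives $\delta(A)\le\delta(B)+d_\infty(A,B)$, so $\max\{\delta(A),\delta(C)\}\le d_\infty(A,B)+\max\{\delta(B),\delta(C)\}$, which again bounds $\tilde d(A,C)$. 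The symmetric mixed case is handled the same way.

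The only delicate point I anticipate is carrying the $\infty$-conventions through the Lipschitz bound and the $d_\infty$ triangle inequality for points $(x,\infty)$. I would treat this by a short separate check: any term involving a point at infinity against a finite point is $\infty$, which makes the inequality trivial.
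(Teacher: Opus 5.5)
Your proof is correct. It rests on the same two facts as the paper's proof: the triangle inequality for $d_\infty$, and the $1$-Lipschitz property of $\delta(P)=d_\infty(P,P_\Delta)$. The paper uses the latter in the form $d_\infty(A,A_\Delta)\le d_\infty(A,C_\Delta)\le d_\infty(A,C)+d_\infty(C,C_\Delta)$, i.e.\ $A_\Delta$ is the $d_\infty$-nearest diagonal point to $A$. What differs is the case structure. The paper argues by contradiction and splits according to which term realizes each of the three quantities $\tilde d(A,B)$, $\tilde d(A,C)$, $\tilde d(C,B)$. This leaves four nontrivial configurations up to symmetry, one of which (its Case 4) needs two separate chains of inequalities. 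You argue directly and split only on the two right-hand terms. Since $\tilde d(A,C)$ is a minimum, it is bounded by whichever of $d_\infty(A,C)$ and $\max\{\delta(A),\delta(C)\}$ you control, so no case distinction on the left-hand side is needed and each of your cases is a one-line estimate; this is shorter and more transparent. You are also more careful than the paper about points at infinity. You correctly note that $\tilde d(A,B)=\infty$ when exactly one of $A,B$ lies in $\Delta^+$, so $\tilde d$ is strictly an extended metric, consistent with the book's convention of not always saying ``extended''. This makes the triangle inequality trivial unless $A,B,C$ are all of the same type, and when all three are at infinity, $\tilde d=d_\infty$ is finite and your first case applies. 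The paper's proof passes over these points silently.
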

\begin{proof}
It is clear that $\tilde d(A,B)=0$ is equivalent to $A=B$ and that $\tilde d$ is symmetric. 
So it remains to show the triangle inequality.
We denote by 
$d_\Delta(A,B)=\max\left\{d_\infty (A, A_\Delta),d_\infty(B, B_\Delta)\right\}$, hence, clearly $$d_\Delta(A,B)\ge d_\infty (A, A_\Delta), d_\infty (B, B_\Delta).$$ 
Assume by contradiction that there exist $A, B$ and $C$ such that
\[
\tilde d(A, B)>\tilde d(A,C)+\tilde d(C, B).
\]
This cannot happen if $\tilde d(\ast, \star)= d_\infty(\ast, \star)$ for every $\ast,\star\in\{A,B,C\}$, and if $\tilde d(\ast, \star)= d_\Delta(\ast, \star)$ for every $\ast,\star\in\{A,B,C\}$,
since both $d_\infty$ and $d_\Delta$ verify the triangle inequality.
We, thus, remain with the following cases, up to symmetric choices for $A$ and $B$: 
\begin{itemize}
\item[1.] $\tilde d(A, B)=d_\Delta(A, B)$, $\tilde d(A,C)=d_\infty(A, C)$ and $\tilde d(B,C)=d_\Delta(B,C)$;
\item[2.] $\tilde d(A, B)=d_\Delta(A, B)$, $\tilde d(A,C)=d_\infty(A, C)$ and $\tilde d(B,C)=d_\infty(B,C)$;
\item[3.] $\tilde d(A, B)=d_\infty(A, B)$, $\tilde d(A,C)=d_\Delta(A, C)$ and $\tilde d(B,C)=d_\Delta(B,C)$;
\item[4.] $\tilde d(A, B)=d_\infty(A, B)$, $\tilde d(A,C)=d_\infty(A, C)$ and $\tilde d(B,C)=d_\Delta(B,C)$.
\end{itemize}

\textit{Case} 1. If $d_\Delta(A,B)=d_\infty(A, A_\Delta)$, then we reach a contradictory chain of inequalities
\begin{align*}
d_\infty(A, A_\Delta) & =d_\Delta(A,B)\\
& >d_\infty(A, C)+d_\Delta(C, B)\\
 & \ge d_\infty(A, C)+d_\infty(C, C_\Delta)\\
 & \ge d_\infty (A, C_\Delta) \\
 & \ge d_\infty (A, A_\Delta)
\end{align*}
where the forth step is obtained by applying the triangle inequality for $d_\infty$, and the fifth is given by the fact that $A_\Delta$ is the point on $\Delta$ with minimum distance from $A$. 
If $d_\Delta(A,B)=d_\infty(B, B_\Delta)$, then 
\begin{align*}
d_\infty(B, B_\Delta) & =d_\Delta(A,B)\\
& >d_\infty(A, C)+d_\Delta(C, B)\\
& \ge d_\infty(A, C)+d_\infty (B, B_\Delta)\\
& \ge d_\infty(B, B_\Delta)
\end{align*}
which is a contradiction. 

\textit{Case} 2. Since $\tilde d(A, B)\le d_\infty(A, B)$, by definition,
\[
d_\infty(A,B) \ge d_\Delta(A, B)>d_\infty(A, C)+d_\infty(C, B),
\]
which is a contradiction as $d_\infty$ satisfies the triangle inequality. 

\textit{Case} 3. Since $\tilde d(A, B)\le d_\Delta(A, B)$, by definition,
\[
d_\Delta(A,B) \ge d_\infty(A, B)> d_\Delta(A,C)+d_\Delta(C, B),
\]
which is a contradiction as $d_\Delta$ satisfies the triangle inequality.

\textit{Case} 4. 
Observe that
\begin{align*}
\max\{d_\infty(A, A_\Delta), d_\infty(B, B_\Delta)\} & \ge d_\infty(A, B) \\
& > d_\infty(A, C)+d_\Delta(C, B) \\
& \ge d_\infty (A, C)+d_\infty(C, C_\Delta)\\
& \ge d_\infty (A, C_\Delta)\\
& \ge d_\infty(A, A_\Delta)
\end{align*}
which implies that $d_\infty(B, B_\Delta)>d_\infty(A, A_\Delta)$.
On the other hand, 
\begin{align*}
\max\{d_\infty(A, A_\Delta), d_\infty(B, B_\Delta)\} & \ge d_\infty(A, B)\\
& > d_\infty(A, C)+d_\Delta(C, B) \\
& \ge d_\infty (A, C)+d_\infty(B, B_\Delta)\\
& \ge d_\infty(B, B_\Delta)
\end{align*}
which implies that $d_\infty(A, A_\Delta)>d_\infty(B, B_\Delta)$.  
These two observations together give a contradiction. 
\end{proof}

Another way to prove that $\tilde d$ is a metric— in particular, that it satisfies the triangle inequality— is to show that it coincides with the metric $d'$ defined in the following exercise.
\begin{figure}
\begin{center}
\includegraphics[width=10cm]{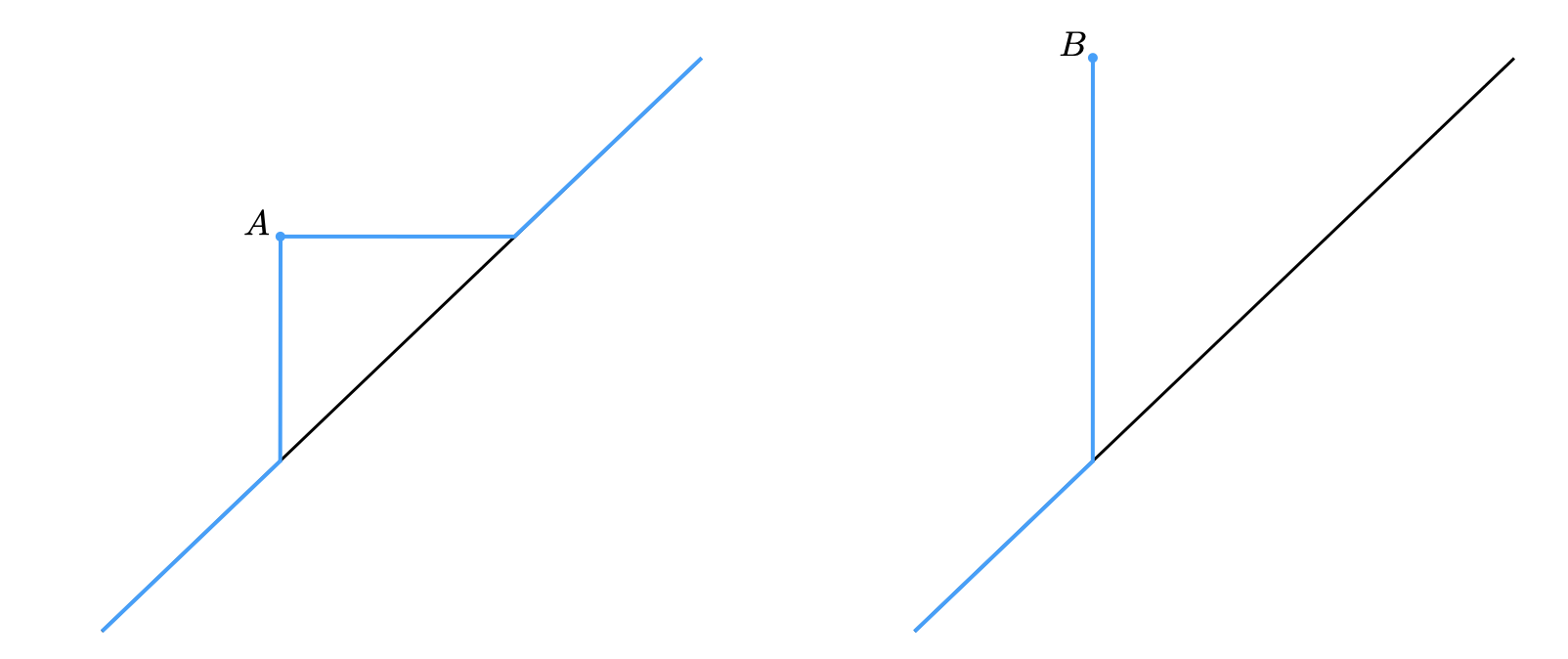}
 \caption{On the left, the profile of $A = (u,v)$ in $\Delta^+$, and on the right, the profile of $B = (u,\infty)$.
}
\label{figura:profilo}
\end{center}
\end{figure}


\begin{exercise}\label{ex:profile}
For every $A\in \Delta^*$ define the \emph{profile} of $A$, $\textup{prof}(A)$, as the boundary of the set 
$\{(x,y)\in\R^2:y\le x \mathrm{\ or\ }
(x\ge x_A\mathrm{\ and\ } y\le y_A)\}$ (see Figure~\ref{figura:profilo}).
Given the set $\mathcal{L}$ of lines of slope $-1$ in $\R^2$ 
and two points $A,B\in \Delta^*$, define $d'(A,B):=\sup_{\ell\in \mathcal{L}}\lVert I_A-I_B\rVert_\infty$, where $I_X$ is the unique point in $\ell\cap \textup{prof}(X)$.
Show that for any $A, B$ in $\Delta^*$, $d'(A, B)=\tilde d(A, B)$. 
Observe that one can reinterpret $d'$ as the 
uniform metric between the functions whose graphs are the two sets obtained by rotating the profiles of $A$ and $B$ by $45^\circ$ clockwise around the point $(0,0)$, divided by $\sqrt{2}$.

[\emph{Hint:} 
The only non-trivial case to examine is the one in which both $A$ and $B$ are proper cornerpoints.
Without loss of generality, by possibly exchanging $A$ and $B$, dilating the plane, and applying a symmetry with respect to the line $x + y = 1$, we may assume that $A = (0,1)$ and that $B = (x_B, y_B)$ satisfies $x_B + y_B \ge 1$ and $y_B - x_B \le 1$.
Thus, if $x_B\le \frac{1}{2}$ then $\tilde d(A, B)=d'(A, B)=x_B$, otherwise $\tilde d(A, B)=d'(A, B)=\frac{1}{2}$.]
\end{exercise}

\subsection{Total multiplicity}\label{subsec:tot_mult}

In this section, we introduce the notion of total multiplicity capturing the variation of PBNFs and being preparatory for the introduction of persistence diagrams.\\


In the following subsections, we will make use of the notion of closure defined below.

\begin{definition}\label{upper_closure}
Consider the product order on $\R^2$ and a set $S \subseteq \Delta^+$. 
We define the \textbf{upper closure} of $S$, denoted by $S^\lnot$, as the collection of points in $\Delta^+$ for which there exists a non-decreasing sequence of points in $S$ converging to them.
\end{definition}

\begin{definition}\label{defbox}
Let $\varepsilon, \eta>0$ 
and $(u,v) \in \Delta^+$, with $u+\varepsilon \le v-\eta$. 
The upper closure of the open rectangle 
$]u-\varepsilon,\,u+\varepsilon[ \times ]v-\eta,\,v+\eta[$
is called an \textit{$(\varepsilon,\eta)$-box centred at $(u,v)$}.
\end{definition}

\begin{figure}
    \centering
    \includegraphics[width=0.45\linewidth]{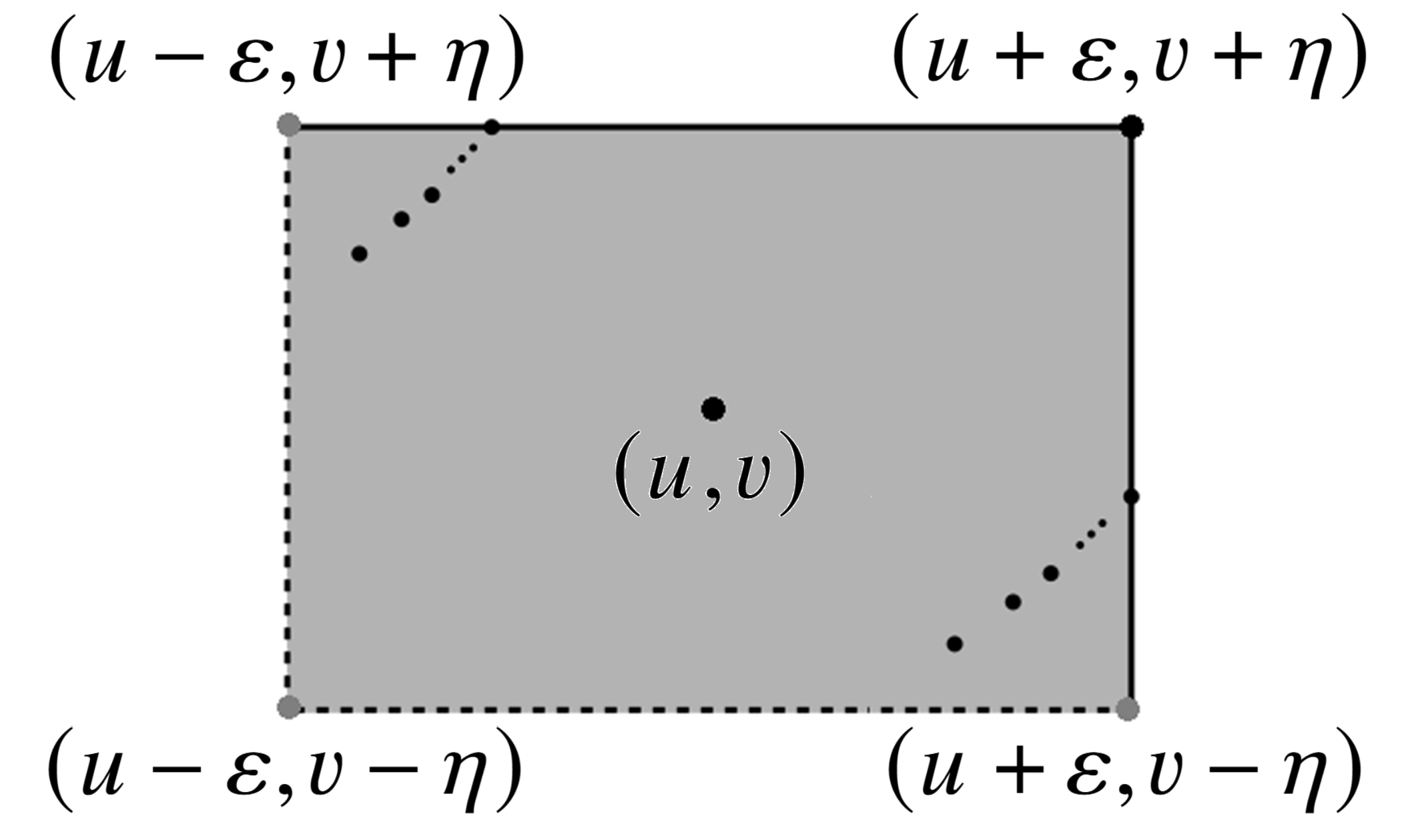}
    \caption{An $(\eps, \eta)$-box centred at $(u,v)$, with $u+\varepsilon\le v-\eta$. Note that the top-right corner is the only corner belonging to the upper closure.}
    \label{fig:placeholder}
\end{figure}

\begin{remark}
    We observe that for any $(u,v) \in \Delta^+$ with $u+\varepsilon \le v-\varepsilon$, $\eps>0$, the open square
$]u-\varepsilon,\,u+\varepsilon[ \times ]v-\varepsilon,\,v+\varepsilon[$
coincides with the open ball centred at $(u,v)$ of radius $\varepsilon$ with respect to the metric $\tilde d$.
\end{remark}


The PBNFs can be summarised visually, as in Figure~\ref{figexPBNF}, as a partition of the half-plane of equation $y\ge x$. 
For every $(\eps, \eta)$-box centred at a point $(u,v)\in\Delta^+$, a notion of multiplicity is defined. 
This multiplicity measures how different the PBNFs are inside the box.
In the following, we will assume that a continuous function $\p\colon X\to\R$ is given.

\begin{definition}\label{defmu}
Let $\varepsilon, \eta>0$
and $(u,v)\in \Delta^+$,
with $u+\varepsilon<v-\eta$. 
We define the \myemph{total multiplicity of the $(\eps, \eta)$-box centred at $(u,v)$} as
$$\mu^\p_{\eps,\eta}(u,v)=
\beta_k^\p{(u+\eps,v-\eta)}
- \beta_k^\p{(u-\eps,v-\eta)} 
+ \beta_k^\p{(u-\eps,v+\eta)}
- \beta_k^\p{(u+\eps,v+\eta)}
.$$


\end{definition}





\begin{ex}\label{exmu}
Consider the PBNF in Figure~\ref{figexPBNF} and its points $(2,4)$ and $(3,4)$. 
If $\eps<1$, the multiplicity of the $(\eps, \eps)$-box centred at $(2,4)$ is equal to 1, and the one of the $(\eps, \eps)$-box centred at $(3,4)$ is equal to 0.
\end{ex}


The next proposition shows that the value $\mu^\p_{\eps,\eta}(u,v)$ is always non-negative.

\begin{proposition}\label{prop_premult}
Given $(u,v)\in\Delta^+$ and $\varepsilon, \eta>0$ 
such that $u +\eps < v - \eta$, it holds that
\[
\beta_k^\p{(u+\eps,v-\eta)} - \beta_k^\p{(u-\eps,v-\eta)}\ge 
\beta_k^\p{(u+\eps,v+\eta)} - \beta_k^\p{(u-\eps,v+\eta)}.
\]
\end{proposition}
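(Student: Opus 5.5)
Set $u_1=u-\eps$, $u_2=u+\eps$, $v_1=v-\eta$, $v_2=v+\eta$, so that $u_1<u_2<v_1<v_2$. The claim is
\[
\dim \Imm i^{\p*}_{u_2,v_1}-\dim \Imm i^{\p*}_{u_1,v_1}\ \ge\ \dim \Imm i^{\p*}_{u_2,v_2}-\dim \Imm i^{\p*}_{u_1,v_2}.
\]
The plan is to interpret each side as the dimension of a quotient and then compare the quotients through the map $i^{\p*}_{v_1,v_2}$.

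First I record the nesting of the persistent homology groups. As in the proof of Proposition~\ref{propmonotonicity}, $i^{\p*}_{u_1,v_j}=i^{\p*}_{u_2,v_j}\, i^{\p*}_{u_1,u_2}$. Hence $A_j:=PH_k^\p(u_1,v_j)\subseteq B_j:=PH_k^\p(u_2,v_j)$ for $j=1,2$. By Assumption~\ref{ass_fingen} all these spaces are finite dimensional, so each side of the inequality equals $\dim(B_j/A_j)$.

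Next I use $g:=i^{\p*}_{v_1,v_2}\colon H_k(X^\p_{v_1})\to H_k(X^\p_{v_2})$. By the composition rule $i^{\p*}_{v_1,v_2}i^{\p*}_{u,v_1}=i^{\p*}_{u,v_2}$, we have $g(B_1)=\Imm(g\, i^{\p*}_{u_2,v_1})=B_2$ and $g(A_1)=A_2$. Therefore $g$ restricts to a surjection $B_1\to B_2$ sending $A_1$ onto $A_2$. It then induces a well-defined linear map $\bar g\colon B_1/A_1\to B_2/A_2$, which is surjective because $g|_{B_1}$ is.

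A surjective linear map between finite-dimensional spaces cannot increase dimension, so $\dim(B_1/A_1)\ge \dim(B_2/A_2)$, which is the desired inequality.

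There is no real obstacle. The only point needing care is that $g$ maps $A_1$ \emph{onto} $A_2$ and $B_1$ onto $B_2$, rather than merely into them. This is exactly what makes the induced quotient map well defined and surjective, and it follows directly from functoriality (Theorem~\ref{thm_homology_functor}).
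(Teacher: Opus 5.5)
Your proof is correct. It reaches the same inequality as the paper by a different route. The paper applies Lemma~\ref{lemmaImgf} (rank--nullity for $g$ restricted to $\Imm f$) twice, with $g=i^{\p*}_{v-\eta,v+\eta}$ and $f$ equal to $i^{\p*}_{u+\eps,v-\eta}$ and to $i^{\p*}_{u-\eps,v-\eta}$. In your notation, this shows that the left-hand side exceeds the right-hand side by exactly $\dim\ker g_{\vert B_1}-\dim\ker g_{\vert A_1}$, which is non-negative because $A_1\subseteq B_1$.

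You instead pass to the quotients $B_j/A_j$ and use the induced surjection $\bar g$. The two arguments encode the same identity. Since $g(A_1)=A_2$, one checks that $\ker\bar g=(A_1+\ker g_{\vert B_1})/A_1\cong \ker g_{\vert B_1}/\ker g_{\vert A_1}$, so rank--nullity for $\bar g$ reproduces the paper's slack term. In particular, $\mu^\p_{\eps,\eta}(u,v)=\dim\ker\bar g$.

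Each route has its advantage. Yours avoids the dimension bookkeeping and phrases the claim as $\dim(B_1/A_1)\ge\dim(B_2/A_2)$, which makes sense without subtracting dimensions. The paper's version exhibits the defect explicitly as a difference of kernel dimensions.

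One small correction to your closing remark. For $\bar g$ to be well defined you only need $g(A_1)\subseteq A_2$, and for surjectivity you need $g(B_1)=B_2$. The equality $g(A_1)=A_2$ is not needed for the inequality; it matters only for the exact identification of $\ker\bar g$ above.
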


\begin{proof}
By applying Lemma~\ref{lemmaImgf} for $f=i^{\p*}_{u+\eps,v-\eta}$ and $g=i^{\p*}_{v-\eta,v+\eta}$ we get
$$\dim \textrm{Im\ } i^{\p*}_{u+\eps,v+\eta} = \dim \textrm{Im\ } i^{\p*}_{u+\eps,v-\eta}-\dim \ker {i^{\p*}_{v-\eta,v+\eta}}_{\vert\textrm{Im\ } i^{\p*}_{u+\eps,v-\eta}}.$$
By applying Lemma~\ref{lemmaImgf} for $f=i^{\p*}_{u-\eps,v-\eta}$ and $g=i^{\p*}_{v-\eta,v+\eta}$ we get
$$\dim \textrm{Im\ } i^{\p*}_{u-\eps,v+\eta} = \dim \textrm{Im\ } i^{\p*}_{u-\eps,v-\eta}-\dim \ker {i^{\p*}_{v-\eta,v+\eta}}_{\vert\textrm{Im\ } i^{\p*}_{u-\eps,v-\eta}}.$$
It follows that
\begin{align*}
              &\beta_k^\p{(u+\eps,v-\eta)}
            -\beta_k^\p{(u-\eps,v-\eta)} \\ =& \dim \textrm{Im\ }i^{\p*}_{u+\eps,v-\eta}
            -\dim \textrm{Im\ } i^{\p*}_{u-\eps,v-\eta}\\
            = & \dim\textrm{Im\ }i^{\p*}_{u+\eps,v+\eta}+\dim \ker {i^{\p*}_{v-\eta,v+\eta}}_{\vert\textrm{Im\ } i^{\p*}_{u+\eps,v-\eta}}\\
            &- \dim\textrm{Im\ }i^{\p*}_{u-\eps,v+\eta}- \dim \ker {i^{\p*}_{v-\eta,v+\eta}}_{\vert\textrm{Im\ } i^{\p*}_{u-\eps,v-\eta}} \\
            \ge &  \beta_k^\p{(u+\eps,v+\eta)} - \beta_k^\p{(u-\eps,v+\eta)}
\end{align*}
where the last inequality follows from the inclusion
$$\textrm{Im\ } i^{\p*}_{u-\eps,v-\eta}=\textrm{Im\ } i^{\p*}_{u+\eps,v-\eta} i^{\p*}_{u-\eps,u+\eps}
\subseteq \textrm{Im\ } i^{\p*}_{u+\eps,v-\eta}$$ which implies
$$\ker {i^{\p*}_{v-\eta,v+\eta}}_{\vert\textrm{Im\ } i^{\p*}_{u-\eps,v-\eta}}\subseteq \ker {i^{\p*}_{v-\eta,v+\eta}}_{\vert\textrm{Im\ } i^{\p*}_{u+\eps,v-\eta}}.$$
\end{proof}

Observe that the inequality in Proposition~\ref{prop_premult} is equivalent to
\[\beta_k^\p{(u+\eps,v-\eta)} - \beta_k^\p{(u+\eps,v+\eta)} \ge \beta_k^\p{(u-\eps,v-\eta)} -
\beta_k^\p{(u-\eps,v+\eta)}.\]

\begin{corollary}\label{propmu>=0}
Given $(u,v)\in\Delta^+$ and $\varepsilon, \eta>0$ 
such that $u + \eps < v- \eta$, the multiplicity $\mu_{\eps,\eta}^\p(u,v)$ is non-negative. 
\end{corollary}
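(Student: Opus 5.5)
This corollary is an immediate rearrangement of Proposition~\ref{prop_premult}, so the plan is simply to unpack Definition~\ref{defmu} and regroup its four terms. The hypotheses here, namely $(u,v)\in\Delta^+$, $\eps,\eta>0$ and $u+\eps<v-\eta$, are exactly those of Proposition~\ref{prop_premult}. In particular, all four points $(u\pm\eps, v\pm\eta)$ lie in $\Delta^+$, so every value of $\beta_k^\p$ appearing in $\mu^\p_{\eps,\eta}(u,v)$ is defined and finite by Assumption~\ref{ass_fingen}.

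First I will write
\[
\mu^\p_{\eps,\eta}(u,v)=\bigl(\beta_k^\p(u+\eps,v-\eta)-\beta_k^\p(u-\eps,v-\eta)\bigr)-\bigl(\beta_k^\p(u+\eps,v+\eta)-\beta_k^\p(u-\eps,v+\eta)\bigr).
\]
This is just the definition with the last two terms grouped together and the sign pulled out. Proposition~\ref{prop_premult} states precisely that the first bracket is greater than or equal to the second, so their difference is non-negative. Since all the quantities are finite integers, the subtraction is legitimate and no case analysis is needed.

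There is no real obstacle in this argument. The only point requiring care is the sign bookkeeping when regrouping: $+\beta_k^\p(u-\eps,v+\eta)-\beta_k^\p(u+\eps,v+\eta)$ must be recognised as the negative of the right-hand side of Proposition~\ref{prop_premult}.
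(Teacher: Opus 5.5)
Your proposal is correct and matches the paper's approach: the corollary is stated there as an immediate consequence of Proposition~\ref{prop_premult}, obtained exactly by regrouping the four terms of Definition~\ref{defmu} as you do. Your check that all four points $(u\pm\eps,v\pm\eta)$ lie in $\Delta^+$ (so the values are finite by Assumption~\ref{ass_fingen}) is a harmless extra detail the paper leaves implicit.
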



The next proposition shows that, fixed $(u,v) \in \Delta^+$, the function
$\mu^\p_{\eps,\eta}(u,v)$ is non-decreasing in the variables $\eps$ and $\eta$.

\begin{proposition}\label{muincreasing}
Let $(u,v)\in \Delta^+$.
If $0 < \eps\le \eps'$, $0 < \eta\le \eta'$, and $u+\eps' < v-\eta'$, then
$\mu^\p_{\eps,\eta}(u,v)\le \mu^\p_{\eps',\eta'}(u,v)$.
\end{proposition}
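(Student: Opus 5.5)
We reduce the statement to two one-variable monotonicity steps, passing through the intermediate box with parameters $(\eps',\eta)$. It then suffices to prove
\[
\mu^\p_{\eps,\eta}(u,v)\le\mu^\p_{\eps',\eta}(u,v)\quad\text{and}\quad \mu^\p_{\eps',\eta}(u,v)\le\mu^\p_{\eps',\eta'}(u,v).
\]
The intermediate box is admissible: $u+\eps'<v-\eta'\le v-\eta$. The same bound makes $(u,v)$ with parameters $(\eps,\eta)$ admissible too.

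\textbf{First step: enlarge in $\eps$.} We write the difference $\mu^\p_{\eps',\eta}(u,v)-\mu^\p_{\eps,\eta}(u,v)$ from Definition~\ref{defmu} and regroup it as the sum of two quantities:
\begin{align*}
&\bigl[\beta_k^\p(u+\eps',v-\eta)-\beta_k^\p(u+\eps,v-\eta)\bigr]-\bigl[\beta_k^\p(u+\eps',v+\eta)-\beta_k^\p(u+\eps,v+\eta)\bigr],\\
&\bigl[\beta_k^\p(u-\eps,v-\eta)-\beta_k^\p(u-\eps',v-\eta)\bigr]-\bigl[\beta_k^\p(u-\eps,v+\eta)-\beta_k^\p(u-\eps',v+\eta)\bigr].
\end{align*}
Each of these has the shape of the inequality in Proposition~\ref{prop_premult}. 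Its proof only uses two abscissae $a\le b$ and two ordinates $c\le d$ with $b<c$. From it one gets
\[
\beta_k^\p(b,c)-\beta_k^\p(a,c)\ge\beta_k^\p(b,d)-\beta_k^\p(a,d).
\]
We apply this with $a=u+\eps$, $b=u+\eps'$ for the first quantity, and with $a=u-\eps'$, $b=u-\eps$ for the second, taking $c=v-\eta$ and $d=v+\eta$ in both. The condition $b<c$ holds because $u+\eps'<v-\eta$. Hence both quantities are nonnegative.

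\textbf{Second step: enlarge in $\eta$.} This is the symmetric argument. We regroup $\mu^\p_{\eps',\eta'}(u,v)-\mu^\p_{\eps',\eta}(u,v)$ as a sum of two "rectangle" expressions. The abscissae are $a=u-\eps'$, $b=u+\eps'$. The ordinate pairs are $(v-\eta',v-\eta)$ and $(v+\eta,v+\eta')$. Both pairs lie above $b$ because $u+\eps'<v-\eta'$. The same general inequality then gives nonnegativity.

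\textbf{Main obstacle.} The key point is to state cleanly the general form of Proposition~\ref{prop_premult} for an arbitrary rectangle $[a,b]\times[c,d]$ with $b<c$. Its proof carries over verbatim: apply Lemma~\ref{lemmaImgf} to $i^{\p*}_{b,c}$ and $i^{\p*}_{c,d}$, and use the inclusion $\Imm i^{\p*}_{a,c}\subseteq\Imm i^{\p*}_{b,c}$. The remaining care is to get the signs right in the regrouping, so that the four-term difference really splits into two such rectangle sums.
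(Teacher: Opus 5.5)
Your proof is correct and is essentially the paper's argument. The paper splits the $(\eps',\eta')$-box into a $3\times 3$ grid of nine boxes, one of them the $(\eps,\eta)$-box, and uses the non-negativity of each box multiplicity; you instead group the eight outer boxes into four strips by passing through the intermediate $(\eps',\eta)$-box. The ``main obstacle'' you flag needs no new proof: for $a<b<c<d$ your rectangle inequality is exactly Proposition~\ref{prop_premult} (equivalently Corollary~\ref{propmu>=0}) for the box centred at $\bigl(\frac{a+b}{2},\frac{c+d}{2}\bigr)$ with half-widths $\frac{b-a}{2}$ and $\frac{d-c}{2}$, and it is trivially an equality when $a=b$ or $c=d$.
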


\begin{proof}
By directly applying Definition~\ref{defmu}, it is easy to check that
\begin{align*}
             \mu^\p_{\eps',\eta'}\left(u,v\right)&= \mu^\p_{r,p}\left(u-s,v+q\right)
            + \mu^\p_{\eps,p}\left(u,v+q\right)
            + \mu^\p_{r,p}\left(u+s,v+q\right)\\
            & +\mu^\p_{r,\eta}\left(u-s,v\right)
            + \hskip 0.8cm \mu^\p_{\eps,\eta}\left(u,v\right)\hskip 0.7cm
            + \mu^\p_{r,\eta}\left(u+s,v\right)\\
            & +\mu^\p_{r,p}\left(u-s,v-q\right)
            + \mu^\p_{\eps,p}\left(u,v-q\right)
            + \mu^\p_{r,p}\left(u+s,v-q\right)
\end{align*}
where $r=\frac{\eps'-\eps}{2}$, $s=\varepsilon+r=\frac{\eps'+\eps}{2}$,  $p=\frac{\eta'-\eta}{2}$, and $q=\eta+p=\frac{\eta'+\eta}{2}$. 
From the non-negativity of the function $\mu^\p_{\eps, \eta}$ (Corollary~\ref{propmu>=0}), it follows that $\mu^\p_{\eps, \eta}(u,v)\le \mu^\p_{\eps', \eta'}(u,v)$.
\end{proof}

\begin{figure}[htb!]
\begin{center}
\includegraphics[width=10cm]{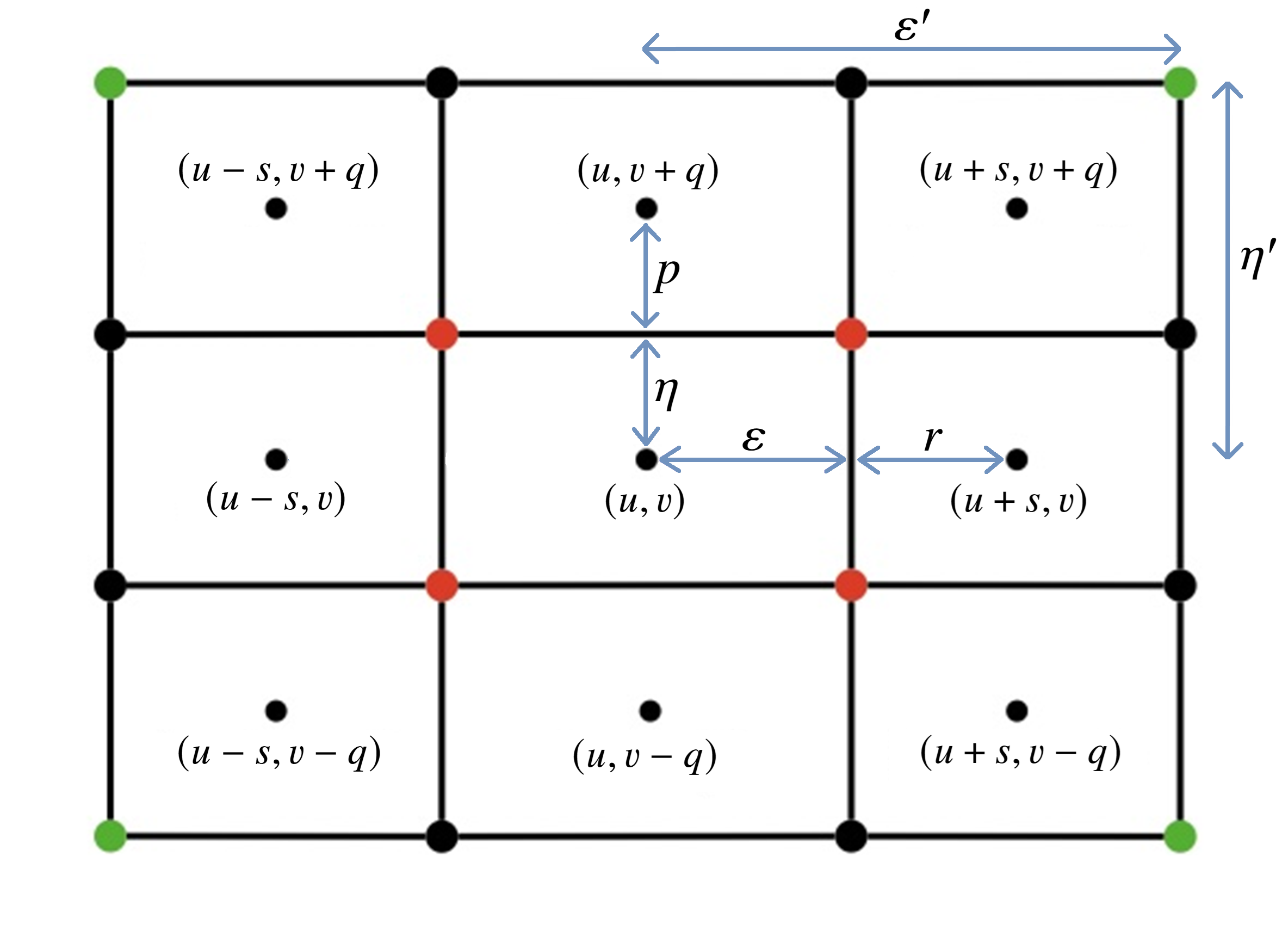}
\caption{The grid used in the proof of Proposition~\ref{muincreasing}.}
\label{fig_in_plain_words}
\end{center}
\end{figure}

Let us explain the proof of Proposition~\ref{muincreasing} in plain words.
The value $\mu^\p_{\eps, \eta}(u,v)$ is equal to the sum of the values of $\beta^\p_k$ at the red points in Figure~\ref{fig_in_plain_words} with alternating signs. 
The value $\mu^\p_{\eps', \eta'}(u,v)$ is equal to the sum of the values of $\beta^\p_k$ at the green points in Figure~\ref{fig_in_plain_words} with alternate signs. 
This last value is equal to the sum of the total multiplicities of the nine boxes in the figure since the contributions of the values of $\beta^\p_k$ at the black and red points cancel each other out.

\subsection{Discontinuities of PBNFs}\label{subsec:PBNF_discontinuities}
We recall that PBNFs are functions with values in the natural numbers. 
A discontinuity point for such a function is, thus, a point of the domain at which the function is not locally constant. 

\begin{lemma}\label{lemmadisco}
Any path-connected open neighborhood of a discontinuity point
for the function $\beta_k^\p$ contains at least one discontinuity point in the first or in the second variable.
\end{lemma}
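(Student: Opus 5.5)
We argue by contradiction. Let $P$ be a discontinuity point of $\beta_k^\p$ and let $U\subseteq\Delta^*$ be a path-connected open neighbourhood of $P$. Suppose that no point of $U$ is a discontinuity point in the first or in the second variable. By ``discontinuity in the first variable'' at $Q=(u,v)$ we mean that the function $x\mapsto\beta_k^\p(x,v)$ is not locally constant at $u$, and analogously for the second variable. Under this assumption, every $Q=(u,v)\in U$ has the property that $\beta_k^\p$ is locally constant along the horizontal and vertical lines through $Q$. Since $\beta_k^\p$ takes values in $\N$, we will show that $\beta_k^\p$ is constant on $U$, and in particular continuous at $P$, which is a contradiction.

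The key step is to show that $\beta_k^\p$ is constant on every axis-parallel segment contained in $U$. Take a horizontal segment $\{(x,v)\mid x\in[a,b]\}\subseteq U$. The function $x\mapsto\beta_k^\p(x,v)$ is locally constant on $[a,b]$, and a locally constant function on a connected interval is constant. The same argument applies to vertical segments, including the case where the second coordinate is $\infty$; there only horizontal segments occur, and they are handled identically. It then suffices to show that any two points of $U$ can be joined by a finite polygonal path made of axis-parallel segments lying in $U$.

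For this we use the path-connectedness of $U$. Fix $Q_0\in U$ and let $W$ be the set of points of $U$ that can be reached from $Q_0$ by such a staircase path. $W$ is open: around each point of $U$ there is a small open $\tilde d$-ball contained in $U$, and by the Remark after Definition~\ref{defbox} this ball is an open square for points of $\Delta^+$. Inside a square every point is reachable by one horizontal segment followed by one vertical segment. $U\setminus W$ is open for the same reason. Since $U$ is path-connected, hence connected, we get $W=U$. Consequently $\beta_k^\p$ is constant on $U$, hence locally constant at $P$, which contradicts the choice of $P$.

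The main obstacle is the local structure near points of the form $(u,\infty)$, where the $\tilde d$-balls are not squares. For such points we plan to check directly from the definition of $\tilde d$ that a small ball around $(u,\infty)$ consists of points $(x,\infty)$ with $|x-u|$ small together with points $(x,y)$ with $|x-u|$ small and $y$ large. The second group does not connect to the first via finite vertical segments, so the staircase argument needs adjusting. To handle it, we use Remark~\ref{remmaxphi}: for $y\ge\max\p$ we have $\beta_k^\p(x,y)=\beta_k^\p(x,\infty)$. This identifies the values near $(u,\infty)$ with values on a horizontal line at finite height $y\ge\max\p$ lying in $U$, so the local constancy argument carries over.
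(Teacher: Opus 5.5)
Your argument is essentially the paper's: connect the discontinuity point to a point where $\beta_k^\p$ takes a different value by a path of horizontal and vertical segments inside $U$, and note that $\beta_k^\p$ must jump along one of those segments. You phrase it by contradiction, and you add an open--closed argument showing that any two points of a connected open set can be joined by such a path; the paper simply assumes this. The main argument is sound, but your last paragraph is wrong and should be removed.

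\emph{The obstacle you describe at points of the form $(u,\infty)$ does not exist.} The paper sets $\infty - v = \infty$ for $v\in\R$. Take $A=(u,\infty)$ and $B=(x,y)$ with $y<\infty$. Then $\max\{|x_A-x_B|,|y_A-y_B|\}=\infty$ and $\max\{\frac{|x_A-y_A|}{2},\frac{|x_B-y_B|}{2}\}=\infty$, so $\tilde d(A,B)=\infty$. Consequently:
\begin{itemize}
\item a small ball around $(u,\infty)$ is just $\{(x,\infty)\mid |x-u|<\eps\}$;
\item the set of points at infinity is open and closed in $\Delta^*$;
\item a path-connected open neighbourhood of $(u,\infty)$ is a horizontal open interval at infinity.
\end{itemize}
This is exactly the paper's one-line remark that a discontinuity at $(u,\infty)$ can only be in the first variable. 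Your own second paragraph already covers this case, where only horizontal segments occur. Delete the claim that such balls contain points $(x,y)$ with $y$ large; the detour through Remark~\ref{remmaxphi} is then unnecessary.
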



\begin{proof}
If $(u,\infty)$ is a discontinuity point, then the discontinuity can only be on the first variable. 
Thus, it suffices to show our statement for $(u,v)\in \Delta^+$ discontinuity point for $\beta_k^\p$. 
In this case, in any path-connected open neighborhood $U\subseteq \Delta^+$ of $(u,v)$, a point $(u',v')\in \Delta^+$ exists such that
$\beta_k^\p(u,v)\neq\beta_k^\p(u',v')$.
We can connect $(u,v)$ and $(u',v')$ by a path entirely contained in $U$ and made of horizontal and vertical segments. Since $\beta_k^\p$ cannot be constant
along this path, our claim follows.
\end{proof}

Next, we show that discontinuities in the first variables propagate along the second, and discontinuities in the second variable propagate along the first. 
\begin{proposition}\label{proppropadisc}
The following statements hold:
\begin{itemize}
  \item[1.] If $\bar u$ is a discontinuity point for
  $\beta_k^\p{(\cdot,\bar v)}$ and
  $\bar u< v < \bar v<\infty$, then $\bar u$ is a discontinuity point also for $\beta_k^\p{(\cdot,v)}$;
 \item[2.] If $\bar v$ is a discontinuity point for
  $\beta_k^\p{(\bar u,\cdot)}$ and
  $\bar u< u < \bar v<\infty$, then $\bar v$ is a discontinuity point also for $\beta_k^\p{(u,\cdot)}$.
\end{itemize}
\end{proposition}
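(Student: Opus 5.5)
We argue both items by contraposition, using the inequalities of Proposition~\ref{prop_premult}, i.e.\ the non-negativity of $\mu^\p_{\eps,\eta}$ from Corollary~\ref{propmu>=0}, together with the monotonicity of Proposition~\ref{propmonotonicity}. A discontinuity of the integer-valued monotone function $\beta_k^\p(\cdot,\bar v)$ at $\bar u$ means exactly that
\[
\beta_k^\p(\bar u+\eps,\bar v)-\beta_k^\p(\bar u-\eps,\bar v)>0 \quad \text{for every small } \eps>0,
\]
since by monotonicity the left and right limits exist.

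For item 1, suppose $\bar u$ is \emph{not} a discontinuity point of $\beta_k^\p(\cdot,v)$. Then for all sufficiently small $\eps>0$ we have $\beta_k^\p(\bar u+\eps,v)=\beta_k^\p(\bar u-\eps,v)$, because an integer-valued non-decreasing function that is locally constant at $\bar u$ is constant on a neighborhood.

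Now apply Proposition~\ref{prop_premult}, in the equivalent form stated after it, with the horizontal pair $\bar u\pm\eps$ and the vertical pair $v<\bar v$. Concretely, we take the box centred at $(\bar u,(v+\bar v)/2)$ with $\eta=(\bar v-v)/2$, choosing $\eps$ small so that $\bar u+\eps<v$, which is possible since $\bar u<v$. This gives
\[
\beta_k^\p(\bar u+\eps,v)-\beta_k^\p(\bar u-\eps,v)\ \ge\ \beta_k^\p(\bar u+\eps,\bar v)-\beta_k^\p(\bar u-\eps,\bar v)\ \ge 0 .
\]
The left-hand side is $0$, so the right-hand side is $0$ for all small $\eps$. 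Hence $\beta_k^\p(\cdot,\bar v)$ is locally constant at $\bar u$, which contradicts the hypothesis.

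Item 2 is symmetric, using the other form of the inequality:
\[
\beta_k^\p(u,\bar v-\eta)-\beta_k^\p(u,\bar v+\eta)\ \ge\ \beta_k^\p(\bar u,\bar v-\eta)-\beta_k^\p(\bar u,\bar v+\eta)\ \ge 0 .
\]
Here the box is centred at $((\bar u+u)/2,\bar v)$ with $\eps=(u-\bar u)/2$, and $\eta$ is small so that $u<\bar v-\eta$, which is possible since $u<\bar v$. The hypothesis $\bar v<\infty$ ensures that $\bar v+\eta$ makes sense.

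The only delicate point is the hypothesis check $u+\eps<v-\eta$ in Proposition~\ref{prop_premult}. It requires the strict inequalities $\bar u<v$ (respectively $u<\bar v$), and that is precisely where these assumptions are used. Everything else is the translation between "discontinuity" and a "positive one-sided jump" via monotonicity.
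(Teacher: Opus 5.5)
Your proof is correct and is essentially the paper's argument. Both rest on the inequality of Proposition~\ref{prop_premult}, applied to the box with horizontal levels $\bar u\pm\eps$ and vertical levels $v<\bar v$ (resp.\ horizontal levels $\bar u<u$ and vertical levels $\bar v\pm\eta$), combined with the monotonicity of Proposition~\ref{propmonotonicity}. The only difference is presentational: you argue by contraposition (a zero jump at one level forces a zero jump at the other), whereas the paper shows directly that a positive jump propagates, and you make the choice of box and the role of the strict inequalities $\bar u<v$, $u<\bar v$ more explicit.
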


\begin{proof}
\begin{itemize}
\item[1.] Since $\beta_k^\p$ is non-decreasing in its first variable,
we have that $$\beta_k^\p{(\bar u+\eps,\bar v)}
            -\beta_k^\p{(\bar u-\eps,\bar v)}\ge 0$$ for any sufficiently small $\eps\ge 0$
            (Proposition~\ref{propmonotonicity}).
            The assumption that $\bar u$ is a discontinuity point for
  $\beta_k^\p{(\cdot,\bar v)}$ implies that
  $\beta_k^\p{(\bar u+\eps,\bar v)}
            -\beta_k^\p{(\bar u-\eps,\bar v)}>0$ for any sufficiently small $\eps>0$. This last inequality and the inequality
 \begin{align*}
            & \beta_k^\p{(\bar u+\eps,v)}
            -\beta_k^\p{(\bar u-\eps,v)}-\beta_k^\p{(\bar u+\eps,\bar v)}
            +\beta_k^\p{(\bar u-\eps,\bar v)}\ge 0
\end{align*}
(Proposition~\ref{prop_premult}) imply that
            $\beta_k^\p{(\bar u+\eps,v)}
            -\beta_k^\p{(\bar u-\eps,v)}>0$ for any sufficiently small $\eps>0$. It follows that $\bar u$ is a discontinuity point also for $\beta_k^\p{(\cdot,v)}$.
 \item[2.] Since $\beta_k^\p$ is non-increasing in its second variable, we have that 
$$\beta_k^\p{(\bar u,\bar v+\eta)}
            -\beta_k^\p{(\bar u,\bar v-\eta)}\le 0$$ for any sufficiently small $\eta\ge 0$
            (Proposition~\ref{propmonotonicity}).
            The assumption that $\bar v$ is a discontinuity point for
  $\beta_k^\p{(\bar u,\cdot)}$ implies that
  $\beta_k^\p{(\bar u,\bar v+\eta)}
            -\beta_k^\p{(\bar u,\bar v-\eta)}< 0$ for any  sufficiently small $\eta> 0$. This last inequality and the inequality
 \begin{align*}
            & \beta_k^\p{(u,\bar v-\eta)}
            -\beta_k^\p{(\bar u,\bar v-\eta)} -\beta_k^\p{(u,\bar v+\eta)}
            +\beta_k^\p{(\bar u,\bar v+\eta)}\ge 0
\end{align*}
(Proposition~\ref{prop_premult}) imply that
            $\beta_k^\p{(u,\bar v-\eta)}
            -\beta_k^\p{(u,\bar v+\eta)}>0$ for any sufficiently small $\eta>0$. It follows that $\bar v$ is a discontinuity point also for $\beta_k^\p{(u,\cdot)}$.
\end{itemize}
\end{proof}

\begin{proposition}\label{propWeps}
For every point $(\bar u,\bar v)\in \Delta^+$, there exists $\eps>0$ such that the open set
$$W_\eps(\bar u,\bar v):=\{(u,v)\in\R^2:|u-\bar u|<\eps,|v-\bar v|<\eps, u\neq \bar u, v\neq \bar v\}$$
is contained in $\Delta^+$ and does not contain any discontinuity point for $\beta_k^\p$.
\end{proposition}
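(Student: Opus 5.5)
Choose $\eps_0>0$ with $\bar u+\eps_0<\bar v-\eps_0$. Then the open square $Q=]\bar u-\eps_0,\bar u+\eps_0[\times]\bar v-\eps_0,\bar v+\eps_0[$ lies in $\Delta^+$, and so does every $W_\eps(\bar u,\bar v)$ with $\eps\le\eps_0$. The plan is to show that, for $\eps$ small, $\beta_k^\p$ is constant on each of the four open quadrants of $W_\eps(\bar u,\bar v)$. Each quadrant is open, so constancy there means every point of it is a continuity point. The argument combines three facts: the monotonicity of Proposition~\ref{propmonotonicity}, the finiteness of values guaranteed by Assumption~\ref{ass_fingen}, and the non-negativity of multiplicities from Corollary~\ref{propmu>=0}.

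\textbf{Step 1 (finiteness of jumps).} Fix any $(u,v)\in Q$. By monotonicity, $\beta_k^\p$ on $Q$ lies between $\beta_k^\p(\bar u-\eps_0,\bar v+\eps_0)$ and $\beta_k^\p(\bar u+\eps_0,\bar v-\eps_0)$, which are finite. Take a monotone path made of a horizontal and a vertical segment. Along it $\beta_k^\p$ is monotone and integer-valued, so it has only finitely many jump points.

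\textbf{Step 2 (one discontinuity near $\bar u$ propagates to a whole vertical line).} Suppose some $u_1\ne\bar u$ with $|u_1-\bar u|<\eps$ is a discontinuity point of $\beta_k^\p(\cdot,v)$ for some $v$ with $|v-\bar v|<\eps_0$. Proposition~\ref{proppropadisc}(1) propagates this downward: $u_1$ is a discontinuity of $\beta_k^\p(\cdot,v')$ for every $v'$ with $u_1<v'<v$. Hence it is a discontinuity of the single function $\beta_k^\p(\cdot,\bar v-\eps_0)$, provided $v>\bar v-\eps_0$. The same reasoning applied with vertical lines, using part (2), sends any discontinuity in the second variable at $v_1\ne\bar v$ near $\bar v$ to a discontinuity of $\beta_k^\p(\bar u+\eps_0,\cdot)$. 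By Step 1, the function $\beta_k^\p(\cdot,\bar v-\eps_0)$ has finitely many discontinuities on $]\bar u-\eps_0,\bar u+\eps_0[$, and likewise $\beta_k^\p(\bar u+\eps_0,\cdot)$ on $]\bar v-\eps_0,\bar v+\eps_0[$. So we can pick $\eps\le\eps_0$ so small that $]\bar u-\eps,\bar u+\eps[\setminus\{\bar u\}$ and $]\bar v-\eps,\bar v+\eps[\setminus\{\bar v\}$ contain none of them. It is cleaner to shrink to a slightly smaller square so that the base lines sit strictly inside $Q$.

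\textbf{Step 3 (conclude via Lemma~\ref{lemmadisco}).} By Step 2, no point of $W_\eps(\bar u,\bar v)$ is a discontinuity point in the first or in the second variable. Suppose some point of $W_\eps(\bar u,\bar v)$ were a discontinuity point of $\beta_k^\p$. Take an open square around it inside its quadrant; this is a path-connected open neighborhood. By Lemma~\ref{lemmadisco} it would contain a discontinuity point in the first or second variable, which is a contradiction.

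\textbf{Main obstacle.} The delicate point is the strict inequalities in Proposition~\ref{proppropadisc}, which require $\bar u<v<\bar v$. I must check that the chosen reference lines $v'=\bar v-\eps_0$ and $u'=\bar u+\eps_0$ satisfy them for every point of the box; the choice $\bar u+\eps_0<\bar v-\eps_0$ ensures this. I also need that a discontinuity "in the first variable" at a point of $W_\eps$ means exactly a discontinuity of $\beta_k^\p(\cdot,v)$ at $u$, with $v$ in the right range.
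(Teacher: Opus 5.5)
Your proof is correct and rests on the same ingredients as the paper's proof. You use Lemma~\ref{lemmadisco} to reduce to discontinuities in a single variable, Proposition~\ref{proppropadisc} to push them onto a fixed reference line, and monotonicity together with Assumption~\ref{ass_fingen} to show such a line carries only finitely many jumps. The only difference is presentation: the paper argues by contradiction with a sequence of points in $W_{1/n}(\bar u,\bar v)$, whereas you choose $\eps$ directly so as to avoid the finitely many jumps on the lines $v=\bar v-\eps_0$ and $u=\bar u+\eps_0$.
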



\begin{proof}
Suppose, contrarily to our assertion, that for every positive integer $n$
a discontinuity
point $(u_n,v_n)\in W_\frac{1}{n}(\bar u,\bar v)$ exists. By applying Lemma~\ref{lemmadisco},
possibly by extracting a subsequence from $\{(u_n,v_n)\}_{n \in \N}$, we can assume that either each
$(u_n,v_n)$ is a discontinuity point in the first variable or each
$(u_n,v_n)$ is a discontinuity point in the second variable. In the following, we
shall assume that each $(u_n,v_n)$ is a discontinuity point in the first variable. The case in
which each $(u_n,v_n)$ is a discontinuity point in the second variable has a similar proof.
Let us fix a natural number $N$ that is so large that $\bar u+\frac{1}{N} < \bar v-\frac{1}{N}$,
i.e., the sets $W_\frac{1}{n}(\bar u, \bar v)$ with $n\ge N$ lie entirely above the diagonal $\Delta$. Let
us consider the function
$\beta_k^\p\left(\cdot, \bar v -\frac{1}{N}\right)\colon\ \left]\bar u-\frac{1}{N},\bar u +\frac{1}{N}\right[\  \to\N$.
From Proposition~\ref{proppropadisc} we know that discontinuities in $u$ spread downwards. Thus
the function $\beta_k^\p\left(\cdot, \bar v -\frac{1}{N}\right)$ should have an infinite number of discontinuities.
Now, since $\beta_k^\p(u, v)$ is non-decreasing in the variable $u$ (Proposition~\ref{propmonotonicity}), this fact would
imply that $\beta_k^\p\left(\bar u+\frac{1}{N}, \bar v -\frac{1}{N}\right)=\infty$, contradicting Assumption~\ref{ass_fingen}.
\end{proof}
A consequence of Proposition~\ref{propWeps} is that the discontinuities of PBNFs do not propagate in directions other than the horizontal and vertical ones. 

\section{Persistence diagrams}\label{sec:pers_diag}

In this section we introduce persistence diagrams. 
They provide another summary of the topology of a filtered topological space. 
We analyze the properties of persistence diagrams, in particular, their cardinality.

\subsection{The metric space $\bar\Delta^*$}\label{subsec:metric_delta*_bar}

We introduce a metric on $\bar \Delta^*$, where persistence diagrams are defined. 
This metric is not just the restriction of the $L^\infty$-distance to $\bar \Delta^*$. 
The reason for this choice is that we would like to claim that two points in $\Delta^+$ close to the diagonal $\Delta$ are close to each other, even when they are far away with respect to $d_\infty$. 
This will allow us later to think about persistence diagrams of small perturbations of the same function as close to each other.

First, recall that the completion of a metric space $(X, d)$ is the unique (up to isometries) complete metric space $(\overline X, \bar d)$ whose elements are the equivalence classes of Cauchy sequences in $(X, d)$, with respect to the equivalence relation $\approx$ between Cauchy sequences defined by setting $(A_i)\approx (B_i)$ if $\lim_{i\to\infty} d(A_i,B_i)=0$.
The metric space $(\bar\Delta^*, d)$ is defined as the completion of $(\Delta^*, \tilde d)$. 
This metric space is in particular a one-point completion, where the additional one point is denoted by $\Delta$, abusing the notation for the diagonal. 
Explicitly, 
\begin{proposition}\label{fskjgnfawljfnv}
There exists an isometric bijection between $(\Delta^*\cup \{\Delta\}, d_C)$ and $(\bar\Delta^*, d)$, 
where 
\[
d_C(A, B) :=
\begin{cases}
\tilde d(A, B) & \textup{if } A, B\in \Delta^*,\\
\frac{y_A-x_A}{2} & \textup{if } A\in \Delta^*, B=\Delta,\\
\frac{y_B-x_B}{2} & \textup{if } A= \Delta, B\in\Delta^*,\\
0 & \textup{if } A=B=\Delta.\\
\end{cases}
\]
\end{proposition}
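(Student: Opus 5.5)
The plan is to build an explicit map $\Phi\colon \Delta^*\cup\{\Delta\}\to\bar\Delta^*$ and check that it is an isometric bijection. The first step is to show that $d_C$ is a metric on $\Delta^*\cup\{\Delta\}$. The only new cases in the triangle inequality are those involving the point $\Delta$. Write $h(A):=\frac{y_A-x_A}{2}=d_\infty(A,A_\Delta)$, with $h(A)=\infty$ when $y_A=\infty$. Two inequalities are then needed.

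\begin{itemize}
\item First, $h(A)\le \tilde d(A,B)+h(B)$. This follows because either $\tilde d(A,B)=d_\infty(A,B)$ and $h$ is $1$-Lipschitz for $d_\infty$, or $\tilde d(A,B)=\max\{h(A),h(B)\}\ge h(A)$.
\item Second, $\tilde d(A,B)\le h(A)+h(B)$. This holds since $\tilde d(A,B)\le\max\{h(A),h(B)\}$.
\end{itemize}

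The second step defines $\Phi$. For $A\in\Delta^*$, let $\Phi(A)$ be the class of the constant sequence $(A)$. Let $\Phi(\Delta)$ be the class of the sequence $A_i=(0,1/i)$. This sequence is Cauchy because $\tilde d(A_i,A_j)\le\max\{h(A_i),h(A_j)\}\to0$.

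The third step checks isometry. The distance in the completion is $d([(A_i)],[(B_i)])=\lim_i\tilde d(A_i,B_i)$. For constant sequences this gives $\tilde d(A,B)$. For $A$ against $\Delta$, we have $\tilde d(A,(0,1/i))=\min\{d_\infty(A,(0,1/i)),\max\{h(A),\frac1{2i}\}\}$. This tends to $\min\{\lim d_\infty,h(A)\}$. Since $d_\infty(A,(0,1/i))\ge h((0,1/i))\ldots$, more directly $d_\infty(A,P)\ge h(A)-h(P)$, so the minimum tends to $h(A)$. Isometry gives injectivity automatically.

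The fourth step, surjectivity, is the main obstacle. Take any Cauchy sequence $(A_i)$ in $\Delta^*$. If $h(A_i)\to0$ along a subsequence, then by the inequality of the first step $h$ is Cauchy-continuous, so $h(A_i)\to0$ along the whole sequence. In that case $\tilde d(A_i,(0,1/i))\le\max\{h(A_i),\frac1{2i}\}\to0$, and the class is $\Phi(\Delta)$.

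Otherwise $h(A_i)\ge c>0$ for all large $i$. Then for $\tilde d(A_i,A_j)<c$, the minimum defining $\tilde d$ must be attained by $d_\infty(A_i,A_j)$. Hence $(A_i)$ is eventually $d_\infty$-Cauchy. If $y_{A_i}=\infty$ eventually, the sequence converges to some $(x,\infty)$, since the first coordinates are Cauchy in $\R$. Otherwise it converges in $\R^2$ to a point $A$ with $y_A-x_A\ge 2c$, so $A\in\Delta^+$. In both cases $\tilde d(A_i,A)\le d_\infty(A_i,A)\to0$, and the class is $\Phi(A)$.

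A point of care is that finite and infinite $y$-coordinates cannot mix in a $d_\infty$-Cauchy sequence, because $\infty-v=\infty$. Thus $\Phi$ is an isometric bijection, identifying the completion with $(\Delta^*\cup\{\Delta\},d_C)$.
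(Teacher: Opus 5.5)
Your proof is correct and follows the paper's route: the same map $\Phi$ (constant sequences for points of $\Delta^*$, and a sequence $(0,1/i)$ for $\Delta$), the same case-by-case isometry check, and surjectivity by showing that every Cauchy sequence is equivalent either to a constant sequence or to the one representing $\Delta$. Your dichotomy on whether $h(A_i)$ stays bounded below also justifies a step the paper only asserts, namely that a sequence not converging in $d_\infty$ must satisfy $d_\infty(A_n,A_{n\Delta})\to 0$; your Step 1 is not needed, since an isometric bijection onto the metric space $(\bar\Delta^*,d)$ already forces $d_C$ to be a metric.
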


\begin{proof}
Consider the map $\Phi\colon (\Delta^*\cup \{\Delta\}, d_C)\to (\bar\Delta^*, d)$ sending $A$ in $\Delta^*$ to the equivalence class $[(A, A, \dots)]$ of the constant sequence $(A, A, \dots)$, and $\Delta$ to the equivalence class of the Cauchy sequence 
$(P_n)_n$, where $P_n=\left(0,\frac{1}{n+1}\right)$.
The map $\Phi$ is an isometry.
To see this we need to separate the different cases. 
If $A, B\in \Delta^*$, then $d_C(A, B)=\tilde d(A, B)$ by definition, and since $(\bar\Delta^*, d)$ is the metric completion of $(\Delta^*, \tilde d)$, $d([(A,A,\dots)],[(B, B, \dots)])=\tilde d(A, B) $. 
Thus, $d_C(A, B)=d(\Phi(A), \Phi(B))$. 
If $A\in \Delta^*$ and $B=\Delta$, then $d_C(A,\Delta)=d_\infty(A, A_\Delta)$. 
On the other hand, $d([(A, A, \dots)], [(P_n)_n])=\lim_{n\to \infty} \tilde d(A, P_n)$ but this coincides with $d_\infty(A, A_\Delta)$, allowing us to conclude that $d_C(A, \Delta)=d(\Phi(A), \Phi(\Delta))$. 
Moreover, it is trivial to check that $d_C(\Delta, \Delta)=0=d(\Phi(\Delta), \Phi(\Delta))$.
Note that $\Phi$ being an isometry implies that it is also injective. 

It remains to show that $\Phi$ is surjective. 
Consider an element $[(A_n)_n]\in (\bar \Delta^*, d)$. 
There are two cases: either $(A_n)_n$ converges to a point $A$ in $(\Delta^*, d_\infty)$ or not.
In the former case, $(A_n)_n$ converges to $A$ also in $(\Delta^*, \tilde d)$. 
Thus, 
$(A_n)_n\approx (A, A, \dots)$,
and hence 
$[(A_n)_n]=\Phi(A)$.
In the latter case, we have that $$\lim_{n\to \infty}d_\infty(A_n, A_{n\Delta})=0.$$ 
This implies that $[(A_n)_n]=[(P_n)_n]$.
Indeed, 
\begin{align*}
d\left([(A_n)_n],\left[(P_n)_n\right]\right) & = \lim_{n\to \infty}d\left(A_n, P_n\right)\\
& \le \lim_{n\to\infty}\max\left\{d_\infty(A_n, A_{n\Delta}), d_\infty(P_n, P_{n\Delta})\right\}=0, 
\end{align*}
since $d_\infty(P_n, P_{n\Delta})=\frac{1}{2(n+1)}$.
Thus, 
$[(A_n)_n]=\Phi(\Delta)$.
\end{proof}

From now on, the two metric spaces in Proposition~\ref{fskjgnfawljfnv} will be identified and denoted by $\bar\Delta^*$.
\begin{remark}\label{squared_balls}
If $y_A - x_A \ge 2\varepsilon$, the open ball $\bar B$ of radius $\varepsilon >0$ with respect to the metric $d$ and centred at $(x_A, y_A)$ coincides with the open ball $\hat B$ of radius $\varepsilon$ with respect to the metric $d_\infty$, centred at $(x_A, y_A)$.
If $y_A - x_A < 2\varepsilon$, the open ball $\bar B$ coincides with the union of the set $\hat B\cap \Delta^+$ and the set $\{(x,y)\in\mathbb{R}^2 : y < x + 2\varepsilon\} \cup \{\Delta\}$ (see Figure~\ref{box-mult}).
From now on, we will denote the open ball with respect to the metric $d$, centred at $(x_A, y_A)$ and of radius $\eps >0$, by $B_\eps(x_A, y_A)$.
\end{remark}

\begin{figure}
\centering
\includegraphics[width=5cm]{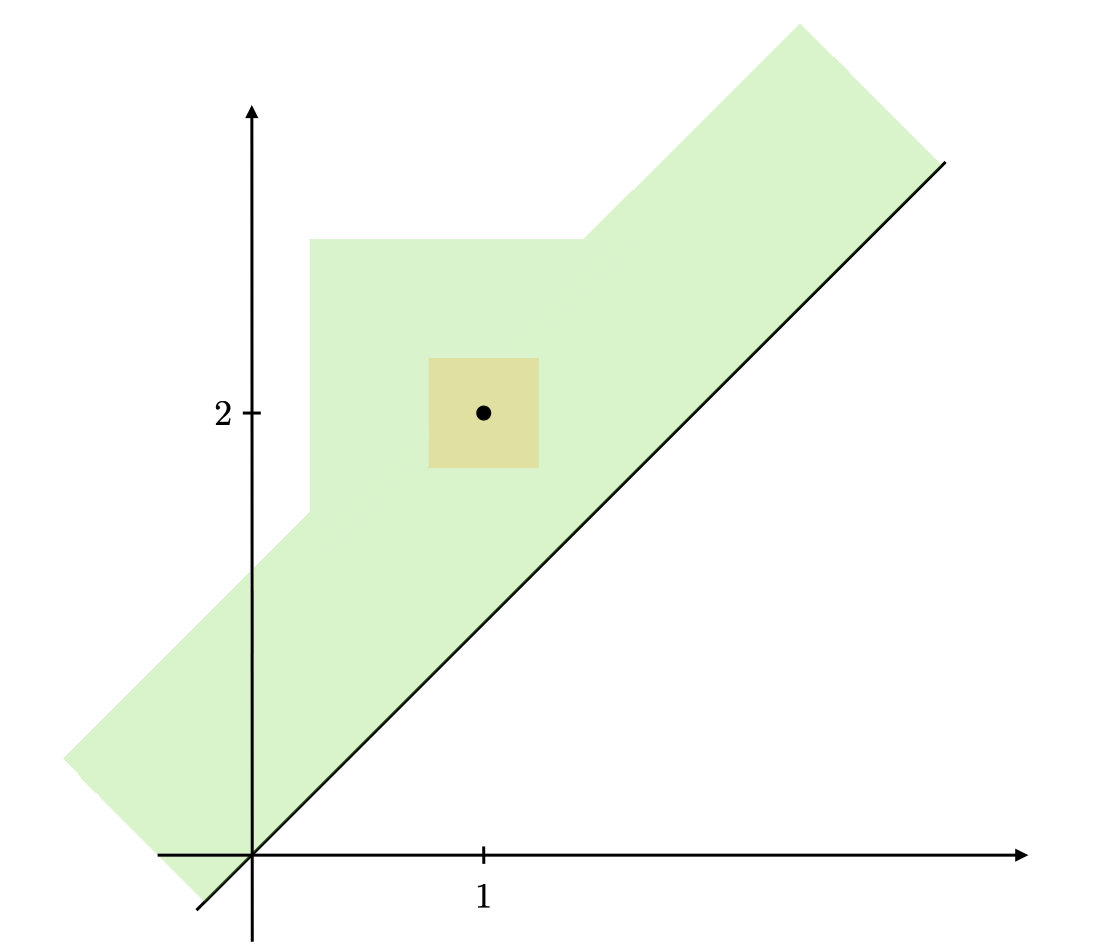}
\caption{
Two open balls of radius $\varepsilon$ for the metric $\tilde d$, centred at $(1,2)$, are represented. For the orange one, $2\varepsilon \le 1$, while for the green one, $2\varepsilon > 1$.
}
\label{box-mult}
\end{figure}

\begin{exercise}
Show that $\bar \Delta^*$ with the topology induced by $d$ is disconnected.
\end{exercise}

\subsection{Multisets and definition of persistence diagrams}\label{subsec:PD}

\begin{definition}[Multiset]\label{Multiset}
A \myemph{multiset} is a pair $(\mathcal{S},f)$ of a set $\mathcal{S}$ and a function $f\colon \mathcal{S}\to\N\cup\{\infty\}$. 
If $S\in \mathcal{S}$ and $f(S)>0$, we say that $S$ is an element of the multiset and its \myemph{multiplicity} is $f(S)$.
We may refer to the multiset $(\mathcal{S},f)$ as the collection $\mathcal{S}_f=\{(S,n)\in \mathcal{S}\times (\N\cup\{\infty\}): 0< n\le f(S)\}$ called its \myemph{realisation}.
\end{definition}

A multiset can be seen as a set in which each element may appear in multiple distinct copies.
We will often say that an element $S$ belongs to the multiset $(\mathcal{S}, f)$ with multiplicity $n=f(S)>0$ if $(S, k)$ belongs to $\mathcal{S}_f$, for every $k\le n$.

\begin{definition}[Map between multisets]\label{mapsmultisets}
Let $(\mathcal{S},f)$ and $(\mathcal{S}',f')$ be two multisets.
Any map from $\mathcal{S}_{f}$ to $\mathcal{S}'_{f'}$ is called a \myemph{multiset map} from the multiset $(\mathcal{S},f)$ to the multiset $(\mathcal{S}',f')$.
\end{definition}

\begin{definition}[Matching between multisets]\label{Bijmultisets}
Let $(\mathcal{S},f)$ and $(\mathcal{S}',f')$ be two multisets.
Any bijection from $\mathcal{S}_{f}$ to $\mathcal{S}'_{f'}$ is called a \myemph{matching} from the multiset $(\mathcal{S},f)$ to the multiset $(\mathcal{S}',f')$.
\end{definition}

\begin{ex}
Consider the set $\Delta^+$ and the function $f\colon \Delta^+\to \mathbb{N}\cup \{\infty\}$ assigning $2$ to the element $(1,2)$ and $0$ to all the other elements. 
The pair $(\Delta^+, f)$ is a multiset that can also be written as $\mathcal{M}=\{((1,2), 1), ((1,2), 2)\}$. 
If $\mathcal{M'}=\{((1,2), 1), ((1,3), 1)\}$ 
is another multiset, we can define a multiset map $\alpha\colon \mathcal{M}\to \mathcal{M}'$ assigning $((1,2), 1)$ to itself and $((1,2),2)$ to $((1,3),1)$. 
The map $\alpha$ is, in particular, a matching between $\mathcal{M}$ and $\mathcal{M}'$.
See Figure~\ref{fig_multiset} for a representation.
\end{ex}
\begin{figure}
\centering
\includegraphics[width=10cm]{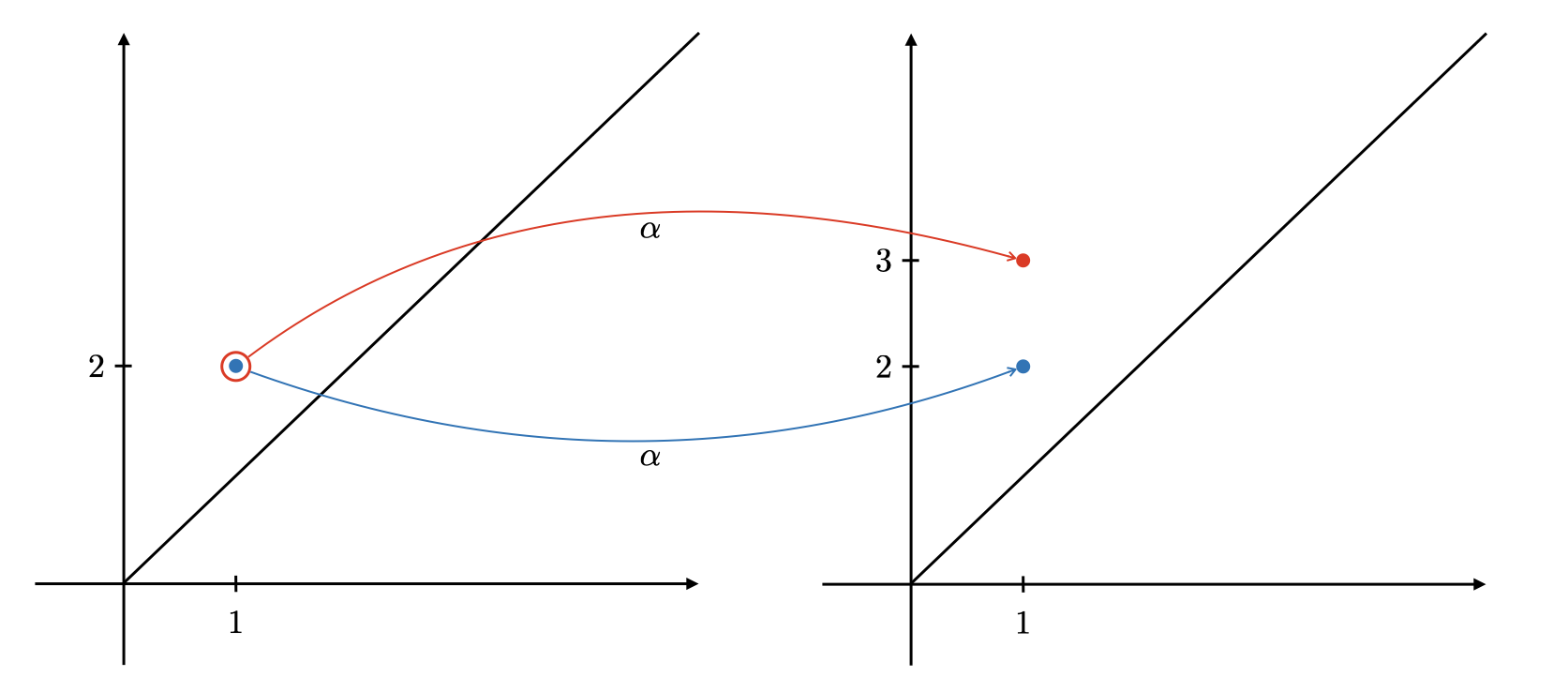}
\caption{The multiplicity of an element is represented by the number of concentric circles. For example $(1,2)$ has multiplicity 2. The arrows represent the matching $\alpha$ on the points with positive multiplicity. }
\label{fig_multiset}
\end{figure}

With a slight abuse of notation, we will identify the multiset $(\mathcal{S},f)$ with the support of $f$, $\{S_i\}_{i\in I}$, where its elements $S_i$ are taken with the multiplicity $m_i=f(S_i)$.  

Since $\mu^\p_{\eps,\eta}$ is non-negative (Corollary~\ref{propmu>=0}) and non-decreasing in $\eps$ (Proposition~\ref{muincreasing}), next definition is well-posed.

\begin{definition}[Persistence diagram]\label{defPD}
Fixed a degree $k\in \Z$, the \myemph{$k$-th persistence diagram} $\dgm_k(\p)$ of the continuous function $\p:X\to\R$ is the multiset $(\bar\Delta^*, f)$, where
$f\colon\bar\Delta^*\to\N\cup\{\infty\}$ is defined by setting, for every $P\in \bar\Delta^*$,
$$
f(P):=
\begin{cases}
\mu^\p(P):=\lim_{\eps\to 0^+} \mu^\p_{\eps,\eps}(P) & \text{ if } P\in \Delta^+; \\
\nu^\p(P):=\lim_{\eps\to 0^+} \left( \beta_k^\p{(u+\eps,\infty)}-\beta_k^\p{(u-\eps,\infty)} \right) &\text{ if } P=(u,\infty); \\
\infty &\text{if } P=\Delta.
\end{cases}
$$
Each element $(u,v)$ of the multiset $\dgm_k(\p)$ with $u<v<\infty$ and with multiplicity greater than 0 is called a \myemph{proper cornerpoint}.
Each element $(u,v)$ of $\dgm_k(\p)$ with $v=\infty$ and with multiplicity greater than 0 is called a \myemph{cornerpoint at infinity} (
or an \myemph{essential cornerpoint}, or an \myemph{improper cornerpoint}).
The point $\Delta$ is called the \myemph{trivial cornerpoint}.
\end{definition}

Please note that, in the presented mathematical framework, each element $P \in \dgm_k(\p)$ is either a point $(u, v)$ with $u<v$ and $v \in \R\cup\{\infty\}$, or $P$ is the diagonal $\Delta$.

We observe that the function $\mu^\p_{\varepsilon,\varepsilon}(P)$ in Definition~\ref{defPD} is defined for every sufficiently small $\varepsilon>0$, and hence the limit
$\lim_{\varepsilon \to 0^+} \mu^\p_{\varepsilon,\varepsilon}(P)$ 
is well defined.

We usually write $\dgm(\p)$ instead of $\dgm_k(\p)$ if a statement about a persistence diagram does not depend on the degree. 

Persistence diagrams provide an alternative and compact way to represent PBNFs. 
For example, Figure~\ref{PD_ex} is the $0$-th persistence diagram of the function in Figure~\ref{figexFunction} whose 0-th PBNF is drawn in Figure~\ref{figexPBNF}. 
There the proper cornerpoints are represented with red dots and the cornerpoint at infinity with a red square. 
In this case, all these cornerpoints have multiplicity one. 
A common interpretation of the points of a persistence diagram is given in terms of birth and death of homological features of the function under consideration.
For example, the function $\p$ in Figure~\ref{figexFunction} has one connected component born at time $1$ and another one born at time $2$, corresponding to two local minima of $\p$. 
The second connected component dies (or merges with the first one) at time $4$. 
This information is summarised in the persistence diagram with a point $(2,4)$, where the $x$-coordinate is in fact the time of birth of the connected component and the $y$-coordinate is its time of death. 
\begin{figure}
\centering
\includegraphics[width=5cm]{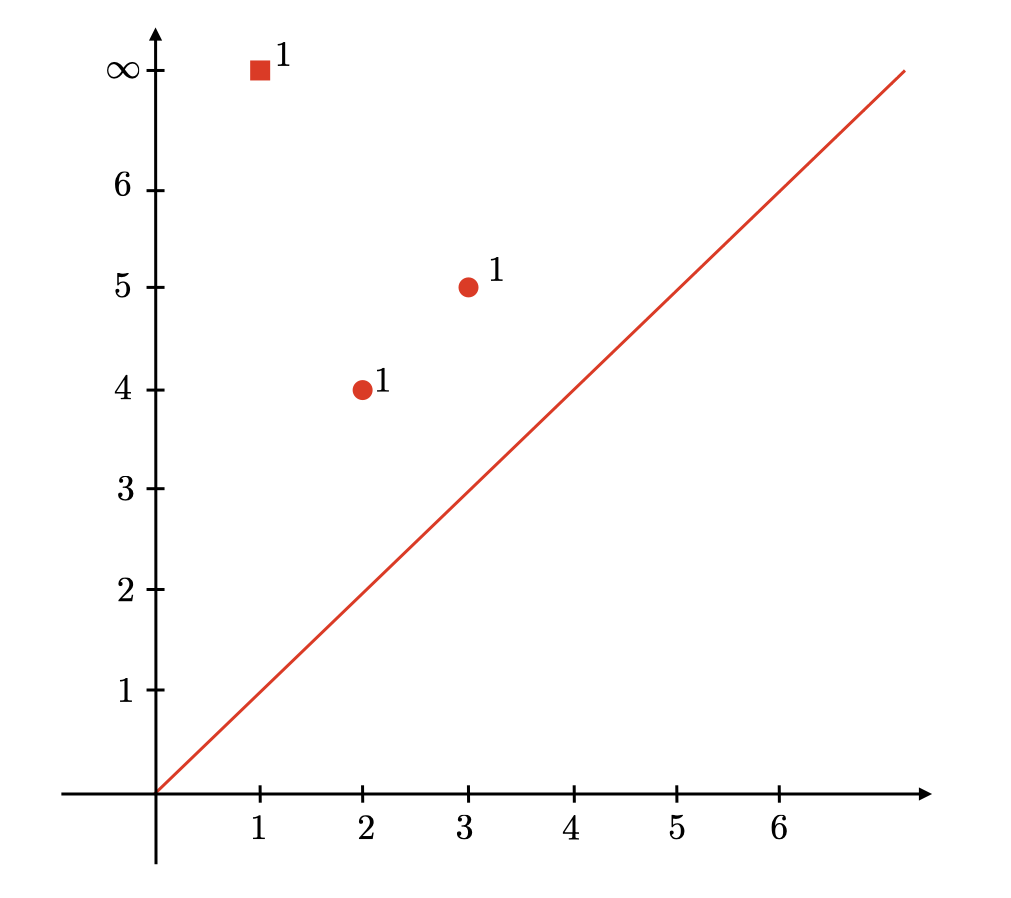}
\caption{Example of 0-th persistence diagram of $\p$ in Figure~\ref{figexFunction}. Proper cornerpoints are depicted as circles, cornerpoints at infinity as squares, the trivial cornerpoint as a red line.}
\label{PD_ex}
\end{figure}

Analogously to the PBNFs, persistence diagrams are invariant under precomposition with homeomorphisms.

\begin{proposition}\label{propinvariancePDs}
$\dgm(\p)=\dgm(\p g^{-1})$ for every homeomorphism $g\colon X\to Y$.
\end{proposition}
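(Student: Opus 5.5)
The plan is to reduce everything to Proposition~\ref{propinvariancebeta}, which gives $\beta_k^\p=\beta_k^{\p g^{-1}}$ as functions on $\Delta^*$ for every degree $k$. The persistence diagram $\dgm_k(\p)$ is the multiset $(\bar\Delta^*,f)$, and $f$ is defined in Definition~\ref{defPD} purely in terms of values of $\beta_k^\p$, except at the point $\Delta$, where it is $\infty$ for every function. So it suffices to show that the multiplicity functions $f_\p$ and $f_{\p g^{-1}}$ agree at every $P\in\bar\Delta^*$.

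First, I will check that $\p g^{-1}\colon Y\to\R$ is a legitimate filtering function. It is continuous because it is a composition of continuous maps. Its domain $Y=g(X)$ is compact and non-empty because $X$ is. Assumption~\ref{ass_fingen} holds for it because each sublevel set $Y^{\p g^{-1}}_v=g(X^\p_v)$ is homeomorphic to $X^\p_v$, so by Proposition~\ref{homeohomologyiso} its homology is finitely generated. Hence $\dgm_k(\p g^{-1})$ is well defined.

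Next, I will split into three cases according to Definition~\ref{defPD}.
\begin{enumerate}
\item If $P=(u,v)\in\Delta^+$, then for every sufficiently small $\eps>0$ the total multiplicity $\mu^\p_{\eps,\eps}(u,v)$ (Definition~\ref{defmu}) is a signed sum of four values of $\beta_k^\p$. These coincide with the corresponding values of $\beta_k^{\p g^{-1}}$, so $\mu^\p_{\eps,\eps}(u,v)=\mu^{\p g^{-1}}_{\eps,\eps}(u,v)$, and the limits as $\eps\to0^+$ agree.
\item If $P=(u,\infty)$, the same argument applies to $\beta_k^\p(u\pm\eps,\infty)$, giving $\nu^\p(P)=\nu^{\p g^{-1}}(P)$.
\item If $P=\Delta$, both multiplicities equal $\infty$ by definition.
\end{enumerate}

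There is no substantial obstacle. The only point needing care is the first step: $\p g^{-1}$ must satisfy the standing hypotheses so that its diagram is defined. The homeomorphism $g$ restricted to sublevel sets supplies this, as already observed in the proof of Proposition~\ref{propinvariancebeta}.
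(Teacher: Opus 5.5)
Your proof is correct and takes the same route as the paper, which simply says the result follows immediately from Proposition~\ref{propinvariancebeta} and the definition of persistence diagram. You spell out the case-by-case comparison of multiplicities, and you also verify that $\p g^{-1}$ satisfies the standing hypotheses, which the paper leaves implicit.
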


\begin{proof}
It immediately follows from Proposition~\ref{propinvariancebeta} and the definition of persistence diagram.
\end{proof}

\begin{exercise}\label{exercise:1DPD}
    Determine the persistence diagrams of the functions:
    \begin{itemize}
        \item $\p$ introduced in Exercise \ref{exercise:1Dcircle};
        \item $\p$ and $\p'$ introduced in Exercise \ref{exercise:eight};
        \item $\p$ introduced in Exercise \ref{exercise:1DcircleDistance}.
    \end{itemize}
\end{exercise}

\begin{exercise}\label{exercize:d_G}
Let $X$ be a compact topological space. Assume that there exists a continuous map $F:C^0(X,\R)\to C^0(X,\R)$ such that $\dgm_0(\varphi)=\dgm_0(F(\varphi))$ for every $\varphi\in C^0(X,\R)$.
Is it true that a homeomorphism $g:X\to X$ exists, such that $F(\varphi)=\varphi g^{-1}$ for every $\varphi\in C^0(X,\R)$? Compare this exercise with Proposition \ref{propinvariancePDs}.
\end{exercise}





\subsection{Cornerpoints in persistence diagrams and discontinuities of PBNFs}\label{subsec:cornerpoints}

In this section, we focus on cornerpoints of persistence diagrams disclosing their relationship with the discontinuities of the PBNFs.\\

The following two propositions claim that the finite coordinates of the proper and essential cornerpoints of a persistence diagram of a function $\p$ can only lie between the minimum and the maximum of $\p$. 

\begin{proposition}\label{propposprcorn}
Let $ P=(u, v)$ be a proper cornerpoint in $\dgm (\p)$. Then $\min \p\le u < v\le \max \p$.
\end{proposition}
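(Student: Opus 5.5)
We argue by contradiction, showing that if $u<\min\p$ or $v>\max\p$ then $\mu^\p_{\eps,\eps}(u,v)=0$ for all sufficiently small $\eps>0$, so that $\mu^\p(P)=0$ and $P$ is not a cornerpoint. The inequality $u<v$ holds because $P\in\Delta^+$. The two cases use different features of the filtration: emptiness of low sublevel sets for $u<\min\p$, and stabilization at $X$ for $v>\max\p$ (Remark~\ref{remmaxphi}).

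\emph{Case $u<\min\p$.} Choose $\eps>0$ with $u+\eps<\min\p$ and $u+\eps<v-\eps$. For every $u'\le u+\eps$ the sublevel set $X^\p_{u'}$ is empty. Its singular chain complex is therefore zero (there are no singular simplices in the empty set), so $H_k(X^\p_{u'})=0$. Hence $PH^\p_k(u',v')=0$ and $\beta_k^\p(u',v')=0$ for all $v'\ge u'$. All four terms in Definition~\ref{defmu} have first coordinate $u\pm\eps\le u+\eps$, so they vanish and $\mu^\p_{\eps,\eps}(u,v)=0$.

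\emph{Case $v>\max\p$.} Choose $\eps>0$ with $v-\eps\ge\max\p$ and $u+\eps<v-\eps$. By Remark~\ref{remmaxphi}, $\beta_k^\p(\cdot,v-\eps)=\beta_k^\p(\cdot,v+\eps)$, because both second coordinates are at least $\max\p$. The definition of $\mu^\p_{\eps,\eps}(u,v)$ then reads $[\beta(u+\eps,v-\eps)-\beta(u+\eps,v+\eps)]+[\beta(u-\eps,v+\eps)-\beta(u-\eps,v-\eps)]$, and each bracket is zero. So $\mu^\p_{\eps,\eps}(u,v)=0$.

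Taking $\eps\to0^+$ gives $\mu^\p(P)=0$ in both cases, which contradicts $P$ being a proper cornerpoint. Hence $\min\p\le u<v\le\max\p$.

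The argument is elementary. The one point needing care is that the homology of the empty space is trivial, which follows directly from the definition of singular chains. It also has to be checked that a single $\eps$ can be chosen small enough for the box to stay in $\Delta^+$ and to satisfy the relevant threshold condition; since the limit in Definition~\ref{defPD} only involves small $\eps$, this causes no difficulty.
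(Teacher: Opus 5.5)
Your proposal is correct and follows essentially the same route as the paper. In both cases you show $\mu^\p_{\eps,\eps}(u,v)=0$ for all small $\eps$: below $\min\p$ the sublevel sets are empty, and above $\max\p$ they all equal $X$. The paper argues the second case directly from the identity inclusion $X^\p_{v-\eps}\hookrightarrow X^\p_{v+\eps}$, which is exactly the content of Remark~\ref{remmaxphi} that you cite.
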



\begin{proof}
If $u<\min \p$ and $\eps>0$ is small enough,
$X^\p_{u-\eps}=X^\p_{u+\eps}=\emptyset$ and hence we have 
$\beta_k^\p{(u-\eps,v)}=\dim {i^{\p*}_{u-\eps,v}}(H_k(X^\p_{u-\eps}))=0$ and
$\beta_k^\p{(u+\eps,v)}=\dim {i^{\p*}_{u+\eps,v}}(H_k(X^\p_{u+\eps}))=0$, for every $v>u+\eps$.
Therefore, $\mu^\p_{\eps, \eps}(u,v)=0$ and $(u,v)$ cannot be a cornerpoint.
If $v>\max \p$ and $\eps>0$ is small enough,
$X^\p_{v-\eps}=X^\p_{v+\eps}=X$.
It follows that the map
${i^{\p}_{v-\eps,v+\eps}}\colon X^\p_{v-\eps}=X\to X^\p_{v+\eps}=X$
is the identity.
This implies that the induced map
${i^{\p*}_{v-\eps,v+\eps}}\colon H_k(X^\p_{v-\eps})=H_k(X)\to H_k(X^\p_{v+\eps})=H_k(X)$
is the identity, and hence
\begin{align*}
            \beta_k^\p{(u,v+\eps)}=&\dim {i^{\p*}_{u,v+\eps}}(H_k(X^\p_{u}))\\ =&\dim {i^{\p*}_{v-\eps,v+\eps}}\circ {i^{\p*}_{u,v-\eps}}(H_k(X^\p_{u}))\\ =&
\dim {i^{\p*}_{u,v-\eps}}(H_k(X^\p_{u}))\\
=&\beta_k^\p{(u,v-\eps)} 
\end{align*}
for every $u<v-\eps$.
Therefore, $\mu^\p_{\eps,\eps}(u,v)=0$, and $(u,v)$ cannot be a cornerpoint.
\end{proof}

\begin{proposition}\label{propposcornatinf}
Let $P=(u,\infty)$ be a cornerpoint at infinity in $\dgm (\p)$. Then $\min \p\le u\le \max \p$.
\end{proposition}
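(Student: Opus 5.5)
We argue by contraposition, in the same way as in Proposition~\ref{propposprcorn}: we show that if $u<\min\p$ or $u>\max\p$, then the multiplicity $\nu^\p(u,\infty)$ vanishes. Recall from Definition~\ref{defPD} that
\[
\nu^\p(u,\infty)=\lim_{\eps\to 0^+}\left(\beta_k^\p(u+\eps,\infty)-\beta_k^\p(u-\eps,\infty)\right).
\]
It therefore suffices to show that the difference inside the limit is zero for every sufficiently small $\eps>0$.

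\emph{Case $u<\min\p$.} Choose $\eps>0$ with $u+\eps<\min\p$. Then $X^\p_{u-\eps}=X^\p_{u+\eps}=\emptyset$. The singular chain complex of the empty set is zero, so $H_k(X^\p_{u\pm\eps})=0$. Hence the images $PH_k^\p(u\pm\eps,\infty)=\Imm i^{\p*}_{u\pm\eps,\infty}$ are zero, so $\beta_k^\p(u\pm\eps,\infty)=0$ and the difference vanishes.

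\emph{Case $u>\max\p$.} Choose $\eps>0$ with $u-\eps>\max\p$. Then $X^\p_{u-\eps}=X^\p_{u+\eps}=X=X^\p_\infty$. Both inclusions $i^\p_{u\pm\eps,\infty}$ are the identity of $X$, and by Theorem~\ref{thm_homology_functor} the induced maps are the identity of $H_k(X)$. So $\beta_k^\p(u+\eps,\infty)=\beta_k^\p(u-\eps,\infty)=\dim H_k(X)$, which is finite by Assumption~\ref{ass_fingen}, and again the difference vanishes. Alternatively, Remark~\ref{remmaxphi} combined with the symmetric statement in the first variable gives the same conclusion.

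In both cases $\nu^\p(u,\infty)=0$, contradicting the assumption that $(u,\infty)$ is a cornerpoint. The only point needing care, and it is minor, is that $u=\max\p$ is not excluded here, unlike the strict inequality $v\le\max\p$ for proper cornerpoints. This is consistent with the argument, which only rules out $u>\max\p$. I do not expect any real obstacle; the argument is a direct adaptation of the proof of Proposition~\ref{propposprcorn}.
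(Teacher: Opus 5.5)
Your proposal is correct and follows essentially the same route as the paper: for small $\eps>0$ the sublevel sets $X^\p_{u\pm\eps}$ are both empty when $u<\min\p$, and both equal to $X$ with identity inclusions when $u>\max\p$. In either case $\beta_k^\p(u+\eps,\infty)=\beta_k^\p(u-\eps,\infty)$, so $\nu^\p(u,\infty)=0$.
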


\begin{proof}
Analogously to the proof of
Proposition~\ref{propposprcorn},
if $u<\min \p$ and $\eps>0$ is small enough,
$X^\p_{u-\eps}=X^\p_{u+\eps}=\emptyset$ and hence we have 
$\beta_k^\p{(u-\eps,\infty)}=\dim {i^{\p*}_{u-\eps,\max \p}}(H_k(X^\p_{u-\eps}))=0$ and
$\beta_k^\p{(u+\eps,\infty)}=\dim {i^{\p*}_{u+\eps,\max\p}}(H_k(X^\p_{u+\eps}))=0$.
Therefore, $\nu^\p(u,\infty)=0$ and $(u,\infty)$ cannot be a cornerpoint at infinity.
If $u>\max \p$ and $\eps>0$ is small enough,
$X^\p_{u-\eps}=X^\p_{u+\eps}=X$.
It follows that the map
${i^{\p}_{u-\eps,u+\eps}}:X^\p_{u-\eps}=X\to X^\p_{u+\eps}=X$
is the identity.
This implies that the induced map
${i^{\p*}_{u-\eps,u+\eps}}:H_k(X^\p_{u-\eps})=H_k(X)\to H_k(X^\p_{u+\eps})=H_k(X)$
is the identity, and hence
\begin{align*}
            \beta_k^\p{(u-\eps,\infty)}
=&\dim {i^{\p*}_{u-\eps,\max\p}}(H_k(X^\p_{u-\eps}))\\
=&\dim {i^{\p*}_{u+\eps,\max\p}}\circ {i^{\p*}_{u-\eps,u+\eps}}(H_k(X^\p_{u-\eps}))\\
=&\dim {i^{\p*}_{u+\eps,\max\p}}(H_k(X^\p_{u-\eps}))\\
=&\dim {i^{\p*}_{u+\eps,\max\p}}(H_k(X^\p_{u+\eps}))\\
=&\beta_k^\p{(u+\eps,\infty)}. 
\end{align*}
Therefore, $\nu^\p(u,\infty)=0$ and $(u,\infty)$ cannot be a cornerpoint at infinity.
\end{proof}


We now show that the discontinuities of PBNFs propagate downward and to the left from proper cornerpoints, and only downward from essential cornerpoints.

\begin{proposition}\label{proppropadiscfromcp}
If $\bar P=(\bar u,\bar v)$ is a cornerpoint in $\dgm (\p)$, the following statements hold:
\begin{itemize}
  \item[1.] if $\bar P$ is a proper cornerpoint and $\bar u< v< \bar v$, then $\bar u$ is a discontinuity point for $\beta_k^\p{(\cdot,v)}$;
    \item[2.] if $\bar P$ is a proper cornerpoint and $\bar u< u< \bar v$, then $\bar v$ is a discontinuity point for $\beta_k^\p{(u,\cdot)}$;
  \item[3.] if $\bar P$ is a cornerpoint at infinity and $\bar u< v
  $,
  then $\bar u$ is a discontinuity point for $\beta_k^\p{(\cdot,v)}$.
\end{itemize}
\end{proposition}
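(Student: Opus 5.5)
We would prove the three statements by combining the positivity of the multiplicity at a cornerpoint with the inequality of Proposition~\ref{prop_premult}. The idea is to push the jump detected by $\mu^\p_{\eps,\eps}(\bar u,\bar v)>0$ (or by $\nu^\p(\bar u,\infty)>0$) downwards or leftwards, in the same way as in Proposition~\ref{proppropadisc}.

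\emph{Statement 1.} Fix $v$ with $\bar u<v<\bar v$. Since $\bar P$ is a proper cornerpoint, we have $\mu^\p(\bar P)>0$. Because $\mu^\p_{\eps,\eps}(\bar P)$ is integer valued and non-decreasing in $\eps$ (Proposition~\ref{muincreasing}), $\mu^\p_{\eps,\eps}(\bar P)\ge 1$ for every sufficiently small $\eps>0$. We choose $\eps$ small enough that also $\bar u+\eps<v<\bar v-\eps$. Writing out Definition~\ref{defmu},
\[
\beta_k^\p(\bar u+\eps,\bar v-\eps)-\beta_k^\p(\bar u-\eps,\bar v-\eps)\ \ge\ 1+\beta_k^\p(\bar u+\eps,\bar v+\eps)-\beta_k^\p(\bar u-\eps,\bar v+\eps)\ \ge\ 1,
\]
where the last step uses monotonicity in the first variable (Proposition~\ref{propmonotonicity}). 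Next we apply Proposition~\ref{prop_premult}, in the form used in the proof of Proposition~\ref{proppropadisc}, to the rectangle with corners at abscissae $\bar u\pm\eps$ and ordinates $v<\bar v-\eps$. The jump in the first variable at height $v$ is at least the jump at height $\bar v-\eps$, so
\[
\beta_k^\p(\bar u+\eps,v)-\beta_k^\p(\bar u-\eps,v)\ge 1
\]
for all small $\eps$. Hence $\beta_k^\p(\cdot,v)$ is not locally constant at $\bar u$.

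\emph{Statement 2.} This is symmetric. From $\mu^\p_{\eps,\eps}(\bar P)\ge1$ and monotonicity in the second variable we get
\[
\beta_k^\p(\bar u+\eps,\bar v-\eps)-\beta_k^\p(\bar u+\eps,\bar v+\eps)\ge 1.
\]
We then use the equivalent form of Proposition~\ref{prop_premult} stated right after its proof, now with abscissae $\bar u+\eps<u$ and ordinates $\bar v\pm\eps$. It shows that the drop in the second variable at abscissa $u$ is at least the drop at $\bar u+\eps$. This gives $\beta_k^\p(u,\bar v-\eps)-\beta_k^\p(u,\bar v+\eps)\ge1$ for small $\eps$, so $\bar v$ is a discontinuity point of $\beta_k^\p(u,\cdot)$.

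\emph{Statement 3.} Here $\nu^\p(\bar u,\infty)\ge1$, so
\[
\beta_k^\p(\bar u+\eps,\infty)-\beta_k^\p(\bar u-\eps,\infty)\ge1
\]
for small $\eps$, and we want to transfer this jump to a finite height $v>\bar u$. If $v\ge\max\p$, Remark~\ref{remmaxphi} gives $\beta_k^\p(\cdot,v)=\beta_k^\p(\cdot,\infty)$ and we are done. Otherwise $v<\max\p$. We choose $\eps$ with $\bar u+\eps<v$ and apply Proposition~\ref{prop_premult} to the rectangle with ordinates $v<\max\p$. This requires a strict inequality $v<\max\p$, which holds in this case. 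The jump at height $v$ then dominates the jump at height $\max\p$, which equals the jump at $\infty$ by Remark~\ref{remmaxphi}.

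The main obstacle is bookkeeping rather than substance. One must check that the hypotheses of Proposition~\ref{prop_premult} hold, namely that the relevant points lie in $\Delta^+$ and that the required strict inequalities hold for all small $\eps$. One must also handle the boundary case $v\ge\max\p$ in Statement~3 separately, as done above.
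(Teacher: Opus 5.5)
Your proposal is correct and follows the paper's proof. In both, positivity of $\mu^\p_{\eps,\eps}(\bar P)$ (resp.\ $\nu^\p(\bar u,\infty)$) together with monotonicity gives a jump at height $\bar v-\eps$ (resp.\ $\max\p$), and Proposition~\ref{prop_premult} then transfers that jump to the required height or abscissa. You apply Proposition~\ref{prop_premult} directly rather than citing Proposition~\ref{proppropadisc}, whose proof is exactly this step. Your separate treatment of the case $v\ge\max\p$ in Statement~3 also avoids the paper's implicit use of $\beta_k^\p(\bar u+\eps,\max\p)$ when $\bar u+\eps\ge\max\p$.
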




\begin{proof}
\begin{itemize}

\item[1.] By Proposition~\ref{prop_premult},
since $\bar P=(\bar u,\bar v)$ is a proper cornerpoint in $\dgm (\p)$, we have that
\begin{equation}\label{ruhcnoaivs}
\beta_k^\p{(\bar u+\eps,\bar v-\eps)}
            -\beta_k^\p{(\bar u-\eps,\bar v-\eps)}-\beta_k^\p{(\bar u+\eps,\bar v+\eps)}
            +\beta_k^\p{(\bar u-\eps,\bar v+\eps)}> 0
\end{equation}
for any small enough $\eps>0$.

Since $\beta_k^\p$ is non-decreasing in its first variable (Proposition~\ref{propmonotonicity}),
we have that
\begin{equation}\label{ghufkjovn}  \beta_k^\p{(\bar u+\eps,\bar v+\eps)}
            -\beta_k^\p{(\bar u-\eps,\bar v+\eps)}\ge 0
\end{equation}
for any small enough $\eps>0$.

From \ref{ruhcnoaivs} and \ref{ghufkjovn} it follows that
$\beta_k^\p{(\bar u+\eps,\bar v-\eps)}
            -\beta_k^\p{(\bar u-\eps,\bar v-\eps)}> 0$ for any small enough $\eps>0$.
            Thus, $\bar u$ is a discontinuity point of $\beta_k^\p{(\cdot,\bar v-\eps)}$ for any small enough $\eps>0$.
            Then Statement 1 in this proposition follows from Statement 1 in Proposition~\ref{proppropadisc}.

\item[2.]  By Proposition~\ref{prop_premult} and
since $\bar P=(\bar u,\bar v)$ is a proper cornerpoint in $\dgm (\p)$, we have that
\begin{equation}\label{jkjejkjke}
             \beta_k^\p{(\bar u+\eps,\bar v-\eps)}
            -\beta_k^\p{(\bar u-\eps,\bar v-\eps)}-\beta_k^\p{(\bar u+\eps,\bar v+\eps)}
            +\beta_k^\p{(\bar u-\eps,\bar v+\eps)}> 0
\end{equation}
for any small enough $\eps>0$.

Since $\beta_k^\p$ is non-increasing in its second variable (Proposition~\ref{propmonotonicity}),
we have that
\begin{equation}\label{eopivohxkcn}
\beta_k^\p{(\bar u-\eps,\bar v+\eps)}
            -\beta_k^\p{(\bar u-\eps,\bar v-\eps)}\le 0
\end{equation}            
            for any small enough $\eps>0$.

From \ref{jkjejkjke} and \ref{eopivohxkcn} it follows that
$\beta_k^\p{(\bar u+\eps,\bar v-\eps)}
            -\beta_k^\p{(\bar u+\eps,\bar v+\eps)}> 0$ for any small enough $\eps>0$.
            Thus, $\bar v$ is a discontinuity point of $\beta_k^\p{(\bar u+\eps,\cdot)}$ for any small enough $\eps>0$. 
            Then Statement 2 in this proposition follows from Statement 2 in Proposition~\ref{proppropadisc}.

\item[3.] Since $\bar P=(\bar u,\infty)$ is an essential cornerpoint in $\dgm (\p)$, we have that
$\beta_k^\p{(\bar u+\eps,\max\p)}
            -\beta_k^\p{(\bar u-\eps,\max\p)}>0$
for any $\eps>0$.
Because of Remark~\ref{remmaxphi}, if $v\ge \max\p$ then
            $\beta_k^\p{(\bar u+\eps,v)}
            -\beta_k^\p{(\bar u-\eps,v})>0$
            for any $\eps>0$.
             It follows that for any $v\ge\max\p$,
$\bar u$ is a discontinuity point of $\beta_k^\p{(\cdot,v)}$.
Then, Statement 3 in this proposition follows from Statement 1 in Proposition~\ref{proppropadisc}.
\end{itemize}
\end{proof}

Note that if $P=(\bar u, \bar v)$ is a proper cornerpoint, then we cannot say that $\bar v$, resp. $\bar u$, is a discontinuity point for $\beta^{\varphi}_k(\bar u, \cdot)$, resp. $\beta^{\varphi}_k(\cdot , \bar v)$. 
As an example, we can consider the cornerpoint $(2,4) \in \dgm_0(\varphi)$ in Figure~\ref{PD_ex} when $\varphi$ is the function in Figure~\ref{figexFunction}.
This cornerpoint is a discontinuity point in the variable $v$, but not in the variable $u$.





\begin{corollary}\label{homological_critical}
If $w$ is a finite coordinate of a cornerpoint $P$ in 
$\dgm_k(\p)\setminus \{\Delta\}$, then, for every sufficiently small $\eps>0$,  
the linear map $i^{\p*}_{w-\eps, w+\eps}\colon H_k(X^\p_{w-\eps})\to H_k(X^\p_{w+\eps})$ is not an isomorphism.
\end{corollary}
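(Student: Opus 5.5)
We argue by contraposition: assume that for arbitrarily small $\eps>0$ (equivalently, by monotonicity of the claim in $\eps$, for some $\eps$ below any threshold) the map $i^{\p*}_{w-\eps,w+\eps}$ is an isomorphism, and show that $P$ cannot have positive multiplicity. The key observation is a factorization through the inclusions. If $w$ is the first coordinate of $P=(w,v)$ and $v>w+\eps$, then $i^{\p*}_{w-\eps,v}=i^{\p*}_{w+\eps,v}\, i^{\p*}_{w-\eps,w+\eps}$, and surjectivity of $i^{\p*}_{w-\eps,w+\eps}$ gives $\Imm i^{\p*}_{w-\eps,v}=\Imm i^{\p*}_{w+\eps,v}$, hence $\beta_k^\p(w-\eps,v)=\beta_k^\p(w+\eps,v)$ for every $v>w+\eps$, including $v=\infty$ (i.e.\ $v=\max\p$, by Remark~\ref{remmaxphi}). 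If instead $w$ is the second coordinate of a proper cornerpoint $P=(u,w)$ with $u<w-\eps$, then $i^{\p*}_{u,w+\eps}=i^{\p*}_{w-\eps,w+\eps}\, i^{\p*}_{u,w-\eps}$, and injectivity gives, via Lemma~\ref{lemmaImgf} (the kernel of the restriction is trivial), $\beta_k^\p(u,w+\eps)=\beta_k^\p(u,w-\eps)$.

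Next we transport these equalities to the multiplicities. For the first coordinate: for a proper cornerpoint, $\mu^\p_{\eps,\eps}(w,v)$ is the difference of $\beta_k^\p(w+\eps,v-\eps)-\beta_k^\p(w-\eps,v-\eps)$ and $\beta_k^\p(w+\eps,v+\eps)-\beta_k^\p(w-\eps,v+\eps)$, both of which vanish, so $\mu^\p_{\eps,\eps}(w,v)=0$; for an essential cornerpoint, $\beta_k^\p(w+\eps,\infty)-\beta_k^\p(w-\eps,\infty)=0$, so $\nu^\p(w,\infty)=0$. For the second coordinate, the differences $\beta_k^\p(w-\eps+\cdot,\cdot)$ in the $v$-direction at $u\pm\eps$ both vanish, again giving $\mu^\p_{\eps,\eps}(u,w)=0$. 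This mirrors the argument in Propositions~\ref{propposprcorn} and~\ref{propposcornatinf}, where the identity map played the role of our isomorphism.

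The main subtlety is the quantifier: the conclusion must hold for every sufficiently small $\eps$, whereas multiplicities are defined as limits. We will handle this with the monotonicity of the constructions. Since $\mu^\p_{\eps,\eps}$ is non-decreasing in $\eps$ (Proposition~\ref{muincreasing}), and the analogous difference for $\nu^\p$ is also monotone, vanishing for a single small $\eps$ forces the limit to be $0$. It therefore suffices to show that if $i^{\p*}_{w-\eps,w+\eps}$ is an isomorphism for one admissible $\eps$ (small enough that $u+\eps<w-\eps$ or $w+\eps<v$), then the multiplicity vanishes. The contrapositive then says that for every admissible small $\eps$ the map fails to be an isomorphism, which is the statement. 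Note that only surjectivity is used for the first coordinate and only injectivity for the second, which is slightly stronger than what is needed.
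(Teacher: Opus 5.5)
Your argument is correct, and it reaches the statement by a somewhat different route than the paper. Both proofs rest on the same factorization of induced maps through $H_k(X^\p_{w+\eps})$ (resp.\ $H_k(X^\p_{w-\eps})$).

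The paper argues forward. It quotes Proposition~\ref{proppropadiscfromcp} (discontinuities propagate from a cornerpoint) to obtain $\beta_k^\p(w-\eps,v-\eps)\neq\beta_k^\p(w+\eps,v-\eps)$ for every $\eps<\frac{v-w}{2}$. The identity $i^{\p*}_{w-\eps,v-\eps}=i^{\p*}_{w+\eps,v-\eps}\,i^{\p*}_{w-\eps,w+\eps}$ then rules out an isomorphism.

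You argue by contraposition, straight from the definitions of $\mu^\p$ and $\nu^\p$. Surjectivity (resp.\ injectivity) of $i^{\p*}_{w-\eps,w+\eps}$ makes the differences along both horizontal sides (resp.\ both vertical sides) of the $(\eps,\eps)$-box vanish at once. Proposition~\ref{muincreasing}, or the monotonicity of $\beta_k^\p$ in its first variable in the case of $\nu^\p$, then carries the vanishing down to the limit.

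Given the discontinuity machinery, the paper's proof is a two-liner. Yours is more self-contained and has two advantages:
\begin{itemize}
\item It explicitly handles the case in which $w$ is the ordinate of a proper cornerpoint. The paper's written proof only considers $P=(w,v)$ and leaves this case implicit; it would require Statement~2 of Proposition~\ref{proppropadiscfromcp} together with injectivity.
\item It makes visible that what fails is surjectivity at a birth coordinate and injectivity at a death coordinate.
\end{itemize}

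Two small points of precision. For a proper cornerpoint $(w,v)$, the admissibility condition is $w+\eps<v-\eps$ (needed for $\mu^\p_{\eps,\eps}(w,v)$ to be defined), not $w+\eps<v$. Also, the parenthetical appeal to a ``monotonicity of the claim in $\eps$'' in your first sentence is neither justified nor needed, since you prove the failure of isomorphism for every admissible $\eps$.
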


\begin{proof} 
If $P=(w,v)$ (possibly with $v=\infty$), by Proposition~\ref{proppropadiscfromcp},  $$\dim (\Imm i^{\p*}_{w-\eps,v-\varepsilon})=\beta^\p_k(w-\eps, v-\varepsilon)\neq \beta^\p_k(w+\eps, v-\varepsilon)=\dim (\Imm i^{\p*}_{w+\eps,v-\varepsilon})$$ for every $\eps>0$ with $w+\varepsilon< v-\varepsilon$, i.e., $\eps<\frac{v-w}{2}$. 
Observe that $i^{\p*}_{w-\eps,v-\varepsilon}= i^{\p*}_{w+\eps,v-\varepsilon}i^{\p*}_{w-\eps,w+\eps}$, hence $i^{\p*}_{w-\eps, w+\eps}$ cannot be an isomorphism.  
\end{proof}

The following result completes the statement of Proposition~\ref{proppropadiscfromcp}, showing that each discontinuity of a PBNF originates at a cornerpoint.

\begin{proposition}\label{propalldisccomefromcp}
The following statements hold:
\begin{itemize}
  \item[1.] If $\bar u<\bar v$ is a discontinuity point for $\beta_k^\p{(\cdot,\bar v)}$, then there exists at least one cornerpoint $(\bar u,v')$ (proper or at infinity) with $v'\ge \bar v$.
  \item[2.] If $\bar v>\bar u$ is a discontinuity point for $\beta_k^\p{(\bar u,\cdot)}$, then there exists at least one proper cornerpoint $(u',\bar v)$ with $u'\le \bar u$.
\end{itemize}
\end{proposition}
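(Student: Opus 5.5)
The plan is to push the discontinuity upward (for Statement 1) or leftward (for Statement 2) until it becomes a positive total multiplicity in a small box. I will use the monotonicity of $\beta_k^\p$ (Proposition~\ref{propmonotonicity}), the finiteness of its values (Assumption~\ref{ass_fingen}), and the local structure given by Proposition~\ref{propWeps}.

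\textbf{Statement 1.} Fix small $\eps>0$ and define
\[
g(v):=\beta_k^\p(\bar u+\eps,v)-\beta_k^\p(\bar u-\eps,v)
\]
for $v\in[\bar v,\infty]$. By hypothesis $g(\bar v)>0$ for all small $\eps$.

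By Proposition~\ref{prop_premult}, $g$ is non-increasing in $v$. Two cases arise.
\begin{itemize}
\item If $g(\max\p)>0$ for all small $\eps$, then $\nu^\p(\bar u,\infty)>0$ by Remark~\ref{remmaxphi}, so $(\bar u,\infty)$ is a cornerpoint at infinity.
\item Otherwise, set $v':=\sup\{v\ge\bar v : \bar u \text{ is a discontinuity point of } \beta_k^\p(\cdot,v)\}$. Then $\bar v\le v'\le\max\p<\infty$.
\end{itemize}
In the second case I will show $\mu^\p(\bar u,v')>0$. By Proposition~\ref{proppropadisc}, $\bar u$ is a discontinuity of $\beta_k^\p(\cdot,v)$ for every $v\in\,]\bar u,v'[$. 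For $v>v'$ it is not. Apply Proposition~\ref{propWeps} at $(\bar u,v')$ to get $\delta$ such that $\beta_k^\p$ is locally constant on the four open quadrants around $(\bar u,v')$. Then for $\eps<\delta$ the values at $(\bar u\pm\eps,v'\pm\eps)$ are the quadrant values. The lower-left and lower-right quadrant values differ, because $\bar u$ is a discontinuity at heights just below $v'$ and $\beta_k^\p$ is constant in $u$ within each quadrant. The upper quadrant values coincide, because $\bar u$ is not a discontinuity at heights just above $v'$. Hence $\mu^\p_{\eps,\eps}(\bar u,v')>0$. Degenerate cases need separate handling: $v'=\bar v$ is fine, since discontinuity at $\bar v$ holds by hypothesis and propagation handles points below $\bar v$.

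\textbf{Statement 2.} This is symmetric: use $h(u):=\beta_k^\p(u,\bar v-\eps)-\beta_k^\p(u,\bar v+\eps)$ for $u\le\bar u$. It is non-decreasing in $u$ by the reformulated inequality after Proposition~\ref{prop_premult}, and it is positive at $\bar u$. Set $u':=\inf\{u\le\bar u : \bar v \text{ is a discontinuity point of } \beta_k^\p(u,\cdot)\}$. Since $\beta_k^\p(u,\cdot)\equiv0$ for $u<\min\p$, we get $u'\ge\min\p$, and no point at infinity can arise. The same quadrant argument at $(u',\bar v)$ shows $\mu^\p(u',\bar v)>0$, giving a proper cornerpoint with $u'\le\bar u$.

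\textbf{Main obstacle.} The delicate step is the quadrant argument, in particular checking that the discontinuity at the limiting height $v'$ (respectively $u'$) actually produces a jump across the corresponding box edge. I will handle this with Proposition~\ref{propWeps}, which says the only possible discontinuities near the corner lie on the two lines $u=\bar u$ and $v=v'$. I also need to verify that the supremum $v'$ is finite and lies in $\Delta^+$, i.e.\ $v'>\bar u$; this follows from $v'\ge\bar v>\bar u$.
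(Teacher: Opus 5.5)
Your proof is correct, but it takes a different route from the paper's. The first step is the same: if the jump at $\bar u$ survives at height $\max\p$, then Remark~\ref{remmaxphi} gives the cornerpoint at infinity.

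After that the arguments diverge. The paper argues by contradiction. It assumes the vertical segment from $(\bar u,\bar v)$ to $(\bar u,\max\p)$ carries no cornerpoint and covers it by finitely many squares of zero total multiplicity. Additivity and inclusion-monotonicity of total multiplicities (Proposition~\ref{muincreasing}, Exercise~\ref{ex_boxes}) then carry the jump $\beta_k^\p(\bar u+\eps',\bar v)-\beta_k^\p(\bar u-\eps',\bar v)$ up to height $\max\p$, where it vanishes. Statement~2 uses the horizontal segment from $(\min\p-1,\bar v)$ to $(\bar u,\bar v)$ in the same way.

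You instead use Proposition~\ref{proppropadisc} to see that the heights at which the discontinuity at $\bar u$ persists form an interval. You take its supremum $v'$ (resp.\ the infimum $u'$) and read off $\mu^\p(\bar u,v')>0$ from the four constant quadrant values supplied by Proposition~\ref{propWeps}.

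Your version is constructive: it identifies the cornerpoint as the upper (resp.\ left) endpoint of the line of discontinuities. It also avoids the box-subdivision bookkeeping that the paper leaves to an exercise. The price is reliance on the local structure result Proposition~\ref{propWeps}. The paper's version needs only the definition of $\mu^\p$ as a limit together with additivity of boxes, and it anticipates the counting formula of Corollary~\ref{propnumpoints}.

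One step should be made explicit. Passing from ``$\bar u$ is a discontinuity point of $\beta_k^\p(\cdot,v)$'' to ``the lower-left and lower-right quadrant values differ'' uses Proposition~\ref{propmonotonicity}. If the two one-sided values agreed, monotonicity would force $\beta_k^\p(\bar u,v)$ to equal them too. You need this because right-continuity (Assumption~\ref{ass_right-cont}) is not yet available at this point. The same remark applies to the lower and upper values in Statement~2.
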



\begin{proof}
\begin{enumerate}
\item Assume that $\bar u<\bar v$ is a discontinuity point for $\beta_k^\p{(\cdot,\bar v)}$.
Of course, $\bar u<\max\p$. If $\lim_{\eps\to 0^+} \beta_k^\p{(\bar u+\eps,\max\p)}-\beta_k^\p{(\bar u-\eps,\max\p)}>0$, then $(\bar u,\infty)$ is a cornerpoint at infinity and hence the statement  holds. Therefore, we can assume that an $\bar \eps>0$ exists, such that
$\bar u+\bar\varepsilon<\max\p$ and 
$\beta_k^\p{(\bar u+\eps,\max\p)}-\beta_k^\p{(\bar u-\eps,\max\p)}=0$
for any positive $\eps\le\bar\eps$.
Let us now consider an open cover $\mathcal{V}=\{V_P\}_{P\in c}$ of the closed vertical segment
$c$ connecting the points $(\bar u,\bar v)$
and $(\bar u,\max\p)$, where $V_P$
is an open square centred at $P\in c$, with the sides parallel to the axes. 
If $c$ does not contain cornerpoints, we can assume that $\mu^\p_{\eps(P),\eps(P)}(P)=0$ for every $P\in c$, where $\eps(P)$ is half the side length of the open square 
$V_P$, depending on $P$.
We can also assume that $\varepsilon(P)\le \bar\varepsilon$ for any point $P\in c$.
Since $c$ is compact, $\mathcal{V}$ admits a finite subcover
$\{V_{P_1},\ldots,V_{P_n}\}$ (see the example in Figure~\ref{covering_V_i}).

\begin{figure}
\centering
\includegraphics[width=8cm]{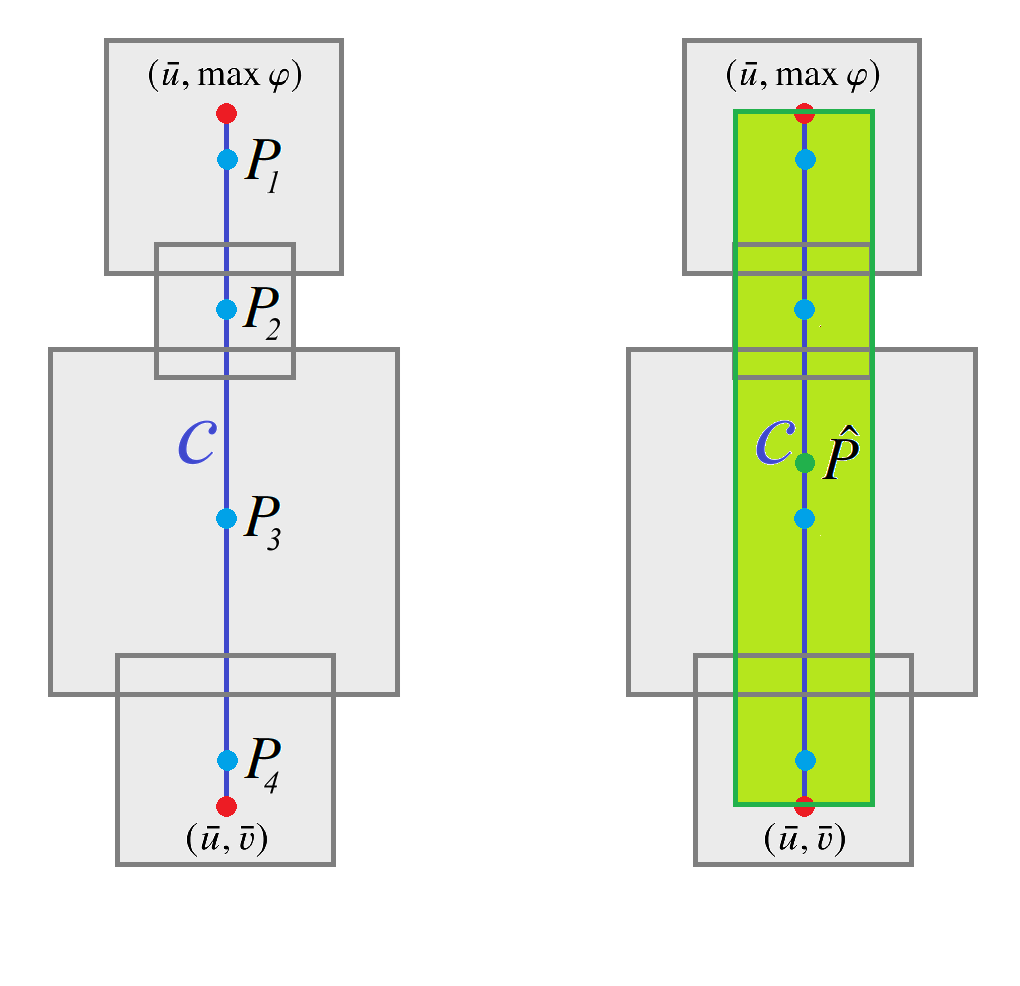}
\caption{On the left, the finite open cover used in the proof of Proposition~\ref{propalldisccomefromcp} is displayed. The points \((\bar u,\bar v)\) and \((\bar u,\max \p)\) are shown in red.
On the right, the rectangle with sides parallel to the axes, barycentre \(\hat P\), width \(2\varepsilon'\), and height \(2\eta'\) is shown in green. The sum with alternating signs of the values taken by \(\beta^\p_k\) at its vertices equals $\sum_{i=1}^n \mu^\p_{\eps(P_i),\eps(P_i)}(P_i)$ and hence vanishes.
}
\label{covering_V_i}
\end{figure}

Let us set $\eps':=\min \eps(P_i)$ and $\eta'$ equal to half the length of $c$.
From Corollary~\ref{propmu>=0} and Proposition~\ref{muincreasing}, it is possible to show (see Exercise~\ref{ex_boxes}) that
$\mu^\p_{\eps',\eta'}(\hat P)=\sum_{i=1}^n \mu^\p_{\eps(P_i),\eps(P_i)}(P_i)=0$, where $\hat P$ is the middle point of $c$.
This implies that
\begin{align*}
            & \beta_k^\p{(\bar u+\eps',\bar v)}
            -\beta_k^\p{(\bar u-\eps',\bar v)}\\
            &=\beta_k^\p{(\bar u+\eps',\max\p)}
            -\beta_k^\p{(\bar u-\eps',\max\p)}=0,
\end{align*}
against the assumption that $\bar u<\bar v$ is a discontinuity point for $\beta_k^\p{(\cdot,\bar v)}$.
Therefore, $c$ must contain at least one cornerpoint.

\item Assume that $\bar v>\bar u$ is a discontinuity point for $\beta_k^\p{(\bar u,\cdot)}$.
Of course, $\bar v\ge\min\p$, and 
we can easily check that  
$$\beta_k^\p{(\min\p-1,\bar v+\eps)}=\beta_k^\p{(\min\p-1,\bar v-\eps)}=0$$ 
for any positive $\eps$ such that $\min\p -1<\bar v-\varepsilon$.
Let us now consider an open cover $\mathcal{V}=\{V_P\}_{P\in c}$ of the closed horizontal segment
$c$ connecting the points $(\min\p -1,\bar v)$ and $(\bar u,\bar v)$, where $V_P$
is an open square centred at $P\in c$, with the sides parallel to the axes. 
If $c$ does not contain cornerpoints, we can assume that $\mu^\p_{\eps(P),\eps(P)}(P)=0$ for every $P\in c$, where $\eps(P)$ is half the side length of the open square 
$V_P$, depending on $P$.
We can also assume that $\min\p-1<\bar v-\varepsilon(P)$ for any point $P\in c$.
Since $c$ is compact, $\mathcal{V}$ admits a finite subcover
$\{V_{P_1},\ldots,V_{P_n}\}$.

Let us set $\eps':=\min \eps(P_i)$ and $\eta'$ equal to half the length of $c$.
From Corollary~\ref{propmu>=0} and Proposition~\ref{muincreasing}, it easily follows that
$$\mu^\p_{\eta',\eps'}(\hat P)=\sum_{i=1}^n \mu^\p_{\eps(P_i),\eps(P_i)}(P_i)=0,$$ where $\hat P$ is the middle point of $c$.
This implies that
\begin{align*}
            &0=\beta_k^\p{(\min\p -1, \bar v-\eps')}
            -\beta_k^\p{(\min\p -1, \bar v+\eps')}\\
            & =\beta_k^\p{(\bar u, \bar v-\eps')} -\beta_k^\p{(\bar u, \bar v+\eps')},
\end{align*}
against the assumption that $\bar v>\bar u$ is a discontinuity point for $\beta_k^\p{(\bar u,\cdot)}$.
Therefore, $c$ must contain at least one cornerpoint.
\end{enumerate}
\end{proof}

\begin{remark}\label{aaohfceolrvjawcol}
    By Proposition~\ref{proppropadiscfromcp}.3 and Proposition~\ref{propalldisccomefromcp}.1, the cornerpoints at infinity of $\dgm_k(\p)$ are all and only the discontinuity points of $\beta^\p_k$ in the first variable. 
\end{remark}

\begin{exercise}\label{ex_boxes}
Consider the $(\eps', \eta')$-box centred at $P'$ and the $(\eps, \eta)$-box centred at $P$ such that the latter is included in the former. 
Show that $\mu_{\eps', \eta'}(P')\ge \mu_{\eps, \eta}(P)$.
\end{exercise}

\subsection{Cardinality of persistence diagrams}\label{subsec:PD_cardinality}

Next results show that, under the hypotheses of this book, the points with positive multiplicity of a persistence diagram form a compact subset of $\bar{\Delta}^\ast$.
First, we show that there are no proper cornerpoints that can be accumulation points.

\begin{proposition}\label{proplocfinite}
For each $\eps>0$, at most a finite number of proper cornerpoints of $\dgm(\p)$ belong to the set $\{(u,v)\in\R^2: u+\eps \le  v\}$.
\end{proposition}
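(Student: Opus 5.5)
We will argue by contradiction: suppose that infinitely many distinct proper cornerpoints $(u_n,v_n)$ of $\dgm(\p)$ lie in $\{(u,v)\in\R^2: u+\eps\le v\}$. By Proposition~\ref{propposprcorn}, every one of them satisfies $\min\p\le u_n<v_n\le\max\p$. So the sequence lies in the compact set $T=\{(u,v): \min\p\le u,\ v\le\max\p,\ u+\eps\le v\}$, and after passing to a subsequence it converges in $d_\infty$ to a point $(\bar u,\bar v)\in T$. In particular $\bar u<\bar v$, so the limit belongs to $\Delta^+$, and we may assume the $(u_n,v_n)$ are pairwise distinct and different from $(\bar u,\bar v)$.

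The next step is to use Proposition~\ref{propWeps}. It gives $\delta>0$ such that $W_\delta(\bar u,\bar v)\subseteq\Delta^+$ contains no discontinuity point of $\beta_k^\p$. For $n$ large, $(u_n,v_n)$ lies in the open square of radius $\delta$ centred at $(\bar u,\bar v)$. We now show that a proper cornerpoint is a discontinuity point of $\beta_k^\p$. By Definition~\ref{defPD}, $\mu^\p_{\eta,\eta}(u_n,v_n)>0$ for all small $\eta$, so $\beta_k^\p$ takes different values at the vertices of arbitrarily small squares centred at $(u_n,v_n)$; hence it is not locally constant there. Therefore no $(u_n,v_n)$ with $n$ large may lie in $W_\delta(\bar u,\bar v)$. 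This forces infinitely many of them onto the cross $\{u=\bar u\}\cup\{v=\bar v\}$, and after passing to a further subsequence we may assume they all lie on one of the two segments through $(\bar u,\bar v)$ within the $\delta$-square.

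The main obstacle is excluding this case, where infinitely many cornerpoints sit on a single vertical segment $u=\bar u$ or a single horizontal segment $v=\bar v$. We will handle it with Proposition~\ref{proppropadiscfromcp} together with finiteness of $\beta_k^\p$.

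Consider first the vertical case, with cornerpoints $(\bar u,v_n)$ and $v_n$ distinct. For each one, take a box $]\bar u-\eta,\bar u+\eta[\times]v_n-\eta_n,v_n+\eta_n[$, where the $\eta_n$ are chosen so that these boxes are pairwise disjoint. Each such box has total multiplicity at least $1$. Fix $N$ of them, and take $\eta$ so small that all $N$ have positive multiplicity. These boxes sit inside one tall box centred on the line $u=\bar u$, of horizontal half-width $\eta$, whose vertical range spans all $N$ of them. By the additivity decomposition used in Proposition~\ref{muincreasing} and Exercise~\ref{ex_boxes}, together with non-negativity (Corollary~\ref{propmu>=0}), the multiplicity of the large box is at least $N$. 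But that multiplicity equals a signed sum of four values of $\beta_k^\p$, so it is bounded by $2\beta_k^\p(\bar u+\eta,\bar v-\delta)$. This quantity does not depend on $N$ and is finite by Assumption~\ref{ass_fingen}. Letting $N\to\infty$ gives a contradiction.

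The horizontal case is symmetric. The large box is now wide in $u$, and its multiplicity is bounded by $2\beta_k^\p(\bar u+\delta,\bar v-\eta)$, which is again finite. This contradiction finishes the proof.
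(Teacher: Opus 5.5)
Your proof is correct, but it takes a different route from the paper's. After the same compactness step, the paper concludes directly from Proposition~\ref{proppropadiscfromcp} and monotonicity. Infinitely many distinct cornerpoints accumulating at $(\bar u,\bar v)$ have either infinitely many distinct abscissas or infinitely many distinct ordinates. In the first case, every such abscissa $u_n$ is a jump point of the non-decreasing function $\beta_k^\p(\cdot,v)$, for one fixed $v$ strictly between $\bar u$ and $\bar v$. In the second case, every such ordinate $v_n$ is a jump point of the non-increasing function $\beta_k^\p(u,\cdot)$, for a suitable fixed $u$. Infinitely many unit jumps along a single line then force $\beta_k^\p=\infty$ somewhere in $\Delta^+$. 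No separation between generic and collinear accumulation is needed.

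You instead use Proposition~\ref{propWeps} to push the accumulation onto the cross through $(\bar u,\bar v)$. You then use a counting inequality: $N$ disjoint boxes of multiplicity at least $1$ inside one large box give the large box multiplicity at least $N$, while that multiplicity is at most $2\beta_k^\p$ at a fixed point. This counting argument, done with a rectangular grid subdivision, works for any $N$ distinct cornerpoints in a small square around $(\bar u,\bar v)$, so the detour through Proposition~\ref{propWeps} is not actually needed. What it buys you is a quantitative statement: a box contains at most as many proper cornerpoints as its total multiplicity. This is one half of Corollary~\ref{propnumpoints}, and you obtain it without the Representation Theorem. That matters, because appealing to the Representation Theorem would be circular here, since its proof uses the present proposition.

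One small repair is needed. As written, $\eta$ is chosen depending on $N$, so the bound $2\beta_k^\p(\bar u+\eta,\bar v-\delta)$ is not literally independent of $N$. There are two easy fixes:
\begin{itemize}
\item Note that this bound is non-decreasing in $\eta$, so it is dominated by its value at a fixed $\eta_0$; the same holds for $2\beta_k^\p(\bar u+\delta,\bar v-\eta)$ in the horizontal case.
\item Fix $\eta$ once and for all. Proposition~\ref{muincreasing}, together with $\mu^\p\ge 1$ and the monotonicity of $\mu^\p_{\eps,\eps}$ in $\eps$, gives $\mu^\p_{\eta,\eta_n}(\bar u,v_n)\ge 1$ for every admissible $\eta,\eta_n$, not only small ones.
\end{itemize}
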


\begin{proof}
By contradiction, let us assume that
the set $\{(u,v)\in\R^2\mid u+\eps \le v\}$ contains an infinite set $\mathcal{S}$ of proper cornerpoints.
Proposition~\ref{propposprcorn}
guarantees that $\mathcal{S}\subseteq\{(u,v)\in\R^2\mid \min \p\le u<v\le \max \p\}$.
Therefore,
$\mathcal{S}\subseteq K:=\{(u,v)\in\R^2\mid \min \p\le u+\eps\le v\le \max \p\}$. Since $K$ is compact with respect to $d$ (check it!), a point $P\in K$ exists, such that $P$ is an accumulation point for $\mathcal{S}$.
Then Proposition~\ref{proppropadiscfromcp} and
Proposition~\ref{propmonotonicity} imply that
the function $\beta_k^{\p}$ takes the value $\infty$ at some point in $\Delta^+$, against Assumption \ref{ass_fingen}.
\end{proof}

The next corollary immediately follows from Proposition~\ref{proplocfinite}.

\begin{corollary} \label{corcountable} 
$\text{Dgm}(\p)\setminus\{\Delta\}$
forms a countable set.
\end{corollary}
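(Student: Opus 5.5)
The claim is that $\dgm(\p)\setminus\{\Delta\}$ is countable. I will write it as a countable union of finite sets, splitting it into the proper cornerpoints and the cornerpoints at infinity and treating each family separately.

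\textbf{Proper cornerpoints.} Every proper cornerpoint $(u,v)$ satisfies $u<v$, so there is a positive integer $n$ with $u+\frac1n\le v$. Hence the set of proper cornerpoints equals
\[
\bigcup_{n\ge 1}\left\{(u,v)\in\dgm(\p) : u+\tfrac1n\le v\right\}.
\]
By Proposition~\ref{proplocfinite}, applied with $\eps=\frac1n$, each set in this union is finite. A countable union of finite sets is countable, so the proper cornerpoints form a countable set.

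\textbf{Cornerpoints at infinity.} This is the only step needing more than a quotation. By Remark~\ref{aaohfceolrvjawcol}, the cornerpoints at infinity $(u,\infty)$ correspond exactly to the discontinuity points $u$ of the function $\beta_k^\p(\cdot,\infty)$. By Proposition~\ref{propmonotonicity}, this function is non-decreasing in $u$. It takes values in $\N$, it vanishes for $u<\min\p$, and it is constant for $u\ge\max\p$, where it equals $\dim H_k(X)$; this value is finite by Assumption~\ref{ass_fingen}. At each discontinuity the function jumps by at least $1$, so it has at most $\dim H_k(X)$ discontinuities. Therefore there are finitely many cornerpoints at infinity.

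\textbf{Conclusion.} The union of a countable set and a finite set is countable, which proves the claim. As a minor point, the statement is about the support of the multiset, so multiplicities play no role. If one wanted to count the realisation instead, each multiplicity is finite, being bounded by a value of $\beta_k^\p$, so the realisation is also countable.
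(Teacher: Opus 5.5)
Your proof is correct. For the proper cornerpoints it is the paper's argument: the paper just says the corollary follows from Proposition~\ref{proplocfinite}, i.e.\ by taking the countable union over $\eps=\frac1n$ of the finite sets that proposition provides.

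The genuine difference is in the cornerpoints at infinity, which Proposition~\ref{proplocfinite} does not address. In the paper their finiteness is Proposition~\ref{propfiniteatinf}, proved just after the corollary by contradiction: infinitely many abscissas in $[\min\p,\max\p]$ would accumulate, and propagation of discontinuities via Proposition~\ref{proppropadiscfromcp}, together with monotonicity, would make $\beta_k^\p$ infinite somewhere. Your jump-counting argument is direct instead. $\beta_k^\p(\cdot,\infty)$ is non-decreasing, $\N$-valued, $0$ below $\min\p$ and equal to $\dim H_k(X)<\infty$ from $\max\p$ on. Hence it has at most $\dim H_k(X)$ jumps, and in fact $\sum_u\nu^\p(u,\infty)\le\dim H_k(X)$. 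This route is more elementary and avoids the forward reference to Proposition~\ref{propfiniteatinf}. It is also quantitative: under Assumption~\ref{ass_right-cont}, Lemma~\ref{k-triangle_lemma} with $\bar u\ge\max\p$ shows the bound is an equality.

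You also only need the trivial direction of Remark~\ref{aaohfceolrvjawcol}. By definition, $\nu^\p(u,\infty)>0$ means that $\beta_k^\p(\cdot,\infty)$ jumps at $u$, so Proposition~\ref{propalldisccomefromcp} is not needed.
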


\begin{ex}
We can easily find continuous functions $\p:X\to \R$ such that $\text{Dgm}(\p)\setminus\{\Delta\}$ is infinite. For example, consider the height function $\p$ defined on the space $X$ represented in Figure~\ref{fig_infdgm}.
\end{ex}

\begin{figure}
\begin{center}
\includegraphics[width=10cm]{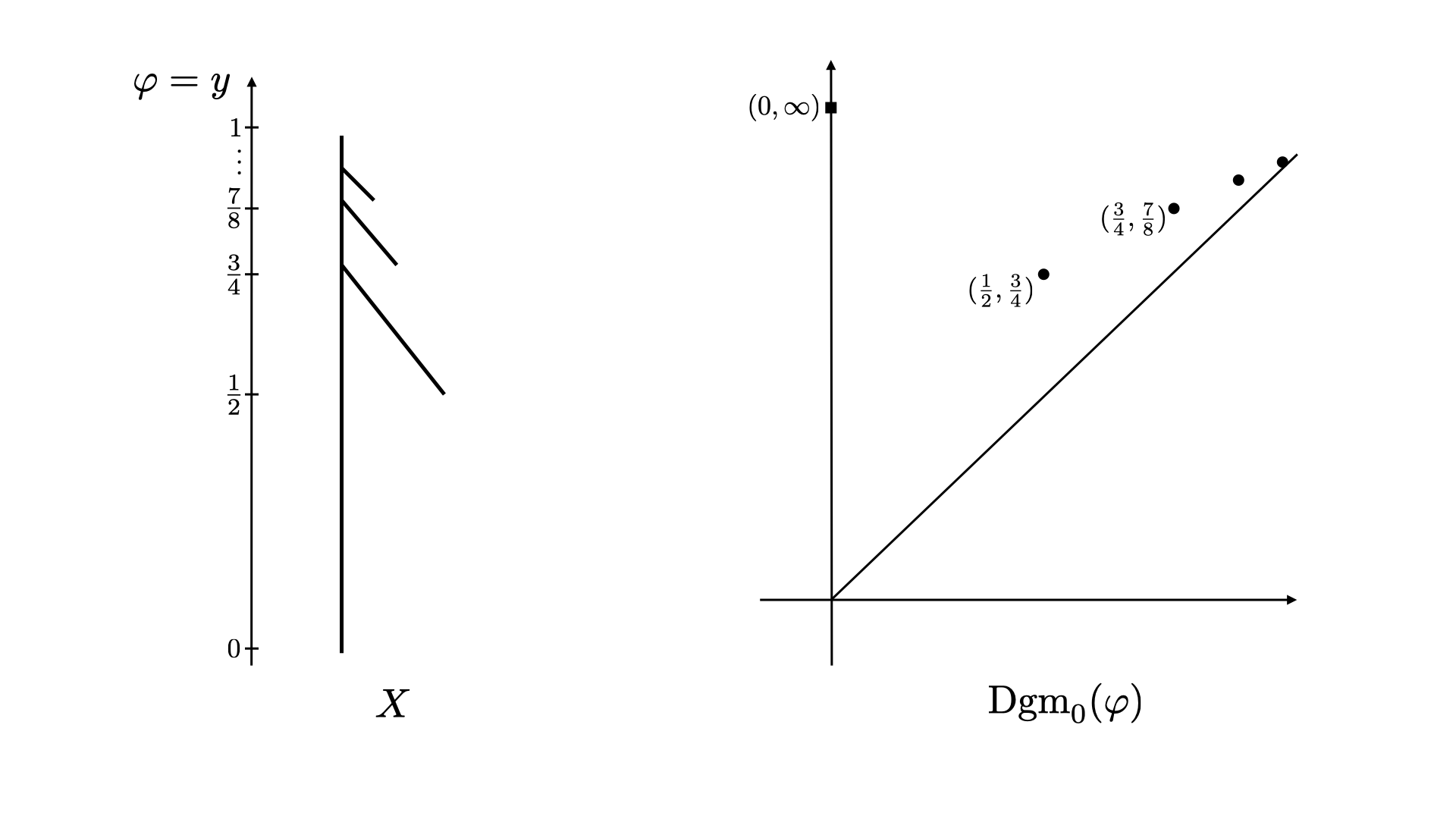}
\caption{A space $X\subseteq\R^2$ with $|\dgm_0(\p)\setminus\{\Delta\}|=\infty$, where $\p$ is the height function. 
The proper cornerpoints of $\dgm_0(\p)$ are the points
$\left(\sum_{i=1}^{k}2^{-i},\sum_{i=1}^{k+1}2^{-i}\right)$ for any natural number $k\ge 1$}.
\label{fig_infdgm}
\end{center}
\end{figure}

\begin{proposition}\label{propfiniteatinf}
$\dgm(\p)$ contains at most a finite number of cornerpoints at infinity.
\end{proposition}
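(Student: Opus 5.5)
The plan is to bound the total number of cornerpoints at infinity, counted with multiplicity, by $\dim H_k(X)$, which is finite by Assumption~\ref{ass_fingen} applied with $v=\max\p$. By Proposition~\ref{propposcornatinf}, every cornerpoint at infinity $(u,\infty)$ satisfies $\min\p\le u\le\max\p$. Since $\beta_k^\p(\cdot,\infty)$ is non-decreasing (Proposition~\ref{propmonotonicity}) and takes values in $\N$, it is bounded above by $\beta_k^\p(\max\p,\infty)=\dim H_k(X)$.

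Next, choose distinct cornerpoints at infinity $(u_1,\infty),\dots,(u_n,\infty)$ with $u_1<\dots<u_n$. Pick $\eps>0$ smaller than half the minimal gap between consecutive $u_i$, and small enough that each multiplicity $\nu^\p(u_i,\infty)$ is realised by the difference
\[
\beta_k^\p(u_i+\eps,\infty)-\beta_k^\p(u_i-\eps,\infty).
\]
This is possible because these differences are non-negative integers, non-increasing as $\eps\to0^+$ by monotonicity, so the limit is attained. The intervals $]u_i-\eps,u_i+\eps[$ are then disjoint and ordered.

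Monotonicity now gives a telescoping bound:
\[
\sum_{i=1}^n \nu^\p(u_i,\infty)\le \beta_k^\p(u_n+\eps,\infty)-\beta_k^\p(u_1-\eps,\infty)\le \dim H_k(X).
\]
The first inequality holds because the gaps $\beta_k^\p(u_{i+1}-\eps,\infty)-\beta_k^\p(u_i+\eps,\infty)$ are non-negative. Since every multiplicity is at least $1$, we get $n\le\dim H_k(X)$. Hence there are finitely many cornerpoints at infinity, and even the count with multiplicity is finite.

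The only delicate point is checking that each $\nu^\p$ is a finite limit that is actually attained for small $\eps$. This follows from integrality together with monotonicity in $\eps$. Everything else is the telescoping sum.
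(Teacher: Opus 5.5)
Your proof is correct, but it takes a different route from the paper's.

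The paper argues by contradiction. If there were infinitely many cornerpoints at infinity, their abscissas, which lie in $[\min\p,\max\p]$ by Proposition~\ref{propposcornatinf}, would have an accumulation point. Then the propagation of discontinuities from essential cornerpoints (Proposition~\ref{proppropadiscfromcp}.3), together with monotonicity (Proposition~\ref{propmonotonicity}), forces $\beta_k^\p$ to take the value $\infty$ at some point of $\Delta^+$. This contradicts Assumption~\ref{ass_fingen}.

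You instead work directly with the non-decreasing, integer-valued function $\beta_k^\p(\cdot,\infty)$, which is bounded by $\dim H_k(X)$, and telescope over disjoint windows around finitely many abscissas. What your route buys:
\begin{itemize}
\item It avoids the accumulation-point step and the discontinuity-propagation machinery. In fact you never use Proposition~\ref{propposcornatinf}, although you cite it.
\item It makes explicit the step the paper leaves implicit: infinitely many upward jumps of a bounded integer-valued monotone function are impossible.
\item It gives the quantitative bound $\sum_{u}\nu^\p(u,\infty)\le\dim H_k(X)$. This is one inequality of the exercise following Corollary~\ref{ecrjwedwojeowcw}.
\end{itemize}

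What the paper's route buys is uniformity: it mirrors the proof of Proposition~\ref{proplocfinite}, so both local-finiteness results come from the same mechanism. Your telescoping does not transfer directly to proper cornerpoints, which may be infinitely many and accumulate toward the diagonal.
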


\begin{proof}
By contradiction, let us assume that
$\dgm(\p)$ contains an infinite set $S_\infty$ of cornerpoints at infinity.
Proposition~\ref{propposcornatinf} guarantees that if $(u,\infty)\in S_\infty$, then  $\min \p\le u\le \max \p$.
Therefore, the set $C$ of the abscissas of points in $S_\infty$ admits an accumulation point in the interval $[\min\p,\max\p]$.
Then Proposition~\ref{proppropadiscfromcp} and
Proposition~\ref{propmonotonicity} imply that
the function $\beta_k^{\p}$ takes the value $\infty$ at some point in $\Delta^+$, against Assumption~\ref{ass_fingen}.
\end{proof}


\begin{proposition}\label{propdgmcompact}
The union of the supports of $\mu^\p$ and $\nu^\p$ is a compact set with respect to the topology induced by the metric $d$.
\end{proposition}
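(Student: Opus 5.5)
Let $\mathcal{D}$ denote the union of the supports of $\mu^\p$ and $\nu^\p$, regarded as a subset of $\bar\Delta^*$. The trivial cornerpoint $\Delta$ is not in this union, but it will appear as the only possible accumulation point. The plan is to show that $\mathcal{D}$ is sequentially compact, which suffices since $\bar\Delta^*$ is a metric space. I will in fact argue for $\mathcal{D}\cup\{\Delta\}$; if the statement is meant literally without $\Delta$, the same argument shows that $\Delta$ is the only obstruction, and I would read the statement as including the trivial cornerpoint, which has multiplicity $\infty$ in $\dgm(\p)$.

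Let $(P_n)_n$ be a sequence in $\mathcal{D}$. By Proposition~\ref{propfiniteatinf} there are only finitely many cornerpoints at infinity. So if infinitely many $P_n$ are at infinity, some single cornerpoint at infinity occurs infinitely often, and this gives a constant, hence convergent, subsequence. We may therefore assume every $P_n=(u_n,v_n)$ is a proper cornerpoint.

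There are now two cases.

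\emph{Case 1: $\liminf_n (v_n-u_n)=0$.} Pick a subsequence with $v_n-u_n\to 0$. By Proposition~\ref{fskjgnfawljfnv}, $d(P_n,\Delta)=\frac{v_n-u_n}{2}\to0$, so the subsequence converges to $\Delta$.

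\emph{Case 2: $v_n-u_n\ge\eps$ for some $\eps>0$ and all large $n$.} By Proposition~\ref{proplocfinite}, only finitely many proper cornerpoints lie in $\{u+\eps\le v\}$. Hence some cornerpoint is repeated infinitely often, and we again get a constant subsequence.

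The main point to be careful about is the role of $\Delta$. Case 1 can genuinely occur, as in the example of Figure~\ref{fig_infdgm}. Then the limit $\Delta$ lies outside the supports of $\mu^\p$ and $\nu^\p$ as literally defined on $\Delta^*$, so closedness holds only once $\Delta$ is included. I would state this explicitly, noting that $f(\Delta)=\infty$ in Definition~\ref{defPD}, so that $\Delta$ belongs to the support of the multiplicity function of $\dgm(\p)$. Compactness of $\dgm(\p)$, seen as the set of its points with positive multiplicity, then follows.

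As a consistency check with the topology: points converging in $d_\infty$ inside $\Delta^+$ also converge in $d$. Every limit found above is either a repeated point or $\Delta$, so no further closure argument is needed.
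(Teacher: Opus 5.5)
Your proof is correct and follows the route the paper intends. The paper leaves this proposition as Exercise~\ref{defdgmcompact}, whose hint is to use Propositions~\ref{proplocfinite} and~\ref{propfiniteatinf} to extract from any sequence a subsequence converging to a proper cornerpoint, a cornerpoint at infinity, or the trivial cornerpoint $\Delta$; that is exactly your case analysis, and a sequence in $\mathcal{D}\cup\{\Delta\}$ hitting $\Delta$ infinitely often is trivially handled by a constant subsequence. You are also right that $\Delta$ must be counted in the set for the statement to hold, as the example of Figure~\ref{fig_infdgm} shows; this agrees with the hint and with the sentence preceding Proposition~\ref{proplocfinite}, which speaks of the points of positive multiplicity of the diagram, and $\Delta$ is such a point.
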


\begin{exercise}\label{defdgmcompact}
Prove Proposition~\ref{propdgmcompact}.

[\emph{Hint.} Use Propositions~\ref{proplocfinite} and~\ref{propfiniteatinf} to show that any sequence in a compact set admits a subsequence converging either to a proper cornerpoint, to a cornerpoint at infinity, or to the trivial cornerpoint~$\Delta$.]
\end{exercise}


\section{Representation theorem}\label{sec:rep_thm}

While persistence diagrams constitute the most intuitive and compact representation of persistence features, they do not support certain convenient operations, such as the definition of unique means \cite{TuMiMuHa14}.
PBNFs, on the other hand, provide more suitable representations for these purposes and are better suited for vectorisation in machine learning.
In this section, we prove that persistence diagrams uniquely determine PBNFs and vice versa (Theorem \ref{k-triangle}).
This is done explicitly, showing a formula to go from one representation to the other. 
This correspondence between persistence diagrams and PBNFs holds under the assumption that the function $\beta_k^\p$ is right-continuous in both its variables, to avoid pathological cases.

Example \ref{exPC} shows a case in which right-continuity of PBNFs is not satisfied.

\begin{ex}\label{exPC}
Let $X$ be a closed rectangle of $\R^2$ containing a Warsaw circle (Figure~\ref{figexPC}).
Let also
$\p: X \to \R$ be the Euclidean distance
from the Warsaw circle.
It is easy to see that the dimension of the
persistent homology group $PH_1^\p(u,v)$ is equal to $1$ for $v > u > 0$ and $v$ sufficiently small, whereas it is equal to $0$ when $v>u=0$, showing
that in this case the function $\beta_1^\p$ is not right-continuous in the first variable.
\end{ex}

\begin{figure}[htb!]
\begin{center}
\includegraphics[width=6cm]{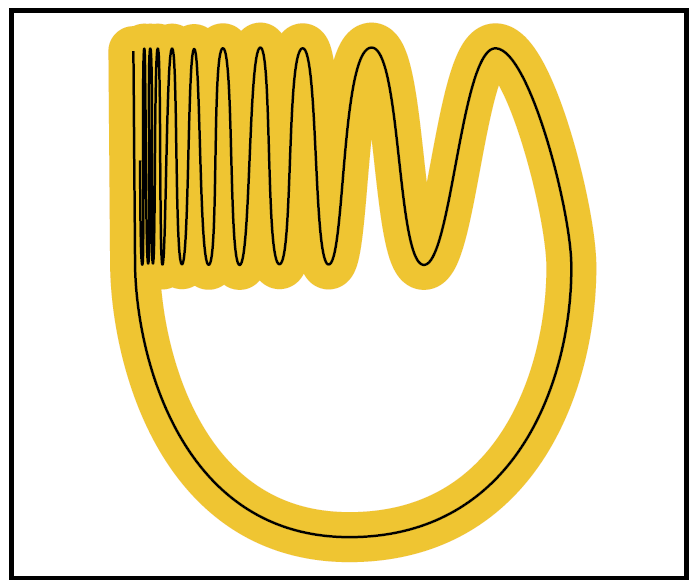}
\caption{
A sublevel set $X^\p_u$, for a small $u > 0$, as considered in Example~\ref{exPC}, corresponds to a small dilation (shaded) of our Warsaw circle. 
}
\label{figexPC}
\end{center}
\end{figure}




From now on, the following will hold.
\begin{assumption}\label{ass_right-cont}
The function $\beta_k^\p$ is right-continuous in both its variables:
\[
\lim_{\eps\to 0^+}\beta_k^\p(u+\eps, v)=\beta_k^\p(u, v)=\lim_{\eps\to 0^+}\beta_k^\p(u, v+\eps).
\]
\end{assumption}

\begin{remark}
Assumption \ref{ass_right-cont} could be avoided by replacing singular homology with Čech homology (cf., e.g., \cite{Sp95}). However, the treatment of this particular homology goes beyond the scope of this book and will not be presented here. 
\end{remark}

The next result is the first part of the Representation Theorem stated below, in the particular case of points in $\Delta^*\setminus \Delta^+$.
We recall that the multiplicity functions $\mu^\p$, $\nu^\p$ are defined in Definition \ref{defPD}.

\begin{lemma}\label{k-triangle_lemma}
If $(\bar u,\infty)\in\Delta^*$, then
$
\beta_k^{\p}(\bar u,\infty)=\sum_{u\leq\bar u}\nu^\p(u,\infty).
$
\end{lemma}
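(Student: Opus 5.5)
The plan is to view $g(u):=\beta_k^\p(u,\infty)$ as a one-variable step function of $u$ and reconstruct it from its jumps. Recall that $\beta_k^\p(u,\infty)=\dim \Imm i^{\p*}_{u,\max\p}$. By Proposition~\ref{propmonotonicity}, $g$ is non-decreasing. By Assumption~\ref{ass_fingen} it takes finite values in $\N$. By Assumption~\ref{ass_right-cont} it is right-continuous. By definition, $\nu^\p(u,\infty)=\lim_{\eps\to0^+}\bigl(g(u+\eps)-g(u-\eps)\bigr)$, which by right-continuity equals $g(u)-g(u^-)$, i.e.\ the jump of $g$ at $u$. So the statement reduces to showing that a non-decreasing, right-continuous, integer-valued function vanishing far to the left equals the sum of its jumps up to $\bar u$.

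\textbf{Step 1 (boundary behaviour).} For $u<\min\p$ we have $X^\p_u=\emptyset$, so $g(u)=0$, exactly as in the proof of Proposition~\ref{propposcornatinf}. That proposition also says every $u$ with $\nu^\p(u,\infty)>0$ lies in $[\min\p,\max\p]$. Proposition~\ref{propfiniteatinf} says there are only finitely many such $u$. Hence the sum $\sum_{u\le\bar u}\nu^\p(u,\infty)$ is a finite sum over the abscissas $u_1<\dots<u_m\le\bar u$ of the cornerpoints at infinity.

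\textbf{Step 2 (constancy between jumps).} Next I will show that $g$ is locally constant at every point $u$ that is not one of the $u_j$. Since $g$ is integer-valued and monotone, $\lim_{\eps\to0^+}\bigl(g(u+\eps)-g(u-\eps)\bigr)=0$ means $g(u+\eps)=g(u-\eps)$ for small $\eps$, so $g$ is constant on a neighbourhood of $u$. (Equivalently, I can invoke Remark~\ref{aaohfceolrvjawcol}: the discontinuities of $g$ in the first variable are exactly the $u_j$.) By connectedness, a locally constant function on an interval is constant there. So $g$ is constant on each of $]-\infty,u_1[$, $]u_j,u_{j+1}[$ and $]u_m,\bar u]$.

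\textbf{Step 3 (telescoping).} On $]-\infty,u_1[$ the constant value is $0$ by Step 1. At each $u_j$, right-continuity gives $g(u_j)=g(u_j^-)+\nu^\p(u_j,\infty)$, and $g$ keeps the value $g(u_j)$ until the next jump. Telescoping over $j=1,\dots,m$ then yields $g(\bar u)=\sum_{j}\nu^\p(u_j,\infty)$, which is the claim. The point $\bar u$ itself, if it is some $u_j$, is correctly included because of right-continuity.

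\textbf{Main obstacle.} The only delicate point is making sure right-continuity is used correctly at the jump points, so that $\nu^\p$ equals $g(u)-g(u^-)$ and not something involving $g(u^+)$ separately. Everything else is bookkeeping with the finiteness from Proposition~\ref{propfiniteatinf}.
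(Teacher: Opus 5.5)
Your proposal is correct and follows essentially the same route as the paper. Both arguments telescope the finitely many jumps of $u\mapsto\beta_k^\p(u,\infty)$, which is $0$ below $\min\p$ and has only finitely many jumps by Proposition~\ref{propfiniteatinf}, and both use right-continuity to pin down the value at $\bar u$. The only cosmetic difference is where you evaluate: the paper samples $\beta_k^\p(\cdot,\infty)$ at auxiliary points $u_0<\dots<u_m$ placed between the jump abscissas, while you evaluate at the jump points themselves and use left limits.
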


\begin{remark}\label{rem_sums}
Because of Proposition~\ref{propfiniteatinf}, the sum $\sum_{u \leq \bar u} \nu^\varphi(u,\infty)$ in the statement of Lemma \ref{k-triangle_lemma} is in fact taken over a finite number of points. 
Because of Proposition~\ref{proplocfinite}, an analogous observation will hold, in what follows, for $\sum_{\mycom{(u,v)\in\Delta^+}{u\leq\bar u,\,v>\bar v}}\mu^\p(u,v)$ 
in the statement of Theorem \ref{k-triangle}.
\end{remark}

\begin{proof}
We can assume that $\dgm(\varphi)$ contains at least one cornerpoint at infinity whose abscissa is less than or equal to $\bar u$; otherwise, the statement of the lemma is trivial, since Remark~\ref{aaohfceolrvjawcol} 
implies that 
$\beta_k^{\p}(u,\infty)$ is constant in the variable $u$ for $u\le\bar u$, and hence 
$\beta_k^{\p}(\bar u,\infty)=\beta_k^{\p}(\min\p -1,\infty)=0$, while by definition every sum over an empty set vanishes.
Because of Proposition 
\ref{propfiniteatinf},
we can find $u_0<\ldots<u_m$
such that:
\begin{itemize}
\item[(a)] \label{A1} \;$u_0<\min\p$;
by Proposition~\ref{propposcornatinf}, the abscissa of each cornerpoint at infinity of $\dgm(\p)$ is greater than $u_0$;
\item[(b)] \label{A2}\; $u_m> \bar u$, and there is no essential cornerpoint $(u,\infty)$ 
having its abscissa in $]\bar u,u_m]$; 
by Remark~\ref{aaohfceolrvjawcol}, 
no value in $]\bar u,u_m]$ is a discontinuity point of the function $\beta^\p_k(\cdot,\infty)$;
\item[(c)]\; there is no essential cornerpoint 
having its abscissa in $\{u_0,\ldots,u_m\}$; 
by Remark~\ref{aaohfceolrvjawcol}, no value $u_i$ is a discontinuity point of the function $\beta^\p_k(\cdot,\infty)$;
\item[(d)]\; each open interval $]u_{i-1},u_{i}[$ with $1\le i\le m$, contains exactly one abscissa of a cornerpoint $P_i$ at infinity, possibly with multiplicity greater than $1$.
If we set
$\beta_i:=\beta_k^{\p}(u_i,\infty)$
for any $i$ with $0\le i\le m$, we can write
$\nu^\p(p_i)=\beta_{i}-\beta_{i-1}$, where $\beta_{-1}$ is defined to be $0$.
\end{itemize}

By (a), $\beta_{0}=0$.
Therefore,
\begin{align*}
          \sum_{u\leq\bar u}\nu^\p(u,\infty)
          &= \sum_{1\le i\le m}\nu^\p(P_i)\\
          &= \sum_{1\le i\le m}\beta_{i}
          -\beta_{i-1}\\
          &= \sum_{1\le i\le m}\beta_{i}
          -\sum_{1\le i\le m}\beta_{i-1}\\
          &= \sum_{1\le i\le m}\beta_{i}
          -\sum_{0\le i\le m-1}\beta_{i}\\
          &= \beta_{m}-\beta_{0}\\
          &=\beta_m=\beta_k^{\p}(u_m,\infty)=\beta_k^{\p}(\bar u,\infty),
\end{align*}
where the last equality is given by (b) and Assumption~\ref{ass_right-cont}.
\end{proof}

The key role of persistence diagrams is shown in the following Representation Theorem (see \cite{CeDFFeal13},\cite{CSEdHa07}) claiming that persistence diagrams uniquely determine PBNFs (the converse also holds by definition of persistence diagram).

\begin{theorem}\label{k-triangle}
If $(\bar u,\bar v)\in\Delta^*$, then
\begin{equation}\label{EqRT}
\beta_k^{\p}(\bar u,\bar v)=\sum_{\mycom{(u,v)\in\Delta^+}{u\leq\bar u,\,v>\bar v}}\mu^\p(u,v)
+\sum_{u\leq\bar u}\nu^\p(u,\infty).
\end{equation}
\end{theorem}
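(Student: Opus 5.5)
The plan is to reduce the case $\bar v<\infty$ to Lemma~\ref{k-triangle_lemma} by a telescoping argument in the second variable, mirroring the proof of that lemma. If $\bar v=\infty$, the first sum is empty and the statement is exactly Lemma~\ref{k-triangle_lemma}. So assume $(\bar u,\bar v)\in\Delta^+$.

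The first step is to establish, for fixed $\bar u$, the identity
\[
\beta_k^\p(\bar u,\bar v)-\beta_k^\p(\bar u,\infty)=\sum_{\mycom{(u,v)\in\Delta^+}{u\le\bar u,\,v>\bar v}}\mu^\p(u,v).
\]
Here $\beta_k^\p(\bar u,\infty)=\beta_k^\p(\bar u,w)$ for every $w\ge\max\p$ (Remark~\ref{remmaxphi}). Both sides are finite by Remark~\ref{rem_sums}, since $v>\bar v>\bar u\ge u$ forces the relevant cornerpoints into a region $u+\delta\le v$ with $\delta>0$ depending on $\bar v-\bar u$. Indeed, the points of $\dgm(\p)$ in the region $u\le\bar u$, $v>\bar v$ satisfy $v-u>\bar v-\bar u$, so by Proposition~\ref{proplocfinite} there are finitely many of them, say with distinct abscissas $a_1<\dots<a_r\le\bar u$ and ordinates $b_1<\dots<b_s$ in $]\bar v,\max\p]$ (Proposition~\ref{propposprcorn}).

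Next I choose a grid. Pick $u_0<\min\p$ and values $u_0<u_1<\dots<u_m$, with $u_m=\bar u$, separating the abscissas so that each strip $]u_{i-1},u_i]$ contains at most one $a_j$. Similarly pick $v_0=\bar v<v_1<\dots<v_n$, with $v_n>\max\p$, separating the ordinates so that each $]v_{j-1},v_j]$ contains at most one $b_l$. Define $\beta_{i,j}=\beta_k^\p(u_i,v_j)$ and the rectangle sums
\[
M_{ij}=\beta_{i,j-1}-\beta_{i-1,j-1}+\beta_{i-1,j}-\beta_{i,j}.
\]
Telescoping over $i$ and $j$ gives
\[
\sum_{i,j}M_{ij}=\beta_{m,0}-\beta_{0,0}-\beta_{m,n}+\beta_{0,n}=\beta_k^\p(\bar u,\bar v)-\beta_k^\p(\bar u,\infty),
\]
using $\beta_{0,\cdot}=0$ because the sublevel set at $u_0$ is empty, and $v_n>\max\p$.

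The main obstacle is showing that $M_{ij}$ equals the total multiplicity $\mu^\p$ of the cornerpoints in the half-open rectangle $]u_{i-1},u_i]\times]v_{j-1},v_j]$. The difficulty is that the grid values may themselves coincide with coordinates of cornerpoints, and $\bar u$ may be an abscissa, so the half-open convention must match the sum's ``$u\le\bar u$, $v>\bar v$''. I would handle it in two parts.

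For interior rectangles whose grid lines avoid cornerpoint coordinates, I use the compactness covering argument from the proof of Proposition~\ref{propalldisccomefromcp} together with Exercise~\ref{ex_boxes} and Proposition~\ref{muincreasing}. Cover the closed rectangle by small boxes, each with $\mu^\p_{\eps,\eps}$ equal to the limit multiplicity. By additivity of the alternating sums over subdivisions, $M_{ij}$ equals the sum of the multiplicities of the boxes containing cornerpoints, namely $\sum\mu^\p$ over the rectangle.

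For grid lines passing through coordinates such as $u_m=\bar u$, I use right-continuity (Assumption~\ref{ass_right-cont}). I would replace $u_i$ by $u_i+\eps$ and $v_j$ by $v_j+\eps$, with $\eps$ small and generic, so that the perturbed rectangles are of the first kind and contain exactly the cornerpoints of the half-open rectangles. Letting $\eps\to0^+$ and invoking Assumption~\ref{ass_right-cont} then recovers $\beta_{i,j}$.

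Finally, summing over $i,j$ gives the displayed identity. Adding Lemma~\ref{k-triangle_lemma} for $\beta_k^\p(\bar u,\infty)$ yields \eqref{EqRT}.
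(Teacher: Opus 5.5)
Your proposal is correct and shares the paper's skeleton:
\begin{itemize}
\item a finite grid over $\{u\le\bar u,\ v>\bar v\}$;
\item cell-wise alternating sums identified with multiplicities;
\item telescoping to $\beta_k^\p(\bar u,\bar v)-\beta_k^\p(\bar u,\infty)$;
\item right-continuity at the edges;
\item Lemma~\ref{k-triangle_lemma} for the $\nu^\p$ part.
\end{itemize}

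Where you differ is in proving the cell identity. The paper interleaves its grid lines strictly between the finitely many relevant coordinates, with $x_m$ just right of $\bar u$ and $y_0$ just above $\bar v$. Each cell then contains only the candidate point $P_i^j=(u_i,v_j)$. Proposition~\ref{propalldisccomefromcp} rules out discontinuities of $\beta_k^\p$ between $P_i^j$ and the cell corners. Hence the cell sum equals $\mu^\p_{\eps,\eps}(P_i^j)$ for small $\eps$, i.e.\ $\mu^\p(P_i^j)$ by definition.

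You instead place grid lines exactly at $\bar u,\bar v$. You then prove directly, by a two-dimensional Lebesgue-number argument with Exercise~\ref{ex_boxes}, that the alternating sum over a rectangle with cornerpoint-free boundary counts the cornerpoints inside. This is a boundary-generic form of Corollary~\ref{propnumpoints}. Boundary coincidences are removed by a generic $\eps$-shift of the whole grid. Your route is more self-contained and gives a reusable additivity principle. The paper's route is shorter because Proposition~\ref{propalldisccomefromcp} already packages the covering argument.

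Two points should be written out.

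First, the limit $\beta_k^\p(u+\eps,v+\eps)\to\beta_k^\p(u,v)$ is not literally Assumption~\ref{ass_right-cont}, which is stated variable by variable. It does follow from that assumption and Proposition~\ref{propmonotonicity}, via
\[
\beta_k^\p(u,v+\eps)\le\beta_k^\p(u+\eps,v+\eps)\le\beta_k^\p(u+\eps,v).
\]

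Second, ``additivity over a cover'' does not by itself yield the count. Covering boxes overlap, and a sub-box lying only in the box around a cornerpoint $P$ is merely bounded by $\mu^\p(P)$, not forced to vanish. The fix is to isolate the (at most one) cornerpoint $P$ of each cell in a small box with $\mu^\p_{\delta,\delta}(P)=\mu^\p(P)$. Then split the cell into that box and eight cornerpoint-free closed subrectangles. Run the covering argument on those eight, where every covering box can be taken with $\mu^\p_{\eps,\eps}=0$.
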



\begin{figure}
\begin{center}
\includegraphics[width=14cm]{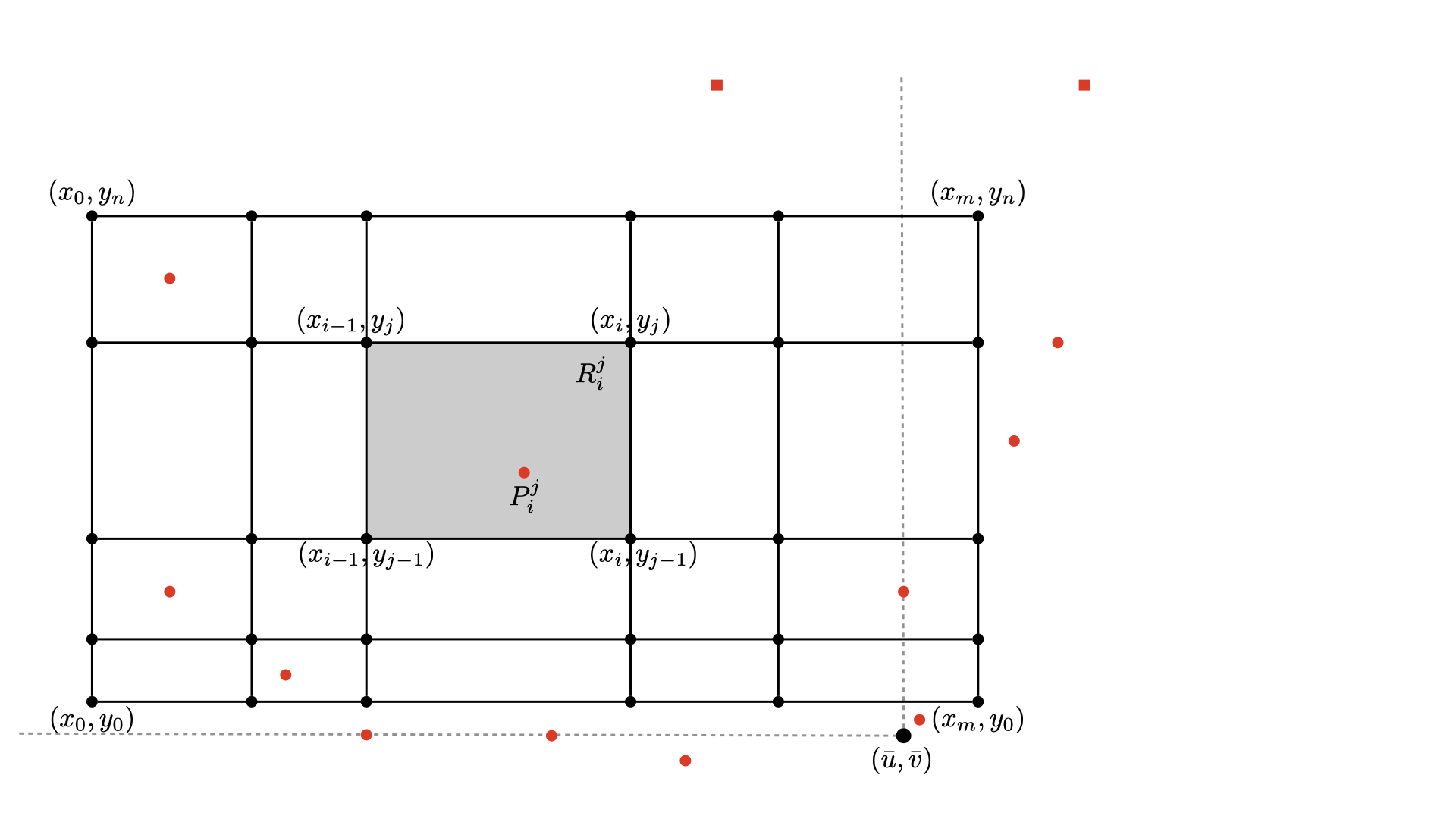}
\caption{The grid used in the proof of Theorem~\ref{k-triangle}. The red points are cornerpoints of $\dgm_k(\p)$.
Proper cornerpoints are depicted as circles, cornerpoints at infinity as squares.}
\label{fig_grid_proof_RT}
\end{center}
\end{figure}


\begin{proof}
The case $(\bar u,\bar v)=(\bar u, \infty)$ has been proven in Lemma \ref{k-triangle_lemma}. So, let us focus on the case $(\bar u,\bar v)\in\Delta^+$.
Proposition~\ref{proplocfinite} and Proposition~\ref{propfiniteatinf} ensure that there exists a finite number of cornerpoints $(u,v)$ such that $u\le\bar u$ and $v>\bar v$.
Such a number can be assumed to be at least one, otherwise the statement of the theorem is trivial because of Proposition~\ref{propalldisccomefromcp} (implying $\beta_k^{\p}(\bar u,\bar v)=\beta_k^{\p}(\bar u,\infty)$) and Lemma~\ref{k-triangle_lemma}.
In this case, both sides of Equation~\ref{EqRT} are equal, since both are zero.

Let $\{u_1, \dots, u_m\}$ be the set of the abscissas of the cornerpoints (proper or essential) $(u,v)$ such that $u\le\bar u$ and $v>\bar v$. Similarly, let $\{v_1, \dots, v_n\}$ be the set of the ordinates of the proper cornerpoints $(u,v)$ such that $u\le\bar u$ and $v>\bar v$.
All coordinates are sorted in increasing order.
Let us choose $x_0, x_1, \dots, x_m$ and $y_0, y_1, \dots, y_n$ in $\R$ such that:
\begin{itemize}
    \item $x_{i-1}<u_i<x_i$ for $i=1, \dots, m$;
    \item $y_{j-1}<v_j<y_j$ for $j=1, \dots, n$;
    \item $\bar u < x_m < \bar v$ and there is no (proper or essential) cornerpoint having its abscissa in $]\bar u,x_m]$;
    \item $\bar v < y_0$.
\end{itemize}
Please refer to Figure~\ref{fig_grid_proof_RT} for a visual representation of the just introduced construction.
For each $1\le i\le m$ and $1\le j\le n$, let us denote by $P_i^j$ the point of coordinates $(u_i,v_j)$ and by $R_i^j$ the open rectangle of vertices $(x_{i},y_{j-1})$, $(x_{i-1},y_{j-1})$, $(x_{i},y_{j})$, and $(x_{i-1},y_{j})$ containing $P_i^j$. Moreover, 
for each $0\le i\le m$ and $0\le j\le n$,
we set $\beta_i^j:=\beta_k^{\p}(x_i,y_j)$.

Proposition~\ref{propalldisccomefromcp} and the above conditions ensure that, for each open rectangle $R_i^j$, there exists no discontinuity point $(u,v)$ for $\beta^\p_k$ such that $u_i<u\le x_i$ and $v_j<v\le y_j$.
So, if we consider $\eps$ sufficiently small such that the point $(u_i+\eps, v_j+\eps)$ is contained in $R_i^j$, then $\beta_k^{\p}(u_i+\eps, v_j+\eps)=\beta_i^j$.
Analogously, we can conclude, for $\eps$ sufficiently small, that:
\begin{itemize}
    \item $\beta_k^{\p}(u_i+\eps, v_j-\eps)=\beta_i^{j-1}$,
    \item $\beta_k^{\p}(u_i-\eps, v_j-\eps)=\beta_{i-1}^{j-1}$,
    \item $\beta_k^{\p}(u_i-\eps, v_j+\eps)=\beta_{i-1}^{j}$.
\end{itemize}
So, we have that:
\begin{align*}\label{RT1}
\mu^\p(P_i^j)= & \lim_{\eps\to 0^+} \mu^\p_{\eps,\eps}(P_i^j)\\
= & \lim_{\eps\to 0^+} \beta_k^{\p}(u_i+\eps, v_j-\eps) - \beta_k^{\p}(u_i-\eps, v_j-\eps)\\
& + \beta_k^{\p}(u_i-\eps, v_j+\eps) - \beta_k^{\p}(u_i+\eps, v_j+\eps)  \\
= & \beta_{i}^{j-1}-\beta_{i-1}^{j-1}+\beta_{i-1}^{j}-\beta_{i}^{j}.
\end{align*}
The above construction also guarantees that:
\begin{itemize}
    \item\label{RTa} $\beta_{0}^{j}=0$, for any index $j=0, \dots n$;
    \item\label{RTb} $\beta_{m}^{n}=\beta_k^\p(\bar u,\infty)=\sum_{u\leq\bar u}\nu^\p(u,\infty)$ by Assumption~\ref{ass_right-cont} (specifically, by the right-continuity of $\beta_k^\p$ in the first variable) and Lemma~\ref{k-triangle_lemma};
    \item\label{RTc} $\beta_{m}^{0}=\beta_k^\p(\bar u,y_0)=\beta_k^\p(\bar u,\bar v)$ by Assumption~\ref{ass_right-cont} (by the right-continuity of $\beta_k^\p$ in the first variable for the first equality and in the second variable for the second equality). 
\end{itemize}

It follows that:
\begin{align*}
          \sum_{\mycom{(u,v)\in\Delta^+}{u\le \bar u,\,v>\bar v}}\mu^\p(u,v)
          &=\sum_{\mycom{(u,v)\in\Delta^+}{u< x_m,\,v>y_0}}\mu^\p(u,v)\\
          &= \sum_{\mycom{1\le i\le m}{1\le j\le n}}\mu^\p(P_i^j)\\
          &= \sum_{\mycom{1\le i\le m}{1\le j\le n}}\beta_{i}^{j-1}
          -\sum_{\mycom{1\le i\le m}{1\le j\le n}}\beta_{i-1}^{j-1}
          +\sum_{\mycom{1\le i\le m}{1\le j\le n}}\beta_{i-1}^{j}
          -\sum_{\mycom{1\le i\le m}{1\le j\le n}}\beta_{i}^{j}\\
          &= \sum_{\mycom{1\le i\le m}{0\le j\le n-1}}\beta_{i}^{j}
          -\sum_{\mycom{0\le i\le m-1}{0\le j\le n-1}}\beta_{i}^{j}
          +\sum_{\mycom{0\le i\le m-1}{1\le j\le n}}\beta_{i}^{j}
          -\sum_{\mycom{1\le i\le m}{1\le j\le n}}\beta_{i}^{j}\\
          &= \sum_{0\le j\le n-1}(\beta_{m}^{j}
          -\beta_{0}^{j})
          +\sum_{1\le j\le n}(\beta_{0}^{j}
          -\beta_{m}^{j})\\
          &= \beta_{m}^{0}-\beta_{0}^{0} + \beta_{0}^{n} -\beta_{m}^{n}\\
        &= \beta_{m}^{0}-\beta_{m}^{n}\\
          &=\beta_k^{\p}(\bar u,\bar v)-\sum_{u\leq\bar u}\nu^\p(u,\infty).
\end{align*}
\end{proof}

Theorem~\ref{k-triangle} states that the value of $\beta^\p_k$ at a point $(\bar u,\bar v)\in\Delta^+$ equals the number of cornerpoints lying strictly above and non-strictly to the left of $(\bar u,\bar v)$.



The following result relates the number of proper cornerpoints of a persistence diagram with positive multiplicity contained in an $(\varepsilon,\varepsilon)$-box with the multiplicity of the $(\varepsilon,\varepsilon)$-box itself.

\begin{corollary}\label{propnumpoints}
Let $\bar P=(\bar u,\bar v)\in\Delta^+$ and $\eps>0$,
with $\bar u+\varepsilon<\bar v-\varepsilon$.
Then $\mu^\p_{\eps,\eps}(\bar P)$ equals the number of proper cornerpoints of $\dgm(\p)$ (counted with their multiplicities) that belong to 
the $(\eps,\eps)$-box centred at $\bar P$, 
i.e.,
\[
B^\lnot_\eps(\bar P)=\{(u,v)\in\Delta^+:
\bar u-\eps<u\le \bar u+\eps, \bar v-\eps<v\le \bar v+\eps\}.
\]
\end{corollary}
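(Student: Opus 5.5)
The plan is to apply the Representation Theorem (Theorem~\ref{k-triangle}) to each of the four vertices appearing in Definition~\ref{defmu} with $\eta=\eps$, and then let the contributions cancel. For a point $(a,b)\in\Delta^+$ write
\[
N(a,b):=\sum_{\mycom{(u,v)\in\Delta^+}{u\le a,\,v>b}}\mu^\p(u,v), \qquad E(a):=\sum_{u\le a}\nu^\p(u,\infty).
\]
Theorem~\ref{k-triangle} gives $\beta_k^\p(a,b)=N(a,b)+E(a)$. By Remark~\ref{rem_sums}, these sums are finite, since they involve only finitely many cornerpoints. All four vertices $(\bar u\pm\eps,\bar v\pm\eps)$ lie in $\Delta^+$ because $\bar u+\eps<\bar v-\eps$, so the theorem applies to each of them.

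Substituting into $\mu^\p_{\eps,\eps}(\bar P)$, the essential parts cancel in pairs. The terms $E(\bar u+\eps)$ appear with signs $+$ and $-$, and so do the terms $E(\bar u-\eps)$. What remains is
\[
\mu^\p_{\eps,\eps}(\bar P)=N(\bar u+\eps,\bar v-\eps)-N(\bar u-\eps,\bar v-\eps)+N(\bar u-\eps,\bar v+\eps)-N(\bar u+\eps,\bar v+\eps).
\]
This is an inclusion–exclusion on the quadrants $Q(a,b)=\{u\le a,\,v>b\}$.

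Next, I compute the first difference and the second difference separately. First, $N(\bar u+\eps,b)-N(\bar u-\eps,b)$ sums $\mu^\p$ over the proper cornerpoints with $\bar u-\eps<u\le\bar u+\eps$ and $v>b$. Taking this difference at $b=\bar v-\eps$ and subtracting it at $b=\bar v+\eps$ leaves exactly the proper cornerpoints with $\bar u-\eps<u\le \bar u+\eps$ and $\bar v-\eps<v\le\bar v+\eps$, each counted with its multiplicity $\mu^\p$. This is precisely the set $B^\lnot_\eps(\bar P)$. These points automatically lie in $\Delta^+$ because $\bar u+\eps<\bar v-\eps$.

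The only delicate point is bookkeeping. Subtracting the sums is legitimate because each is a finite sum of natural numbers, so the differences can be rearranged as sums over set differences of the index sets. One also has to check that the strict and non-strict inequalities of the quadrants $Q$ ($u\le a$, $v>b$) produce exactly the half-open intervals in the definition of the upper-closed box. No further analytic argument is needed, because right-continuity (Assumption~\ref{ass_right-cont}) is already built into Theorem~\ref{k-triangle}.
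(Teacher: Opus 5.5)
Your proposal is correct and follows essentially the same route as the paper: you expand $\mu^\p_{\eps,\eps}(\bar P)$ via Definition~\ref{defmu}, apply the Representation Theorem at the four vertices, cancel the $\nu^\p$ terms, and telescope the remaining quadrant sums into the half-open box $B^\lnot_\eps(\bar P)$. Your explicit checks that all four vertices lie in $\Delta^+$ and that every sum is finite (so the differences can be taken over set differences of index sets) make precise the bookkeeping that the paper leaves implicit.
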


\begin{proof}


The statement follows from the definition of
$\mu^\p_{\eps,\eps}(\bar P)$ (Definition~\ref{defmu}) and the Representation Theorem (Theorem~\ref{k-triangle}), since
\begin{align*}
        \mu^\p_{\eps,\eps}(\bar P)
          &=\beta_k^{\p}(\bar u+\eps,\bar v-\eps)
          -\beta_k^{\p}(\bar u-\eps,\bar v-\eps) -\beta_k^{\p}(\bar u+\eps,\bar v+\eps)
          +\beta_k^{\p}(\bar u-\eps,\bar v+\eps)\\
          &=\ \ \ \  \sum_{\mathclap{
          \substack{ (u,v)\in\Delta^+\\
                    u\le \bar u+\eps\\
                    v>\bar v-\eps}
                    }
                    }
                    \mu^\p(u,v)
                    +\sum_{u\leq\bar u+\eps}\nu^\p(u,\infty)
          -\sum_{\mathclap{
          \substack{ (u,v)\in\Delta^+\\
                    u\le \bar u-\eps\\
                    v>\bar v-\eps}
                    }
                    }
                    \mu^\p(u,v)
                    -\sum_{u\leq\bar u-\eps}\nu^\p(u,\infty)\\
          &-\ \ \ \  \sum_{\mathclap{
          \substack{ (u,v)\in\Delta^+\\
                    u\le \bar u+\eps\\
                    v>\bar v+\eps}
                    }
                    }
                    \mu^\p(u,v)
                    -\sum_{u\leq\bar u+\eps}\nu^\p(u,\infty)
          +\sum_{\mathclap{
          \substack{ (u,v)\in\Delta^+\\
                    u\le \bar u-\eps\\
                    v>\bar v+\eps}
                    }
                    }
                    \mu^\p(u,v)
                    +\sum_{u\leq\bar u-\eps}\nu^\p(u,\infty)\\
                    &=\ \ \ \  \sum_{\mathclap{
          \substack{ (u,v)\in\Delta^+\\
                    \bar u-\eps<u\le \bar u+\eps\\
                    v>\bar v-\eps}
                    }
                    }
                    \mu^\p(u,v)
          -\sum_{\mathclap{
          \substack{ (u,v)\in\Delta^+\\
                    \bar u-\eps<u\le \bar u+\eps\\
                    v>\bar v+\eps}
                    }
                    }
                    \mu^\p(u,v)\\
           &= \ \ \ \  \sum_{\mathclap{
          \substack{ (u,v)\in\Delta^+\\
                    \bar u-\eps<u\le \bar u+\eps\\
                    \bar v-\eps<v\le \bar v +\eps}
                    }
                    }
                    \mu^\p(u,v).
\end{align*}
\end{proof}

\begin{corollary}\label{ecrjwedwojeowcw}
Let $\bar P=(\bar u,\infty)\in\Delta^*$ and $\eps>0$.
Then $\beta^\p_k(\bar u+\eps, \infty)-\beta^\p_k(\bar u-\eps, \infty)$ equals the number of improper cornerpoints of $\dgm(\p)$ (counted with their multiplicities) that belong to 
the half-open interval $](\bar u-\eps, \infty), (\bar u+\eps, \infty)]$.
\end{corollary}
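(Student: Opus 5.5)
The plan is to argue exactly as in the proof of Corollary~\ref{propnumpoints}, but now with Lemma~\ref{k-triangle_lemma} in place of Theorem~\ref{k-triangle}. Since $(\bar u\pm\eps,\infty)\in\Delta^*$, Lemma~\ref{k-triangle_lemma} can be applied at both points:
\[
\beta^\p_k(\bar u+\eps,\infty)=\sum_{u\le \bar u+\eps}\nu^\p(u,\infty),\qquad
\beta^\p_k(\bar u-\eps,\infty)=\sum_{u\le \bar u-\eps}\nu^\p(u,\infty).
\]
By Proposition~\ref{propfiniteatinf} (see also Remark~\ref{rem_sums}), both sums have only finitely many nonzero terms. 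Their difference is therefore a finite, well-defined quantity. The set of indices $\{u\le\bar u-\eps\}$ is contained in $\{u\le \bar u+\eps\}$, so the terms indexed by it cancel. What remains is
\[
\beta^\p_k(\bar u+\eps,\infty)-\beta^\p_k(\bar u-\eps,\infty)=\sum_{\bar u-\eps<u\le\bar u+\eps}\nu^\p(u,\infty).
\]

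Next we read off this sum. By Definition~\ref{defPD}, $\nu^\p(u,\infty)$ is the multiplicity of $(u,\infty)$ in $\dgm_k(\p)$, and the cornerpoints at infinity are exactly the points $(u,\infty)$ with $\nu^\p(u,\infty)>0$. The points $(u,\infty)$ with $\bar u-\eps<u\le\bar u+\eps$ are precisely those of the half-open interval $](\bar u-\eps,\infty),(\bar u+\eps,\infty)]$. Hence the sum counts the improper cornerpoints in that interval with their multiplicities, which is the claim.

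I do not expect any real obstacle. The only points to check are that the two sums are finite, which Proposition~\ref{propfiniteatinf} gives, and that the endpoints are handled correctly: $\bar u-\eps$ is excluded and $\bar u+\eps$ is included, because the indexing condition in Lemma~\ref{k-triangle_lemma} is the non-strict inequality $u\le\cdot$. The right-continuity in Assumption~\ref{ass_right-cont} is already built into Lemma~\ref{k-triangle_lemma}, so it need not be invoked again.
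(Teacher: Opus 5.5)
Your proposal is correct and is essentially the paper's own proof: apply Lemma~\ref{k-triangle_lemma} at $(\bar u+\eps,\infty)$ and $(\bar u-\eps,\infty)$, then subtract the two finite sums. What remains is $\sum_{\bar u-\eps<u\le\bar u+\eps}\nu^\p(u,\infty)$, which counts the improper cornerpoints in the half-open interval with their multiplicities.
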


\begin{proof}
The statement follows from Lemma~\ref{k-triangle_lemma}, since 
\begin{align*}
\beta^\p_k(\bar u+\eps, \infty)-\beta^\p_k(\bar u-\eps, \infty)&= \sum_{u\leq\bar u+\eps}\nu^\p(u,\infty)-\sum_{u\leq\bar u-\eps}\nu^\p(u,\infty)\\
&=\sum_{\bar u-\eps<u\le \bar u+\eps} \nu^\p(u,\infty).
\end{align*}
\end{proof}

\begin{exercise}
Show that 
$\sum_{(u,\infty)\in \dgm_k(\p)}\nu^\p(u, \infty)=
\dim H_k(X)$.
\end{exercise}




\section{
Stability of persistence diagrams}
\label{sec:stability_thm}

In this section, we aim at proving that persistence diagrams are stable with respect to the $L^\infty$-distance between filtering functions.
To claim such a result, we need to define a notion of distance between persistence diagrams (Section \ref{subsec:matching_distance}).
Then in Section \ref{subsec:stability_thm}, the stability theorem is stated and proven.\\

\subsection{Matching distance}\label{subsec:matching_distance}

Our next goal is to introduce a metric for the set of persistence diagrams.
To do that, we have to see how $\beta_k^\p{(u,v)}$ changes when $\p$ changes.


\begin{proposition}\label{propPBNsandhomeo}
Let $\p,\psi\colon X\to \R$ be two continuous functions with $\|\p-\psi\|_\infty \le \eta$.
Then, for every $(u,v)\in \Delta^*$, we have that
$\beta_k^\p{(u-\eta,v+\eta)}\le \beta_k^\psi{(u,v)}$, where $\beta_k^\p{(u-\eta,v+\eta)}=\beta_k^\p{(u-\eta,\infty)}$ if $(u, v)=(u,\infty)$.
\end{proposition}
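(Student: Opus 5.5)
The key observation is a chain of inclusions of sublevel sets. Since $\|\p-\psi\|_\infty\le\eta$, for every $x\in X$ we have $\psi(x)\le\p(x)+\eta$ and $\p(x)\le\psi(x)+\eta$. Hence $\p(x)\le u-\eta$ implies $\psi(x)\le u$, and $\psi(x)\le v$ implies $\p(x)\le v+\eta$. This gives
\[
X^\p_{u-\eta}\subseteq X^\psi_u\subseteq X^\psi_v\subseteq X^\p_{v+\eta}
\]
for every $u\le v$ with $u,v$ real. In the case $v=\infty$ we simply use $X^\psi_\infty=X=X^\p_\infty$, so the last inclusion is an equality. The composition of these three inclusions is the inclusion $i^\p_{u-\eta,v+\eta}\colon X^\p_{u-\eta}\hookrightarrow X^\p_{v+\eta}$.

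Denote by $j\colon X^\p_{u-\eta}\hookrightarrow X^\psi_u$ and $l\colon X^\psi_v\hookrightarrow X^\p_{v+\eta}$ the two outer inclusions. By Theorem~\ref{thm_homology_functor}, the induced maps in degree $k$ satisfy
\[
i^{\p*}_{u-\eta,v+\eta}=l_{*}\, i^{\psi*}_{u,v}\, j_{*}.
\]
Therefore
\[
\Imm i^{\p*}_{u-\eta,v+\eta}=l_*\bigl(i^{\psi*}_{u,v}(\Imm j_*)\bigr)\subseteq l_*\bigl(\Imm i^{\psi*}_{u,v}\bigr),
\]
and the dimension of the right-hand side is at most $\dim \Imm i^{\psi*}_{u,v}=\beta^\psi_k(u,v)$, since a linear map cannot increase dimension. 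All dimensions involved are finite by Assumption~\ref{ass_fingen}. This yields $\beta^\p_k(u-\eta,v+\eta)\le\beta^\psi_k(u,v)$.

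The only point requiring care is the case $v=\infty$. There $\beta^\p_k(u-\eta,v+\eta)$ is interpreted as $\beta^\p_k(u-\eta,\infty)$, and both $X^\psi_\infty$ and $X^\p_\infty$ equal $X$, so the same factorization works with $l$ the identity. One should also check that $(u-\eta,v+\eta)$ still lies in $\Delta^*$, which holds because $u-\eta<u<v<v+\eta$. No step is a real obstacle; the argument is functoriality plus the fact that the image of a composition is contained in the image of a middle factor pushed forward.
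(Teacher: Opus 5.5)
Your proof is correct and is essentially the paper's argument, since both rest on the chain of inclusions $X^\varphi_{u-\eta}\subseteq X^\psi_u\subseteq X^\psi_v\subseteq X^\varphi_{v+\eta}$. The paper unwinds your functorial factorization at the chain level: it chooses a basis of $PH_k^\varphi(u-\eta,v+\eta)$ with representatives in $X^\varphi_{u-\eta}\subseteq X^\psi_u$, and shows that their classes in $H_k(X^\psi_v)$ are linearly independent because any bounding $(k+1)$-chain in $X^\psi_v$ is also a chain in $X^\varphi_{v+\eta}$ --- which is your observation that $l_*$ maps a subspace of $PH_k^\psi(u,v)$ onto $PH_k^\varphi(u-\eta,v+\eta)$.
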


\begin{proof}
We recall that $X^\p_\infty=X^\p_{\max\p+\eta}$, for every $\eta\ge 0$.
Let us denote by $S_k^\p(u)$ the set of the singular $k$-chains in $X_u^\p$.
Let us choose an ordered basis $(\alpha_1,\ldots,\alpha_m)$ of  $PH_k^\p{(u-\eta,v+\eta)}\subseteq
H_k(X^\p_{v+\eta})$ and an arbitrary representative $z_i$ of $\alpha_i$, with $z_i\in S_k^{\p}{(u-\eta)}$ for $i=1,\dots,m$.
For each index $i$, we can consider the homology class $\hat\alpha_i\in PH_k^{\psi}{(u,v)}$ that contains $z_i$ (observe that
$z_i\in S_k^{\psi}{(u)}$, because $\|\p-\psi\|_\infty\le \eta$).
We now show that the homology classes $\hat\alpha_i$ are linearly independent in
$PH_k^{\psi}{(u,v)}$. If
$\lambda_1\hat\alpha_1+\ldots+\lambda_m\hat\alpha_m=\mathbf{0}\in PH_k^{\psi}{(u,v)}=\mathrm{Im\ }i^{\psi*}_{u,v}\subseteq H_k(X^{\psi}_{v})$,
we can find a chain $\gamma\in S_{k+1}^{\psi}{(v)}$ such that
$\partial_{k+1} \gamma=\lambda_1 z_1+\ldots+\lambda_m z_m$.
Since $\|\p-\psi\|_\infty\le\eta$,
we have that
$\gamma\in S_{k+1}^{\p}{(v+\eta)}$, and hence
$\lambda_1\alpha_1+\ldots+\lambda_m\alpha_m=\mathbf{0}\in PH_k^{\p}{(u-\eta,v+\eta)}=\mathrm{Im\ }i^{\p*}_{u-v,v+\eta}\subseteq H_k(X^\p_{v+\eta})$.
We know that  $\alpha_1,\ldots,\alpha_m$ are linearly independent
in $PH_k^\p{(u-\eta,v+\eta)}$, and hence $\lambda_1=\ldots=\lambda_m=0$. Therefore,
$\hat\alpha_1,\ldots,\hat\alpha_m$ are linearly independent in $PH_k^\psi{(u,v)}$.
This proves that
$\dim PH_k^{\p}{(u-\eta,v+\eta)}\le \dim PH_k^{\psi}{(u,v)}$.
\end{proof}

Proposition~\ref{propPBNsandhomeo} has the following interesting consequence.

\begin{proposition}\label{proplocconstmult}
Consider $\p\in C^0(X, \R)$.
If $\bar P=(\bar u,\bar v)\in\Delta^+$,
then there is a real number $\bar \eta>0$ such that if $\psi\in C^0(X, \R)$ and $\|\p-\psi\|_\infty \le \eta$ with $0<\eta\le\bar\eta$,
then the closed ball 
$\overline{B_{\eta}}(\bar P)
$ contains exactly $\mu^\p(\bar P)$ proper cornerpoints
(counted with their multiplicities) of the persistence diagram $\dgm_k(\psi)$.
\end{proposition}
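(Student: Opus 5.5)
We plan to count the proper cornerpoints of $\dgm_k(\psi)$ in $\overline{B_{\eta}}(\bar P)$ through an alternating sum of values of $\beta_k^\psi$. We then sandwich that sum between two alternating sums of values of $\beta_k^\p$, using Proposition~\ref{propPBNsandhomeo} in both directions. Exchanging the roles of $\p$ and $\psi$ there gives
\[
\beta_k^\p(u-\eta,v+\eta)\le\beta_k^\psi(u,v)\le\beta_k^\p(u+\eta,v-\eta),
\]
whenever the arguments make sense. Finally, $\bar\eta$ is chosen so that $\beta_k^\p$ takes only four values near $\bar P$.

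\emph{Choice of $\bar\eta$.} By Propositions~\ref{proplocfinite} and~\ref{propfiniteatinf}, only finitely many cornerpoints of $\dgm_k(\p)$ (proper or at infinity) lie in $\{(u,v): v\ge u+c\}$ for any fixed $c>0$. Hence we can pick $\bar\eta>0$ with three properties. First, $\bar u+3\bar\eta<\bar v-3\bar\eta$. Second, no cornerpoint $(u',v')$ with $v'>\bar v-3\bar\eta$ has $u'\in[\bar u-3\bar\eta,\bar u+3\bar\eta]\setminus\{\bar u\}$. Third, no proper cornerpoint $(u',v')$ with $u'<\bar u+3\bar\eta$ has $v'\in[\bar v-3\bar\eta,\bar v+3\bar\eta]\setminus\{\bar v\}$. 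By the Representation Theorem~\ref{k-triangle} (together with Assumption~\ref{ass_right-cont}), $\beta_k^\p$ is then constant on each of the four pieces of the square $[\bar u-3\bar\eta,\bar u+3\bar\eta]\times[\bar v-3\bar\eta,\bar v+3\bar\eta]$ cut by $u<\bar u$ versus $u\ge\bar u$ and $v<\bar v$ versus $v\ge\bar v$. Call these four values $\beta_{\pm\pm}$, where the first sign refers to $u$ and the second to $v$. By the same theorem, $\mu^\p(\bar P)=\beta_{+-}-\beta_{--}-\beta_{++}+\beta_{-+}$.

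\emph{Counting for $\psi$.} The Representation Theorem applied to $\psi$, via the same telescoping as in Corollary~\ref{propnumpoints} but for non-square boxes, says the following. For $a<b<c<d$, the alternating sum $\beta^\psi_k(b,c)-\beta^\psi_k(a,c)-\beta^\psi_k(b,d)+\beta^\psi_k(a,d)$ counts the proper cornerpoints of $\dgm_k(\psi)$ in $]a,b]\times]c,d]$. The set $\overline{B_\eta}(\bar P)=[\bar u-\eta,\bar u+\eta]\times[\bar v-\eta,\bar v+\eta]$ is the intersection over $\delta\to0^+$ of the boxes $]\bar u-\eta-\delta,\bar u+\eta]\times]\bar v-\eta-\delta,\bar v+\eta]$. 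Since $\dgm_k(\psi)$ is locally finite, the count $N$ in the closed ball equals
\[
\beta^\psi_k(\bar u+\eta,\bar v-\eta-\delta)-\beta^\psi_k(\bar u-\eta-\delta,\bar v-\eta-\delta)-\beta^\psi_k(\bar u+\eta,\bar v+\eta)+\beta^\psi_k(\bar u-\eta-\delta,\bar v+\eta)
\]
for every sufficiently small $\delta\in\,]0,\eta[$.

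\emph{Upper bound.} Bound the two positive terms from above and the two negative terms from below. This gives
\[
N\le\beta_k^\p(\bar u+2\eta,\bar v-2\eta-\delta)-\beta_k^\p(\bar u-2\eta-\delta,\bar v-\delta)-\beta_k^\p(\bar u+2\eta,\bar v)+\beta_k^\p(\bar u-\delta,\bar v+2\eta).
\]
All four evaluation points lie in the $3\bar\eta$-square and in the regions $(+,-)$, $(-,-)$, $(+,+)$ and $(-,+)$ respectively; the point with $v=\bar v$ counts as $(+,+)$. Hence $N\le\beta_{+-}-\beta_{--}-\beta_{++}+\beta_{-+}=\mu^\p(\bar P)$.

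\emph{Lower bound.} Here the inequalities are reversed. The evaluation points become $(\bar u,\bar v-\delta)$, $(\bar u-\delta,\bar v-2\eta-\delta)$, $(\bar u,\bar v+2\eta)$ and $(\bar u-2\eta-\delta,\bar v)$. These fall respectively in the regions $(+,-)$, $(-,-)$, $(+,+)$ and $(-,+)$, which gives $N\ge\mu^\p(\bar P)$.

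The main obstacle is the choice of $\bar\eta$. The constancy of $\beta^\p_k$ on the four regions must hold for every evaluation point that appears, including the borderline points with coordinate exactly $\bar u$ or $\bar v$; this is where right-continuity is used. It also requires excluding cornerpoints of $\p$ that lie far above or far to the left of the square but whose abscissa or ordinate enters it, since these create horizontal or vertical discontinuity lines through the square. Keeping all evaluation points inside the $3\bar\eta$-square, using $\delta<\eta\le\bar\eta$, then makes the region bookkeeping routine.
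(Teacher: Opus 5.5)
Your argument is correct and is essentially the paper's proof. Both proofs bound the relevant values of $\beta^\psi_k$ above and below by values of $\beta^\p_k$, using Proposition~\ref{propPBNsandhomeo} in both directions. Both then use that $\beta^\p_k$ is constant on the four pieces around $\bar P$, and count cornerpoints of $\dgm_k(\psi)$ through an alternating sum as $\delta\to0^+$. The one real difference is how constancy is obtained:
\begin{itemize}
\item the paper gets it on the open components of $W_\eps(\bar P)$ from Proposition~\ref{propWeps}, and counts with the symmetric boxes $B^\lnot_{\eta+\delta}(\bar P)$;
\item you get it on the half-closed pieces from the Representation Theorem plus local finiteness, and this is what lets you use the asymmetric box $]\bar u-\eta-\delta,\bar u+\eta]\times]\bar v-\eta-\delta,\bar v+\eta]$ with evaluation points on the lines $u=\bar u$ or $v=\bar v$.
\end{itemize}

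There is one bookkeeping slip. In both bound displays, the evaluation points you give for $\beta^\psi_k(\bar u+\eta,\bar v+\eta)$ and $\beta^\psi_k(\bar u-\eta-\delta,\bar v+\eta)$ are not the ones the two-sided bound produces. For the upper bound they should be $(\bar u,\bar v+2\eta)$ and $(\bar u-\delta,\bar v)$. For the lower bound they should be $(\bar u+2\eta,\bar v)$ and $(\bar u-2\eta-\delta,\bar v+2\eta)$. The corrected points lie in the same regions $(+,+)$ and $(-,+)$ as yours, so the conclusion $N=\mu^\p(\bar P)$ is unaffected.
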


\begin{proof}
By Proposition~\ref{propWeps}, a sufficiently small $\eps>0$ exists such that the set
$W_\eps(\bar P):=\{(u,v)\in\R^2:|u-\bar u|<\eps,|v-\bar v|<\eps, u\neq \bar u, v\neq \bar v\}$
is contained in $\Delta^+$
and does not contain any discontinuity point of $\beta^\p_k$.
Proposition~\ref{proppropadiscfromcp} 
implies that $\bar P$ is the only point of $\Delta^+$ that could belong to both $\dgm_k(\p)$ and the open ball 
$B_\eps(\bar P)$.

Let $\bar\eta$ be a real number such that $0 < \bar\eta <\frac{\eps}{2}$. For each real number $\eta$ with $0 < \eta \le \bar\eta$,
let us take a sufficiently small positive real number $\delta<\eta$ with $2\eta+\delta<\eps$, so that
$\bar u + 2\eta + \delta < \bar v -2\eta - \delta$ (we recall that $\bar u +\eps<\bar v-\eps$, since $W_\eps(\bar P)\subseteq \Delta^+$).
We define $\bar P_+^- = (\bar u+\eta+\delta, \bar v-\eta-\delta), \bar P_-^- = (\bar u-\eta-\delta, \bar v-\eta-\delta), \bar P_+^+ = (\bar u+\eta+\delta, \bar v+\eta+\delta), \bar P_-^+ = (\bar u-\eta-\delta, \bar v+\eta+\delta)$ as illustrated in Figure~\ref{figQeps}.


\begin{figure}
\begin{center}
\includegraphics[width=6cm]{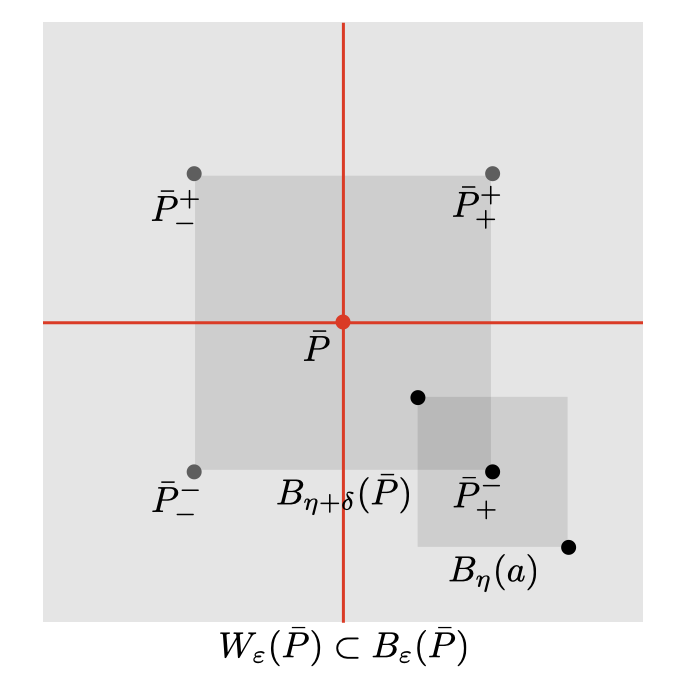}
\caption{The points highlighted and the balls are the ones appearing in the proof of Proposition~\ref{proplocconstmult}. 
The set $W_\eps(\bar P)$ is obtained from the open ball $B_\eps(\bar P)$ by removing the points displayed in red.}
\label{figQeps}
\end{center}
\end{figure}

Let us assume that $\psi\in C^0(X, \R)$ and $\|\p-\psi\|_\infty \le \eta$.
By applying Proposition~\ref{propPBNsandhomeo} twice, we obtain that
\[
\beta^\p_k(\bar u+\delta,\bar v-\delta)\le
\beta^\psi_k(\bar P_+^-)
\le
\beta^\p_k(\bar u+2\eta+\delta,\bar v-2\eta-\delta).
\]
Since $\beta^\p_k$ is constant in each connected component of the set $W_\eps(\bar P)$, in particular,
\[
\beta^\p_k(\bar u+\delta,\bar v-\delta)=
\beta^\p_k(\bar P_+^-)
=
\beta^\p_k(\bar u+2\eta+\delta,\bar v-2\eta-\delta).
\]
This implies that $\beta^\p_k(\bar P_+^-)=\beta^\psi_k(\bar P_+^-)$.
Analogously, we can prove that $\beta^\p_k(\bar P_-^-)=\beta^\psi_k(\bar P_-^-)$,
$\beta^\p_k(\bar P_+^+)=\beta^\psi_k(\bar P_+^+)$, and
$\beta^\p_k(\bar P_-^+)=\beta^\psi_k(\bar P_-^+)$.
Hence, $$\mu^\p(\bar P)=\mu^\p_{\eta+\delta,\eta+\delta}(\bar P)=\mu^\psi_{\eta+\delta,\eta+\delta}(\bar P).$$
From Corollary~\ref{propnumpoints} (applied to $\psi$),
it follows that $\mu^\p(\bar P)$ is equal to the number of cornerpoints in $\dgm_k(\psi)$ that are contained 
in $B^\lnot_{\eta+\delta}(\bar P)$. 
This is true for any sufficiently small
$\delta>0$.
Therefore, $\mu^\p(\bar P)$ is equal to the number of cornerpoints in $\dgm_k(\psi)$
contained in the set 
$\bigcap_{\delta>0} B_{\eta+\delta}^\lnot(\bar P)$, i.e., the closure of the ball $B_{\eta}(\bar P)$.
\end{proof}

The following result extends Proposition~\ref{proplocconstmult} to the points at infinity.

\begin{proposition}\label{proplocconstmultinfty}
If $\bar P=(\bar u,\infty)\in\Delta^*$,
then there is a real number $\bar \eta>0$ such that if $\psi\in C^0(X, \R)$ and $\|\p-\psi\|_\infty \le \eta$ with $0<\eta\le\bar\eta$,
then 
the closed interval $[(\bar u- \eta, \infty),(\bar u+\eta,\infty)]\subset \R\times \{\infty\}$
contains exactly $\nu^\p(\bar P)$ improper cornerpoints
(counted with their multiplicities) of the persistence diagram $\dgm_k(\psi)$.
\end{proposition}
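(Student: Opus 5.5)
I would mimic the proof of Proposition~\ref{proplocconstmult}, working in one variable only. Since $\beta^\p_k(\cdot,\infty)$ is non-decreasing with integer values (Proposition~\ref{propmonotonicity}), and by Remark~\ref{aaohfceolrvjawcol} its discontinuities are exactly the abscissas of the cornerpoints at infinity, Proposition~\ref{propfiniteatinf} provides an $\eps>0$ such that $\beta^\p_k(\cdot,\infty)$ is constant on $]\bar u-\eps,\bar u[$ and on $]\bar u,\bar u+\eps[$. I will take $\bar\eta$ with $0<\bar\eta<\eps/2$. Given $0<\eta\le\bar\eta$, I choose $\delta>0$ small with $2\eta+\delta<\eps$.

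The first key step is a two-sided comparison. Proposition~\ref{propPBNsandhomeo} holds for points $(u,\infty)$, and it can be applied with the roles of $\p$ and $\psi$ exchanged. This gives
\[
\beta^\p_k(\bar u+\delta,\infty)\le\beta^\psi_k(\bar u+\eta+\delta,\infty)\le\beta^\p_k(\bar u+2\eta+\delta,\infty),
\]
and similarly
\[
\beta^\p_k(\bar u-2\eta-\delta,\infty)\le\beta^\psi_k(\bar u-\eta-\delta,\infty)\le\beta^\p_k(\bar u-\delta,\infty).
\]
Both outer terms in each chain lie in the same interval of constancy of $\beta^\p_k(\cdot,\infty)$, because $\delta>0$ and $2\eta+\delta<\eps$. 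Hence $\beta^\psi_k(\bar u\pm(\eta+\delta),\infty)=\beta^\p_k(\bar u\pm\delta,\infty)$. The difference of these two values equals $\nu^\p(\bar P)$, since the limit defining $\nu^\p$ is already attained for every positive $\delta<\eps$.

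Next I apply Corollary~\ref{ecrjwedwojeowcw} to $\psi$. It says that the number of improper cornerpoints of $\dgm_k(\psi)$, counted with multiplicity, in $](\bar u-\eta-\delta,\infty),(\bar u+\eta+\delta,\infty)]$ equals $\nu^\p(\bar P)$, and this holds for every sufficiently small $\delta>0$. These half-open intervals decrease as $\delta\to0^+$, and their intersection is the closed interval $[(\bar u-\eta,\infty),(\bar u+\eta,\infty)]$. By Proposition~\ref{propfiniteatinf} only finitely many cornerpoints are involved, so the count stabilizes and the count on the intersection equals $\nu^\p(\bar P)$.

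The only delicate point is checking that Proposition~\ref{propPBNsandhomeo} really gives the upper bound on $\beta^\psi_k$, by swapping $\p$ and $\psi$ and using that the second coordinate stays $\infty$. Everything else is a direct transcription of the bounded case.
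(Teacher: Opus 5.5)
Your proposal is correct and follows essentially the same route as the paper. You use two applications of Proposition~\ref{propPBNsandhomeo}, one with $\p$ and $\psi$ swapped, to sandwich $\beta^\psi_k(\bar u\pm(\eta+\delta),\infty)$ between values of $\beta^\p_k(\cdot,\infty)$ on an interval of constancy; you then apply Corollary~\ref{ecrjwedwojeowcw} to $\psi$ and intersect over $\delta$. Your appeal to the finiteness of the improper cornerpoints of $\dgm_k(\psi)$ to pass to the closed interval simply makes the paper's final step a little more explicit.
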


\begin{proof}
By Proposition~\ref{propfiniteatinf}, there exists $\eps>0$ such that the open interval $](\bar u+\eps, \infty), (\bar u+\eps, \infty)[$ contains at most $\bar P$ as a cornerpoint, if $\bar P$ is a cornerpoint, and none otherwise. 
Let $\bar \eta $ be a real number such that $0<\bar \eta<\frac{\eps}{2}$. 
For each real number $\eta$ with $0<\eta\le \bar \eta$, let us take a sufficiently small positive real number $\delta<\eta$ with $2\eta+\delta<\eps$. 
We define $\bar P_+=(\bar u+\eta+\delta, \infty)$ and $\bar P_-=(\bar u-\eta-\delta,\infty)$. 
Let us assume that $\psi\in C^0(X, \R)$ and $\lVert \p-\psi\rVert_\infty\le \eta$. 
By applying Proposition~\ref{propPBNsandhomeo} twice, we obtain 
\[
\beta^\p_k(\bar u+\delta,\infty)\le
\beta^\psi_k(\bar P_+)
\le
\beta^\p_k(\bar u+2\eta+\delta,\infty).
\]
By Remark~\ref{aaohfceolrvjawcol} and the choice of $\eps$, $\beta^\p_k$ is constant in each connected component of $](\bar u-\eps, \infty), (\bar u+\eps, \infty)[\setminus \{\bar P\}$, in particular,
\[
\beta^\p_k(\bar u+\delta,\infty)=
\beta^\p_k(\bar P_+)
=
\beta^\p_k(\bar u+2\eta+\delta,\infty).
\]
This implies $\beta^\p_k(\bar P_+)=\beta^\psi_k(\bar P_+)$. 
Analogously, we can prove that $\beta^\p_k(\bar P_-)=\beta^\psi_k(\bar P_-)$.
Hence, $\nu^\p(\bar P)=\beta^\p_k(\bar P_+)-\beta^\p_k(\bar P_-)=\beta^\psi_k(\bar P_+)-\beta^\psi_k(\bar P_-)$.
From Corollary~\ref{ecrjwedwojeowcw} (applied to $\psi$), it follows that $\nu^\p(\bar P)$ is equal to the number of cornerpoints in $\dgm_k(\psi)$ that are contained in the half-open interval $](\bar u-\eta-\delta, \infty), (\bar u+\eta+\delta,\infty)]$.
This is true for any sufficiently small $\delta>0$. 
Therefore, $\nu^\p(\bar P)$ is equal to the number of cornerpoints in $\dgm_k(\psi)$ contained in the set $\bigcap_{\delta>0}](\bar u-\eta-\delta, \infty), (\bar u+\eta+\delta,\infty)]$, i.e., the closed interval $[(\bar u-\eta, \infty), (\bar u+\eta,\infty)]$.
\end{proof}


The following corollary shows that Propositions~\ref{proplocconstmult} and~\ref{proplocconstmultinfty} can be adapted to open balls.

\begin{corollary}\label{corintorno}
If \(\bar P \in \Delta^+\), then there exists a real number \(\bar \eta > 0\) such that, for every \(\psi \in C^0(X,\mathbb{R})\) with \(\|\varphi - \psi\|_\infty < \eta\), where \(0 < \eta \le \bar \eta\), the open ball \(B_\eta(\bar P)\) with respect to \(d\) contains exactly as many proper cornerpoints (counted with multiplicity) of the persistence diagram \(\dgm_k(\psi)\) as the multiplicity of $\bar P$ in \(\dgm_k(\varphi)\).

Analogously, if \(\bar P =(\bar u, \infty)\), then there exists a real number \(\bar \eta > 0\) such that, for every \(\psi \in C^0(X,\mathbb{R})\) with \(\|\varphi - \psi\|_\infty < \eta\), where \(0 < \eta \le \bar \eta\), the open interval \(](\bar u-\eta, \infty), (\bar u+\eta, \infty)[\) with respect to \(d\) contains exactly as many improper cornerpoints (counted with multiplicity) of the persistence diagram \(\dgm_k(\psi)\) as the multiplicity of $\bar P$ in \(\dgm_k(\varphi)\).
\end{corollary}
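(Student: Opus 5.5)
The plan is to derive the open-ball version from the closed-ball statements of Propositions~\ref{proplocconstmult} and~\ref{proplocconstmultinfty}, by writing the open ball as an increasing union of closed balls.

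Fix \(\bar P=(\bar u,\bar v)\in\Delta^+\) and let \(\bar\eta>0\) be the constant given by Proposition~\ref{proplocconstmult}; we keep the same \(\bar\eta\). Take \(\psi\) with \(\|\p-\psi\|_\infty<\eta\le\bar\eta\), and set \(\eta_0:=\|\p-\psi\|_\infty\).

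First suppose \(\eta_0>0\). For every \(\eta'\) with \(\eta_0\le\eta'<\eta\) we have \(0<\eta'\le\bar\eta\) and \(\|\p-\psi\|_\infty\le\eta'\). So Proposition~\ref{proplocconstmult} says that \(\overline{B_{\eta'}}(\bar P)\) contains exactly \(\mu^\p(\bar P)\) proper cornerpoints of \(\dgm_k(\psi)\). The closed balls \(\overline{B_{\eta'}}(\bar P)\) increase with \(\eta'\), and their union over \(\eta'\in[\eta_0,\eta)\) is the open ball \(B_\eta(\bar P)\). This holds because each point at \(d\)-distance \(r<\eta\) from \(\bar P\) lies in \(\overline{B_{\eta'}}(\bar P)\) for \(\eta'=\max\{r,\eta_0\}\).

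Now suppose \(B_\eta(\bar P)\) contained more than \(\mu^\p(\bar P)\) cornerpoints counted with multiplicity. Then some finite subfamily of them would already exceed \(\mu^\p(\bar P)\). That finite subfamily lies in a single \(\overline{B_{\eta'}}(\bar P)\) with \(\eta'<\eta\), namely the one with \(\eta'\) the maximum of their distances and \(\eta_0\). This is a contradiction. Conversely, \(\overline{B_{\eta_0}}(\bar P)\subseteq B_\eta(\bar P)\) already contains \(\mu^\p(\bar P)\) of them. Hence the count in \(B_\eta(\bar P)\) is exactly \(\mu^\p(\bar P)\).

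If \(\eta_0=0\), then \(\psi=\p\). In that case we apply the same argument with any \(\eta'\in(0,\eta)\) in place of \(\eta_0\); alternatively, this is direct from the choice of \(\eps\) in the proof of Proposition~\ref{proplocconstmult}. Along the way we also check that proper cornerpoints inside \(B_\eta(\bar P)\) do not reach the diagonal part of the ball. This holds since \(\bar\eta<\eps/2\) and \(\bar u+\eps<\bar v-\eps\), so the relevant balls are squares, as in Remark~\ref{squared_balls}.

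The case \(\bar P=(\bar u,\infty)\) is verbatim the same. We use Proposition~\ref{proplocconstmultinfty}, and the open interval \(](\bar u-\eta,\infty),(\bar u+\eta,\infty)[\) is the union of the closed intervals \([(\bar u-\eta',\infty),(\bar u+\eta',\infty)]\) for \(\eta'<\eta\). The only delicate point is the exhaustion step: the counts on the closed balls are constant and finite, and any finite set of points of the open ball lies in one closed ball. These two facts together give the equality of counts in the limit.
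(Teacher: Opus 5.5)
Your proof is correct and is essentially the paper's argument: the paper likewise applies Proposition~\ref{proplocconstmult} (resp.\ Proposition~\ref{proplocconstmultinfty}) at a positive radius $\eta-\delta<\eta$ with $\|\varphi-\psi\|_\infty\le\eta-\delta$, and gets the lower bound from $\overline{B_{\eta-\delta}}(\bar P)\subseteq B_\eta(\bar P)$, just as you do with $\eta_0$. For the upper bound it simply uses $B_\eta(\bar P)\subseteq\overline{B_\eta}(\bar P)$, since the proposition also applies at radius $\eta$ itself; this makes your exhaustion step and your separate case $\eta_0=0$ unnecessary, although your version is also valid.
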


\begin{proof}
We apply Propositions~\ref{proplocconstmult}, keeping the notation used in their statements.
If \(\|\varphi - \psi\|_\infty < \eta\), we can find \(\delta\) with \(0 < \delta < \eta\) such that \(\|\varphi - \psi\|_\infty \le \eta - \delta\).
It follows that the closed ball \(\overline{B_{\eta-\delta}}(\bar P)\), with respect to \(d\), contains exactly as many non-trivial cornerpoints (counted with multiplicity) of the persistence diagram \(\dgm_k(\psi)\) as the multiplicity of \(\bar P\) in \(\dgm_k(\varphi)\).
On the other hand, since \(\|\varphi - \psi\|_\infty \le \eta\), the closed ball \(\overline{B_{\eta}}(\bar P)\), with respect to \(d\) has the same property. 
This implies that there are no points of \(\dgm_k(\psi)\) in the set \(\overline{B_{\eta}}(\bar P)\setminus \overline{B_{\eta-\delta}}(\bar P)\), and hence the claim of the corollary follows.

The case of $\bar P=(\bar u, \infty)$ is obtained analogously by applying Proposition~\ref{proplocconstmultinfty}.
\end{proof}

We can now introduce a metric between persistence diagrams.

\begin{definition}[Matching distance]\label{defbottleneckdistance}
For every two $\p, \psi\in C^0(X, \R)$, we define the \emph{matching distance} or \emph{bottleneck distance} as
\[
d_\match (\dgm(\p),\dgm(\psi)):=\inf_{\sigma\in \mathcal{M}(\p,\psi)}\sup_{P\in\dgm(\p)}d(P,\sigma(P))
\]
where $\mathcal{M}(\p,\psi)$ is the set  of all matchings between
$\dgm(\p)$ and $\dgm(\psi)$.
\end{definition}

\begin{proposition}\label{dmatchisadistance}
The function $d_\match$ is a metric on the set of persistence diagrams.
\end{proposition}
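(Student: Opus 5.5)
We have to check that $d_\match$ is finite and non-negative, symmetric, satisfies the triangle inequality, and vanishes only when the two diagrams coincide as multisets. The plan is to handle these properties one at a time.

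\emph{Finiteness and existence of matchings.} Every persistence diagram contains $\Delta$ with multiplicity $\infty$, and by Corollary~\ref{corcountable} its remaining points form a countable set. So both realisations $\dgm(\p)_f$ and $\dgm(\psi)_{f'}$ are countably infinite, and $\mathcal{M}(\p,\psi)$ is non-empty.

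Finiteness needs a little more care. By Proposition~\ref{propfiniteatinf}, each diagram has finitely many cornerpoints at infinity, and their number equals $\dim H_k(X)$ (the exercise following Corollary~\ref{ecrjwedwojeowcw}). Hence the two diagrams have the same number of cornerpoints at infinity, since $X$ is the same space. Choose a matching as follows:
\begin{itemize}
\item pair the cornerpoints at infinity bijectively with each other; by Proposition~\ref{propposcornatinf} their abscissas lie in bounded intervals, so these pairs have bounded distance;
\item send every proper cornerpoint to a copy of $\Delta$; by Proposition~\ref{propposprcorn} its distance to $\Delta$ is at most $(\max\p-\min\p)/2$;
\item match the remaining copies of $\Delta$ bijectively among themselves, using that countably infinite sets remain countably infinite after removing countably many elements.
\end{itemize}
This matching has bounded cost, so $d_\match<\infty$.

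\emph{Symmetry.} Since $d$ is symmetric and $\sigma\mapsto\sigma^{-1}$ is a bijection $\mathcal{M}(\p,\psi)\to\mathcal{M}(\psi,\p)$, we get $\sup_P d(P,\sigma(P))=\sup_Q d(\sigma^{-1}(Q),Q)$, which gives symmetry.

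\emph{Triangle inequality.} Given matchings $\sigma_1$ from $\dgm(\p)$ to $\dgm(\psi)$ and $\sigma_2$ from $\dgm(\psi)$ to $\dgm(\chi)$, the composite $\sigma_2\sigma_1$ is a matching from $\dgm(\p)$ to $\dgm(\chi)$. The triangle inequality for $d$ on $\bar\Delta^*$ gives
\[
\sup_P d(P,\sigma_2\sigma_1 P)\le \sup_P d(P,\sigma_1P)+\sup_Q d(Q,\sigma_2Q).
\]
Taking infima over $\sigma_1$ and $\sigma_2$ then yields the triangle inequality for $d_\match$.

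\emph{Identity of indiscernibles.} This is the main obstacle. We must show that $d_\match(\dgm(\p),\dgm(\psi))=0$ forces the two multisets to coincide. The difficulty is that the infimum need not be attained, so no single matching realises distance $0$.

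The plan is to argue by multiplicities. Fix $\bar P\in\Delta^+$ and $\eps>0$ small enough that the closed ball $\overline{B_{2\eps}}(\bar P)$ stays away from $\Delta$ and contains no cornerpoint of $\dgm(\p)$ other than $\bar P$. This is possible by Proposition~\ref{proplocfinite}, applied to $\dgm(\p)$. Next I would like to shrink $\eps$ further, again using Proposition~\ref{proplocfinite} but now for $\dgm(\psi)$, so that the annulus $\overline{B_{2\eps}}(\bar P)\setminus\{\bar P\}$ contains no point of $\dgm(\psi)$ either. Then pick a matching $\sigma$ of cost less than $\eps$.

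Every copy of $\bar P$ in $\dgm(\p)$ is sent by $\sigma$ into $B_\eps(\bar P)$, where the only possible cornerpoint of $\dgm(\psi)$ is $\bar P$ itself. Conversely, every copy of $\bar P$ in $\dgm(\psi)$ has a preimage within distance $\eps$, and the only candidate there is $\bar P$. This gives equal multiplicities of $\bar P$ in the two diagrams.

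I would run the same argument for points $(\bar u,\infty)$. Here Proposition~\ref{propfiniteatinf} gives the isolation, and a small ball around $(\bar u,\infty)$ contains only points at infinity with nearby abscissa. Infinite multiplicity at $\Delta$ is automatic for both diagrams.

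Finally, there is the degenerate case where $d_\match$ is only a pseudo-metric on functions. This causes no trouble, because the statement concerns diagrams, not functions.
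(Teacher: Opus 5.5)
Your proof is correct and takes essentially the paper's route: inverse matchings give symmetry, composed matchings give the triangle inequality, and the isolation of each point of $\bar\Delta^*$ provided by Propositions~\ref{proplocfinite} and~\ref{propfiniteatinf} forces any matching of sufficiently small cost to send each copy of a point to a copy of the same point, so the multiplicities agree. Two small repairs are needed: first, you should state the trivial direction $d_\match(\dgm(\p),\dgm(\p))=0$ (via the identity matching); second, in your finiteness paragraph (which goes beyond what the paper checks), the claim that a countably infinite set stays infinite after removing countably many elements is false as stated, so reserve, for example, only the odd-indexed copies of $\Delta$ to absorb the proper cornerpoints.
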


\begin{proof}
\begin{itemize}
 \item[1.] We have that \begin{align*}
            d_\match (\dgm(\p),\dgm(\p))
            &=\inf_{\sigma\in \mathcal{M}(\p,\p)}\sup_{P\in\dgm(\p)}d(P,\sigma(P))\\
            &\le \sup_{p\in\dgm(\p)}d(P,\id(P))=0.
\end{align*}
 \item[2.] Since $d_{\mathrm{match}}(\dgm(\p), \dgm(\p')) = 0$, for every $\varepsilon > 0$ we can find a matching $\sigma_\varepsilon \in \mathcal{M}(\p,\p')$ such that $d(P,\sigma_\varepsilon(P)) < \varepsilon$ for every cornerpoint $P \in \dgm(\p)$. 
Because of Proposition~\ref{proplocfinite} and Proposition~\ref{propfiniteatinf}, we know that for any point $P \in \Delta^*$ there exists a sufficiently small open ball $B_\varepsilon(P) \subseteq \Delta^*$ that contains no points of $\dgm(\p) \cup \dgm(\p')$ other than $P$, if any. Therefore, if $\varepsilon$ is sufficiently small, then $\sigma_\varepsilon(P) = P$ for every copy of $P$ in the multiset $\dgm(\p)$.
Hence,
$\mu^{\p}(P) \le \mu^{\p'}(P)
\quad \text{for any } P \in \Delta^+$,
and
$\nu^{\p}(P) \le \nu^{\p'}(P)
\quad \text{for any } P = (u,\infty)$.
 Analogously,
 $\mu^{\p'}(P)\le \mu^\p(P)$
 for any $P\in \Delta^+$ and
 $\nu^{\p'}(P)\le\nu^\p(P)$
 for any $P=(u,\infty)$.
 Therefore, $\dgm(\p)=\dgm(\p')$;
 \item[3.] By setting $Q=\sigma(P)$, we have that \begin{align*}
            d_\match (\dgm(\p),\dgm(\p'))
            &=\inf_{\sigma\in \mathcal{M}(\p,\p')}\sup_{P\in\dgm(\p)}d(\sigma(P),P)\\
            &=\inf_{\sigma^{-1}\in \mathcal{M}(\p',\p)}\sup_{Q\in\dgm(\p')}d(Q,\sigma^{-1}(Q))\\
            &=d_\match (\dgm(\p'),\dgm(\p));
\end{align*}
  \item[4.] If $\sigma\in \mathcal{M}(\p,\p')$ and $\sigma'\in \mathcal{M}(\p',\p'')$, then $d(P,\sigma'(\sigma(P)))\le d(P,\sigma(P))+d(\sigma(P),\sigma'(\sigma(P)))$
  for every $P\in \dgm(\p)$,
  and hence
  \begin{align*}
          \sup_{P\in\dgm(\p)}d(P,\sigma'(\sigma(P)))
          &\le \sup_{P\in\dgm(\p)}d(P,\sigma(P))
  +d(\sigma(P),\sigma'(\sigma(P)))\\
          &\le \sup_{P\in\dgm(\p)}(d(P,\sigma(P))+\\
  & + \sup_{P\in\dgm(\p)}d(\sigma(P),\sigma'(\sigma(P))))\\
          &= \sup_{P\in\dgm(\p)}d(P,\sigma(P))+
  \sup_{Q\in\dgm(\p')}d(Q,\sigma'(Q)).
\end{align*}

It follows that for every $\sigma\in \mathcal{M}(\p,\p')$
  \begin{align*}
          d_\match (\dgm(\p),\dgm(\p''))
          &=\inf_{\sigma''\in \mathcal{M}(\p,\p'')}\sup_{P\in\dgm(\p)}d(P,\sigma''(P))\\
          &=\inf_{\sigma'\in \mathcal{M}(\p',\p'')}\sup_{P\in\dgm(\p)}d(P,\sigma'(\sigma(P)))\\
          &\le \sup_{P\in\dgm(\p)}d(P,\sigma(P))+\\
  & + \inf_{\sigma'\in \mathcal{M}(\p',\p'')}\sup_{Q\in\dgm(\p')}d(Q,\sigma'(Q))\\
   &\le \sup_{P\in\dgm(\p)}d(P,\sigma(P))+\\
& +    d_\match (\dgm(\p'),\dgm(\p'')).
\end{align*}

By taking the infimum for $\sigma$ varying in $\mathcal{M}(\p,\p')$, we obtain the triangle inequality
\begin{align*}
d_\match (\dgm(\p),\dgm(\p''))
\le  &  \; d_\match (\dgm(\p),\dgm(\p'))\\
 & +d_\match (\dgm(\p'),\dgm(\p'')).
\end{align*}
\end{itemize}
\end{proof}

\begin{remark}
The matching distance can also be regarded as a distance on functions, by assigning to each pair $(\varphi,\psi)$ the value $d_\match(\dgm(\varphi), \dgm(\psi))$. 
In this case, it is a pseudo-metric, since two continuous functions may have the same persistence diagram. 
\end{remark}

\begin{exercise}
Find two distinct real-valued functions $\p$ and $\psi$ such that the equality $d_\match(\dgm(\p),\dgm(\psi))=0$ holds.
\end{exercise}

\begin{exercise}\label{denaukhao}
Let
$(M, d_M)$ be a metric space. 
The Hausdorff distance between two non-empty compact subsets $A, B\subseteq M$ 
is defined as $$\delta_H(A, B)=\max\left\{\sup_{a\in A}\inf_{b\in B}d_M(a,b), \allowbreak \sup_{b\in B}\inf_{a\in A}d_M(a,b)\right\}.$$
Consider the persistence diagrams $\dgm(\p_1)$ and $\dgm(\p_2)$ of the functions $\p_1$ and $\p_2$. 
The Hausdorff distance between them is defined as the Hausdorff distance between the union of the supports of $\mu^{\p_1}$ and $\nu^{\p_1}$ and the union of the supports of $\mu^{\p_2}$ and $\nu^{\p_2}$.
Show that $$\delta_H(\dgm_k(\p_1), \dgm_k(\p_2))\le d_{\mathrm{match}}(\dgm(\p_1), \dgm(\p_2)).$$
\end{exercise}


\subsection{Matching distance stability theorem}\label{subsec:stability_thm}

After some preliminary results, in this section, we state and prove the matching distance stability (Theorem \ref{matchingstabilitythm})\cite{CSEdHa07}. 
This result is crucial in Topological Data Analysis as it certifies the robustness of persistence diagrams to perturbations and noise.
In fact, the stability of the persistence diagrams ensures, in particular, that close functions will result in close diagrams.
\\

\begin{proposition}\label{propgoodmatching}
If $\p,\psi\in C^0(X, \R)$ and $\|\p-\psi\|_\infty \le \eps$,
then, for any finite multiset $K\subseteq\dgm(\varphi)\cap\Delta^+$
with $\min_{P\in K}d(P,\Delta)>\eps$, there is an injective multiset map $\sigma:K\to\dgm(\psi)$ such that $\max_{P\in K}d(P,\sigma(P))\le\eps$.
\end{proposition}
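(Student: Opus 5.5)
The natural route is to deform $\p$ into $\psi$ along the straight-line homotopy $\p_t:=(1-t)\p+t\psi$, $t\in[0,1]$, and to follow the finitely many points of $K$ continuously in $t$. Note that $\|\p_t-\p_{t'}\|_\infty\le |t-t'|\,\eps$ for all $t,t'$. In particular, each $\p_t$ is continuous on the compact space $X$. I will assume that every $\p_t$ still satisfies Assumptions~\ref{ass_fingen} and~\ref{ass_right-cont}, as is implicit in the setting of this section.

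The local tool is Proposition~\ref{proplocconstmult}. For every $t\in[0,1]$ and every proper cornerpoint $Q$ of $\dgm_k(\p_t)$ there is an $\bar\eta(t,Q)>0$ such that, for every $s$ with $\|\p_t-\p_s\|_\infty\le\eta\le\bar\eta(t,Q)$, the closed ball $\overline{B_\eta}(Q)$ contains exactly $\mu^{\p_t}(Q)$ proper cornerpoints of $\dgm_k(\p_s)$. I will also use the compactness statement of Proposition~\ref{propdgmcompact} together with Proposition~\ref{proplocfinite}. These give that, away from the diagonal, each $\dgm_k(\p_t)$ has only finitely many points, so the $\bar\eta$'s can be shrunk until the balls around distinct points are pairwise disjoint.

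The construction is a chaining argument. For each $t$, let $\eta_t>0$ be smaller than all the $\bar\eta(t,Q)$ for the finitely many relevant $Q$, namely those reachable from $K$ and at $d$-distance greater than about $\eps-(\text{accumulated displacement})$ from $\Delta$. Let $I_t$ be the open interval of parameters $s$ with $|s-t|\,\eps<\eta_t$. By compactness of $[0,1]$ I extract a finite subcover and a partition $0=t_0<t_1<\dots<t_N=1$ in which consecutive $t_i,t_{i+1}$ lie in a common chart. On each step, the proposition yields a multiplicity-preserving injective matching of the cornerpoints of $\dgm_k(\p_{t_i})$ near $K$'s trajectory into $\dgm_k(\p_{t_{i+1}})$, with each point moved by at most $\|\p_{t_i}-\p_{t_{i+1}}\|_\infty=(t_{i+1}-t_i)\eps$ in $d$. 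Composing these matchings and using the triangle inequality for $d$ gives a total displacement of at most $\sum_i (t_{i+1}-t_i)\eps=\eps$. Injectivity holds at each step because the closed balls around distinct points are disjoint and exact counts with multiplicity are preserved.

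The main obstacle is keeping the tracked points away from the diagonal and away from the essential points throughout the chain. Proposition~\ref{proplocconstmult} only counts proper cornerpoints inside $\overline{B_\eta}(Q)$, and this is meaningful only while that ball stays inside $\Delta^+$. This is where the hypothesis $\min_{P\in K}d(P,\Delta)>\eps$ enters. After time $t$ a tracked point has moved by at most $t\eps$, so its distance from $\Delta$ is still greater than $\eps-t\eps\ge0$, and strictly positive uniformly by finiteness of $K$. Hence the balls can be chosen inside $\Delta^+$. The delicate part is making the choice of $\eta_t$ depend only on $t$ and not on the path, which is what allows the compactness extraction. To achieve this, I would let $\eta_t$ be a minimum over the finitely many cornerpoints of $\dgm_k(\p_t)$ at distance at least $\delta:=\tfrac12(\min_{P\in K}d(P,\Delta)-\eps)$ from $\Delta$. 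Every tracked point stays at distance greater than $\delta$ from $\Delta$, so it is always among these. Since the ball radius used at a step is at most $(t_{i+1}-t_i)\eps$, which can be made smaller than $\delta$, the balls never meet the region near $\Delta$.
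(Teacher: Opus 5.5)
There is a genuine gap in the chaining step: for $t_i,t_{i+1}$ in a common chart, you claim that Proposition~\ref{proplocconstmult} moves the tracked cornerpoints of $\dgm_k(\p_{t_i})$ injectively into $\dgm_k(\p_{t_{i+1}})$ by at most $(t_{i+1}-t_i)\eps$. That proposition is one-sided. Applied at a chart centre $\tau$, it says that for $s\in I_\tau$ each ball $\overline{B_{|s-\tau|\eps}}(Q)$ around a cornerpoint $Q$ of $\dgm_k(\p_\tau)$ contains exactly $\mu^{\p_\tau}(Q)$ cornerpoints of $\dgm_k(\p_s)$. It does not exclude further cornerpoints of $\dgm_k(\p_s)$, far from $\Delta$, lying outside all these balls. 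Your tracked points at time $t_i$ are cornerpoints of $\dgm_k(\p_{t_i})$, not of $\dgm_k(\p_\tau)$, and nothing you have shown puts them inside a ball around a point of $\dgm_k(\p_\tau)$. That fact is a local version of the statement you are proving, with the two functions exchanged. Even granting it, routing through $\tau$ costs $(|t_i-\tau|+|t_{i+1}-\tau|)\eps$, which equals $(t_{i+1}-t_i)\eps$ only when $\tau\in[t_i,t_{i+1}]$. The bound you state would need the proposition centred at $t_i$, i.e.\ $t_{i+1}\in I_{t_i}$, and the finite subcover does not give this. A forward chain with $t_{i+1}\in I_{t_i}$ does not help either: it may accumulate before reaching $1$, because $t\mapsto\eta_t$ has no positive lower bound (it degenerates, e.g., when two cornerpoints of $\dgm_k(\p_t)$ come together). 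So the delicate point is not the path-independence of $\eta_t$, which your choice of $\delta$ settles, but the direction in which local matchings are available.

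One repair is to prove the missing covering property. Fix $\tau$ and apply Proposition~\ref{proplocconstmult} to $\p_\tau$ at \emph{every} point $\bar P$, cornerpoint or not, of the compact set $\{(u,v):\min\p_\tau-1\le u,\ v\le\max\p_\tau+1,\ v-u\ge 2\delta\}$. Take finitely many balls $B_{\bar\eta(\bar P_i)}(\bar P_i)$ covering it, and set $\eta^*_\tau:=\min\{1,\min_i\bar\eta(\bar P_i)\}$. If $\|\p_s-\p_\tau\|_\infty\le\eta^*_\tau$, a cornerpoint of $\dgm_k(\p_s)$ at distance $\ge\delta$ from $\Delta$ lies in some $\overline{B_{\bar\eta(\bar P_i)}}(\bar P_i)$, which forces $\mu^{\p_\tau}(\bar P_i)\ge 1$. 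Comparing with the count in $\overline{B_{\|\p_s-\p_\tau\|_\infty}}(\bar P_i)$ then places that cornerpoint within $\|\p_s-\p_\tau\|_\infty$ of $\bar P_i$. Now take $\eta_\tau\le\eta^*_\tau$ and a partition alternating the centres $\tau_j$ with points $s_j\in I_{\tau_j}\cap I_{\tau_{j+1}}\cap[\tau_j,\tau_{j+1}]$. Every step is then legitimate, and the displacements telescope to $\eps$.

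The paper avoids this detour with a supremum argument. It reparametrises the homotopy so that $\|\p_t-\p_{t'}\|_\infty\le|t-t'|$ and considers the set of times $t$ admitting an injective $\sigma_t\colon K\to\dgm(\p_t)$ of displacement at most $t$. Closedness of this set comes from extracting convergent subsequences of the tracked points: the closed $\eps$-balls around $K$ are compact and miss $\Delta$. It then applies Proposition~\ref{proplocconstmult} at the limit point, which is not known in advance to be a cornerpoint; this is exactly what absorbs the covering issue. Openness uses the proposition only forward from the current function.
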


\begin{proof}
The statement is trivial if $\varepsilon = 0$, since $\p = \psi$, and we can choose $\sigma$ to be the inclusion map $K \hookrightarrow \dgm(\p)$. Therefore, we may assume that $\varepsilon > 0$.
Let $K = \{P_1, \ldots, P_k\}$, where each $P_j=(u_j,v_j)$ has multiplicity in $K$ equal to
$m_j\le\mu^\p(P_j)$,
and set $m:=\sum_{j=1}^k m_j$.
Let us set
$\p_t:=\frac{\eps-t}{\eps}\p+\frac{t}{\eps}\psi$ for every $t\in[0,\eps]$.
Then, for every $t,t'\in[0,\eps]$,
$\|\p_t-\p_{t'}\|_\infty\le|t-t'|$.
Consider the set $A$ of all values  $\delta\in [0, \eps]$ for which an injective multiset map  $\sigma_\delta: K\to \dgm(\p_\delta)$ exists, such that
$d(P_j,\sigma_\delta(P_j))\le\delta$ for every $P_j\in K$.
In other words, if we think of the variation of $t$ as the flow of time, $A$ is the set of times
$\delta$ for which the cornerpoints in $K$ move less than $\delta$ itself, when $\p$ is changed into $\p_\delta$.

We want to prove that $\sup A=\eps$.
First of all, we observe that $A$ is non-empty, since $0 \in A$ (it suffices to choose $\sigma_0$ equal to the inclusion map $K\xhookrightarrow{} \dgm(\p)$).
Let us set $\bar{\delta} = \sup A$ and show that $\bar{\delta} \in A$. Indeed, let
$(\delta_n)$ be a non-decreasing sequence in $A$, converging to $\bar \delta$. Since $\delta_n\in A$, for each $n$ there exists an injective map  $\sigma_{\delta_n}: K\to \dgm(\p_{\delta_n})$, such that
$\max_j d(P_j,\sigma_{\delta_n}(P_j))\le\delta_n$.
Since $\delta_n\le\eps$,
$\max_j d(P_j,\sigma_{\delta_n}(P_j))\le\eps$ for any $n$.
Thus, $\sigma_{\delta_n}(P_j)\in\overline{B_\eps}(P_j)$
for any $n$,
where $\overline{B_\eps}(P_j)$ is the closed ball 
of radius $\eps$ 
with respect to $d$, centred at $P_j$.
Note that such ball 
does not contain the diagonal $\Delta$ because $\min_{P\in K}d(P, \Delta)>\eps$.
Since the closed balls $\overline{B_\varepsilon}(P_j)$ are compact sets, possibly after extracting a subsequence, we may assume that the sequence $(\sigma_{\delta_n}(P_j))_n$ converges for every index $j\in\{1,\ldots,k\}$.
We set $\bar P_j:=\lim_{n\to\infty}\sigma_{\delta_n}(P_j)$.
We have that $d(P_j, \bar P_j)\le \bar \delta$.
Also, the following property holds:

$(*)$ If $P_{j_1},\ldots,P_{j_r}\in K$ and
$\bar Q=\bar P_{j_1}=\ldots =\bar P_{j_r}$, then the multiplicity of $\bar Q=(\bar u,\bar v)$ in $\dgm(\p_{\bar \delta})$
is not smaller than $r$.

Let us prove $(*)$. We know that $\delta_n\le \eps$ for any index $n$. Let $\eta>0$.
If $n$ is large enough then $|\delta_n-\bar\delta|\le\eta$, and hence $\|\p_{\delta_n}-\p_{\bar\delta}\|_\infty\le \eta$. 
As a consequence, on the one hand, if $\eta$ is sufficiently small, then Proposition~\ref{proplocconstmult} (local constancy of multiplicity) guarantees that, for all sufficiently large $n$, the multiplicity $\overline{m}$ of $\bar Q$ in $\dgm(\p_{\bar\delta})$ equals the number of cornerpoints of $\dgm(\p_{\delta_n})$ that belong to the closed ball $\overline{B_\eta}(\bar Q) := \{(u,v) \in \Delta^+ : \bar u - \eta \le u \le \bar u + \eta, \bar v - \eta \le v \le \bar v + \eta\}$.
On the other hand,
$\overline{B_\eta}(\bar Q)$ contains
at least the $r$ cornerpoints
$\sigma_{\delta_n}(P_{j_1}),\ldots,\sigma_{\delta_n}(P_{j_r})$ of $\dgm(\p_{\delta_n})$,
since $\lim_{n\to\infty}\sigma_{\delta_n}(P_{j_i})=\bar Q$
for $1\le i\le r$.
Therefore, $\overline{m}\ge r$, and $(*)$ is proved.
In particular, $\bar Q$ is a cornerpoint in $\dgm(\p_{\bar \delta})$.
To conclude that $\bar \delta\in A$, it is now sufficient
to consider the multiset map  $\sigma_{\bar \delta}: K\to \dgm(\p_{\bar \delta})$ taking $P_j$ to $\bar P_j$ for every $P_j\in K$. Property $(*)$ guarantees that $\sigma_{\bar\delta}$ is injective  (in the sense of an injective multiset map).
Thus, we have proved that $\sup A \in A$, i.e., $\sup A = \max A$.

We end the proof by showing that
$\max A =\eps$. In fact, if $\bar\delta<\eps$, by using Proposition~\ref{proplocconstmult}
once again, it is not difficult to show that there exists $\eta>0$, with $\bar\delta + \eta < \eps$, and
an injective multiset map $\sigma_{\bar \delta,\bar \delta+\eta}$ from the multiset $\{\bar P_1, \ldots, \bar P_k\}$ to the multiset $\dgm(\p_{\bar \delta+\eta})$
such that
$d(\bar P_j^i,\sigma_{\bar \delta,\bar \delta+\eta}(\bar P_j^i))\le\eta$ for $1\le j\le k$, where $\bar P_j^i$ denotes the $i$-th copy of $\bar P_j$ in the multiset $\sigma_{\bar \delta}(K)$.
Hence $\sigma_{\bar \delta,\bar \delta+\eta}\circ \sigma_{\bar \delta}:K\to \dgm(\p_{\bar \delta+\eta})$ is an injective multiset map and, by
the triangle inequality,
$d(P_j,\sigma_{\bar \delta,\bar \delta+\eta}\circ \sigma_{\bar \delta}(P_j))\le\bar\delta+\eta$
for $1\le j\le k$, implying that
$\bar\delta +\eta\in A$.
This contradicts the fact that
$\bar\delta =\max A$. Therefore,
$\eps=\max A$, and hence $\eps\in A$.
\end{proof}



The following result extends Proposition~\ref{propgoodmatching} to the points at infinity. We omit the proof, as it is analogous to that of Proposition~\ref{propgoodmatching}. It relies on Proposition~\ref{proplocconstmultinfty}.

\begin{proposition}\label{propgoodmatching2}
If $\p,\psi\in C^0(X, \R)$ and $\|\p-\psi\|_\infty\le \eps$, then for any finite multiset of essential cornerpoints $K'\subseteq\dgm(\varphi)$, there exists an injective multiset map $\sigma':K'\to\dgm(\psi)$ such that 
$\max_{P\in K'}d(P,\sigma'(P))\le\eps$.
\end{proposition}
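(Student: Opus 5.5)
I would follow the proof of Proposition~\ref{propgoodmatching} almost verbatim, replacing Proposition~\ref{proplocconstmult} by Proposition~\ref{proplocconstmultinfty}. The case $\eps=0$ is trivial, since then $\p=\psi$ and we take $\sigma'$ to be the inclusion. So assume $\eps>0$. Write $K'=\{P_1,\ldots,P_k\}$ with $P_j=(u_j,\infty)$ of multiplicity $m_j\le\nu^\p(P_j)$. Consider the straight-line homotopy $\p_t:=\frac{\eps-t}{\eps}\p+\frac{t}{\eps}\psi$, which satisfies $\|\p_t-\p_{t'}\|_\infty\le|t-t'|$. Let $A$ be the set of $\delta\in[0,\eps]$ for which there is an injective multiset map $\sigma_\delta\colon K'\to\dgm(\p_\delta)$ with $d(P_j,\sigma_\delta(P_j))\le\delta$ for all $j$. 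I would also require that every point of $\sigma_\delta(K')$ be an essential cornerpoint.

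First, $0\in A$. Next I would show $\bar\delta:=\sup A\in A$. Take $\delta_n\uparrow\bar\delta$ in $A$. Each $\sigma_{\delta_n}(P_j)=(u_j^n,\infty)$ has $|u_j^n-u_j|\le\delta_n\le\eps$. This is the one place where the essential case is simpler than Proposition~\ref{propgoodmatching}. Points at infinity can only be matched within the line $\R\times\{\infty\}$: for $Q=(u,\infty)$ the distance $d(P_j,Q)$ is $|u_j-u|$. By contrast, $d(P_j,\Delta)=\infty$ and $d(P_j,Q)=\infty$ for proper $Q$. So no hypothesis keeping us away from $\Delta$ is needed. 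Compactness of $[u_j-\eps,u_j+\eps]$ gives, after passing to a subsequence, limits $\bar P_j=(\bar u_j,\infty)$ with $|\bar u_j-u_j|\le\bar\delta$.

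The analogue of property $(*)$ comes next. If $r$ of the $P_j$ (with multiplicity) share the limit $\bar Q$, apply Proposition~\ref{proplocconstmultinfty} to $\p_{\bar\delta}$ at $\bar Q$ with small $\eta$. For large $n$, the closed interval $[(\bar u-\eta,\infty),(\bar u+\eta,\infty)]$ contains exactly $\nu^{\p_{\bar\delta}}(\bar Q)$ essential cornerpoints of $\dgm(\p_{\delta_n})$. It also contains the $r$ points $\sigma_{\delta_n}(P_{j_i})$, so $\nu^{\p_{\bar\delta}}(\bar Q)\ge r$. Hence $P_j\mapsto\bar P_j$ defines an injective multiset map, and $\bar\delta\in A$.

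Finally, suppose $\bar\delta<\eps$. For each distinct point $\bar Q$ among the $\bar P_j$, Proposition~\ref{proplocconstmultinfty} gives an $\bar\eta(\bar Q)$. Take $\eta$ smaller than all of these, and also smaller than half the minimal gap between distinct abscissas $\bar u_j$, so that the intervals around distinct $\bar Q$ are disjoint. Then $\dgm(\p_{\bar\delta+\eta})$ has exactly $\nu^{\p_{\bar\delta}}(\bar Q)$ essential points in each interval. So the copies of $\bar Q$ in $\sigma_{\bar\delta}(K')$ can be injectively sent to points within distance $\eta$. Composing with $\sigma_{\bar\delta}$ and using the triangle inequality shows $\bar\delta+\eta\in A$, a contradiction. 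Hence $\eps\in A$, and $\sigma'=\sigma_\eps$ is the required map.

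The main obstacle is bookkeeping in this last step. The sub-matching must be injective globally, which requires the disjointness of the neighbourhoods of distinct limit points, and it must use the multiplicity count from Proposition~\ref{proplocconstmultinfty} rather than a mere existence statement.
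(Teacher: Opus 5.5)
Your proposal is correct and is exactly the adaptation the paper intends: the paper omits this proof, saying only that it mirrors the proof of Proposition~\ref{propgoodmatching} with Proposition~\ref{proplocconstmultinfty} in place of Proposition~\ref{proplocconstmult}, and your observation that $d(P,Q)=\infty$ unless $Q$ is essential correctly explains why no separation-from-$\Delta$ hypothesis is needed. The only detail to add is that in the last step $\eta$ must also satisfy $\bar\delta+\eta\le\eps$, so that $\bar\delta+\eta$ lies in $[0,\eps]$ and the contradiction with $\bar\delta=\sup A$ applies.
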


The following result shows that it is possible to match all the points of $\dgm(\p)$ injectively with points of $\dgm(\psi)$, with maximum displacement not exceeding $\|\p - \psi\|_\infty$.

\begin{proposition}\label{propgoodmatching3}
If $\|\p-\psi\|_\infty\le\eps$, then there exists an injective multiset map $\tau:\dgm(\p)\to\dgm(\psi)$ such that $d(P,\tau(P))\le\eps$ for every $P\in \dgm(\p)$.
\end{proposition}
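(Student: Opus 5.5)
The plan is to build $\tau$ by treating separately the cornerpoints at infinity, the proper cornerpoints far from the diagonal, and everything else. The points at infinity are handled directly: by Proposition~\ref{propfiniteatinf}, the set $K'$ of essential cornerpoints of $\dgm(\p)$, counted with multiplicity, is a finite multiset. Proposition~\ref{propgoodmatching2} then gives an injective multiset map $\sigma'\colon K'\to\dgm(\psi)$ with displacement at most $\eps$. Since $d$ between a point at infinity and any point of $\Delta^+\cup\{\Delta\}$ is infinite, $\sigma'$ necessarily takes values among the essential cornerpoints of $\dgm(\psi)$.

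For the proper cornerpoints, let $K:=\{P\in\dgm(\p)\cap\Delta^+ : d(P,\Delta)>\eps\}$. I claim $K$ is finite. Indeed, $d(P,\Delta)>\eps$ means $v-u>2\eps$, so Proposition~\ref{proplocfinite} applies. Proposition~\ref{propgoodmatching} then gives an injective $\sigma\colon K\to\dgm(\psi)$ with $d(P,\sigma(P))\le\eps$. The image of $\sigma$ lies in $\Delta^+$: the closed $\eps$-balls around points of $K$ avoid $\Delta$, and proper points are at infinite distance from points at infinity. So the images of $\sigma$ and $\sigma'$ are disjoint, and $\sigma\cup\sigma'$ is injective.

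All remaining points of $\dgm(\p)$ are proper cornerpoints $P$ with $d(P,\Delta)\le\eps$, together with all copies of the trivial cornerpoint $\Delta$. I send all of them to copies of $\Delta$ in $\dgm(\psi)$. This is allowed because $\Delta$ has multiplicity $\infty$ in $\dgm(\psi)$, so its realisation contains infinitely many copies. The set of remaining points is countable by Corollary~\ref{corcountable}, plus the countably many copies of $\Delta$ in $\dgm(\p)$. So it can be mapped injectively into the countably infinite set of copies of $\Delta$ in $\dgm(\psi)$. The displacement is $d(P,\Delta)\le\eps$ for proper points and $0$ for copies of $\Delta$.

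Gluing the three pieces gives $\tau$. The images are pairwise disjoint: points at infinity, proper points, and copies of $\Delta$. Each piece is injective, so $\tau$ is injective, and every displacement is at most $\eps$.

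The only delicate points are the following. First, I must make sure the image of $\sigma$ lands in $\Delta^+$ rather than at $\Delta$. Second, I must make the convention on the cardinality of the realisation with multiplicity $\infty$ precise, namely that $\mathcal{S}_f$ contains countably infinitely many copies $(\Delta,n)$. I expect the latter to be the main bookkeeping obstacle. I would state explicitly that the copies of $\Delta$ are indexed by $\N$ and use an explicit bijection, for example interleaving even and odd indices, to fit in the remaining points.
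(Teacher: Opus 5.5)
Your proposal is correct and follows the paper's proof. The paper also splits $\dgm(\p)$ into the finite part at distance $>\eps$ from $\Delta$, matched via Propositions~\ref{propgoodmatching} and~\ref{propgoodmatching2} with images off $\Delta$, and sends the rest injectively to copies of $\Delta$; your explicit checks that the images of $\sigma$, $\sigma'$ and the $\Delta$-part are disjoint, and that the near-diagonal part is countable, fill in details the paper leaves implicit. The one thing to add, missing from the paper as well, is the case $\eps=0$: there $K$ may be infinite, so Proposition~\ref{proplocfinite} does not give finiteness, but $\p=\psi$ and the identity map works.
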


\begin{proof}
Set $K_1 := \{P \in \dgm(\p) : d(P,\Delta) > \varepsilon\}$ and $K_2 := \{P \in \dgm(\p) : d(P,\Delta) \le \varepsilon\}$.
Note that the cardinality of the multiset $K_2$ is always infinite, since $K_2$ contains $\Delta$, counted with  infinite multiplicity.
The cardinality of $K_1$ is finite, according to Propositions~\ref{proplocfinite} and~\ref{propfiniteatinf} on local finiteness, resp. finiteness, of proper and improper cornerpoints. Proposition~\ref{propgoodmatching} and Proposition~\ref{propgoodmatching2} guarantee the existence of an injective multiset map  $\tau_1$ from the multiset $K_1$ to $\dgm(\psi)$,  such that $d(P,\tau(P))\le\eps$ and $\tau(P)\neq \Delta$ for every $P\in K_1$.
We can now consider an injective multiset map  $\tau_2$ from the multiset $K_2$ to the multiset containing just the point $\Delta$ with infinite multiplicity.
The map $\tau$ that coincides with $\tau_1$ on $K_1$ and with $\tau_2$ on $K_2$ is the wanted injective multiset map $\tau$.
\end{proof}

We now recall the following well-known result \cite{KuMo68}.

\begin{theorem}[Cantor-Schröder-Bernstein Theorem]\label{CantorBthm}
Let $A$ and $B$ be sets. If there exist injective maps $f \colon A \to B$ and $g \colon B \to A$, then there exists a bijection $h \colon A \to B$ such that whenever $h(a) = b$, either $f(a) = b$ or $g(b) = a$ (or both).
\end{theorem}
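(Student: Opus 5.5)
The plan is the classical back-and-forth argument. Write $f\colon A\to B$, $g\colon B\to A$ for the given injections. Every element is placed in a maximal chain obtained by alternately applying $f$ and $g$ forwards and taking preimages backwards. The bijection $h$ is then defined chain by chain: on each chain it uses either $f$ or $g^{-1}$. This automatically gives the extra property in the statement, namely $h(a)=f(a)$ or $g(h(a))=a$.

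\textbf{Step 1: ancestry of elements.} Given $a\in A$, trace its ancestry backwards. If $a\in g(B)$, take the unique $b$ with $g(b)=a$; if that $b\in f(A)$, take the unique $a'$ with $f(a')=b$; and so on. Injectivity of $f$ and $g$ makes each backward step unique whenever it exists, so the ancestry is a well-defined finite or infinite sequence. Accordingly, split $A=A_A\cup A_B\cup A_\infty$:
\begin{itemize}
\item $A_A$: the ancestry terminates at an element of $A\setminus g(B)$;
\item $A_B$: the ancestry terminates at an element of $B\setminus f(A)$;
\item $A_\infty$: the ancestry never terminates.
\end{itemize}
Split $B=B_A\cup B_B\cup B_\infty$ in the same way.

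\textbf{Step 2: the restricted maps are bijections.} Check three facts.
\begin{itemize}
\item $f$ restricts to a bijection $A_A\to B_A$. It maps $A_A$ into $B_A$, since prepending one step does not change where the ancestry terminates. It is onto $B_A$, because every $b\in B_A$ has a parent in $A$: otherwise $b\notin f(A)$ and its ancestry would terminate in $B$.
\item $f$ restricts to a bijection $A_\infty\to B_\infty$, by the same reasoning.
\item $g$ restricts to a bijection $B_B\to A_B$. Here every $a\in A_B$ lies in $g(B)$, since otherwise $a$ would be a terminal element of $A$.
\end{itemize}

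\textbf{Step 3: defining $h$.} Set $h=f$ on $A_A\cup A_\infty$ and $h=(g_{\vert B_B})^{-1}$ on $A_B$. The three pieces are disjoint and their images $B_A$, $B_\infty$, $B_B$ partition $B$, so $h$ is a bijection. By construction, either $h(a)=f(a)$, or $h(a)=b$ with $g(b)=a$.

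The main obstacle is the bookkeeping in Step 2. One has to verify carefully that membership in the classes is preserved under one forward step, and that surjectivity holds on each piece. The key point is that an element with no parent is itself a terminal element, so it falls into the class of the side it lives on. Everything else is routine.
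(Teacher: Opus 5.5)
Your proof is correct and is the same chain (ancestry) decomposition the paper uses. Your classes $A_A$, $A_B$, $A_\infty$ correspond to the paper's maximal sequences starting in $A$, starting in $B$, and the infinite-or-cyclic ones, with $h=f$ on the first and last classes and $h=g^{-1}$ on the middle one; your Step 2 also supplies the bijectivity check that the paper leaves implicit, and tracking which side each ancestor lies on removes the need for the paper's assumption that $A$ and $B$ are disjoint.
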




\begin{proof}
We may assume that $A$ and $B$ are disjoint. 
For every $a \in A$ and every $b \in B$, we can construct the two sequences
\[
\cdots \to f^{-1}(g^{-1}(a)) \to g^{-1}(a) \to a \to f(a) \to g(f(a)) \to \cdots,
\]
\[
\cdots \to g^{-1}(f^{-1}(b)) \to f^{-1}(b) \to b \to g(b) \to f(g(b)) \to \cdots.
\]
These sequences are infinite to the right and may be finite or infinite to the left.
In these sequences, the inverses are intended as preimages and, since $f$ and $g$ are injective, each preimage either consists of a single element or is empty.
Any such sequence may terminate on the left or not. If it does terminate, this may occur at an element of $A$ or at an element of $B$. If it does not terminate, then it may be infinite or cyclic.
Let $\mathbb{S}$ denote the set of all maximal sequences obtained by the previous procedure, i.e., sequences that cannot be extended further. 
Note that two distinct sequences in $\mathbb{S}$ cannot have common elements, by maximality and the injectivity of $f$ and $g$. It follows that every $a \in A$ belongs to exactly one sequence in $\mathbb{S}$.
We then define the map $h \colon A \to B$ by setting
$h(a) = f(a)$ if $a$ belongs to a sequence starting from an element of $A$, $h(a) = g^{-1}(a)$ if $a$ belongs to a sequence starting from an element of $B$, and $h(a) = f(a)$
otherwise.
\end{proof}




\begin{theorem}[Matching distance stability theorem]\label{matchingstabilitythm}
Let $\p, \psi\in C^0(X,\R)$. Then
$$d_\match (\dgm(\p),\dgm(\psi))\le \|\p-\psi\|_\infty.$$
\end{theorem}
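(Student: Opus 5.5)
Set $\eps:=\|\p-\psi\|_\infty$; this is finite because $X$ is compact and both functions are continuous. By Proposition~\ref{propgoodmatching3} there is an injective multiset map $\tau\colon\dgm(\p)\to\dgm(\psi)$ with $d(P,\tau(P))\le\eps$ for every $P\in\dgm(\p)$. The hypothesis is symmetric in $\p$ and $\psi$, so the same proposition also gives an injective multiset map $\tau'\colon\dgm(\psi)\to\dgm(\p)$ with $d(Q,\tau'(Q))\le\eps$ for every $Q\in\dgm(\psi)$. These are maps between the realisations $\dgm(\p)_f$ and $\dgm(\psi)_{f'}$, which are ordinary sets, so Theorem~\ref{CantorBthm} applies with $A=\dgm(\p)_f$, $B=\dgm(\psi)_{f'}$, $f=\tau$ and $g=\tau'$.

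Theorem~\ref{CantorBthm} yields a bijection $h\colon\dgm(\p)_f\to\dgm(\psi)_{f'}$, that is, a matching $h\in\mathcal M(\p,\psi)$. For every copy $P$ we have either $h(P)=\tau(P)$ or $\tau'(h(P))=P$. In the first case $d(P,h(P))=d(P,\tau(P))\le\eps$. In the second case $d(P,h(P))=d(h(P),\tau'(h(P)))\le\eps$, using the symmetry of $d$. Hence
\[
\sup_{P\in\dgm(\p)}d(P,h(P))\le\eps ,
\]
and taking the infimum over all matchings gives $d_\match(\dgm(\p),\dgm(\psi))\le\eps=\|\p-\psi\|_\infty$.

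The point that needs care is that Theorem~\ref{CantorBthm} is applied to the realisations, which contain the copies of $\Delta$ with infinite multiplicity. This causes no difficulty: the theorem holds for arbitrary sets, and the distance bound is checked copy by copy, using only that $d$ is evaluated on the underlying points of $\bar\Delta^*$.

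The real work has already been done in Proposition~\ref{propgoodmatching3}, which rests on the continuation argument of Proposition~\ref{propgoodmatching}. That argument passes through the convex combinations $\p_t$ and uses the local constancy of multiplicities (Propositions~\ref{proplocconstmult} and~\ref{proplocconstmultinfty}). Once that result is available, the only step left is the combinatorial symmetrization through Theorem~\ref{CantorBthm}, which turns the two one-sided injections into a single bijection without increasing the displacement beyond $\eps$.
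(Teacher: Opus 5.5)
Your proof is correct and follows essentially the same route as the paper: you take the two one-sided injective multiset maps given by Proposition~\ref{propgoodmatching3} and combine them into a bijection via Theorem~\ref{CantorBthm}, applied to the realisations. The only difference is that you also write out the copy-by-copy check $d(P,h(P))\le\varepsilon$ in both cases of the Cantor--Schr\"oder--Bernstein alternative, which the paper leaves implicit.
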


\begin{proof}
Proposition~\ref{propgoodmatching3} guarantees that there exist an injective multiset map $\tau\colon\dgm(\p)\to\dgm(\psi)$ such that $d(P,\tau(P))\le\eps$ for every $P\in \dgm(\p)$, and an injective multiset map $\tau'\colon\dgm(\psi)\to\dgm(\p)$ such that $d(Q,\tau'(Q))\le\eps$ for every $Q\in \dgm(\psi)$.
Then the claim
follows from Theorem~\ref{CantorBthm}, by setting $A$ equal to a realisation of $\dgm(\p)$ and $B$ equal to a realisation of $\dgm(\psi)$ (see Definition~\ref{Multiset}).
\end{proof}

\begin{corollary}\label{cor_compactness_space_PDs}
If $\Phi\subseteq C^0(X, \R)$ is compact, then so is the set
$\{\dgm(\p)\mid \p\in\Phi\}$ with respect to the topology induced by $d_\match$.
\end{corollary}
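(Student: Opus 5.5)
The corollary follows from a general topological principle: the continuous image of a compact set is compact. Consider the map $D\colon C^0(X,\R)\to \{\text{persistence diagrams}\}$, $\p\mapsto \dgm(\p)$. Here $C^0(X,\R)$ carries the uniform metric $\|\cdot\|_\infty$, and the target carries the metric $d_\match$, which is a metric by Proposition~\ref{dmatchisadistance}. By the Matching distance stability theorem (Theorem~\ref{matchingstabilitythm}),
\[
d_\match(D(\p),D(\psi))\le \|\p-\psi\|_\infty .
\]
Thus $D$ is non-expansive, hence $1$-Lipschitz and in particular continuous. The set $\{\dgm(\p)\mid \p\in\Phi\}$ is exactly $D(\Phi)$.

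To keep things elementary, I would argue with sequences. Both spaces involved are metric spaces, and as recalled in Chapter~\ref{Chapter0}, compactness coincides with sequential compactness there. So let $(\dgm(\p_n))_n$ be any sequence in $D(\Phi)$, with $\p_n\in\Phi$. Since $\Phi$ is compact, hence sequentially compact, some subsequence $(\p_{n_j})_j$ converges uniformly to a function $\p\in\Phi$. Stability then gives
\[
d_\match(\dgm(\p_{n_j}),\dgm(\p))\le \|\p_{n_j}-\p\|_\infty\to 0 .
\]
So the subsequence converges in $d_\match$ to $\dgm(\p)$, which lies in $D(\Phi)$. This proves that $D(\Phi)$ is sequentially compact, and therefore compact.

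The only point needing care is that $D$ must be well defined on all of $\Phi$. Every $\p\in\Phi$ has to satisfy the standing Assumptions~\ref{ass_fingen} and~\ref{ass_right-cont}, so that $\dgm(\p)$ makes sense and the stability theorem applies. I would state explicitly that $\Phi$ is understood inside the class of functions for which the theory is set up. I would also note that $d_\match$ takes finite values on $D(\Phi)$, which follows directly from the stability bound since $\|\p-\psi\|_\infty<\infty$ on compact $X$. Beyond this bookkeeping there is no real obstacle; the corollary is an immediate consequence of the stability theorem.
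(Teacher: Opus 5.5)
Your proposal is correct and follows the paper's route: the paper also derives the corollary from the Matching distance stability theorem (Theorem~\ref{matchingstabilitythm}), using that in metric spaces compactness is equivalent to sequential compactness. Your sequence argument is just that proof written out in detail, and your check that the standing assumptions hold for every $\p\in\Phi$ is a reasonable addition.
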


\begin{proof}
This follows immediately from Theorem~\ref{matchingstabilitythm}, recalling that a metric space is compact if and only if it is sequentially compact.
\end{proof}

\begin{exercise}\label{exercise_sup=max}
Consider $\p,\psi\in C^0(X, \R)$. 
Prove that for every matching $\sigma\in \mathcal{M}(\p,\psi)$ the equality $\sup_{P\in\dgm(\p)} d(P,\sigma(P))=\max_{P\in\dgm(\p)} d(P,\sigma(P))$ holds.
[\textit{Hint.} 
Note that not every matching $\sigma$ is continuous. The conclusion follows from Proposition~\ref{proplocfinite} and Proposition~\ref{propdgmcompact}.]
\end{exercise}

\begin{exercise}\label{exercise_inf_sup=min_max}
Consider $\p,\psi\in C^0(X, \R)$. 
Assume that the statement of Exercise~\ref{exercise_sup=max} holds. 
Show that $$d_\match (\dgm(\p),\dgm(\psi))
=\min_{\sigma\in B(\p,\psi)}\max_{P\in\dgm(\p)}d(P,\sigma(P)).$$
[\textit{Hint.} 
Consider a sequence $(\sigma_i)_i$ of matchings from $\dgm(\p)$ to $\dgm(\psi)$ such that the sequence
$\left( \max_{P \in \dgm(\p)} d\bigl(P,\sigma_i(P)\bigr) \right)_i$ is non-increasing and converges to 
$d_{\mathrm{match}}\bigl(\dgm(\p),\dgm(\psi)\bigr)$.
Use Proposition~\ref{proplocfinite} and Proposition~\ref{propdgmcompact} to show that there exists a convergent subsequence of $(\sigma_i)_i$ such that its limit $\bar \sigma$ is a matching and $\max_{P \in \dgm(\p)} d\bigl(P,\bar\sigma(P)\bigr)
= d_{\mathrm{match}}\bigl(\dgm(\p),\dgm(\psi)\bigr)$.]
\end{exercise}

\begin{definition}\label{def_optimal_matching}
If $\bar\sigma$ is a matching from $\dgm(\p)$ to $\dgm(\psi)$ and the equality $\max_{P\in\dgm(\p)}d(P,\bar\sigma(P))=d_\match (\dgm(\p),\dgm(\psi))$ holds, then
we say that $\bar\sigma$ is an \emph{optimal matching}.
\end{definition}

Exercise~\ref{exercise_inf_sup=min_max} shows that for each pair $(\dgm(\p),\dgm(\psi))$ of persistence diagrams an optimal matching
$\bar\sigma\colon  \dgm(\p)\to\dgm(\psi)$ exists.


\section{Cornerpoints and critical values of a function on a smooth manifold}\label{sec_critical1}

In this section, we analyze the cornerpoints in the persistence diagram of a function in the differentiable case.
In particular, we show that, for a regular filtering function on a regular manifold, the coordinates of the cornerpoints are related to the critical values of the function.

\smallskip

The following result is classical, and its proof requires knowledge of differential geometry and algebraic topology, which is beyond the scope of this book.
However, it is fundamental for proving Theorem~\ref{thm_coord_cnpts_crit_values}, so we state it in our language and notation, without proof.
The interested reader may refer to~\cite[Theorem 3.1]{milnor}.


\begin{theorem}\label{milnor_critical}
Let $M$ be a smooth and compact manifold, $\p\colon M\to \R$ a smooth function, and $u<v$. 
If $\{x\in M\mid u\le \p(x)\le v\}$ does not contain any critical point for $\p$, then the linear map $i_{u,v}^{\p\ast}\colon H_k(M_u^\p)\to H_k(M_v^\p)$ is an isomorphism, for every $k$.
\end{theorem}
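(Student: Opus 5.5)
The plan is to follow Milnor's classical argument: show that $M_u^\p$ is a deformation retract of $M_v^\p$ by flowing along a rescaled gradient field. The isomorphism then follows from homotopy invariance of singular homology.

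\emph{Step 1: a critical-point-free neighbourhood.} The set $C$ of critical points of $\p$ is closed in the compact manifold $M$, so it is compact. It is disjoint from the compact set $K:=\p^{-1}([u,v])$. Hence $\p(C)$ is a compact set disjoint from $[u,v]$, and there is $\delta>0$ such that $N:=\p^{-1}(]u-\delta,v+\delta[)$ contains no critical points. On $N$ the gradient $\nabla\p$, which lies in the tangent spaces $TM_x$, is nowhere zero.

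\emph{Step 2: the vector field and its flow.} Choose a smooth function $\rho\colon M\to[0,1]$ with $\rho\equiv 1$ on $K$ and support contained in $N$; for instance, compose $\p$ with a bump function on $\R$. Define
\[
Y(x)=\rho(x)\,\frac{\nabla\p(x)}{\lVert\nabla\p(x)\rVert^2},
\]
extended by $0$ outside $N$. This is a smooth, compactly supported tangent vector field, so it generates a global one-parameter group of diffeomorphisms $\theta_t\colon M\to M$. Whenever the trajectory stays in $K$,
\[
\frac{d}{dt}\,\p(\theta_t(x))=\langle\nabla\p,Y\rangle=1,
\]
so along such a trajectory $\p(\theta_t(x))=\p(x)+t$.

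\emph{Step 3: the deformation retraction.} For $s\in[0,1]$ define $r_s\colon M_v^\p\to M_v^\p$ by
\[
r_s(x)=
\begin{cases}
x & \text{if } \p(x)\le u,\\
\theta_{s(u-\p(x))}(x) & \text{if } u\le \p(x)\le v.
\end{cases}
\]
The two formulas agree on $\p(x)=u$, so $r_s$ is continuous in $(s,x)$. By the linear behaviour of $\p$ along the flow, $r_s$ maps $M_v^\p$ into itself. Moreover $r_0=\id$, $r_1(M_v^\p)\subseteq M_u^\p$, and $r_1$ is the identity on $M_u^\p$. Write $i\colon M_u^\p\hookrightarrow M_v^\p$ for the inclusion. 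Then $r_1\circ i=\id_{M_u^\p}$, and $i\circ r_1$ is homotopic to $\id_{M_v^\p}$.

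\emph{Step 4: conclusion in homology.} By Theorem~\ref{thm_homology_functor}, $(r_1)_{\#*}\,i^{\p*}_{u,v}=\id$. Homotopy invariance (homotopic maps induce the same map in singular homology) gives $i^{\p*}_{u,v}\,(r_1)_{\#*}=\id$. Therefore $i^{\p*}_{u,v}$ is an isomorphism in every degree $k$.

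The main obstacle is that homotopy invariance of singular homology has not been proved in the text. I would invoke it as a standard result, via the prism operator giving a chain homotopy, citing \cite{Ha02}, in the same spirit as Theorem~\ref{thm:isomorphism}. A secondary technical point is checking that the flow of $Y$ is globally defined and that $\p(\theta_t(x))=\p(x)+t$ holds for all the parameters used in Step 3. This holds because a trajectory starting in $K$ and flowing backwards for time at most $\p(x)-u$ never leaves $K$, since $\p$ decreases at unit rate until it reaches the value $u$.
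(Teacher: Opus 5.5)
Your proposal is correct and matches the argument the paper relies on. The paper gives no proof of its own; it cites Milnor's \emph{Morse Theory}, Theorem~3.1, whose proof is exactly yours: flow along the compactly supported field $\rho\,\nabla\varphi/\lVert\nabla\varphi\rVert^2$ to deformation-retract $M_v^\varphi$ onto $M_u^\varphi$, then apply homotopy invariance of singular homology, which is fine to cite as a standard result in the same way the paper cites Milnor.
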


\begin{theorem}\label{thm_coord_cnpts_crit_values}
Let $M$ be a smooth and compact manifold, $\p\colon M\to \R$ a smooth function.
If $w$ is a finite coordinate of a cornerpoint $P\in\dgm_k(\p)\setminus \{\Delta\}$, then $w$ is a critical value for $\p$.
\end{theorem}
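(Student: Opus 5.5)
The argument is by contradiction, combining Corollary~\ref{homological_critical} with Theorem~\ref{milnor_critical}. Suppose $w$ is a finite coordinate of a cornerpoint $P\in\dgm_k(\p)\setminus\{\Delta\}$ and $w$ is not a critical value of $\p$. Then the level set $\p^{-1}(w)$ contains no critical point of $\p$.

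First I would enlarge this to a whole slab. The set $C$ of critical points of $\p$ is closed in $M$, because it is where $d\p_x$ fails to have maximal rank and this is a closed condition for a smooth function. Since $M$ is compact, $C$ is compact, so $\p(C)$ is a compact subset of $\R$. By assumption $w\notin\p(C)$, so there is $\eps_0>0$ with $[w-\eps_0,w+\eps_0]\cap\p(C)=\emptyset$. Equivalently, for every $0<\eps\le\eps_0$ the set $\{x\in M\mid w-\eps\le\p(x)\le w+\eps\}$ contains no critical point of $\p$.

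Next I apply Theorem~\ref{milnor_critical} with $u=w-\eps$ and $v=w+\eps$. It shows that $i^{\p*}_{w-\eps,w+\eps}\colon H_k(M^\p_{w-\eps})\to H_k(M^\p_{w+\eps})$ is an isomorphism for every $0<\eps\le\eps_0$. On the other hand, Corollary~\ref{homological_critical} applies to $P$ and $w$ (here $X=M$), and says this map fails to be an isomorphism for every sufficiently small $\eps>0$. Choosing $\eps$ small enough for both statements gives a contradiction, so $w$ is a critical value.

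The only step needing care is the slab step, namely that a regular value has a neighbourhood of regular values. Compactness of $M$ is essential there, and I would justify it via closedness of $C$ as above. One should also check that the standing Assumptions~\ref{ass_fingen} and~\ref{ass_right-cont} hold in this setting, so that the cornerpoint machinery and Corollary~\ref{homological_critical} are available; for a compact manifold with a smooth function these are taken as given.
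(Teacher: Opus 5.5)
Your proof is correct and follows essentially the same route as the paper: it combines Corollary~\ref{homological_critical} with Theorem~\ref{milnor_critical} and uses the compactness of $M$. The paper argues directly, taking critical points $x_n$ with $\p(x_n)\to w$, extracting a convergent subsequence, and using continuity of $\nabla\p$ and $\p$. You run the same compactness step by contradiction, through the closedness of the critical set $C$ and the compactness of $\p(C)$.
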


\begin{proof}
By Corollary~\ref{homological_critical},
for every sufficiently small $\eps>0$ 
the linear map $$i^{\p*}_{w-\eps, w+\eps}\colon H_k(M^\p_{w-\eps})\to H_k(M^\p_{w+\eps})$$ is not an isomorphism.
Thus, by Theorem~\ref{milnor_critical}, the interval $[w-\eps, w+\eps]$ contains a critical value, $w_\eps$, for $\p$, for every small $\eps$. 
Consider a sequence $(x_n)_n$ in $M$ of critical points corresponding to 
these $w_\eps$.
Since $M$ is compact, we can assume, up to subsequences, that $(x_n)_n$ converges and denote the point of convergence by $\bar x$.
The continuity of $\nabla\p$ and the fact that $x_n$ are all critical imply $0=\lim_{n\to \infty}\nabla \p(x_n)=\nabla \p(\bar x)$.
Thus, $\bar x$ is also a critical point. 
By continuity of $\p$, we have that $\p(\bar x)=w$, concluding the proof.

\end{proof}
\chapter{Biparameter persistent homology}
\label{MPH}

We define persistent homology for functions valued in the real plane.
This is not an immediate generalization of persistent homology for real-valued functions, as different techniques are required to develop the theory in this biparameter setting.
The central result of this chapter allows us to locate persistent features by examining certain critical values of the filtering function.
Furthermore, we define a metric between biparameter persistent Betti numbers functions and show stability with respect to the uniform metric.


\section{Biparameter persistent Betti numbers functions}

Analogously to Sections \ref{sec:PBNF}, \ref{sec:pers_diag}, \ref{sec:stability_thm} of Chapter \ref{ChapterHC}, 
we introduce here the notion of biparameter persistent Betti numbers function, present the definition of biparameter persistence diagrams 
and prove the stability of the information they encode.


The definition of persistent Betti numbers function can be extended to functions taking values in $\R^2$.
Let $\p=(\p_1,\p_2):X\to\R^2$ be a continuous function on a non-empty compact 
space $X$. 
For any $(u_1,u_2)\in \R^2$, the sublevel set $X^\p_{(u_1,u_2)}$ is defined as $\{x\in X\mid\p_1(x)\leq u_1, \p_2(x)\leq u_2\}$, and the collection of sublevel sets, $\{X^\p_{(u_1,u_2)}\}_{(u_1,u_2)\in \R^2}$, is called a \myemph{filtration} (see Figure~\ref{fig:2d_filt}).
\begin{figure}
\centering
\includegraphics[width=\linewidth]{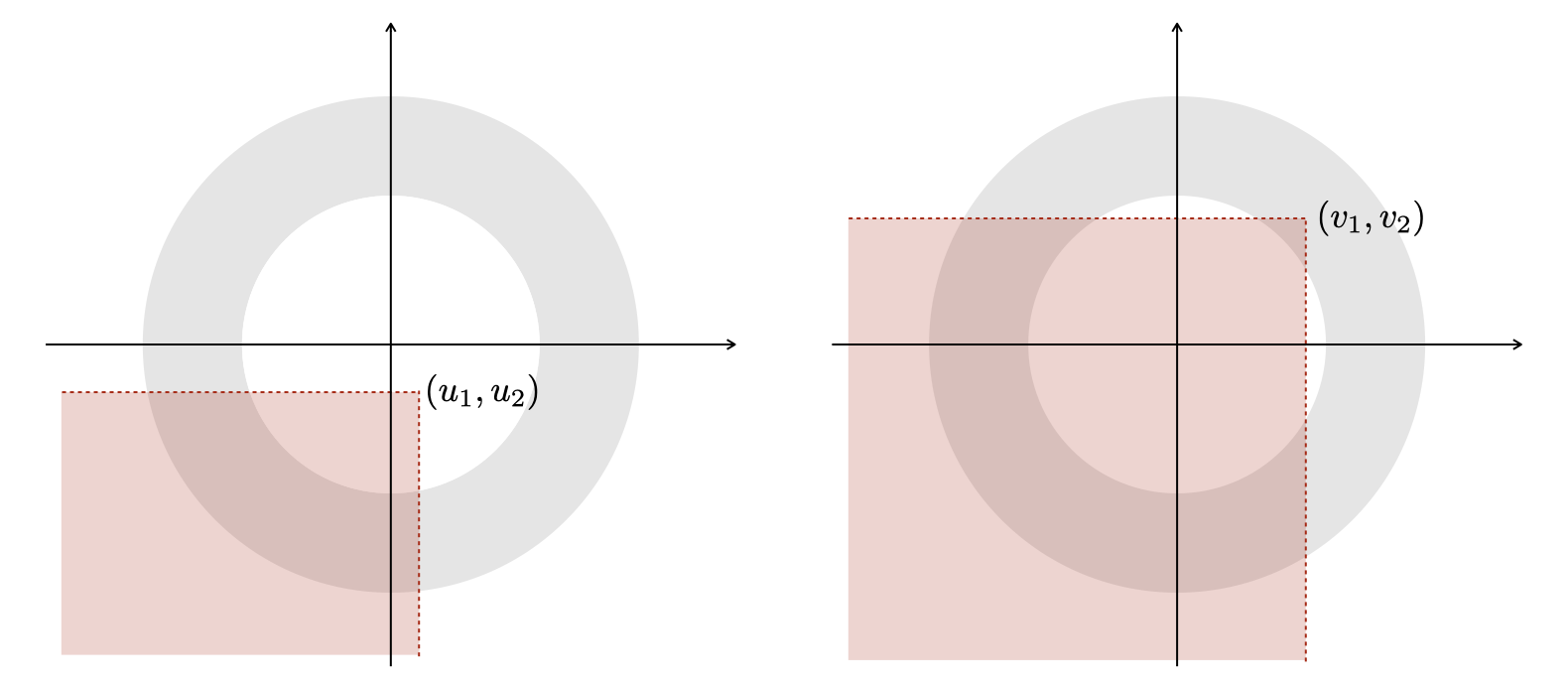}
\caption{Two examples of sublevel set for the inclusion of the annulus into $\R^2$.
$X^\p_{(u_1, u_2)}$ and $X^\p_{(v_1, v_2)}$ are respectively depicted as intersections, on the left and on the right, of the gray annulus with the red box.}
\label{fig:2d_filt}
\end{figure}

The compactness of $X$ ensures that the sequences of spaces in the filtrations $\{X^\p_{(v_1,v_2)}\}_{v_1\in \R}$ and $\{X^\p_{(v_1,v_2)}\}_{v_2\in \R}$ stabilize for a sufficiently large value of $v_1$ and $v_2$, respectively. Explicitly, for any $v_1\geq \max\p_1$ and $v_2\in\R$, $X^\p_{(v_1,v_2)}=X^\p_{(\max\p_1,v_2)}$.
Similarly, for any $v_2\geq \max\p_2$ and $v_1\in\R$, $X^\p_{(v_1,v_2)}=X^\p_{(v_1,\max\p_2)}$. 
Moreover, if $v_i\geq \max\p_i$ for both $i=1,2$, $X^\p_{(v_1,v_2)}=X^\p_{(\max\p_1,\max\p_2)}=X$.
In the following, the symbols $X^\p_{(\infty,v_2)}$, $X^\p_{(v_1,\infty)}$, and $X^\p_{(\infty,\infty)}$ denote $X^\p_{(\max\p_1,v_2)}$, $X^\p_{(v_1,\max\p_2)}$, and $X^\p_{(\max\p_1,\max\p_2)}$, respectively.

Consider $k\in \Z$.  
If $u_1 \leq v_1$ and $u_2\leq v_2$, the inclusion $$i^{\p}_{(u_1,u_2),(v_1,v_2)}\colon X^\p_{(u_1,u_2)}\hookrightarrow X^\p_{(v_1,v_2)}$$ induces a linear map 
$i^{\p*}_{(u_1,u_2),(v_1,v_2)}\colon H_k(X^\p_{(u_1,u_2)})\to H_k(X^\p_{(v_1,v_2)})$.

Sometimes, when $u_1 \le u_2$ and $v_1 \le v_2$, we will write $(u_1, u_2) \preceq (v_1, v_2)$. 
Similarly, when $u_1 < u_2$ and $v_1 < v_2$, we will write $(u_1,u_2) \prec (v_1,v_2)$.

\begin{definition}[Biparameter persistent homology group]\label{MPHG}
Let $k\in \Z$.
The vector subspace $\mathrm{Im\ }i^{\p*}_{(u_1,u_2),(v_1,v_2)}\subseteq H_k(X^\p_{(v_1, v_2)})$, with $(u_1,u_2),(v_1,v_2) \in (\R\cup \{\infty\})^2$ such that the inequalities $u_1 \le v_1$ and $u_2 \le v_2$ hold, 
is denoted by the symbol $PH_k^\p{((u_1,u_2),(v_1,v_2))}$ and called
the \myemph{$k$-th biparameter persistent homology group} of $\p$ at $((u_1,u_2),(v_1,v_2))$.
\end{definition}

Analogously to Chapter \ref{ChapterHC}, from now on the following will hold.
\begin{assumption}\label{BI_ass_fingen}
For every pair $(u_1,u_2)\in\R^2$, the vector space $H_k(X^\p_{(u_1,u_2)})$ is finitely generated.
\end{assumption}

Similarly to the monoparametric case, we adopt the symbols $\Delta^+$, $\Delta^*$ to denote the sets $$\{((u_1,u_2),(v_1,v_2))\in \R^2\times\R^2 \mid u_1<v_1, u_2<v_2\}$$ and $$\{((u_1,u_2),(v_1,v_2))\in \R^2\times(\R\cup \{\infty\})^2 \mid u_1<v_1, u_2<v_2\},$$ respectively.

\begin{definition}[Biparameter persistent Betti numbers function]\label{MPBNF}
The \myemph{biparameter persistent Betti numbers function} of $\p=(\p_1,\p_2)\colon X\to\R^2$ in degree $k$, briefly PBNF, is the function $\beta_k^\p$$\colon\Delta^*\to\mathbb{N}$ defined by
\begin{displaymath}
\beta_k^\p((u_1,u_2),(v_1,v_2)):=\dim PH_k^\p{((u_1,u_2),(v_1,v_2))}.
\end{displaymath}
\end{definition}

Consider the annulus, $X$, in Figure~\ref{fig:2d_filt}, which is a topological space in $\R^2$, and the filtering function $\p\colon X\to \R^2$ given by the inclusion. 
The 0-th homology groups of the sublevel sets depicted in the figure, $H_0(X^\p_{(u_1,u_2)})$ and $H_0(X^\p_{(v_1,v_2)})$, are respectively isomorphic to $\Z_2$ and $\Z_2^2$.
Moreover, the linear map $i^\p_{(u_1,u_2),(v_1,v_2)}$, induced by the inclusion between the 0-th homology groups (which are indeed vector spaces), maps 
each homology class of a point \( p\in X^\p_{(u_1,u_2)} \) (i.e., its connected component) to the homology class of $p$ in \( X^\p_{(v_1,v_2)} \).
Thus, $\beta_0^\p((u_1,u_2),(v_1,v_2))=1$. 

\begin{exercise}
Consider the inclusion $\p\colon X\to \R^2$ of the annulus $X$ in Figure~\ref{fig:2d_filt}. 
Compute $H_i(X^\p_{(u_1,u_2)})$ and $\beta^\p_k((u_1,u_2),(v_1,v_2))$, for every $((u_1,u_2)$, $(v_1,v_2))\in \Delta^*$, and for $i=0,1$.
\end{exercise}

\begin{exercise}\label{xca_2D>1D}
Find a compact space $X$ and two continuous functions $\p=(\p_1,\p_2),\psi=(\psi_1,\psi_2)\colon X\to\R^2$ such that, for any $k\in \Z$,
$\beta_k^{\p_1}\equiv \beta_k^{\psi_1}$
and $\beta_k^{\p_2}\equiv \beta_k^{\psi_2}$,
but there exists a value $k$ for which $\beta_k^{\p}\not\equiv \beta_k^{\psi}$.
\end{exercise}

\subsection{Biparameter persistence diagrams}\label{sec:BI_pers_diag}

We now show how to study biparameter persistent Betti numbers via a reduction to the monoparametric setting, referring the reader to Figure~\ref{foliation} for a pictorial representation.

\begin{figure}
    \centering
    \includegraphics[width=\linewidth]{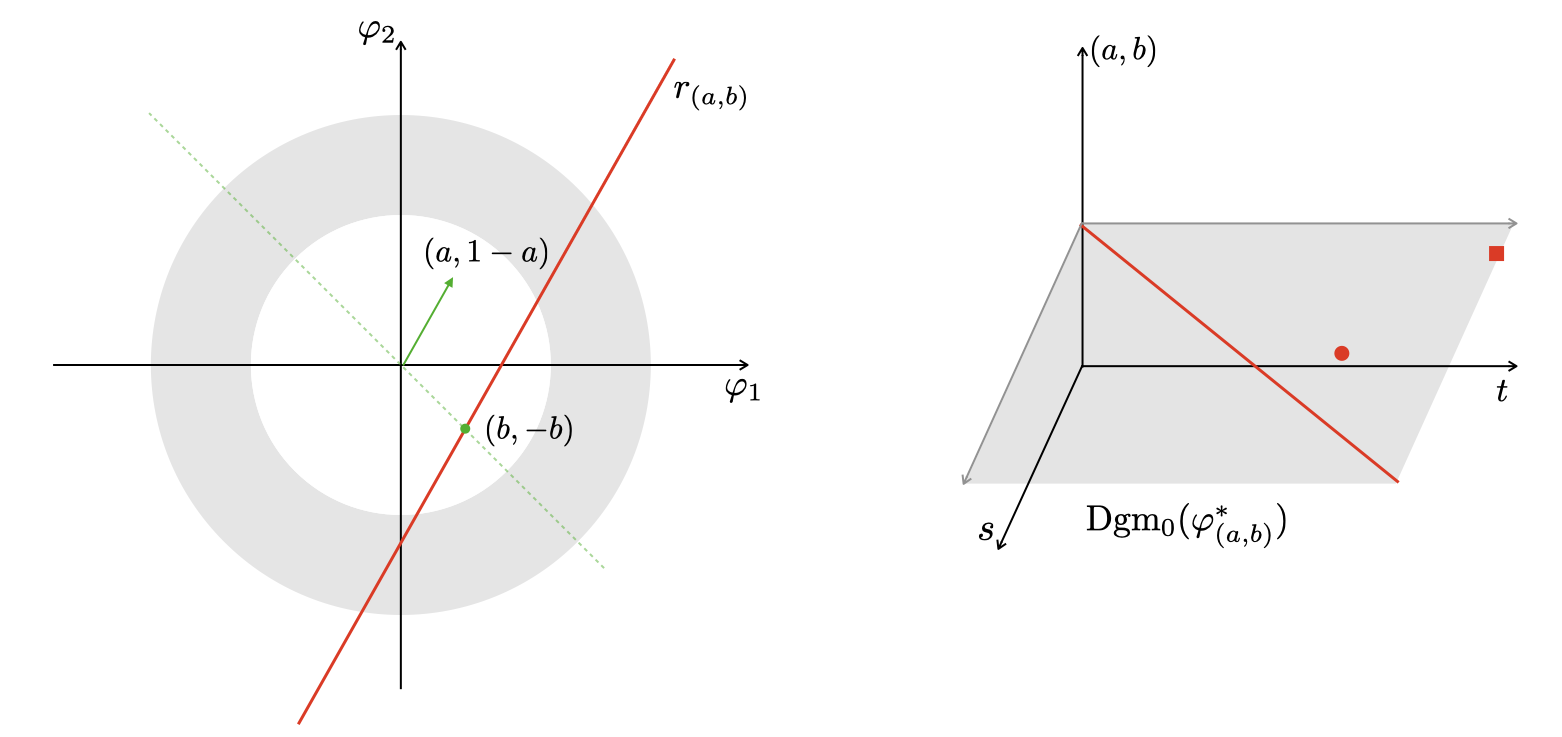}
    \caption{Correspondence between an admissible line $r_{(a,b)}$
and a persistence diagram in the case of the annulus $X$ depicted in Figure~\ref{fig:2d_filt}. Left: a monoparametric filtration is constructed by sweeping the line $r_{(a,b)}$. The green vector $(a,1-a)$ and the green point $(b,-b)$ are used to parameterize this line as $r_{(a,b)}: t\cdot (a,1-a) + (b,-b)$. Right: the persistence diagram of the monoparametric filtration on the left can be found on a planar section of the domain of the biparameter persistent Betti numbers function $\beta_k^\p$.}
    \label{foliation}
\end{figure}

In accordance with Chapter \ref{ChapterHC}, from now on the following assumption will hold.

\begin{assumption}
\label{ass_rcont}
The function $\beta^\p_k$ is right-continuous in both pairs of entries: for every 
$((u_1,u_2),(v_1,v_2))\in \Delta^*$,
\begin{itemize}
\item $\lim_{\substack {\eps_1\to 0^+\\ \eps_2\to 0^+}}\beta^\p_k((u_1+\eps_1, u_2+\eps_2), (v_1, v_2))= \beta^\p_k((u_1,u_2), (v_1,v_2))$,
\item 
$\lim_{\substack {\eps_1\to 0^+\\ \eps_2\to 0^+}}\beta^\p_k((u_1, u_2), (v_1+\eps_1, v_2+\eps_2))= \beta^\p_k((u_1,u_2), (v_1,v_2))$, 
\end{itemize}
where the first condition is equivalent to: for every $\eta>0$ there exist $\eps_1, \eps_2>0$ such that if $0< u'_1-u_1\le \eps_1$ and $0< u'_2-u_2\le \eps_2$ then $\lvert \beta^\p_k((u'_1,u'_2), (v_1,v_2)) - \beta^\p_k((u_1,u_2), (v_1,v_2)) \rvert\le \eta$. Since $\beta^\p_k$ takes values in $\mathbb{N}$, the above condition is equivalent to: for sufficiently small $\varepsilon_1,\varepsilon_2$, $$\beta^\p_k((u'_1,u'_2), (v_1,v_2)) = \beta^\p_k((u_1,u_2), (v_1,v_2)).$$ An analogous statement holds for the right-continuity condition on the second pair.
\end{assumption}

Let us consider the set \myemph{$\Lambda^+$} of all lines of $\R^2$ that have positive slope. This set can be parameterized by the set $\mathcal {P}(\Lambda^+):=\,]0,1[\,\times\R$, by taking each line $r\in\Lambda^+$ to the unique pair $(a,b)$ with $0<a<1$ and $b\in \R$ such that $(a,1-a)$ is a direction vector of $r$ and $(b,-b)\in r$.
The line $r$ will be denoted by $r_{(a,b)}$.
The set $\Lambda^+$ is referred to as the
\myemph{set of admissible lines}.
Each point $(u_1,u_2)$ of $r_{(a,b)}$ can be expressed as $t\cdot (a,1-a)+(b,-b)$  for some $t\in \R$.
So, one can define the filtration $\{X^{a,b}_t\}_{t}$ where $X^{a,b}_t:=X^\p_{(u_1,u_2)}$ is the set of points of $X$ whose image by $\p$ is ``under and on the left of $(u_1,u_2)$'' while $(u_1,u_2)$ moves along the line $r_{(a,b)}$.
As a consequence, each admissible line $r_{(a,b)}$ defines a filtration $\{X^{a,b}_t\}_t$ of $X$ and a persistence diagram associated with this filtration.

It is interesting to observe that the filtration $\{X^{a,b}_t\}_t$ can also be defined as the sublevel sets filtration induced by a suitable real-valued function.
In fact, we have that $X^{a,b}_t=\{x\in X\mid \p_{(a,b)}(x)\leq t\}$ where $\p_{(a,b)}:X\to\R$ is defined by setting $\p_{(a,b)}(x):=\max\left\{\frac{\p_1(x)-b}{a},\frac{\p_2(x)+b}{1-a}\right\}$.
Thus, the following definition is well-posed.

\begin{definition}[Biparameter persistence diagram]\label{defBPD}
Fixed an integer $k$, the family of $k$-th persistence diagrams $\{\dgm_k(\p_{(a,b)})\}_{(a,b)}$ associated with the lines $r_{(a,b)}$ varying $(a,b)$ is called
\myemph{$k$-th biparameter persistence diagram of $\p$}.
\end{definition}

The persistent Betti numbers function $\beta_k^\p$ can be completely recovered by considering all and only the persistent Betti numbers functions $\beta^{\p_{(a,b)}}_k$ associated with the admissible lines $r_{(a,b)}$.
Explicitly, $\beta_k^\p((u_1,u_2),(v_1,v_2))=\beta^{\p_{(a,b)}}_k(s,t)$ where $r_{(a,b)}$ is the admissible line passing through the points $(u_1,u_2)$ and $(v_1,v_2)$, and $s,t \in \R$ satisfy $(u_1,u_2)=s\cdot (a,1-a)+(b,-b)$ and $(v_1,v_2)=t\cdot (a,1-a)+(b,-b)$.
Note that $u_1+u_2=s$ and $v_1+v_2=t$.

\begin{exercise}\label{xca_no_pathol_cases}
Under the assumptions above, prove that for every pair $(a,b)\in {\mathcal {P}}(\Lambda^+)$: 
\begin{itemize}
    \item for every $u\in\R$, the vector space $H_k(X^{\p_{(a,b)}}_{u})$ satisfies Assumption~\ref{ass_fingen},
    \item the function $\beta_k^{\p_{(a,b)}}$ satisfies Assumption~\ref{ass_right-cont}.
\end{itemize}
\end{exercise}

Analogously to the monoparametric case, PBNFs are non-decreasing in the first pair of variables and non-increasing in the second pair, as the following result states. 
We omit the proof, as it 
is completely analogous to that
of Proposition~\ref{propmonotonicity}.
\begin{proposition}\label{PBNF_non}
If $u_1\le u'_1< v_1\le v'_1$ and $u_2\le u'_2< v_2\le v'_2$,
\begin{enumerate}
\item $\beta^\p_k((u_1, u_2), (v_1,v_2))\le \beta^\p_k((u'_1, u'_2), (v_1,v_2))$,
\item $\beta^\p_k((u_1, u_2), (v_1,v_2))\ge \beta^\p_k((u_1, u_2), (v'_1,v'_2))$. 
\end{enumerate}
\end{proposition}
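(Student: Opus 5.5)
The plan is to mirror the proof of Proposition~\ref{propmonotonicity}, replacing the real parameters by pairs ordered componentwise. The two basic facts are that inclusions of sublevel sets compose, and that homology is functorial (Theorem~\ref{thm_homology_functor}).

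Since $(u_1,u_2)\preceq(u'_1,u'_2)\prec(v_1,v_2)\preceq(v'_1,v'_2)$ componentwise, we have the inclusions
\[
X^\p_{(u_1,u_2)}\subseteq X^\p_{(u'_1,u'_2)}\subseteq X^\p_{(v_1,v_2)}\subseteq X^\p_{(v'_1,v'_2)}.
\]
They hold because $\p_1(x)\le u_1\le u'_1$ and $\p_2(x)\le u_2\le u'_2$, and similarly for the other pairs. If some coordinate equals $\infty$, the convention replacing $\infty$ by $\max\p_i$ keeps these inclusions valid. Every inclusion in this chain is the composite of the shorter inclusions, so Theorem~\ref{thm_homology_functor} gives
\[
i^{\p*}_{(u_1,u_2),(v_1,v_2)}=i^{\p*}_{(u'_1,u'_2),(v_1,v_2)}\, i^{\p*}_{(u_1,u_2),(u'_1,u'_2)}
\]
and
\[
i^{\p*}_{(u_1,u_2),(v'_1,v'_2)}=i^{\p*}_{(v_1,v_2),(v'_1,v'_2)}\, i^{\p*}_{(u_1,u_2),(v_1,v_2)}.
\]

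For item 1, the first factorization shows that the image of $i^{\p*}_{(u_1,u_2),(v_1,v_2)}$ is contained in the image of $i^{\p*}_{(u'_1,u'_2),(v_1,v_2)}$. Both images are subspaces of $H_k(X^\p_{(v_1,v_2)})$, so
\[
PH_k^\p((u_1,u_2),(v_1,v_2))\subseteq PH_k^\p((u'_1,u'_2),(v_1,v_2)),
\]
and taking dimensions gives the inequality.

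For item 2, apply Lemma~\ref{lemmaImgf} with $f=i^{\p*}_{(u_1,u_2),(v_1,v_2)}$ and $g=i^{\p*}_{(v_1,v_2),(v'_1,v'_2)}$. The lemma gives $\dim\Imm gf=\dim\Imm f-\dim\ker g_{\vert\Imm f}$, hence $\dim\Imm gf\le\dim\Imm f$, which is the claimed inequality.

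Lemma~\ref{lemmaImgf} needs finite-dimensional spaces, and this is supplied by Assumption~\ref{BI_ass_fingen}. The only step requiring care is checking that the sublevel-set inclusions exist for the given pairs, including the $\infty$ conventions; everything after that is formal.
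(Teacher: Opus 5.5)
Your proposal is correct and follows the route the paper intends. The paper omits the proof as ``completely analogous'' to Proposition~\ref{propmonotonicity}, which is exactly your argument: factor the inclusion of sublevel sets, use image containment for item~1, and apply Lemma~\ref{lemmaImgf} for item~2. Your explicit checks of the $\infty$ conventions, and of finite-dimensionality via Assumption~\ref{BI_ass_fingen}, supply details the paper leaves implicit.
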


Next result states that, even though Assumption~\ref{ass_rcont} for $\beta^\p_k$ seems stronger, it is equivalent to requiring Assumption~\ref{ass_right-cont} for $\beta^{\p_{(a,b)}}_k$, for every $(a,b)$.
\begin{proposition}\label{jhfgsrelifshejz}
The following are equivalent:
\begin{enumerate}
\item $\lim_{\substack {\eps_1\to 0^+\\ \eps_2\to 0^+}}\beta^\p_k((u_1+\eps_1, u_2+\eps_2), (v_1, v_2))= \beta^\p_k((u_1,u_2), (v_1,v_2))$, for every $((u_1, u_2), (v_1,v_2))\in \Delta^*$, 
\item $\lim_{\eps\to 0^+}\beta^{\p_{(a,b)}}_k(s+\eps, t)=\beta^{\p_{(a,b)}}_k(s, t)$, for every $(a,b)\in \mathcal{P}(\Lambda^+)$ and every $s,t$ with $s<t$.
\end{enumerate}
Symmetrically, the following are equivalent:
\begin{enumerate}
\item[3.] $\lim_{\substack {\eps_1\to 0^+\\ \eps_2\to 0^+}}\beta^\p_k((u_1, u_2), (v_1+\eps_1, v_2+\eps_2))= \beta^\p_k((u_1,u_2), (v_1,v_2))$, for every $((u_1, u_2), (v_1,v_2))\in \Delta^*$, 
\item[4.] $\lim_{\eps\to 0^+}\beta^{\p_{(a,b)}}_k(s, t+\eps)=\beta^{\p_{(a,b)}}_k(s, t)$, for every $(a,b)\in \mathcal{P}(\Lambda^+)$ and every $s,t$ with $s<t$.
\end{enumerate}
\end{proposition}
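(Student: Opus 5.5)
Both equivalences rest on a single dictionary between the two settings. For $(a,b)\in\mathcal P(\Lambda^+)$ and $s<t$, set $(u_1,u_2)=s(a,1-a)+(b,-b)$ and $(v_1,v_2)=t(a,1-a)+(b,-b)$. As observed before Exercise~\ref{xca_no_pathol_cases},
\[
\beta^{\p_{(a,b)}}_k(s,t)=\beta^\p_k((u_1,u_2),(v_1,v_2)).
\]
Moving $s$ to $s+\eps$ moves $(u_1,u_2)$ to $(u_1+a\eps,u_2+(1-a)\eps)$ along the admissible line, while $(v_1,v_2)$ stays fixed. Conversely, every $((u_1,u_2),(v_1,v_2))\in\Delta^*$ with finite second pair lies on a unique admissible line through the two points, since $u_1<v_1$ and $u_2<v_2$ force a positive slope. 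I treat the case where some $v_i=\infty$ by replacing $\infty$ with $\max\p_i$, exactly as in the conventions $X^\p_{(\infty,v_2)}=X^\p_{(\max\p_1,v_2)}$; when both entries are $\infty$, the second pair can be moved onto a line through $(u_1,u_2)$ with both coordinates $\ge\max\p_i$.

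\emph{(1)$\Rightarrow$(2).} I restrict the two-variable limit in (1) to the ray $\eps_1=a\eps$, $\eps_2=(1-a)\eps$, both of which tend to $0^+$. Through the dictionary this gives (2) directly.

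\emph{(2)$\Rightarrow$(1).} Fix $((u_1,u_2),(v_1,v_2))$ and let $r_{(a,b)}$ be the admissible line through the two points, with parameters $s<t$. By (2) there is $\eps_0>0$ such that $\beta^{\p_{(a,b)}}_k(s+\eps,t)=\beta^{\p_{(a,b)}}_k(s,t)$ for $0<\eps\le\eps_0$. I choose $\eps_0$ small enough that $s+\eps_0<t$; this also keeps the point $(u_1,u_2)+\eps_0(a,1-a)$ strictly below $(v_1,v_2)$ in both coordinates. Set $\delta:=\eps_0\min\{a,1-a\}$, and take any $(\eps_1,\eps_2)$ with $0<\eps_i\le\delta$. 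Then
\[
(u_1,u_2)\preceq(u_1+\eps_1,u_2+\eps_2)\preceq (u_1,u_2)+\eps_0(a,1-a),
\]
in the componentwise order. Proposition~\ref{PBNF_non}(1) then sandwiches $\beta^\p_k((u_1+\eps_1,u_2+\eps_2),(v_1,v_2))$ between $\beta^\p_k((u_1,u_2),(v_1,v_2))$ and $\beta^\p_k((u_1,u_2)+\eps_0(a,1-a),(v_1,v_2))$. Via the dictionary these two bounds are $\beta^{\p_{(a,b)}}_k(s,t)$ and $\beta^{\p_{(a,b)}}_k(s+\eps_0,t)$, which are equal. So the value is constant for all small $\eps_1,\eps_2$, which is (1) in the integer-valued form described after Assumption~\ref{ass_rcont}.

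\emph{(3)$\Leftrightarrow$(4).} The argument is symmetric. One direction again restricts to the diagonal ray. For the other, I use Proposition~\ref{PBNF_non}(2), monotonicity in the second pair, to sandwich $(v_1+\eps_1,v_2+\eps_2)$ between $(v_1,v_2)$ and $(v_1,v_2)+\eps_0(a,1-a)$. The inequalities are now reversed, but the sandwich argument is unchanged.

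The only delicate point is \emph{(2)$\Rightarrow$(1)}. There I must check two things: that the sandwich points stay within the range allowed by Proposition~\ref{PBNF_non} (the strict inequalities $u'_i<v_i$), and that the infinite coordinates are handled through the $\max\p_i$ conventions. The first is what the choice of small $\eps_0$ guarantees; the second is bookkeeping.
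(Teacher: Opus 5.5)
Your proof is correct, and your argument for (2)$\Rightarrow$(1) is more direct than the paper's.

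The paper argues by contradiction. It takes a sequence $P_i\to P$, decreasing in both coordinates, along which $\beta^\p_k(P_i,Q)\not\to\beta^\p_k(P,Q)$. It then considers the admissible lines $r_{(a_i,b_i)}$ through $P_i$ and $Q$ and shows they converge to the line $r_{(a,b)}$ through $P$ and $Q$. Next it projects the $P_i$ orthogonally onto $r_{(a,b)}$ and extracts a chain that alternates between points $P_{i_j}$ and projected points $\hat P_{i_j}$. Monotonicity then forces the limits along the two sequences to agree, which contradicts (2).

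You use the same two ingredients: monotonicity (Proposition~\ref{PBNF_non}) and the identification $\beta^{\p_{(a,b)}}_k(s,t)=\beta^\p_k(P,Q)$ along the line through $P$ and $Q$. But you apply them only once. The single point $P+\eps_0(a,1-a)$ dominates the whole square $P+(0,\delta]^2$, where $\delta=\eps_0\min\{a,1-a\}$. So one sandwich gives constancy on an explicit neighbourhood, with no limiting lines, projections or subsequence extraction.

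You also prove (1)$\Rightarrow$(2) directly, by restricting to the ray $(\eps_1,\eps_2)=\eps(a,1-a)$, instead of deferring to Exercise~\ref{xca_no_pathol_cases}. And you handle infinite entries of $(v_1,v_2)$, which the paper's proof passes over by writing $t=v_1+v_2$.

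One small correction in that bookkeeping: replacing $v_i=\infty$ by $\max\p_i$ breaks the strict inequality $u_i<v_i$ when $u_i\ge\max\p_i$. Replace it instead by any finite value that is both $\ge\max\p_i$ and $>u_i$. This leaves the relevant sublevel sets unchanged, and hence all values of $\beta^\p_k$ that enter your sandwich.
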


\begin{proof}
The implication from Statement 1 to Statement 2 is the content of Exercise~\ref{xca_no_pathol_cases}. 
So, we just show that Statement 2 implies Statement 1.
Moreover, the equivalence between Statements 3 and 4 is analogous.
Assume by contradiction that there is a strictly decreasing 
sequence $(P_i)_i=(u_1^i, u_2^i)_i$ such that $\lim_{i\to \infty}P_i=P=(u_1,u_2)$ and $\lim_{i\to \infty}\beta^\p_k(P_i, Q)\neq\beta^\p_k(P,Q)$, for some $Q=(v_1,v_2)$. 
We can consider the line, $r_{(a_i, b_i)}$, passing through $P_i$ and $Q$, for every $i$, giving rise to the sequences $(s_i)_i$, $(a_i)_i$ and $(b_i)_i$ such that $(u_1^i, u_2^i) =(a_i,1-a_i) s_i+(b_i,-b_i)$ and $(v_1,v_2)  =(a_i,1-a_i)t+(b_i,-b_i)$.
Since $s_i=u_1^i+u_2^i$, $(s_i)_i$ is strictly decreasing and $\lim_{i\to \infty} (u_1^i+u_2^i)=u_1+u_2$, which we denote by $s$. 
Furthermore, since $t=v_1+v_2$, we have $a_i=\frac{v_1-u_1^i}{v_1+v_2-u^i_1-u_2^i}$. 
So, the limit $\lim_{i\to \infty}a_i=\frac{v_1-u_1}{v_1+v_2-u_1-u_2}$ exists and it is denoted by $a$. 
As a consequence, the limit $\lim_{i\to \infty}b_i=u_1-(u_1+u_2)\frac{v_1-u_1}{v_1+v_2-u_1-u_2}$ also exists and it is denoted by $b$. 
Let us now consider the orthogonal projection of the sequence $(P_i)_i$ on the line $r_{(a,b)}$ and denote it by $(\hat P_{i})_i=(\hat u_1^i, \hat u_2^i)_i$. 
Note that, since $(P_i)_i$ is strictly decreasing
and $r_{(a,b)}$ has positive slope, the sequence $(\hat P_i)_i$ is also strictly descreasing. 
It is possible to choose a subsequence of indices $(i_j)_j$ such that 
\[
P\prec \cdots \prec \hat P_{i_{j+3}}\prec P_{i_{j+2}}\prec \hat P_{i_{j+1}}\prec P_{i_j}\prec \cdots
\]
The existence of this sequence can be shown with the following procedure, which is also illustrated in Figure~\ref{fig_sequence}.
Take $P_{i_1}=P_1$. 
The next element $\hat P_{i_2}$ can be chosen as the biggest $\hat P_i$ smaller than $P_{i_1}$. 
This point exists because the downset of $P_{i_1}$ contains all $\hat P_{i_
j}$ with $j\ge l$, for a certain $l$. 
This is because such downset contains $P$  and the sequence $(\hat P_i)_i$ converges to $P$ and it is strictly decreasing.
We can iteratively proceed in the same way to construct the entire sequence.
Since this sequence is strictly decreasing, Proposition~\ref{PBNF_non} guarantees that $\beta^\p_k(\hat P_{i_j}, Q)\le \beta^\p_k(P_{i_{j+1}}, Q)\le \beta^\p_k(\hat P_{i_{j+2}}, Q)$. 
Consequently, $\lim_{j\to \infty}\beta^\p_k(\hat P_{i_j}, Q)=\lim_{j\to \infty}\beta^\p_k(P_{i_j}, Q)$, which, by assumption, is different from $\beta^\p_k(P,Q)$.
This leads to a contradiction because $\beta^\p_k(\hat P_{i_j}, Q)=\beta_k^{\p_{(a,b)}}(\hat s_i, t)$ and $\lim_{i\to \infty}\beta_k^{\p_{(a,b)}}(\hat s_i, t)\neq \beta^\p_k(P,Q)=\beta_k^{\p_{(a,b)}}(s, t)$ in contrast with Statement 2.
\end{proof}

\begin{figure}
    \centering
    \includegraphics[width=0.7\linewidth]{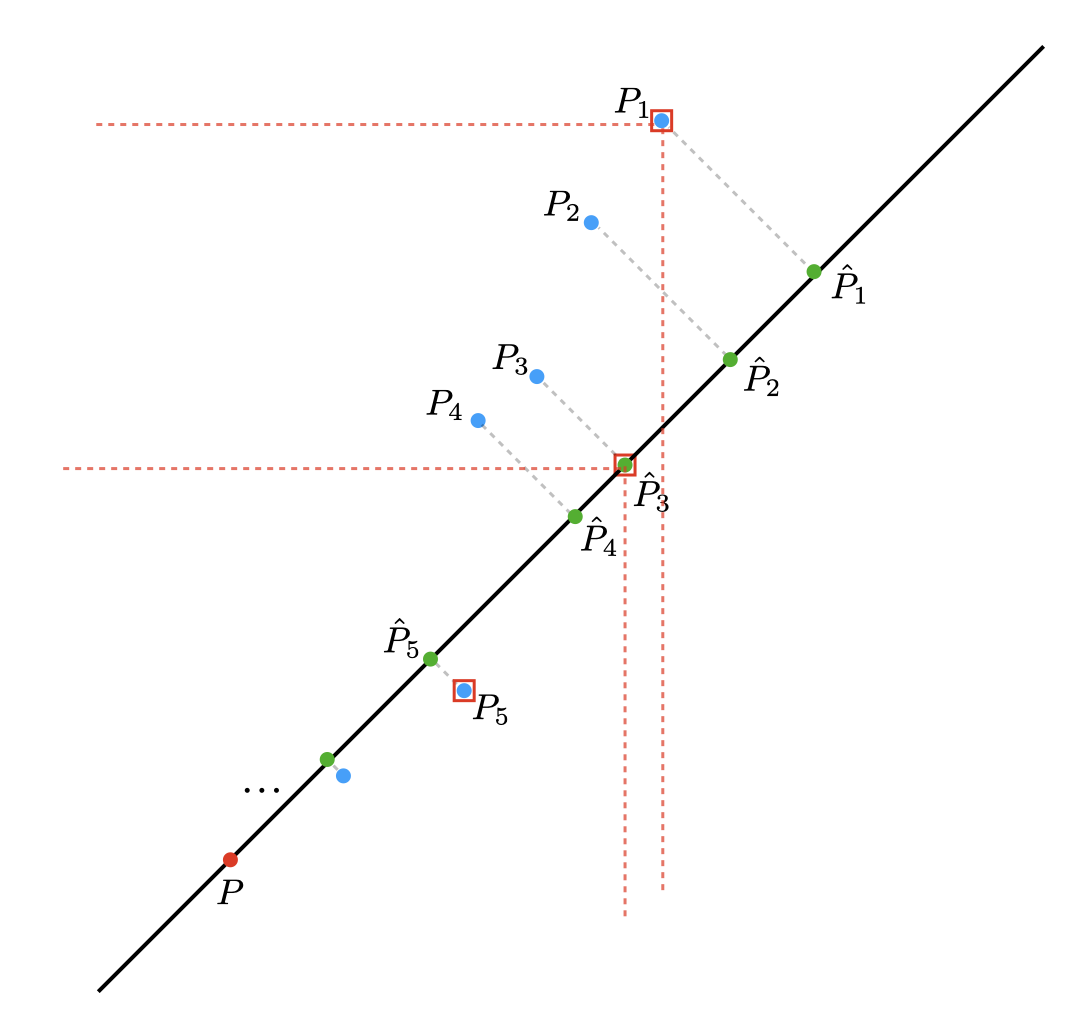}
    \caption{The first elements of the sequence constructed in the proof of Proposition~\ref{jhfgsrelifshejz} are $P_1\succ \hat P_3\succ P_5\succ \cdots$.}
    \label{fig_sequence}
\end{figure}

\subsection{Stability of biparameter persistence diagrams}\label{2DMD}

In order to define a stable distance between the PBNFs of $\p$ and $\psi$ we need to normalize the functions $\p_{(a,b)}$ and $\psi_{(a,b)}$.
So, we set, for every $(a,b)\in {\mathcal {P}}(\Lambda^+)$, $\p_{(a,b)}^*:=\min\{a,1-a\}\cdot \p_{(a,b)}$ and $\psi_{(a,b)}^*:=\min\{a,1-a\}\cdot \psi_{(a,b)}$, respectively.
These normalized functions determine the \myemph{normalized persistence diagrams} $\dgm\left(\p_{(a,b)}^*\right)$, $\dgm\left(\psi_{(a,b)}^*\right)$.
This is equivalent to considering the diagrams $\dgm\left(\p_{(a,b)}\right)$, $\dgm\left(\psi_{(a,b)}\right)$ associated with the admissible line $r_{(a,b)}$ and normalize them by multiplying coordinatewise their points by $\min\{a,1-a\}$.  


We start from the following results providing an alternative, yet equivalent, formulation of the  $L^{\infty}$-distance between $\p$ and $\psi$.

\begin{lemma}\label{lemmafab}
For every $(a,b)\in {\mathcal {P}}(\Lambda^+)$
, $\left\|\p_{(a,b)}^*- \psi_{(a,b)}^*\right\|_\infty\le \|\p-\psi\|_\infty$.
\end{lemma}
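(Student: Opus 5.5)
We argue pointwise: it suffices to show that for every $x\in X$
\[
\bigl|\p^*_{(a,b)}(x)-\psi^*_{(a,b)}(x)\bigr|\le \|\p-\psi\|_\infty ,
\]
and then take the supremum over $x$. Set $c:=\min\{a,1-a\}>0$. By definition,
\[
\p^*_{(a,b)}(x)=\max\left\{\frac{c}{a}(\p_1(x)-b),\ \frac{c}{1-a}(\p_2(x)+b)\right\},
\]
and analogously for $\psi$. The coefficients $\lambda_1:=c/a$ and $\lambda_2:=c/(1-a)$ both lie in $]0,1]$, because $c\le a$ and $c\le 1-a$. This normalization is exactly what makes the estimate work.

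The key elementary fact is that $\max$ is non-expansive with respect to the max-norm:
\[
\bigl|\max\{\alpha_1,\alpha_2\}-\max\{\gamma_1,\gamma_2\}\bigr|\le\max\{|\alpha_1-\gamma_1|,|\alpha_2-\gamma_2|\}.
\]
To see this, assume without loss of generality that $\max\{\alpha_1,\alpha_2\}=\alpha_1\ge\max\{\gamma_1,\gamma_2\}$. Then the difference equals $\alpha_1-\max\{\gamma_1,\gamma_2\}\le\alpha_1-\gamma_1$, which is bounded by the right-hand side. The other orderings are symmetric.

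Now apply this with $\alpha_1=\lambda_1(\p_1(x)-b)$, $\alpha_2=\lambda_2(\p_2(x)+b)$ and $\gamma_i$ defined in the same way from $\psi$. The shifts by $\pm b$ cancel, so
\[
|\alpha_i-\gamma_i|=\lambda_i|\p_i(x)-\psi_i(x)|\le|\p_i(x)-\psi_i(x)|.
\]
Hence
\[
|\p^*_{(a,b)}(x)-\psi^*_{(a,b)}(x)|\le\max_i|\p_i(x)-\psi_i(x)|=\|\p(x)-\psi(x)\|_\infty\le\|\p-\psi\|_\infty.
\]

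No step is a real obstacle. The only points needing care are checking that $\lambda_i\le1$, which is precisely why $\min\{a,1-a\}$ was chosen, and stating the non-expansiveness of $\max$ cleanly.
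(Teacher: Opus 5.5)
Your proof is correct and follows essentially the same route as the paper: both rely on the non-expansiveness of $\max$ with respect to the max-norm and on the bounds $\min\{a,1-a\}/a\le 1$ and $\min\{a,1-a\}/(1-a)\le 1$. The only difference is cosmetic. You move the factor $\min\{a,1-a\}$ inside the maximum before applying the inequality, whereas the paper applies it to $\varphi_{(a,b)}$ and $\psi_{(a,b)}$ and multiplies by $\min\{a,1-a\}$ afterwards.
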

\begin{proof}\label{prooflemmafab}
By recalling that for any $s,t,s',t'\in\R$
\[
\left|\max\left\{s,t\right\}-\max\left\{s',t'\right\}\right|\le
\max\left\{|s-s'|,|t-t'|\right\},
\]
for every $(a,b)\in {\mathcal {P}}(\Lambda^+)$ and every $x\in X$, we have
\[
\begin{array}{lll}
\left|\p_{(a,b)}^*(x)-\psi_{(a,b)}^*(x)\right| & = & \min\{a,1-a\}\cdot\left|\p_{(a,b)}(x)-\psi_{(a,b)}(x)\right|  \\
&\le & \min\{a,1-a\}\cdot\max\left\{\left|\frac{\p_1(x)-{\psi}_1(x)}{a}\right|, \left|\frac{\p_2(x)-{\psi}_2(x)}{1-a}\right|\right\}  \\
&\le & \max\left\{\left|\p_1(x)-{\psi}_1(x)\right|, \left|\p_2(x)-{\psi}_2(x)\right|\right\}.
\end{array}
\]
\end{proof}

\begin{proposition}\label{statement}
Let $\p,\psi:X\to\R^2$ be two continuous functions.
Then
\[
\|\p-\psi\|_\infty=
\sup_{(a,b)\in {\mathcal {P}}(\Lambda^+)}\left\|\p^*_{(a,b)}-\psi^*_{(a,b)}\right\|_\infty=
\sup_{b\in \R}\left\|\p^*_{(1/2,b)}-\psi^*_{(1/2,b)}\right\|_\infty.
\]
\end{proposition}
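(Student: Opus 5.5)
We prove the chain of equalities by establishing two inequalities. Write $M:=\|\p-\psi\|_\infty=\sup_{x\in X}\max\{|\p_1(x)-\psi_1(x)|,|\p_2(x)-\psi_2(x)|\}$. Since the supremum over $b\in\R$ with $a=\frac12$ is taken over a subset of $\mathcal P(\Lambda^+)$, we have
\[
\sup_{b\in\R}\left\|\p^*_{(1/2,b)}-\psi^*_{(1/2,b)}\right\|_\infty\le \sup_{(a,b)\in\mathcal P(\Lambda^+)}\left\|\p^*_{(a,b)}-\psi^*_{(a,b)}\right\|_\infty .
\]
Lemma~\ref{lemmafab} bounds the middle supremum by $M$. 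It therefore suffices to show $M\le \sup_{b}\|\p^*_{(1/2,b)}-\psi^*_{(1/2,b)}\|_\infty$.

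For $a=\frac12$ we have $\min\{a,1-a\}=\frac12$, so $\p^*_{(1/2,b)}(x)=\max\{\p_1(x)-b,\ \p_2(x)+b\}$, and similarly for $\psi$. By compactness of $X$ and continuity, $M$ is attained at some $\bar x$ and some component. Suppose it is the first, so $M=|\p_1(\bar x)-\psi_1(\bar x)|$; the second case is symmetric, with the roles of $b$ and $-b$ exchanged.

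The idea is to choose $b$ so negative that, at $\bar x$, the first entry dominates both maxima. We need $\p_1(\bar x)-b\ge \p_2(\bar x)+b$ and $\psi_1(\bar x)-b\ge\psi_2(\bar x)+b$. These hold as soon as
\[
2b\le \min\{\p_1(\bar x)-\p_2(\bar x),\ \psi_1(\bar x)-\psi_2(\bar x)\}.
\]
For such $b$,
\[
|\p^*_{(1/2,b)}(\bar x)-\psi^*_{(1/2,b)}(\bar x)|=|(\p_1(\bar x)-b)-(\psi_1(\bar x)-b)|=M .
\]
Hence $\|\p^*_{(1/2,b)}-\psi^*_{(1/2,b)}\|_\infty\ge M$, which gives the missing inequality. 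In the symmetric case we take $b$ large and positive, so that the second entries dominate.

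The only delicate point is getting the direction of the domination inequalities right, and noting that the supremum defining $M$ is attained. The attainment is not even essential: without it, we can apply the same argument to points $x_n$ whose values approach $M$.
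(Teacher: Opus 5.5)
Your proof is correct and follows essentially the same route as the paper: Lemma~\ref{lemmafab} gives the upper bounds, and for the reverse inequality you evaluate at a point $\bar x$ where $\|\p-\psi\|_\infty$ is attained, taking $a=\frac12$ and choosing $b$ so that one entry of the maximum dominates for both functions. The differences are cosmetic. You choose $b$ via the pointwise condition $2b\le\min\{\p_1(\bar x)-\p_2(\bar x),\psi_1(\bar x)-\psi_2(\bar x)\}$, whereas the paper uses the uniform bound $b<\min\{\min\p_1,-\max\p_2,\min\psi_1,-\max\psi_2\}$. You also treat the second-component case explicitly, which the paper leaves implicit.
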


\begin{proof}
By Lemma~\ref{lemmafab}, we know that if $(a,b)\in {\mathcal {P}}(\Lambda^+)$ then
$\left\|\p_{(a,b)}^*- \psi_{(a,b)}^*\right\|_\infty\le \|\p-\psi\|_\infty$. Therefore,
we have that
\[
\|\p-\psi\|_\infty\ge
\sup_{(a,b)\in {\mathcal {P}}(\Lambda^+)}\left\|\p^*_{(a,b)}-\psi^*_{(a,b)}\right\|_\infty\ge
\sup_{b\in \R}\left\|\p^*_{(1/2,b)}-\psi^*_{(1/2,b)}\right\|_\infty.
\]
Let us take a point $\bar x\in X$, whose existence is ensured by the compactness of $X$, such that $\|\p-\psi\|_\infty=\|\p(\bar x)-\psi(\bar x)\|_\infty$.
We can assume that $\|\p(\bar x)-\psi(\bar x)\|_\infty=|\p_1(\bar x)-\psi_1(\bar x)|$.
If $a=1/2$, then $\min\{a,1-a\}=a=1-a$, so that
$\p^*_{(a,b)}(\bar x)=\max\{\p_1(\bar x)-b,\p_2(\bar x)+b\}$ and
$\psi^*_{(a,b)}(\bar x)=\max\{\psi_1(\bar x)-b,\psi_2(\bar x)+b\}$.

If we also assume that {$b<\min\{\min \p_1,-\max \p_2,\min \psi_1,-\max \psi_2\}$}, then
$\p^*_{(a,b)}(\bar x)=\p_1(\bar x)-b$ and $\psi^*_{(a,b)}(\bar x)=\psi_1(\bar x)-b$.
It follows that
$$
\sup_{b}\left\|\p^*_{(1/2,b)}-\psi^*_{(1/2,b)}\right\|_\infty\ge
|\p_1(\bar x)-\psi_1(\bar x)|=\|\p-\psi\|_\infty.
$$
\end{proof}


\begin{definition}[Biparameter matching distance]\label{defbimatchingdistance}
For any two continuous functions $\p,\psi:X\to\R^2$ we define the \myemph{biparameter matching distance} \myemph{$D_{\mathrm{match}}(\p,\psi)$} as
$$
D_{\mathrm{match}}(\p,\psi):=\sup_{(a,b)\in{\mathcal {P}}(\Lambda^+)}d_\match\left(\dgm\left(\p_{(a,b)}^*\right),\dgm\left(\psi_{(a,b)}^*\right)\right)
.$$
\end{definition}

\begin{remark}
The biparameter matching distance can be seen as an extended metric between PBNFs (or biparameter persistence diagrams), or as pseudo-metric between functions.
This is due to the fact that it is defined as a supremum of metrics, respectively, pseudo-metrics.
Unlike the monoparametric case, the chosen notation for the biparameter matching distance favors the interpretation of metric between functions for simplicity of notation. 
\end{remark}

By applying Lemma~\ref{lemmafab} and Theorem~\ref{matchingstabilitythm} about the stability of the matching distance in the monoparametric case, the next result~\cite{CeDFFeal13} immediately follows.

\begin{theorem}[Biparameter matching distance stability theorem]\label{biparam_matchingstabilitythm}
Let $\p,\psi:X\to\R^2$ be two continuous functions.
Then
$$D_{\mathrm{match}}(\p,\psi)\le \|\p-\psi\|_\infty.$$
\end{theorem}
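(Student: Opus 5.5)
The plan is to reduce the statement to the monoparametric stability result, one admissible line at a time, and then take the supremum over lines. Fix an arbitrary pair $(a,b)\in\mathcal{P}(\Lambda^+)$. The normalized functions $\p^*_{(a,b)}$ and $\psi^*_{(a,b)}$ are real-valued functions on the compact space $X$. They are continuous, being positive multiples of a maximum of two continuous functions. They also satisfy the standing hypotheses of Chapter~\ref{ChapterHC}. Indeed, by Exercise~\ref{xca_no_pathol_cases} the functions $\p_{(a,b)}$ and $\psi_{(a,b)}$ satisfy Assumptions~\ref{ass_fingen} and~\ref{ass_right-cont}. Multiplying by the positive constant $c=\min\{a,1-a\}$ only rescales the filtration parameter: $X^{c\,\p_{(a,b)}}_u=X^{\p_{(a,b)}}_{u/c}$. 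So these properties transfer to $\p^*_{(a,b)}$ and $\psi^*_{(a,b)}$, and the persistence diagrams $\dgm(\p^*_{(a,b)})$, $\dgm(\psi^*_{(a,b)})$ are well defined.

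Next, Theorem~\ref{matchingstabilitythm} applies to the pair $\p^*_{(a,b)},\psi^*_{(a,b)}$ and gives
\[
d_\match\left(\dgm\left(\p^*_{(a,b)}\right),\dgm\left(\psi^*_{(a,b)}\right)\right)\le \left\|\p^*_{(a,b)}-\psi^*_{(a,b)}\right\|_\infty .
\]
Lemma~\ref{lemmafab} bounds the right-hand side by $\|\p-\psi\|_\infty$. The resulting bound does not depend on $(a,b)$, so taking the supremum over $(a,b)\in\mathcal{P}(\Lambda^+)$ in Definition~\ref{defbimatchingdistance} yields $D_{\mathrm{match}}(\p,\psi)\le\|\p-\psi\|_\infty$.

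The only step needing care is checking that the monoparametric theory (finiteness and right-continuity, which underlie the definition of persistence diagrams and the proof of Theorem~\ref{matchingstabilitythm}) legitimately applies to the normalized functions. I would handle it exactly as above, through Exercise~\ref{xca_no_pathol_cases} and the rescaling identity for sublevel sets. Everything else is a direct chaining of the two cited inequalities.
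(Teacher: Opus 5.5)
Your proposal is correct and matches the paper's argument: apply the monoparametric stability Theorem~\ref{matchingstabilitythm} to the normalized functions on each admissible line, bound the right-hand side using Lemma~\ref{lemmafab}, and take the supremum over $(a,b)$. Your check that the standing assumptions carry over to $\varphi^*_{(a,b)}$ and $\psi^*_{(a,b)}$, via the identity $X^{c\,\varphi_{(a,b)}}_u=X^{\varphi_{(a,b)}}_{u/c}$, is a detail the paper leaves implicit.
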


\begin{remark}\label{remNormStability}
The introduction of normalized persistence diagrams in the definition of $D_{\mathrm{match}}$ is crucial to obtain a stable pseudo-metric.
The statement of Lemma~\ref{lemmafab} indeed implies that $d_\match\left(\dgm\left(\p_{(a,b)}^*\right),\dgm\left(\psi_{(a,b)}^*\right)\right)$ is less than or equal to $\|\p-\psi\|_\infty$, while this is not true for the bottleneck distance $d_\match\left(\dgm\left(\p_{(a,b)}\right),\dgm\left(\psi_{(a,b)}\right)\right)$.
\end{remark}

\begin{exercise}
Exhibit two continuous functions $\p,\psi:X\to\R^2$ and parameters $(a,b)\in {\mathcal {P}}(\Lambda^+)$ such that
$$d_\match\left(\dgm\left(\p_{(a,b)}\right),\dgm\left(\psi_{(a,b)}\right)\right) > \|\p-\psi\|_\infty.$$
\end{exercise}

For details on the computation of the matching distance, we refer the interested reader to the papers \cite{BiCeFrGi11,KeLeOu19,CeFr20,KeNi20,BaBrHaLaMaSt22,BjKe23}.

\section{The extended Pareto grid}\label{preparing}
The purpose of this section is to establish a link between the position of cornerpoints of $\dgm\left(\p_{(a,b)}^*\right)$ for a function $\p$ and the intersections of the admissible line $r_{(a,b)}$ with a certain subset of the plane $\R^2$, called the \textit{extended Pareto grid} of $\p$~\cite{CeEtFr19,EtFrQuTo23,FrGaQuTo25}.
The link that we establish in this section is the analogous of the one discussed in Section~\ref{sec_critical1} describing the relation between cornerpoints and critical values in the monoparameter persistent homology setting.

We will assume that $M$ is a compact $m$-manifold, with $m\ge 2$, and the smooth 
filtering function $\p=(\p_1,\p_2)\colon M\to\R^2$ is sufficiently regular, in the sense described below. 
If not differently stated, we will also assume that a degree $k$ has been fixed for the computation of persistence diagrams.
\medskip


\begin{definition}\label{Jacobi}
\begin{enumerate}
    \item The \myemph{Jacobi set}, $\mathbb{J}(\p)$, is the set of all points $x\in M$ at which the gradients of $\p_1$ and $\p_2$ are linearly dependent, 
    namely $\nabla \p_1(x)=\lambda\nabla \p_2(x)$ or $\nabla \p_2(x)=\lambda\nabla \p_1(x)$ for some $\lambda\in\R$.
    \item If $\lambda\leq 0$, the point $x\in M$ is said to be a \myemph{Pareto critical point} for $\p$. 
    The set of all {Pareto critical points} of $\p$ is denoted by $\mathbb{J}_P(\p)$ and is a subset of the Jacobi set $\mathbb{J}(\p)$. 
    Note that $\mathbb{J}_P(\p)$ contains both the critical points of $\p_1$ and the critical points of $\p_2$.
\end{enumerate}
\end{definition}

\begin{assumption}\label{ass_wan}
\begin{enumerate}
    \item No point $x\in M$ exists such that both $\nabla \p_1(x)$ and $\nabla \p_2(x)$ vanish.
	\item 
    $\mathbb{J}(\p)$ is a finite union of smooth compact 1-manifolds 
    each one diffeomorphic to a circle.
	\item
    The set of \myemph{cusp points}, $\mathbb{J}_C(\p)$, of $\p$ is defined as the subset of $\mathbb{J}(\p)$ consisting of those points at which the restriction of $\p$ to $\mathbb{J}(\p)$ fails to be an immersion, i.e., the points at which the differential $d\p$ is not injective. 
In other words, $\mathbb{J}_C(\p)$ is the subset of $\mathbb{J}(\p)$ at which both $\nabla \p_1$ and $\nabla \p_2$ are orthogonal to $\mathbb{J}(\p)$.
The set $\mathbb{J}_P(\p)\setminus \mathbb{J}_C(\p)$ consists of
 finitely many connected components, each one diffeomorphic to an interval.
  Each component can meet critical points for $\p_1,\p_2$ only at its endpoints.
    \item 
    With respect to any parameterization of each component of $\mathbb{J}_P(\p)\setminus \mathbb{J}_C(\p)$, one of $\p_1$ and $\p_2$ is strictly increasing and the other is strictly decreasing.
\end{enumerate}
\end{assumption}


In~\cite{Wa75} (see also \cite{EdHa04}), it was proved that the properties of Assumption~\ref{ass_wan} are generic in the set of smooth maps from $M$ to $\R^2$. This means that these properties hold for a dense open subset of the space of smooth functions from $M$ to $\R^2$.

Assumption~\ref{ass_wan}.3 implies that the connected components of $\mathbb{J}_P(\p)\setminus\mathbb{J}_C(\p)$ are open, or closed, or semi-open arcs in $M$. 
They are referred to as \myemph{critical arcs} of $\p$ (see Figure~\ref{torusw} for an example). 
If an endpoint $x$ of a critical arc belongs to that critical arc and hence is not a cusp point, then it is a critical point for either $\p_1$ or $\p_2$.
Furthermore, Assumption~\ref{ass_wan}.3 implies that both the set of critical points of $\p_1$ and the set of critical points of $\p_2$ are finite.

Let $x_1,\ldots,x_h$ be the critical points of $\p_1$ and $y_1,\ldots,y_k$ the critical points of $\p_2$ (Assumption~\ref{ass_wan}.1 guarantees that $\{x_1,\ldots,x_h\}\cap \{y_1,\ldots,y_k\}=\emptyset$). Consider the following closed half-lines: for each critical point $x_i$ of $\p_1$ (respectively each critical point $y_j$ of $\p_2$), the half-line $\{(x^1,x^2)\in\R^2 \mid x^1=\p_1(x_i), x^2\ge \p_2(x_i)\}$ (respectively the half-line $\{(x^1,x^2)\in\R^2 \mid x^1\ge \p_1(y_j), x^2=\p_2(y_j)\}$).
\begin{definition}[Extended Pareto grid]
The \myemph{extended Pareto grid} \myemph{$\Gamma(\p)$} is the union of $\p(\mathbb{J}_P(\p))$ with these closed half-lines.
The closures of the images of critical arcs of $\p$ are called \myemph{proper contours} of $\p$ associated with those critical arcs of $\p$, while the closed half-lines are called \myemph{improper contours} of $\p$ associated with the corresponding critical points of $\p_1$ and $\p_2$~\cite{CeEtFr19}. 
\end{definition}
We observe that every contour is a closed set and the number of contours of $\p$ is finite because of Assumption~\ref{ass_wan}.3.

The definition of proper and improper contours makes it possible to define the multiplicity of the points of the extended Pareto grid.

\begin{definition}[Multiplicity of a point of the extended Pareto grid]
Let \myemph{$\mathcal{S}(\p)$} be the set of all points of $\Gamma(\p)$ that belong to more than one (proper or improper) contour.
If $\mathcal{S}(\p)$ consists of isolated points, we say that the \myemph{multiplicity of $A\in \Gamma(\p)$} is the greatest 
$k$ such that
for every $\varepsilon>0$
a line $r_{(a,b)}$ with $(a,b)\in{\mathcal {P}}(\Lambda^+)$ exists, verifying these two properties: $r_{(a,b)}$ does not intersect $\mathcal{S}(\p)$ and the cardinality of $r_{(a,b)}\cap\Gamma(\p)\cap B(A,\varepsilon)$ is $k$, where $B(A,\varepsilon)$ is the open ball of centre $A$ and radius $\varepsilon$ with respect to the Euclidean distance.
\end{definition}
In other words, the multiplicity of $A\in \Gamma(\p)$ is the maximum $k$ such that we can find a line with positive slope that does not intersect $\mathcal{S}(\p)$ and contains $k$ points of the extended Pareto grid that are arbitrarily close to $A$.

Under the assumption that $\mathcal{S}(\mathbf{\p})$ consists of isolated points, let $\mathcal{D}(\mathbf{\p})$ be the set of all points $A \in \Gamma(\mathbf{\p})$ that have multiplicity strictly greater than $1$.
We observe that $\mathcal{D}(\mathbf{\p}) \subseteq \mathcal{S}(\mathbf{\p})$. 
Furthermore, the image under $\mathbf{\p}$ of every critical point of either $\p_1$ or $\p_2$, as well as of every cusp point, lies in $\mathcal{S}(\mathbf{\p})$, since each such image belongs to at least two contours.

\begin{definition}[Contour-arc]
Each connected component of $\Gamma(\p)\setminus\mathcal{D}(\p)$ is called a \myemph{contour-arc} of $\p$. 
\end{definition}


A visual intuition is given by Figure~\ref{figgrid3}, showing the extended Pareto grid of the function $\p$ taking each point $x$ of the torus in Figure~\ref{torusw} to the pair $\p(x)=(x^1(x),x^2(x))$. 
The images of the critical arcs are in red, the vertical half-lines with abscissas equal to critical values of $\p_1$ are in magenta, and the horizontal half-lines with ordinates equal to critical values of $\p_2$ are in orange. 
The extended Pareto grid $\Gamma(\p)$ contains the red, magenta and orange points. 
The highlighted red points are endpoints of contours. 
A blue admissible line $r_{(a,b)}$ that does not intersect $\mathcal{S}(\p)$ is also represented. 
The black point $A$ belongs to $\mathcal{D}(\p)$, since we can find a line with positive slope which does not intersect $\mathcal{S}(\p)$ and contains exactly two points of the extended Pareto grid that are arbitrarily close to $A$ (see the  green points in the figure). 
The circled point is an example of point of $\mathcal{S}(\p)$ whose multiplicity is 1, and hence that does not belong to $\mathcal{D}(\p)$.

From now on, we assume that the following properties hold.

\begin{assumption}\label{defnassumptions}
\begin{enumerate}
    \item \label{defnassumptionsinjectivity} For every two distinct critical arcs $\alpha_1, \alpha_2\subseteq \mathbb{J}_P(\p)$, the restriction of $\p$ to $\alpha_1\cup \alpha_2$ is injective up to a finite number of points.  
    \item \label{defnassumptionscontourarcs}
Every contour-arc $\gamma$ of $\p$ is associated with a pair $(d(\gamma),s(\gamma))\in\Z\times\{-1,1\}$
such that at each point $(u_1,u_2)$ of $\gamma$ the following statements hold for every small enough $\varepsilon>0$, where $i^k_*\colon H_k(M^\p_{(u_1-\varepsilon,u_2-\varepsilon)})\to H_k(M^\p_{(u_1+\varepsilon,u_2+\varepsilon)})$ is the linear map induced by the inclusion $M^\p_{(u_1-\varepsilon,u_2-\varepsilon)}\hookrightarrow M^\p_{(u_1+\varepsilon,u_2+\varepsilon)}$:
\begin{itemize}
  \item If $k\neq d(\gamma)$, $i^k_*$ is an isomorphism;
  \item If $k= d(\gamma)$ and $s(\gamma)=1$, $i^k_*$ is injective and $$\textrm{rank} \left(H_k(M^\p_{(u_1+\varepsilon,u_2+\varepsilon)})\right)=
      \textrm{rank} \left(H_k(M^\p_{(u_1-\varepsilon,u_2-\varepsilon)})\right)+1;$$
  \item If $k= d(\gamma)$ and $s(\gamma)=-1$, $i^k_*$ is surjective and $$\textrm{rank} \left(H_k(M^\p_{(u_1+\varepsilon,u_2+\varepsilon)})\right)=
      \textrm{rank} \left(H_k(M^\p_{(u_1-\varepsilon,u_2-\varepsilon)})\right)-1.$$
\end{itemize}

\end{enumerate}
\end{assumption}


\begin{remark}\label{remcontour1}
Assumption~\ref{defnassumptions}.\ref{defnassumptionsinjectivity} prevents different critical arcs to have the exact same image, hence, giving rise to the same contour.  
Moreover, it implies that $\mathcal{S}(\p)$ is finite, and hence $\mathcal{D}(\p)\subseteq \mathcal{S}(\p)$ is also finite. 
\end{remark}

\begin{remark}\label{remcontour2}
It is not difficult to prove that in Assumption\ref{defnassumptions}.\ref{defnassumptionscontourarcs} the homology groups $H_k(M^\p_{(u_1-\varepsilon,u_2-\varepsilon)})$ and $H_k(M^\p_{(u_1+\varepsilon,u_2+\varepsilon)})$ can be replaced by the homology groups $H_k(M^\p_{(u_1-a\varepsilon,u_2-(1-a)\varepsilon)})$ and $H_k(M^\p_{(u_1+a\varepsilon,u_2+(1-a)\varepsilon)})$
for any fixed $a\in\,]0,1[$ without changing the property. 
In plain words, Assumption~\ref{defnassumptions}.\ref{defnassumptionscontourarcs} guarantees that the passage across a contour-arc $\gamma$ along any direction $(a,1-a)$ just creates ($s(\gamma)=1$) or destroys ($s(\gamma)=-1$) exactly one homological class in degree $d(\gamma)$, without producing any homological change in the other degrees. 
\end{remark}

Figure~\ref{figgrid4} shows the contour-arcs and the set $\mathcal{D}(\p)$ (in white) for the function taking each point $x$ of the torus in Figure~\ref{torusw} to the pair $\p(x)=(x^1(x),x^2(x))$. 
Each of the two magenta contour-arcs corresponds to the birth of a homology class in degree $0$ (i.e. $(d(\gamma),s(\gamma))=(0,1)$). 
Each of the ten black contour-arcs corresponds to the birth of a homology class in degree $1$ (i.e. $(d(\gamma),s(\gamma))=(1,1)$). 
Each of the two blue contour-arcs corresponds to the birth of a homology class in degree $2$ (i.e. $(d(\gamma),s(\gamma))=(2,1)$). 
Each of the two red contour-arcs corresponds to the death of a homology class in degree $0$ (i.e. $(d(\gamma),s(\gamma))=(0,-1)$). 
Each of the four green contour-arcs corresponds to the death of a homology class in degree $1$ (i.e. $(d(\gamma),s(\gamma))=(1,-1)$). 
Note that the homological event associated with the points of a contour of $\p$ can change along the considered contour. 
This justifies the choice of stating Assumption~\ref{defnassumptions}.\ref{defnassumptionscontourarcs} for contour-arcs instead of contour.

\begin{figure}
\begin{center}
\begin{tikzpicture}
    \begin{axis}[
        axis equal image,
        hide axis,
        z buffer = sort,
        view = {122}{30},
        scale = 1.5
    ]
        \addplot3[
            surf,
            shader = faceted interp,
            samples = 40,
            samples y = 80,
            domain = 0:2*pi,
            domain y = 0:2*pi,
            colormap name = mycolormap,
            thin,
        ](
            {(3+sin(deg(\x)))*cos(deg(\y))},
            {(3+sin(deg(\x)))*sin(deg(\y))},
            {cos(deg(\x))}
        );
            \node (A) at (0,0,-4) {};
            \node (D) at (0,0,4) {};
            \draw [black, ->] (A) -- (D) node[pos=0.95,above left] {$x^3$};
            \draw [black] (-5,0,0) -- (-4,0,0);
            \draw [black, dashed, opacity=0.3] (-4,0,0) -- (-2,0,0);
            \draw [black] (-2,0,0) -- (2,0,0);
            \draw [black, dashed, opacity=0.3] (2,0,0)-- (4,0,0);
            \draw[black, ->] (4,0,0) -- (5,0,0)
            node[pos=0.95, above left] {$x^1$};
            
            \draw [black] (0,-5,0) -- (0,-4,0);
            \draw [black, dashed, opacity=0.3] (0,-4,0) -- (0,-2,0);
            \draw [black] (0,-2,0) -- (0,2,0);
            \draw [black, dashed, opacity=0.3] (0,2,0)-- (0,4,0);
            \draw[black, ->] (0,4,0) -- (0,5,0)
            node[pos=0.80, above right] {$x^2$};

            \draw[red, thick] (4,0,0) arc(0:90:4);
            \draw[red, opacity=0.3, thick] (-4,0,0) arc(180:270:4);
            \draw[red, opacity=0.3, thick] (2,0,0) arc(0:90:2);
            \draw[red, thick] (-2,0,0) arc(180:270:2);
    \end{axis}
\end{tikzpicture}
\end{center}\caption{The torus endowed with the filtering function $\p(x)=(x^1(x), x^2(x))$. 
The critical arcs of $\p$ are displayed in red (the lighter shadow of red reflects the fact that those arcs are on a part of the surface that is not visible from this perspective).}
\label{torusw}
\end{figure}

\begin{figure}
\begin{center}

\begin{tikzpicture}[scale=1.7]
\draw [gray!30,line width=1.1] (0,0) circle (1);
\draw [gray!30,line width=1.1] (0,0) circle (2);
\draw [red,line width=1.1] (1,0) arc (0:90:1);
\draw [red,line width=1.1] (2,0) arc (0:90:2);
\draw [red,line width=1.1] (-1,0) arc (180:270:1);
\draw [red,line width=1.1] (-2,0) arc (180:270:2);

\foreach \x in {-2,-1,1,2}
{
	\draw [magenta,line width=1.1] (\x,0) -- (\x,3);
	\draw [orange,line width=1.1] (0,\x) -- (3,\x);

	\draw [red] (\x,0) node {{\tiny $\bullet$}};
	\draw [red] (0,\x) node {{\tiny $\bullet$}};
}

\draw [blue,line width=1.1] (0,-2.5) -- (3,3.03);
\draw [black] (2.45,2.8) node {$r_{(a,b)}$};
\draw [black] (2.2,1.15) node {$A$};

\draw [blue] (0.27,-2) node {{\tiny $\bullet$}};
\draw [blue] (0.81,-1) node {{\tiny $\bullet$}};
\draw [blue] (1.8,0.82) node {{\tiny $\bullet$}};
\draw [blue] (1.81,0.835) node {{\tiny $\bullet$}};
\draw [green] (1.9,1) node {{\tiny $\bullet$}};
\draw [green] (2,1.18) node {{\tiny $\bullet$}};
\draw [black] (2,1) node {{\tiny $\bullet$}};
\draw [blue] (2.44,2) node {{\tiny $\bullet$}};
\draw (-1,-0) circle (0.1cm);
\end{tikzpicture}
\end{center}\caption{The extended Pareto grid for the torus in Figure~\ref{torusw} 
endowed with the filtering function $\p(x):=(x^1(x),x^2(x))$.}\label{figgrid3}
\end{figure}


\begin{figure}
\begin{center}
\begin{tikzpicture}[scale=1.3]
\draw [magenta,line width=1.1] (1,0) arc (0:90:1);
\draw [black,line width=1.1] (2,0) arc (0:90:2);
\draw [black,line width=1.1] (-1,0) arc (180:270:1);
\draw [magenta,line width=1.1] (-2,0) arc (180:270:2);

\draw [red,line width=1.1] (1,0) -- (1,1);
\draw [red,line width=1.1] (0,1) -- (1,1);

\draw [black,line width=1.1] (1,1) -- (3,1);
\draw [black,line width=1.1] (1,1) -- (1,3);

\draw [black,line width=1.1] (-1,0) -- (-1,3);
\draw [magenta,line width=1.1] (-2,0) -- (-2,3);
\draw [black,line width=1.1] (0,-1) -- (3,-1);
\draw [magenta,line width=1.1] (0,-2) -- (3,-2);
\draw [green,line width=1.1] (0,2) -- (2,2);
\draw [green,line width=1.1] (2,0) -- (2,2);
\draw [blue,line width=1.1] (2,2) -- (2,3);
\draw [blue,line width=1.1] (2,2) -- (3,2);

\draw [line width=0.1,black,fill=white] (1,1.73) circle (0.04);
\draw [line width=0.1,black,fill=white] (1.73,1) circle (0.04);

\draw [line width=0.1,black,fill=white] (1,0) circle (0.04);
\draw [line width=0.1,black,fill=white] (2,0) circle (0.04);
\draw [line width=0.1,black,fill=white] (0,1) circle (0.04);
\draw [line width=0.1,black,fill=white] (0,2) circle (0.04);

\foreach \x in {1,2}
{
	\foreach \y in {1,2}
	{
		\draw [line width=0.1,black,fill=white] (\x,\y) circle (0.04);
	}
}

\end{tikzpicture}
\end{center}\caption{The connected components obtained by deleting the double points (in white) from $\Gamma(\p)$ are the contour-arcs for the torus in Figure~\ref{torusw} endowed with the filtering function $\p(x)=(x^1(x),x^2(x))$. 
In this example $\Gamma(\p)$ contains 20 contour-arcs.
}\label{figgrid4}
\end{figure}

\subsection{The Position Theorem}
The key theorem of this section relates the finite coordinates of the non-trivial cornerpoints of the persistence diagram of $\p^*_{(a,b)}$ to the intersection of the line $r_{(a,b)}$ with the extended Pareto grid of the function $\p$. 
This is fundamental, not only to locate the cornerpoints, but also to keep track of them while the line $r_{(a,b)}$ continuously changes.

We recall that, for every $x \in M$,
\[\p^*_{(a,b)}(x)=\min\{a,1-a\}\cdot \max\left\{\frac{\p_1(x)-b}{a},\frac{\p_2(x)+b}{1-a}\right\}.
\]


We can now state the first result in this section.

\begin{lemma}\label{mainlemma}
If $w$ is a finite coordinate of $P\in \dgm\left(\p_{(a,b)}^*\right)\setminus \{\Delta\}$ , then there exists a point $\bar x\in M$ such that exactly one of the following properties holds:
\begin{enumerate}
  \item $\nabla\p_1(\bar x)=\mathbf{0}$ and
  $w=\frac{\min\{a,1-a\}}{a}\cdot (\varphi_1(\bar x)-b)>\frac{\min\{a,1-a\}}{1-a}\cdot (\varphi_2(\bar x)+b);$
  \item $\nabla\p_2(\bar x)=\mathbf{0}$ and
  $w=\frac{\min\{a,1-a\}}{1-a}\cdot (\varphi_2(\bar x)+b)>\frac{\min\{a,1-a\}}{a}\cdot (\varphi_1(\bar x)-b);$
  \item
$\bar x\in \mathbb{J}_P(M)$ and $w=\frac{\min\{a,1-a\}}{a}\cdot (\varphi_1(\bar x)-b)=\frac{\min\{a,1-a\}}{1-a}\cdot (\varphi_2(\bar x)+b).$
\end{enumerate}
\end{lemma}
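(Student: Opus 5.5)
Write $c=\min\{a,1-a\}$, $F_1=\frac{c}{a}(\p_1-b)$ and $F_2=\frac{c}{1-a}(\p_2+b)$. Then $\p^*_{(a,b)}=\max\{F_1,F_2\}$, and each $F_i$ is a positive multiple of $\p_i$ plus a constant, so it has the same critical points as $\p_i$. The plan is to imitate the proof of Theorem~\ref{thm_coord_cnpts_crit_values}, replacing Morse theory for a smooth function by a deformation argument for this max of two smooth functions. The three alternatives in the statement are exactly the ways a point $\bar x$ can be ``critical'' for $\max\{F_1,F_2\}$. If $F_1(\bar x)>F_2(\bar x)$, the max equals $F_1$ near $\bar x$, so criticality means $\nabla\p_1(\bar x)=\mathbf 0$. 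The case $F_2>F_1$ is symmetric. If $F_1(\bar x)=F_2(\bar x)$, criticality in the nonsmooth sense means some convex combination $t\nabla F_1+(1-t)\nabla F_2$ vanishes, i.e.\ $\nabla\p_1=\lambda\nabla\p_2$ with $\lambda\le0$ (or the symmetric relation), i.e.\ $\bar x\in\mathbb J_P(\p)$. Exactly one of the three holds because the (in)equality between $F_1(\bar x)$ and $F_2(\bar x)$ determines the case.

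\textbf{Step 1.} By Corollary~\ref{homological_critical}, for every small $\eps>0$ the map $H_k(M_{w-\eps})\to H_k(M_{w+\eps})$ for the sublevel sets of $\p^*_{(a,b)}$ is not an isomorphism. Note that $M^{\p^*_{(a,b)}}_t=M^\p_{(u_1,u_2)}$ with $(u_1,u_2)=(b+ta/c,\,-b+t(1-a)/c)$ on the line $r_{(a,b)}$.

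\textbf{Step 2 (key step).} Prove a substitute for Theorem~\ref{milnor_critical}: if the slab $\{w-\eps\le\p^*_{(a,b)}\le w+\eps\}$ contains no point of types 1--3 for any value in $[w-\eps,w+\eps]$, then the inclusion induces isomorphisms in homology. I would try to construct a smooth vector field $V$ on a neighbourhood of the slab with $\langle\nabla F_i(x),V(x)\rangle<0$ for every $i$ that is active or nearly active at $x$. Such a field exists at each point by the no-critical-point hypothesis: at a point with $F_1>F_2$ we have $\nabla\p_1\ne\mathbf 0$, so take $V=-\nabla F_1$. At a point with $F_1=F_2$ and $x\notin\mathbb J_P$, the gradients are not negatively dependent and neither vanishes, so $0$ is not in the convex hull of $\nabla F_1,\nabla F_2$. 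A vector $V$ with negative inner product against both then exists, e.g.\ minus the element of the hull of minimal norm. The conditions are open, so compactness of the slab plus a partition of unity glue these local choices. Flowing along $V$ strictly decreases $\max\{F_1,F_2\}$ at a rate bounded below. This gives a deformation retraction of $M_{w+\eps}$ onto $M_{w-\eps}$, hence an isomorphism, contradicting Step 1. This step is the main obstacle. The delicate points are choosing $V$ uniformly near the locus $F_1=F_2$ and checking that the flow stays in $M$ (no boundary, so this is fine) and decreases the max at a uniform rate.

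\textbf{Step 3.} Consequently, for each $\eps=1/n$ there is a point $x_n$ of one of the three types with value $w_n\in[w-1/n,w+1/n]$. Passing to a subsequence, we may take all $x_n$ of the same type and $x_n\to\bar x$ by compactness of $M$. The closed conditions pass to the limit: vanishing gradients, the equality $F_1=F_2$, and linear dependence with $\lambda\le0$ (the set $\mathbb J_P$ is closed, since the sign condition is closed once Assumption~\ref{ass_wan}.1 rules out simultaneous vanishing). Continuity gives $\max\{F_1,F_2\}(\bar x)=w$.

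One subtlety remains for types 1 and 2. In the limit a strict inequality $F_1>F_2$ may degenerate to equality. In that case $\nabla\p_1(\bar x)=\mathbf 0$ and $F_1(\bar x)=F_2(\bar x)$, and $\bar x$ is a critical point of $\p_1$, hence lies in $\mathbb J_P$ with $\lambda=0$. So it falls under type 3, and the trichotomy is recovered by reclassifying according to whether $F_1(\bar x)$ is greater than, smaller than, or equal to $F_2(\bar x)$.
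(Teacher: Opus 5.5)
Your proposal is correct, but it follows a genuinely different route from the paper.

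\textbf{The paper's route.} The paper never proves a deformation lemma for the nonsmooth function $\p^*_{(a,b)}=\max\{\hat\p_1,\hat\p_2\}$. Instead, it approximates this function uniformly by the smooth functions $F_n=\bigl((\hat\p_1+c)^n+(\hat\p_2+c)^n\bigr)^{1/n}-c$. It then uses the Matching Distance Stability Theorem~\ref{matchingstabilitythm} to obtain cornerpoints of $\dgm(F_n)$ converging to $P$, and applies Theorem~\ref{thm_coord_cnpts_crit_values} (hence Milnor's smooth theorem) to each $F_n$. The condition $\nabla F_n(x_n)=\mathbf 0$ is a vanishing combination of $\nabla\hat\p_1(x_n)$ and $\nabla\hat\p_2(x_n)$ with positive weights. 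Normalising the weights and passing to the limit gives $\bar x\in\mathbb J_P(\p)$ directly. In the two strict cases, a separate computation (uniform convergence of $\partial F_n/\partial r^j$ to $\partial\hat\p_i/\partial r^j$ near $\bar x$) upgrades this to $\nabla\p_i(\bar x)=\mathbf 0$.

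\textbf{Your route.} You identify the three alternatives as exactly the points where $\mathbf 0$ lies in the convex hull of the active gradients. You prove the nonsmooth analogue of Theorem~\ref{milnor_critical} by a pseudo-gradient flow and combine it with Corollary~\ref{homological_critical}. The strict/equal dichotomy is then settled after the limit by your reclassification.

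\textbf{What each buys.} The paper's route is economical within the book: it only recycles results already stated (stability, and the smooth critical-value theorem whose Morse-theoretic input is cited), at the cost of an explicit computation. Yours is conceptually cleaner and explains why precisely these three cases occur. It also bypasses the stability theorem and generalises verbatim to maxima of finitely many smooth functions. Its price is that Step 2 is a new piece of differential topology that must be proved in full, whereas the book does not even prove the smooth version.

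\textbf{The points you flag.} These are handled by standard means.
\begin{enumerate}
\item Shrink each local neighbourhood so that any index inactive at its centre stays strictly inactive on it. Then at every point the active set is contained in the active set of each centre whose neighbourhood contains it. Hence the partition-of-unity combination preserves the strict inequalities, with a uniform bound $-\delta$ by compactness.
\item The set of points of types 1--3 is closed. Note that $\mathbb J_P(\p)=\{x:\mathbf 0\in\mathrm{conv}\{\nabla\p_1(x),\nabla\p_2(x)\}\}$ is closed without invoking Assumption~\ref{ass_wan}.1. So you can work on a slightly larger slab, on which the upper right Dini derivative of the max along flow lines is at most $-\delta$.
\item Consequently the hitting time of level $w-\eps$ is continuous, and the deformation retraction follows.
\end{enumerate}
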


\begin{proof} 
Let us set
\[\hat\p_1(x)=\frac{\min\{a,1-a\}}{a}\cdot (\varphi_1(x)-b), \ \text{and} \ 
\hat\p_2(x)=\frac{\min\{a,1-a\}}{1-a}\cdot (\varphi_2(x)+b).\] Moreover, we
 choose a 
real number $c>0$ 
 such that $$\min_{x\in M}\hat\p_1(x),\min_{x\in M}\hat\p_2(x)>-c.$$
We now adopt the approach proposed in \cite{CeFr15}.
Consider the sequence $(F_n)_{n\in \mathbb{N}\setminus\{0\}}$, 
of real-valued functions where
$$F_n(x)=\left((\hat\p_1(x)+c)^n+(\hat\p_2(x)+c)^n\right)^{\frac{1}{n}}-c$$ for any $x$ in $M$. Such a sequence converges uniformly to the function $\p_{(a,b)}^*(x)=
\max\left\{\hat\p_1(x),\hat\p_2(x)\right\}$.
Indeed, recalling that if $\alpha,\beta\ge 0$ then $(\alpha^n+\beta^n)^{\frac{1}{n}}\ge \max\{\alpha,\beta\}$, 
for every $x\in M$ and for every index $n$, we have
that {\setlength\arraycolsep{2pt}
\begin{eqnarray*}
&&|\p_{(a,b)}^*(x)-F_n(x)|\\
&=&\left|\max\left\{\hat\p_1(x),\hat\p_2(x)\right\}-\left((\hat\p_1(x)+c)^n+(\hat\p_2(x)+c)^n\right)^{\frac{1}{n}}+c\right|\\
&=&\left|\max\left\{\hat\p_1(x)+c,\hat\p_2(x)+c\right\}-\left((\hat\p_1(x)+c)^n+(\hat\p_2(x)+c)^n\right)^{\frac{1}{n}}\right|\\
&=&\left((\hat\p_1(x)+c)^n+(\hat\p_2(x)+c)^n\right)^{\frac{1}{n}}-\max\left\{\hat\p_1(x)+c,\hat\p_2(x)+c\right\}\\
&\le&\left(2(\max\left\{\hat\p_1(x)+c,\hat\p_2(x)+c\right\})^n\right)^{\frac{1}{n}}-\max\left\{\hat\p_1(x)+c,\hat\p_2(x)+c\right\}\\
&=&\left(2^{\frac{1}{n}}-1\right)\cdot\max\left\{\hat\p_1(x)+c,\hat\p_2(x)+c\right\}\\
&\le& \left(2^{\frac{1}{n}}-1\right)\cdot\max\left\{\|\hat\p_1+c\|_\infty,\|\hat\p_2+c\|_\infty\right\}.
\end{eqnarray*}}
By the Matching Stability Theorem
\ref{matchingstabilitythm} it follows that it is possible to find a sequence $(P_n)_n$ converging to $P$ such that $P_n\in \dgm\left(F_n\right)\setminus \{\Delta\}$.
Let $w_n$ be the coordinate of $P_n$ converging to $w$. 
Since
$P_n$ is a cornerpoint of $\dgm\left(F_n\right)\setminus \{\Delta\}$, it follows from Theorem~\ref{thm_coord_cnpts_crit_values}
that $w_n$ is a critical value of the function
$F_n$.
Then there exists also a point $x_n\in M$ such that
$\nabla F_n(x_n)=\mathbf{0}$ and $w_n=F_n(x_n)$. 
This means that (with respect to local
coordinates $r^1,\dots,r^m$ of the manifold $M$), 
for any index $j$
the following equalities hold:
{\setlength\arraycolsep{2pt}
\begin{eqnarray*}
0&=&\frac{\partial F_n}{\partial
r^j}(x_n)\\
&=&\left((\hat\p_1(x_n)+c)^n+(\hat\p_2(x_n)+c)^n\right)^{\frac{1-n}{n}}\\
&\cdot&
\left((\hat\p_1(x_n)+c)^{n-1}\cdot\frac{\partial
\hat\p_1}{\partial r^j}(x_n)+
(\hat\p_2(x_n)+c)^{n-1}\cdot\frac{\partial
\hat\p_2}{\partial r^j}(x_n)\right).
\end{eqnarray*}}

Hence, recalling that $\hat\p_1(x_n)+c$ and $\hat\p_2(x_n)+c$ are positive, $\frac{\partial F_n}{\partial
r^j}(x_n)=0$ if and only if
{\setlength\arraycolsep{2pt}
\begin{eqnarray*}
(\hat\p_1(x_n)+c)^{n-1}\cdot\frac{\partial
\hat\p_1}{\partial r^j}(x_n)&+&
(\hat\p_2(x_n)+c)^{n-1}\cdot\frac{\partial
\hat\p_2}{\partial r^j}(x_n)=0.
\end{eqnarray*}}

Therefore, by setting
$$\boldsymbol{v}_n=(v_n^1,v_n^2)=\left((\hat\p_1(x_n)+c)^{n-1},(\hat\p_2(x_n)+c)^{n-1}\right),$$
we can write
$$\begin{pmatrix}
\frac{\partial\hat\p_1}{\partial r^1}(x_n) & \frac{\partial\hat\p_2}{\partial r^1}(x_n) \\
\vdots & \vdots \\
\frac{\partial\hat\p_1}{\partial r^m}(x_n) & \frac{\partial\hat\p_2}{\partial r^m}(x_n)
\end{pmatrix}
\begin{pmatrix}
v_n^1 \\
v_n^2
\end{pmatrix}=
\begin{pmatrix}
0 \\
\vdots\\
0
\end{pmatrix}.$$

By the compactness of $M$, we can assume
(possibly by extracting a subsequence) that $(x_n)_n$ converges to a
point $\bar x$.
Let us define $\boldsymbol{u}_n=\frac{\boldsymbol{v}_n}{\left\|\boldsymbol{v}_n\right\|_{\infty}}$. 
Again by compactness (possibly by considering a subsequence), the sequence $(\boldsymbol{u}_n)_n$
converges to a vector $\boldsymbol{\bar u}=(\bar u^1,\bar u^2)$, where
$\bar{u}^1=\lim_{n\to\infty}\frac{v_n^1}{\left\|\boldsymbol{v}_n\right\|_{\infty}}$,
$\bar{u}^2=\lim_{n\to\infty}\frac{v_n^2}{\left\|\boldsymbol{v}_n\right\|_{\infty}}$, and where $\left\|\boldsymbol{\bar u}\right\|_{\infty}=1$, because $\left\|\boldsymbol{u}_n\right\|_{\infty}=1$.
Since
$$\ \ \ \ \begin{pmatrix}
\frac{\partial\hat\p_1}{\partial r^1}(x_n) & \frac{\partial\hat\p_2}{\partial r^1}(x_n) \\
\vdots & \vdots \\
\frac{\partial\hat\p_1}{\partial r^m}(x_n) & \frac{\partial\hat\p_2}{\partial r^m}(x_n)
\end{pmatrix}
\begin{pmatrix}
u_n^1 \\
u_n^2
\end{pmatrix}=
\begin{pmatrix}
0 \\
\vdots\\
0
\end{pmatrix}$$
for any index $n$, it follows that
$$(*)\ \ \begin{pmatrix}
\frac{\partial\hat\p_1}{\partial r^1}(\bar x) & \frac{\partial\hat\p_2}{\partial r^1}(\bar x) \\
\vdots & \vdots \\
\frac{\partial\hat\p_1}{\partial r^m}(\bar x) & \frac{\partial\hat\p_2}{\partial r^m}(\bar x)
\end{pmatrix}
\begin{pmatrix}
\bar u^1 \\
\bar u^2
\end{pmatrix}=
\begin{pmatrix}
0 \\
\vdots\\
0
\end{pmatrix}.$$
We observe that $u_n^1,u_n^2\ge 0$ for every index $n$, and hence $\bar u^1,\bar u^2\ge 0$.
By $(*)$, it follows that
\[
\bar u^1\cdot\frac{\min\{a,1-a\}}{a}\cdot \frac{\partial\p_1}{\partial r^j}(\bar x) + \bar u^2\cdot\frac{\min\{a,1-a\}}{1-a}\cdot \frac{\partial\p_2}{\partial r^j}(\bar x)=0.
\]
Hence,
$\frac{\bar u^1}{a}\nabla\varphi_{1}(\bar x)+
\frac{\bar u^2}{1-a}\nabla\varphi_{2}(\bar x)=\mathbf{0}$.
By recalling
that (a) $\bar u^1,\bar u^2\geq 0$, (b) $\boldsymbol{\bar u}$ is a non-zero vector, and (c) $a,1-a>0$,
it immediately follows that a $\lambda\le 0$ exists with $\nabla\p_1(\bar x)=\lambda\nabla\p_2(\bar x)$ or $\nabla\p_2(\bar x)=\lambda\nabla\p_1(\bar x)$.
Therefore, $\bar x\in \mathbb{J}_P(\p)$, and hence $\p(\bar x)\in \Gamma(\p)$.

There are three possibilities: $\hat\p_1(\bar x)>\hat\p_2(\bar x)$, $\hat\p_1(\bar x)<\hat\p_2(\bar x)$, and $\hat\p_1(\bar x)=\hat\p_2(\bar x)$.

If $\hat\p_1(\bar x)>\hat\p_2(\bar x)$, then $\p_{(a,b)}^*(x)=\hat\p_1(x)=\frac{\min\{a,1-a\}}{a}\cdot (\p_1(x)-b)$ in an open ball $B$ centred at $\bar x$. Because of the continuity of $\hat\p_1$ and $\hat\p_2$, we can assume that the radius 
$\delta$
of the ball $B$ is so small that 
$0 < \frac{\hat\p_2(x)+c}{\hat\p_1(x)+c}\le\delta<1$
in $B$. 
Since the functions $\frac{\partial\hat\p_1}{\partial r^j}(x)$, $\frac{\partial\hat\p_2}{\partial r^j}(x)$ are continuous, we can also assume that 
$\left|\frac{\partial\hat\p_1}{\partial r^j}(x)\right|,\left|\frac{\partial\hat\p_2}{\partial r^j}(x)\right|\le \mu$ in $B$.

On the one hand, we can prove that $\frac{\partial F_n}{\partial r^j}$ uniformly converges to
$\frac{\partial\hat\p_1}{\partial r^j}$ in $B$, for any index $j$. Indeed, it turns out that, for $x\in B$,

\begin{eqnarray*}
\frac{\partial F_n}{\partial r^j}(x)
&=&\frac{\partial\ }{\partial r^j}\Big(\big((\hat\p_1(x)+c)^n+(\hat\p_2(x)+c)^n\big)^{\frac{1}{n}}-c\Big)\\
&=&n\left( (\hat\p_1(x)+c)^{n-1} \frac{\partial\hat\p_1}{\partial r^j}(x) + (\hat\p_2(x)+c)^{n-1}\frac{\partial\hat\p_2}{\partial r^j}(x)\right)\\
&\cdot&\frac{1}{n} \big((\hat\p_1(x)+c)^n+(\hat\p_2(x)+c)^n\big)^{\frac{1-n}{n}}\\
&=& (\hat\p_1(x)+c)^{n-1}\left(
\frac{\partial\hat\p_1}{\partial r^j}(x)+
\left(\frac{\hat\p_2(x)+c}{\hat\p_1(x)+c}\right)^{n-1}\frac{\partial\hat\p_2}{\partial r^j}(x)
\right)\\
&\cdot&(\hat\p_1(x)+c)^{1-n}\left(1+
\left(\frac{\hat\p_2(x)+c}{\hat\p_1(x)+c} \right)^n
\right)^{\frac{1-n}{n}}\\
&=&\left(
\frac{\partial\hat\p_1}{\partial r^j}(x)+
\left(\frac{\hat\p_2(x)+c}{\hat\p_1(x)+c} \right)^{n-1}
\frac{\partial\hat\p_2}{\partial r^j}(x)\right)\cdot\left(1+\left(\frac{\hat\p_2(x)+c}{\hat\p_1(x)+c} \right)^{n}\right)^{\frac{1-n}{n}}.
\end{eqnarray*}

Since $0\le\frac{\hat\p_2(x)+c}{\hat\p_1(x)+c}\le\delta<1$ and
$\left|\frac{\partial\hat\p_i}{\partial r^j}(x)\right|\le \mu$ in $B$ for $i\in\{1,2\}$,
it follows that the function
$\left(
\frac{\partial\hat\p_1}{\partial r^j}(x)+
\left(\frac{\hat\p_2(x)+c}{\hat\p_1(x)+c} \right)^{n-1}
\frac{\partial\hat\p_2}{\partial r^j}(x)\right)$
is upper bounded by the function
$\frac{\partial\hat\p_1}{\partial r^j}(x)+
\delta^{n-1}\mu$, and lower bounded by the function
$\frac{\partial\hat\p_1}{\partial r^j}(x)-
\delta^{n-1}\mu$. 
The uniform convergence of these two functions to $\frac{\partial\hat\p_1}{\partial r^j}(x)$ 
as $n\to\infty$ 
implies that the sequence $\frac{\partial\hat\p_1}{\partial r^j}(x)+
\left(\frac{\hat\p_2(x)+c}{\hat\p_1(x)+c} \right)^{n-1}
\frac{\partial\hat\p_2}{\partial r^j}(x)$ also uniformly converges to the function 
$\frac{\partial\hat\p_1}{\partial r^j}(x)$.
Moreover, 
the function
$\left(1+\left(\frac{\hat\p_2(x)+c}{\hat\p_1(x)+c} \right)^{n}\right)^{\frac{1-n}{n}}$
is upper bounded by the constant function
$\left(1+\delta^{n}\right)^{\frac{1-n}{n}}$, and lower bounded by the constant function $1$.
The uniform convergence of the constant function $\left(1+\delta^{n}\right)^{\frac{1-n}{n}}$ to the constant function $1$ 
as $n\to\infty$ 
implies that the sequence $\left(1+\left(\frac{\hat\p_2(x)+c}{\hat\p_1(x)+c} \right)^{n}\right)^{\frac{1-n}{n}}$ also uniformly converges to the constant function 
$1$.
It follows that 
the function 
$\left(
\frac{\partial\hat\p_1}{\partial r^j}(x)+
\left(\frac{\hat\p_2(x)+c}{\hat\p_1(x)+c} \right)^{n-1}
\frac{\partial\hat\p_2}{\partial r^j}(x)\right)\cdot\left(1+\left(\frac{\hat\p_2(x)+c}{\hat\p_1(x)+c} \right)^{n}\right)^{\frac{1-n}{n}}$
(i.e., $\frac{\partial F_n}{\partial r^j}(x)$)
uniformly converges to $\frac{\partial\hat\p_1}{\partial r^j}(x)$ in $B$ as $n\to\infty$.

On the other hand, we already know that
the sequence $(F_n)_n$ converges uniformly to the function $\p_{(a,b)}^*$, which equals $\hat\p_1$ in a neighborhood of $\bar x$.
Since $\nabla F_n(x_n)=\mathbf{0}$, $\bar x=\lim_{n\to\infty}x_n$ and $w_n=F_n(x_n)$, 
from the uniform convergence  of $\frac{\partial F_n}{\partial r^j}(x)$  to $\frac{\partial\hat\p_1}{\partial r^j}(x)$ in $B$ as $n\to\infty$,  
it follows that $\nabla\hat \p_1(\bar x)=\mathbf{0}$ and $\hat\p_1(\bar x)=\lim_{n\to\infty}w_n$.
We also know that $P=\lim_{n\to\infty}P_n$, and hence
$w=\lim_{n\to\infty}w_n$. 
This implies that $w=\hat\p_1(\bar x)$.
As a consequence, $w=\hat\p_1(\bar x)=\frac{\min\{a,1-a\}}{a}\cdot (\varphi_1(\bar x)-b)>\hat\p_2(\bar x)=\frac{\min\{a,1-a\}}{1-a}\cdot (\varphi_2(\bar x)+b)$.

Analogously, if $\hat\p_2(\bar x)>\hat\p_1(\bar x)$, then $\p_{(a,b)}^*(x)=\hat\p_2(x)=\frac{\min\{a,1-a\}}{1-a}\cdot (\p_2(x)+b)$ in  
a small open ball $B$ centred at $\bar x$. 
On the one hand, it is possible to show as above that $\frac{\partial F_n}{\partial r^j}$ uniformly converges to
$\frac{\partial\hat\p_2}{\partial r^j}$ in such a neighborhood, for any index $j$.
On the other hand, we already know that
the sequence $(F_n)_n$ converges uniformly to the function $\p_{(a,b)}^*$, which equals $\hat\p_2$ in $B$.
Since $\nabla F_n(x_n)=\mathbf{0}$, $\bar x=\lim_{n\to\infty}x_n$ and $w_n=F_n(x_n)$, 
from the uniform convergence  of $\frac{\partial F_n}{\partial r^j}(x)$  to $\frac{\partial\hat\p_2}{\partial r^j}(x)$ in $B$, 
it follows that $\nabla\hat \p_2(\bar x)=\mathbf{0}$ and $\hat\p_2(\bar x)=\lim_{n\to\infty}w_n$.
We also know that $P=\lim_{n\to\infty}P_n$, and hence
$w=\lim_{n\to\infty}w_n$.
This implies that $w=\hat\p_2(\bar x)$.
As a consequence, $w=\hat\p_2(\bar x)=\frac{\min\{a,1-a\}}{1-a}\cdot (\varphi_2(\bar x)+b)>\hat\p_1(\bar x)=\frac{\min\{a,1-a\}}{a}\cdot (\varphi_1(\bar x)-b)$.

Let us finally consider the case $\hat\p_1(\bar x)=\hat\p_2(\bar x)$
(i.e., $\frac{\min\{a,1-a\}}{a}\cdot (\varphi_1(\bar x)-b)=\frac{\min\{a,1-a\}}{1-a}\cdot (\varphi_2(\bar x)+b)$). 
We already know that a $\lambda\le 0$ exists, such that $\nabla\p_1(\bar x)=\lambda\nabla\p_2(\bar x)$ or $\nabla\p_2(\bar x)=\lambda\nabla\p_1(\bar x)$.
Since the sequence $(F_n)_n$ converges uniformly to the function $\p_{(a,b)}^*$,
the equalities $\lim_{n\to\infty}w_n
=\lim_{n\to\infty}F_n(x_n)=\p_{(a,b)}^*(\bar x)=
\hat\p_1(\bar x)=\hat\p_2(\bar x)$ hold.
We also know that $P=\lim_{n\to\infty}P_n$, and hence
$w=\lim_{n\to\infty}w_n$. 
This implies that $w=\frac{\min\{a,1-a\}}{a}\cdot (\varphi_1(\bar x)-b)=\frac{\min\{a,1-a\}}{1-a}\cdot (\varphi_2(\bar x)+b)$.
\end{proof}


With the extended Pareto grid at hand, we can state and prove the following result, which gives a necessary condition for $P$ to be a point of $\dgm\left(\p_{(a,b)}^*\right)$.

\begin{theorem}[Position Theorem]\label{GT*}
Let $(a,b)\in {\mathcal {P}}(\Lambda^+)$, $P\in\dgm\left(\p_{(a,b)}^*\right)\setminus \{\Delta\}$. 
Then, for each finite coordinate $w$ of $P$, a point $A=(x^1_A, x^2_A)\in r_{(a,b)}\cap\Gamma(\p)$
exists such that $w=\frac{\min\{a,1-a\}}{a}\cdot (x^1_A-b)=\frac{\min\{a,1-a\}}{1-a}\cdot (x^2_A+b)$.
\end{theorem}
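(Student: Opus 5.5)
The plan is to derive the Position Theorem directly from Lemma~\ref{mainlemma}. Set $m:=\min\{a,1-a\}$. The lemma gives a point $\bar x\in M$ satisfying one of three alternatives. In each case we will build a point $A\in r_{(a,b)}\cap\Gamma(\p)$ satisfying $w=\frac{m}{a}(x^1_A-b)=\frac{m}{1-a}(x^2_A+b)$.

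We first note which points these equalities single out. A point of the form $t\cdot(a,1-a)+(b,-b)$ on $r_{(a,b)}$ has $\frac{m}{a}(x^1-b)=mt=\frac{m}{1-a}(x^2+b)$. The unique point of $r_{(a,b)}$ satisfying the required identity is therefore $A:=\frac{w}{m}(a,1-a)+(b,-b)$. So the task is only to show that this specific point lies in $\Gamma(\p)$.

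\emph{Case 3} ($\bar x\in\mathbb{J}_P(\p)$, with the two quantities equal). The equalities $\frac{m}{a}(\p_1(\bar x)-b)=w=\frac{m}{1-a}(\p_2(\bar x)+b)$ give $\p(\bar x)=A$. Since $\p(\mathbb{J}_P(\p))\subseteq\Gamma(\p)$ by definition, this case is done.

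\emph{Case 1} ($\nabla\p_1(\bar x)=\mathbf 0$ and $w=\frac{m}{a}(\p_1(\bar x)-b)>\frac{m}{1-a}(\p_2(\bar x)+b)$). Then $\bar x$ is a critical point of $\p_1$, so the improper contour $\{(x^1,x^2)\mid x^1=\p_1(\bar x),\ x^2\ge\p_2(\bar x)\}$ lies in $\Gamma(\p)$. The point $A$ has $x^1_A=b+\frac{aw}{m}=\p_1(\bar x)$ and $x^2_A=\frac{(1-a)w}{m}-b$. The strict inequality gives $\frac{m}{1-a}(x^2_A+b)=w>\frac{m}{1-a}(\p_2(\bar x)+b)$, hence $x^2_A>\p_2(\bar x)$. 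Thus $A$ lies on that vertical half-line.

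\emph{Case 2} is symmetric: $\bar x$ is critical for $\p_2$, $x^2_A=\p_2(\bar x)$, and $x^1_A>\p_1(\bar x)$, so $A$ lies on the horizontal improper contour.

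There is no real obstacle, since the analytic work was already done in Lemma~\ref{mainlemma}. The only point needing care is the bookkeeping in Cases 1 and 2: one must check that the inequality in the lemma goes in the direction that places $A$ on the closed half-line starting at $\p(\bar x)$, and not on its complement.
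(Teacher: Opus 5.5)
Your proposal is correct and takes the same route as the paper. You apply Lemma~\ref{mainlemma}; in Cases 1 and 2 you use the strict inequality to place the required point on the improper contour through $\p(\bar x)$; in Case 3 you note that $\p(\bar x)$ itself lies in $r_{(a,b)}\cap\Gamma(\p)$. Fixing the unique candidate $A=\frac{w}{\min\{a,1-a\}}(a,1-a)+(b,-b)$ at the outset is a slightly cleaner way to organize the bookkeeping that the paper does through the parameter $\bar t$.
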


\begin{proof}
Let us apply Lemma~\ref{mainlemma}.
Assume that property 1 of Lemma~\ref{mainlemma} holds, and hence that
$\nabla\p_1(\bar x)=\mathbf{0}$ and
  $w=\frac{\min\{a,1-a\}}{a}\cdot (\varphi_1(\bar x)-b)>\frac{\min\{a,1-a\}}{1-a}\cdot (\varphi_2(\bar x)+b)$. 
  Recall that the admissible line $r_{(a,b)}$ is parameterized by $t$ and has equation $(u(t),v(t))=t\cdot(a,1-a)+(b,-b)$. 
  Looking for the point $A=(u(\bar t),v(\bar t))\in r_{(a,b)}$ whose abscissa is $\p_1(\bar x)$, we find that $u(\bar t)=a\bar t+b=\p_1(\bar x)$, i.e., $\bar t=\frac{\p_1(\bar x)-b}{a}$.
It follows that $v(\bar t)=(1-a)\frac{\p_1(\bar x)-b}{a}-b>\p_2(\bar x)$,
since $\frac{\varphi_1(\bar x)-b}{a}>\frac{\varphi_2(\bar x)+b}{1-a}$.
This means that at $(u(\bar t),v(\bar t))$ the line $r_{(a,b)}$ meets the vertical (open) half-line $r: x=\p_1(\bar x),y>\p_2(\bar x)$, which is part of the extended Pareto grid (recall that, by property 1, $\bar x$ is a critical point for $\p_1$).
We also observe that $\frac{\bar x^1_A-b}{a}=\bar t= \frac{\bar x^2_A+b}{1-a}$.
Therefore, $w=\frac{\min\{a,1-a\}}{a}\cdot (\varphi_1(\bar x)-b)=\frac{\min\{a,1-a\}}{a}\cdot (x^1_A-b)=\frac{\min\{a,1-a\}}{1-a}\cdot (x^2_A+b)$ with $A=(x^1_A, x^2_A)\in r_{(a,b)}\cap\Gamma(\p)$.

We skip the case in which property 2 of Lemma~\ref{mainlemma} holds, because it is completely analogous to the one just considered.

To conclude the proof, assume now that property 3 holds.
We know that
$w=\frac{\min\{a,1-a\}}{a}\cdot (\varphi_1(\bar x)-b)=\frac{\min\{a,1-a\}}{1-a}\cdot (\varphi_2(\bar x)+b)$ and
the point $(\p_1(\bar x),\p_2(\bar x))$ belongs to $\Gamma(\p)$, because $\bar x\in\mathbb{J}_P(\p)$. 
Given that the admissible line $r_{(a,b)}$ is parameterized by $t$ and has equation $(u(t),v(t))=t\cdot(a,1-a)+(b,-b)$, by taking $\hat t=\frac{w}{\min\{a,1-a\}}$ we have that $(u(\hat t),v(\hat t))=(\p_1(\bar x),\p_2(\bar x))$, and hence this point belongs to $r_{(a,b)}$.
By setting $A=(x^1_A, x^2_A)=(\p_1(\bar x),\p_2(\bar x))\in r_{(a,b)}\cap \Gamma(\p)$ we get $w=\frac{\min\{a,1-a\}}{a}\cdot (x^1_A-b)=\frac{\min\{a,1-a\}}{1-a}\cdot (x^2_A+b)$.
This yields the claim.
\end{proof}

It is important to note that the reverse of the Position Theorem~\ref{GT*} may not hold: a point of intersection between the extended Pareto grid of a function $\p$ and a line $r_{(a,b)}$ may not correspond to any coordinate of a persistence diagram $\dgm\left(\p^*_{(a,b)}\right)$.
However, the Position Theorem suggests a way to find the possible positions for the cornerpoints of $\dgm\left(\p_{(a,b)}^*\right)\setminus \{\Delta\}$.
It consists in drawing the extended Pareto grid $\Gamma(\p)$ and considering its intersections $(x^1_1,x^2_1),\ldots, (x^1_l,x^2_l)$ with the admissible line $r_{(a,b)}$. 
For each 
cornerpoint in
$\dgm\left(\p_{(a,b)}^*\right)\setminus \{\Delta\}$, both its coordinates belong to the finite set
\begin{equation}\label{coord}
\left\{
\frac{\min\{a,1-a\}}{a}\cdot (x^1_i-b)=\frac{\min\{a,1-a\}}{1-a}\cdot (x^2_i+b)\right\}_{1\le i\le l}\cup \{\infty\}.
\end{equation}
Note that when $b<0$ and $|b|$ is sufficiently large, the admissible line $r_{(a,b)}$ can intersect $\Gamma(\p)$ only at the vertical half-lines (see line $r_{(a,b')}$ in Figure~\ref{figgrid5}). 
In this case, $\p^*_{(a,b)}=\frac{\min\{a,1-a\}}{a}\cdot (\p_1-b)$, and the values $x^1_1,\dots,x^1_l$ in (\ref{coord}) are the critical values of $\p_1$.
Similarly, when $b>0$ and $|b|$ is large enough, $r_{(a,b)}$ intersects $\Gamma(\p)$ only at the horizontal half-lines (see line $r_{(a,b'')}$ in Figure~\ref{figgrid5}). 
Then $\p^*_{(a,b)}=\frac{\min\{a,1-a\}}{1-a}\cdot (\p_2+b)$, and the values $x^2_1,\dots,x^2_l$ in (\ref{coord}) are the critical values of $\p_2$.

\begin{figure}
\begin{center}
\begin{tikzpicture}[scale=1.0]
\draw [gray!30,line width=1.1] (0,0) circle (1);
\draw [gray!30,line width=1.1] (0,0) circle (2);
\draw [red,line width=1.1] (1,0) arc (0:90:1);
\draw [red,line width=1.1] (2,0) arc (0:90:2);
\draw [red,line width=1.1] (-1,0) arc (180:270:1);
\draw [red,line width=1.1] (-2,0) arc (180:270:2);

\foreach \x in {-2,-1,1,2}
{
	\draw [magenta,line width=1.1] (\x,0) -- (\x,8);
	\draw [orange,line width=1.1] (0,\x) -- (7.5,\x);

	\draw [red] (\x,0) node {{\tiny $\bullet$}};
	\draw [red] (0,\x) node {{\tiny $\bullet$}};
}

\draw [blue,line width=1.1] (-2.5,-0.47) -- (2.1,8);
\draw [blue] (0.2,3.5) node {$r_{(a,b')}$};
\draw [blue,line width=1.1] (0,-2.5) -- (5.7,8);
\draw [blue] (3.8,3.5) node {$r_{(a,b)}$};
\draw [blue,line width=1.1] (3.6,-2.5) -- (7.5,4.68);
\draw [blue] (6.3,3.5) node {$r_{(a,b'')}$};

\draw [green] (-2,0.45) node {{\tiny $\bullet$}};
\draw [green] (-1,2.29) node {{\tiny $\bullet$}};
\draw [green] (1,5.98) node {{\tiny $\bullet$}};
\draw [green] (2,7.82) node {{\tiny $\bullet$}};

\draw [green] (0.27,-2) node {{\tiny $\bullet$}};
\draw [green] (0.81,-1) node {{\tiny $\bullet$}};
\draw [green] (1.8,0.82) node {{\tiny $\bullet$}};
\draw [green] (1.81,0.835) node {{\tiny $\bullet$}};
\draw [green] (1.9,1) node {{\tiny $\bullet$}};
\draw [green] (2,1.18) node {{\tiny $\bullet$}};
\draw [green] (2.44,2) node {{\tiny $\bullet$}};

\draw [green] (3.87,-2) node {{\tiny $\bullet$}};
\draw [green] (4.41,-1) node {{\tiny $\bullet$}};
\draw [green] (5.5,1) node {{\tiny $\bullet$}};
\draw [green] (6.04,2) node {{\tiny $\bullet$}};
\end{tikzpicture}
\end{center}\caption{When $b<0$ and $|b|$ is large enough, the line $r_{(a,b)}$ intersects only the vertical half-lines
in the extended Pareto grid. When $b>0$ and $|b|$ is large enough, the line $r_{(a,b)}$ intersects only the horizontal half-lines
in the extended Pareto grid.}
\label{figgrid5}
\end{figure}

\section{Pairing of contour-arcs}
\label{pairing}

\begin{definition}\label{defn_sing_pair}
A pair $(a,b)\in \, ]0,1[\times\R$ is a \myemph{singular pair} for $\p:M\to\R^2$ if and only if the set  $\dgm\left(\p_{(a,b)}^*\right)\setminus \{\Delta\}$ contains at least one (proper or improper) cornerpoint having multiplicity strictly greater than 1. 
A pair $(a,b)$ that is not singular is called a \myemph{regular pair}. The sets of all singular pairs and all regular pairs for $\p$ will be denoted by the symbols
\myemph{$\mathrm{Sing}(\p)$} and \myemph{$\mathrm{Reg}(\p)$}, respectively.
\end{definition}

\begin{definition}
Two contour-arcs $\gamma_1,\gamma_2$ for $\p$ are called \myemph{paired} with respect to $r_{(a,b)}$ if $r_{(a,b)}$ intersects both $\gamma_1$ and $\gamma_2$ in two respective points $A=(x^1_A,x^2_A)$, $B=(x^1_B,x^2_B)$, and $\frac{\min\{a,1-a\}}{a}\cdot \left(x^1_A-b,x^1_B-b\right)\in \dgm\left(\p_{(a,b)}^*\right)$ (or, equivalently, $\frac{\min\{a,1-a\}}{1-a}\cdot \left(x^2_A+b,x^2_B+b\right)\in \dgm\left(\p_{(a,b)}^*\right)$).
Note that this implies $A\neq B$.
\end{definition}

In plain words, two contour-arcs are paired with respect to $r_{(a,b)}$ if one of them is associated with the birth of a homological class in the filtration given by $\p_{(a,b)}^*$ and the other is associated with the death of the same homological class in the same filtration.
We underline that each contour-arc can be paired to different contour-arcs with respect to different admissible lines.

\begin{exercise}
If the contour-arcs $\gamma_1,\gamma_2$ are paired with respect to $r_{(a,b)}$, and $r_{(a',b')}$ is an admissible line intersecting both $\gamma_1$ and $\gamma_2$ at two respective points $C=(x^1_C,x^2_C)$, $D=(x^1_D,x^2_D)$, then $\gamma_1,\gamma_2$ are paired with respect to $r_{(a',b')}$ as well. 

[\emph{Hint:} Use the Position Theorem~\ref{GT*} and the Matching Distance Stability Theorem~\ref{matchingstabilitythm}.]
\end{exercise}

\begin{exercise}
Find an example of a function $\p\colon M\to \R^2$ such that a contour-arc is not paired with any other with respect to every line $r_{(a,b)}$.
\end{exercise}

\subsection{Localization of singular pairs}
\label{locsing}

The Position Theorem allows us to deduce where singular pairs can be in ${\mathcal {P}}(\Lambda^+)$.

\begin{proposition}\label{propldp}
Let $\left(\bar a,\bar b\right)\in \mathrm{Sing}(\p)$. If $\dgm\left(\p_{\left(\bar a,\bar b\right)}^*\right)$ contains a proper cornerpoint of multiplicity greater than 1, then $r_{\left(\bar a,\bar b\right)}$ contains at least two points of $\mathcal{D}(\p)$.
If $\dgm\left(\p_{\left(\bar a,\bar b\right)}^*\right)$ contains an improper cornerpoint of multiplicity greater than 1, then $r_{\left(\bar a,\bar b\right)}$ contains at least one point of $\mathcal{D}(\p)$.
\end{proposition}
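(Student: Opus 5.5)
The plan is to argue by contraposition, combining the Position Theorem with the local homological description given by Assumption~\ref{defnassumptions}. Suppose $\dgm\left(\p_{(\bar a,\bar b)}^*\right)$ contains a proper cornerpoint $P=(w_1,w_2)$ with multiplicity $m\ge 2$. By the Position Theorem~\ref{GT*}, both $w_1$ and $w_2$ correspond to points $A_1,A_2\in r_{(\bar a,\bar b)}\cap\Gamma(\p)$; these points are determined by the coordinates via the parametrization of the line, and $A_1\neq A_2$. The claim will follow once we show that each $A_i$ belongs to $\mathcal{D}(\p)$. For an improper cornerpoint $(w,\infty)$ with multiplicity at least $2$, the same argument is applied to the single point $A$ associated with $w$.

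\emph{Key step.} Suppose some $A_i$, say $A_1$, is not in $\mathcal{D}(\p)$. Then $A_1$ lies on a contour-arc $\gamma$, and it has multiplicity $1$ as a point of $\Gamma(\p)$. I would make this precise by showing that, near $A_1$, the line $r_{(\bar a,\bar b)}$ meets $\Gamma(\p)$ only at $A_1$, and only along the single contour-arc $\gamma$. Since $\mathcal{D}(\p)$ is finite (Remark~\ref{remcontour1}), we can choose a small $\eps$ so that the portion of the line with parameter in $[t_1-\eps,t_1+\eps]$ meets $\Gamma(\p)$ only at $A_1$. By Assumption~\ref{defnassumptions}.\ref{defnassumptionscontourarcs} and Remark~\ref{remcontour2}, crossing $A_1$ along the direction $(\bar a,1-\bar a)$ changes the rank of $H_k$ by at most one, and only in degree $d(\gamma)$. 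Corollary~\ref{homological_critical}-type reasoning then gives
\[
\beta^{\p^*}_k(w_1+\eps,w_2-\eps)-\beta^{\p^*}_k(w_1-\eps,w_2-\eps)\le 1 .
\]
This is because both quantities are images in the same target, and the source $H_k$ grows by at most one dimension. Using Proposition~\ref{prop_premult}, $\mu_{\eps,\eps}(P)$ is bounded by this difference, so $\mu(P)\le 1$, which is a contradiction.

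\emph{Death coordinate.} For $A_2$ the argument is symmetric. We use instead the difference in the second variable, $\beta(w_1+\eps,w_2-\eps)-\beta(w_1+\eps,w_2+\eps)$. This is bounded by $\dim\ker$ of $H_k(X_{w_2-\eps})\to H_k(X_{w_2+\eps})$, and that kernel has dimension at most $1$ by the same assumption.

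\emph{Main obstacle.} The delicate point is the case where $A_1$ belongs to $\mathcal{S}(\p)\setminus\mathcal{D}(\p)$, for instance an endpoint of contours or a cusp image with multiplicity $1$, since Assumption~\ref{defnassumptions} is stated only on contour-arcs. Here I would use that $A_1\notin\mathcal{D}(\p)$ means $A_1$ lies in the interior of some contour-arc, because contour-arcs are the components of $\Gamma(\p)\setminus\mathcal{D}(\p)$. So the assumption does apply at $A_1$, with a uniform $\eps$, and the rank-at-most-one conclusion holds directly at the point.
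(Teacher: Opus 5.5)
Your argument is correct, but it takes a different route from the paper's. The paper perturbs $r_{(\bar a,\bar b)}$ to a nearby line $r_{(a',b')}$ that avoids $\mathcal{S}(\p)$. It then uses the Matching Distance Stability Theorem~\ref{matchingstabilitythm} to split the multiple cornerpoint into $n\ge 2$ cornerpoints near $P$. Finally, it invokes the Position Theorem~\ref{GT*} and the line-counting definition of the multiplicity of a grid point to conclude that the points of $r_{(\bar a,\bar b)}\cap\Gamma(\p)$ carrying the coordinates of $P$ are double.

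You instead stay on the original line. A point of $\Gamma(\p)$ outside $\mathcal{D}(\p)$ lies on some contour-arc by definition, even if it is in $\mathcal{S}(\p)$, so your treatment of the ``main obstacle'' is right. Hence Assumption~\ref{defnassumptions}.\ref{defnassumptionscontourarcs}, together with Remark~\ref{remcontour2}, applies at that very point. It shows that the cokernel (for the birth coordinate) or the kernel (for the death coordinate) of the map induced by crossing the point along $r_{(\bar a,\bar b)}$ has dimension at most one. This forces the multiplicity of $P$, and likewise $\nu$ in the improper case, to be at most $1$.

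Your route needs neither stability nor perturbation. It also makes explicit a step the paper's sketch leaves implicit: to get $n$ \emph{distinct} grid points near $A_1$ on the perturbed line, one must know that cornerpoints cannot be multiple on a line avoiding $\mathcal{S}(\p)$, and that is exactly your local bound. The paper's route, in exchange, connects the conclusion directly to the definition of $\mathcal{D}(\p)$ via generic lines.

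Two small corrections. First, the inequality $\mu_{\eps,\eps}(P)\le\beta(w_1+\eps,w_2-\eps)-\beta(w_1-\eps,w_2-\eps)$ comes from monotonicity in the first variable (Proposition~\ref{propmonotonicity}), not from Proposition~\ref{prop_premult}, which only gives $\mu_{\eps,\eps}(P)\ge 0$. Second, your claim that the line meets $\Gamma(\p)$ near $A_1$ only at $A_1$ is not needed. If you want it, it follows from each contour meeting a positive-slope line at most once, as in the proof of Corollary~\ref{corfiniteness}, not from the finiteness of $\mathcal{D}(\p)$.
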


\begin{proof}
Let us first assume that $\dgm\left(\p_{\left(\bar a,\bar b\right)}^*\right)$ contains a proper cornerpoint of multiplicity $n>1$.
By Assumption~\ref{defnassumptions}, we can find a line $r_{(a',b')}$ that is arbitrarily close to $r_{\left(\bar a,\bar b\right)}$ and such that $(a', b')$ is not in $\mathcal{S}(\p)$.
Because of the Matching Distance Stability Theorem~\ref{matchingstabilitythm}, $\dgm\left(\p_{(a',b')}^*\right)$ must contain $n$ proper cornerpoints arbitrarily close to each other. 
Therefore, the Position Theorem~\ref{GT*} and the definition of $\mathcal{D}(\p)$ imply that $r_{\left(\bar a,\bar b\right)}$ contains two double points of $\Gamma(\p)$.

Let us now assume that $\dgm\left(\p_{\left(\bar a,\bar b\right)}^*\right)$ contains an improper cornerpoint of multiplicity $n$.
Also in this case let us consider a line $r_{(a',b')}$ that is arbitrarily close to $r_{\left(\bar a,\bar b\right)}$ and such that $(a', b')$ is not in $\mathcal{S}(\p)$.
Because of the Matching Distance Stability Theorem~\ref{matchingstabilitythm}, $\dgm\left(\p_{(a',b')}^*\right)$ must contain $n$ improper points arbitrarily close to each other.
Therefore, the Position Theorem~\ref{GT*} and the definition of $\mathcal{D}(\p)$ imply that $r_{\left(\bar a,\bar b\right)}$ contains at least one double point of $\Gamma(\p)$.
\end{proof}

Figure~\ref{figdp} illustrates the statement in Proposition~\ref{propldp} in the case of a proper double cornerpoint of $\dgm\left(\p_{\left(\bar a,\bar b\right)}^*\right)$.

\begin{figure}
\begin{center}
\begin{tikzpicture}[scale=1.0]
\draw [line width=1.2] (0,0) -- (6,5);
\draw (5.8,4.3) node {$r_{\left(\bar a,\bar b\right)}$};
\draw [red,line width=1.2] (2,0.28) arc (30:70:3);
\draw [red,line width=1.2] (1.52,0) arc (0:40:3);
\draw [line width=0.1,black,fill=white] (1.31,1.09) circle (0.04);

\draw [red,line width=1.2] (5.5,3.20) arc (30:70:3);
\draw [red,line width=1.2] (5.02,2.92) arc (0:40:3);
\draw [line width=0.1,black,fill=white] (4.81,4.01) circle (0.04);
\end{tikzpicture}
\end{center}\caption{A line $r_{\left(\bar a,\bar b\right)}$ associated with a singular pair $\left(\bar a,\bar b\right)\in{\mathcal {P}}(\Lambda^+)$, in case $\dgm\left(\p_{\left(\bar a,\bar b\right)}^*\right)$ contains a proper double cornerpoint. Parts of four contours (split in eight proper contour-arcs) are displayed in red.}
\label{figdp}
\end{figure}

We conclude this subsection by stating some useful results.

\begin{corollary}\label{corfiniteness}
$\dgm\left(\p_{(a,b)}^*\right)\setminus \{\Delta\}$ is finite for any $(a,b)\in{\mathcal {P}}(\Lambda^+)$.
\end{corollary}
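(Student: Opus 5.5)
The plan is to combine the Position Theorem~\ref{GT*} with the finiteness of the set of possible coordinates given by the intersection $r_{(a,b)}\cap\Gamma(\p)$, and then bound multiplicities. Fix $(a,b)\in\mathcal{P}(\Lambda^+)$.

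\emph{Step 1: finitely many possible coordinates.} Show that $r_{(a,b)}\cap\Gamma(\p)$ is finite. The improper contours are finitely many half-lines, since $\p_1$ and $\p_2$ have finitely many critical points by Assumption~\ref{ass_wan}.3. Each is vertical or horizontal, so it meets a line of positive slope in at most one point. The proper contours are finitely many closures of images of critical arcs. By Assumption~\ref{ass_wan}.4, along each critical arc one component of $\p$ is strictly increasing and the other strictly decreasing, so its image is the graph of a strictly monotone decreasing relation. A line of positive slope therefore meets it in at most one point, and the closure adds at most the endpoints. Hence $r_{(a,b)}\cap\Gamma(\p)=\{A_1,\dots,A_l\}$ is finite. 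By the Position Theorem~\ref{GT*}, every finite coordinate of a point of $\dgm(\p^*_{(a,b)})\setminus\{\Delta\}$ lies in the finite set $W$ from (\ref{coord}). So the support of the diagram off $\Delta$ is contained in the finite set of pairs $(w,w')$ with $w<w'$, $w\in W$, $w'\in W\cup\{\infty\}$.

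\emph{Step 2: finite multiplicities.} It remains to check that each such point has finite multiplicity. This follows from Assumption~\ref{ass_fingen} via Exercise~\ref{xca_no_pathol_cases}: $\beta^{\p_{(a,b)}}_k$ is finite everywhere. Then $\mu$ at a proper point is a limit of an alternating sum of four finite values, hence finite. Likewise $\nu$ at an improper point is a difference of finite values. Rescaling by $\min\{a,1-a\}$ does not affect this.

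Alternatively, Step 2 could simply invoke Propositions~\ref{proplocfinite} and~\ref{propfiniteatinf}. Since all points lie in the finite candidate set, they are at positive distance from $\Delta$, and the local finiteness statement then already gives a finite total count.

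\emph{Main obstacle.} The delicate point is Step 1, specifically that a proper contour meets $r_{(a,b)}$ in finitely many points. I would argue it through the monotonicity in Assumption~\ref{ass_wan}.4, as above. If that is insufficient at cusp endpoints, I would fall back on the fact that there are finitely many contours, each with at most two endpoints.
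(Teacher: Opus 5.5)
Your proof is correct and follows essentially the paper's argument, though the paper argues by contradiction where you argue directly. The shared ingredients are the Position Theorem~\ref{GT*}, the fact that there are finitely many contours, each meeting $r_{(a,b)}$ at most once (Assumption~\ref{ass_wan}.4 for proper contours; horizontal or vertical for improper ones), and finiteness of multiplicities from the finite-dimensionality assumption. One small tightening: finiteness of $\mu$ should be justified by noting that $\mu_{\eps,\eps}$ is non-decreasing in $\eps$ (Proposition~\ref{muincreasing}), so the limit is bounded by its value at any fixed $\eps$; being a limit of finite values is not enough on its own.
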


\begin{proof}
Assume by contradiction that $\dgm\left(\p_{(a,b)}^*\right)\setminus \{\Delta\}$ is infinite. 
Assumption~\ref{BI_ass_fingen} and Proposition~\ref{jhfgsrelifshejz} imply that the multiplicity of every cornerpoint is well-defined and finite. 
Thus, there exists an infinite sequence of cornerpoints, each with finite multiplicity. 
By the Position Theorem~\ref{GT*}, there exists a sequence of points in $\Gamma(\p)\cap r_{(a,b)}$ giving the finite coordinates of cornerpoints in the sequence above. 
Since the contours have negative slope (Assumption~\ref{ass_wan}.4), each of them can intersect the line $r_{(a,b)}$ at most once.
This leads to a contradiction because the contours are finitely many, by Assumption~\ref{ass_wan}.2.
\end{proof}

\begin{corollary}\label{corsp}
The following statements hold:
\begin{enumerate}
\item The set of all pairs $(a,b)\in\mathrm{Sing}(\p)$ such that $\dgm\left(\p_{(a,b)}^*\right)$ contains a proper cornerpoint of multiplicity greater than 1 is finite.
\item The set of all pairs $(a,b)\in\mathrm{Sing}(\p)$ such that $\dgm\left(\p_{(a,b)}^*\right)$ contains an improper cornerpoint of multiplicity greater than 1 is a finite union of open segments with endpoints $(0,b_1)$ and $(1, b_2)$, for some $b_1$ and $b_2$.
\end{enumerate}
\end{corollary}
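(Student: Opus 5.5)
The plan is to derive both statements from Proposition~\ref{propldp}, together with the finiteness of $\mathcal{D}(\p)$ (Remark~\ref{remcontour1}) and the explicit parametrization of admissible lines. By Assumption~\ref{defnassumptions}.\ref{defnassumptionsinjectivity}, the set $\mathcal{S}(\p)$ is finite, and hence so is $\mathcal{D}(\p)\subseteq\mathcal{S}(\p)$. Write $\mathcal{D}(\p)=\{D_1,\dots,D_N\}$.

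\emph{Statement 1.} Let $(a,b)$ be such that $\dgm(\p_{(a,b)}^*)$ has a proper cornerpoint of multiplicity greater than one. By Proposition~\ref{propldp}, the line $r_{(a,b)}$ contains two distinct points $D_i\neq D_j$ of $\mathcal{D}(\p)$. Two distinct points determine at most one line. It is admissible only when $D_i\prec D_j$ or $D_j\prec D_i$ strictly. Moreover, $(a,b)\mapsto r_{(a,b)}$ is a bijection between $\mathcal{P}(\Lambda^+)$ and $\Lambda^+$. So each unordered pair $\{D_i,D_j\}$ yields at most one pair $(a,b)$, and the set in question has at most $\binom{N}{2}$ elements.

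\emph{Statement 2.} Proposition~\ref{propldp} shows that this set $T$ is contained in the union, over $i$, of the sets $L_i=\{(a,b)\mid D_i\in r_{(a,b)}\}$. Writing $D_i=(x^1,x^2)$, the condition $D_i\in r_{(a,b)}$ means $x^1=ta+b$ and $x^2=t(1-a)-b$. Adding the two equations gives $t=x^1+x^2$, so $b=x^1-a(x^1+x^2)$. Thus $L_i$ is the open segment in $]0,1[\times\R$ joining $(0,x^1)$ to $(1,-x^2)$, and $T\subseteq\bigcup_i L_i$.

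The main obstacle is proving equality: $T$ must be a union of whole segments, not an arbitrary subset of them. My plan is to show that, for fixed $i$, membership in $T$ is constant along $L_i$, and then to define the finite union as $\bigcup\{L_i\mid L_i\cap T\neq\emptyset\}$.

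First I would reinterpret an improper cornerpoint of $\dgm(\p_{(a,b)}^*)$ with abscissa $w$ through Remark~\ref{aaohfceolrvjawcol}. Its multiplicity is the jump of $t\mapsto\beta_k^{\p_{(a,b)}}(t,\infty)$, which equals $\beta_k^{\p_{(a,b)}}(t,\infty)=\dim\Imm\bigl(H_k(X^\p_{(u_1,u_2)})\to H_k(X)\bigr)$ at the point $(u_1,u_2)\in r_{(a,b)}$.

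Next, by Theorem~\ref{GT*} this jump is located at the intersection point of $r_{(a,b)}$ with $\Gamma(\p)$. For a multiplicity of at least two, that point must be some $D_i$ by the argument of Proposition~\ref{propldp}. The jump at $D_i$ is then $\dim\Imm(H_k(X^\p_{D_i})\to H_k(X))$ minus $\lim$ of the same quantity at points $P\prec D_i$, $P\to D_i$, taken along the line.

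Finally, I would argue that this difference does not depend on the direction of approach. The reason is that, near $D_i$, the only contours are those through $D_i$, and the monotonicity of Proposition~\ref{PBNF_non} together with the right-continuity of Proposition~\ref{jhfgsrelifshejz} squeezes the value at $D_i-(a\eps,(1-a)\eps)$ between values taken at points $D_i-(\eps',\eps')$ on a diagonal. Hence the improper multiplicity at $D_i$ is the same for every $(a,b)\in L_i$.

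The step I expect to be delicate is this direction-independence, since it requires that no other contour-arc is crossed in a small cone below $D_i$. I would handle it using the finiteness of $\mathcal{S}(\p)$ and Assumption~\ref{ass_wan}.4.
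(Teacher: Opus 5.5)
For Statement 1 you follow the paper: Proposition~\ref{propldp} plus finiteness of $\mathcal{D}(\p)$, made explicit by noting that two distinct comparable points determine exactly one admissible line.

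For Statement 2 your route is genuinely different. The paper fixes $A\in\mathcal{D}(\p)$ carrying a multiple improper cornerpoint for one line and rotates the line about $A$. It then argues, via the Matching Distance Stability Theorem, the Position Theorem, and the fact that near $A$ the rotated line meets $\Gamma(\p)$ only at $A$, that the multiplicity stays greater than $1$ along the whole pencil $S_A$. You instead observe that the improper multiplicity at the coordinate of a point $A\in r_{(a,b)}$ is the jump $\lim_{\eps\to0^+}\bigl[f(A+\eps(a,1-a))-f(A-\eps(a,1-a))\bigr]$ of one function, $f(u)=\dim\mathrm{Im}\bigl(H_k(X^\p_u)\to H_k(X)\bigr)$, which is non-decreasing for the product order on $\R^2$. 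This is more elementary: it needs no stability theorem, no continuity of $(a,b)\mapsto\p^*_{(a,b)}$, and no local analysis of $\Gamma(\p)$ near $A$. It also proves more, since the exact multiplicity, not merely the property of being greater than $1$, is constant along $L_i$.

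Two points need fixing.

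\emph{The flagged step is already complete.} The squeeze you describe proves direction-independence by itself. With $\eps_1=\eps\max\{a,1-a\}$ and $\eps_2=\eps\min\{a,1-a\}$ one has $A-(\eps_1,\eps_1)\preceq A-\eps(a,1-a)\preceq A-(\eps_2,\eps_2)$ componentwise. Monotonicity of $f$ alone then forces the limit from below along $(a,1-a)$ to equal the diagonal limit $\sup_{u\prec A}f(u)$. From above, right-continuity gives $f(A)$. You need no information about contour-arcs in a cone below $D_i$, and no Assumption~\ref{ass_wan}.4.

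\emph{The final assembly is wrong as written.} Membership in $T$ is not constant along $L_i$. A line through $D_i$ can lie in $T$ only because another point $D_j\in\mathcal{D}(\p)$ on it carries the multiple improper cornerpoint, while the jump of $f$ at $D_i$ is at most $1$. An example is the unique admissible line through $D_i$ and $D_j$ when $D_i\prec D_j$. In that case $\bigcup\{L_i\mid L_i\cap T\neq\emptyset\}$ strictly contains $T$.

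The fix is to index by the jump itself. Set $J(D_i)=f(D_i)-\sup_{u\prec D_i}f(u)$, which is direction-independent by the squeeze. Then $T=\bigcup_{J(D_i)\ge 2}L_i$. This uses the fact that every multiple improper cornerpoint sits at the coordinate of a point of $\mathcal{D}(\p)$, which is the argument inside Proposition~\ref{propldp}; the paper's proof relies on it as well.
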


\begin{proof}
The first statement follows from Proposition~\ref{propldp} and Assumption~\ref{defnassumptions}, by recalling the inclusion $\mathcal{D}(\p)\subseteq\mathcal{S}(\p)$. 

As for the second statement, let us assume that the set $\dgm\left(\p_{\left(\bar a,\bar b\right)}^*\right)$ contains an improper cornerpoint $P$ of multiplicity greater than 1. 
Because of Proposition~\ref{propldp}, $r_{\left(\bar a,\bar b\right)}$ contains a point $A\in\mathcal{D}(\p)$ such that $P=(x^1_A-\bar b, \infty)$ or $P=(x^2_A+\bar b, \infty)$.
It is easy to check that the sheaf of positive slope lines passing through $A$ corresponds to a segment $S_A$ in ${\mathcal {P}}(\Lambda^+)$ whose closure has $(0,b_1)$ and $(0,b_2)$ as endpoints, for some $b_1$ and $b_2$.
Observe that rotating the line $r_{(\bar a,\bar b)}$ around $A$, the only intersection of the rotated line with $\Gamma(\p)$ with abscissa close enough to $x^1_A$ is $A$ itself.
By the Matching Distance Stability Theorem~\ref{biparam_matchingstabilitythm},  the cornerpoint $(x^i_A-b, \infty)$ in $\dgm\left(\p_{(a,b)}^*\right)$ has multiplicity greater than 1, for every $(a,b)\in S_A$. 
We can then conclude by observing that $\mathcal{D}(\p)\subseteq \mathcal{S}(\p)$ is finite by Assumption~\ref{defnassumptions}.
\end{proof}

The results proved in this subsection are illustrated by the following example.

\begin{ex}\label{excor}
Let us consider the union $M$ of two disjoint spherical surfaces in $\R^3$, having the extended Pareto grid represented in Figure~\ref{figcor} with respect to the filtering function $\p$ that takes each point $x\in M$ to the pair $\p(x)=(x^1(x),x^2(x))$.
Let us consider the admissible line $r_{\left(\bar a,\bar b\right)}$ containing the double points $A$ and $B$.
It is easy to check that $\left(\bar a,\bar b\right)\in \mathrm{Sing}(\p)$ and that a proper cornerpoint of multiplicity greater than 1 is found in degree 1.
Moreover, if the line $r_{(a,b)}$ contains the double point $C$, then $(a,b)\in \mathrm{Sing}(\p)$ and an improper cornerpoint of multiplicity greater than 1 is found in degree $0$. 
Furthermore, if the line $r_{(a,b)}$ contains the double point $D$, then $(a,b)\in \mathrm{Sing}(\p)$ and an improper cornerpoint of multiplicity greater than 1 is found in degree $2$.
\end{ex}

\begin{figure}
\begin{center}
\begin{tikzpicture}[scale=0.5]

\draw [red,line width=1.1] (3,1.25) arc (0:90:4);
\draw [red,line width=1.1] (-5,1.25) arc (180:270:4);

\draw [red,line width=1.1] (-1,-2.75) -- (12,-2.75);
\draw [red,line width=1.1] (-5,1.25) -- (-5,13);

\draw [red,line width=1.1] (-1,5.25) -- (12,5.25);
\draw [red,line width=1.1] (3,1.25) -- (3,13);

\draw [red,line width=1.1] (4,0) arc (0:90:4);
\draw [red,line width=1.1] (-4,0) arc (180:270:4);

\draw [red,line width=1.1] (0,-4) -- (12,-4);
\draw [red,line width=1.1] (-4,0) -- (-4,13);

\draw [red,line width=1.1] (0,4) -- (12,4);
\draw [red,line width=1.1] (4,0) -- (4,13);

\draw [blue,line width=1.1] (-1.55,-4) -- (8.2,13);

\draw [black] (2.55,3.1) node {{\tiny $\bullet$}};
\draw [black] (3,4) node {{\tiny $\bullet$}};
\draw [black] (-3.55,-1.8) node {{\tiny $\bullet$}};
\draw [black] (4,5.25) node {{\tiny $\bullet$}};

\draw (9,12.5) node {$r_{\left(\bar a,\bar b\right)}$};

\draw (2.4,2.5) node {$A$};

\draw (3.4,3.6) node {$B$};

\draw (-4,-2.3) node {$C$};

\draw (4.5,5.75) node {$D$};

\end{tikzpicture}
\end{center}\caption{
The extended Pareto grid of the manifold $M$ described in Example~\ref{excor}, with respect to the filtering function $\p$ that takes each point $x\in M$ to the pair $\p(x)=(x^1(x),x^2(x))$. The blue line corresponds to a singular pair of ${\mathcal {P}}(\Lambda^+)$ such that the associated persistence diagram in degree 1 has a proper cornerpoint of multiplicity greater than 1.
}\label{figcor}
\end{figure}

\subsection{Creation and destruction of cornerpoints}
\label{C&D}

In this subsection we show how to detect where in $\Delta$ a proper cornerpoint of $\dgm\left(\p_{(a,b)}^*\right)$ can be created or destroyed, when $(a,b)$ varies. 
This is described in Proposition~\ref{propDelta}, which is a consequence of the Position Theorem~\ref{GT*}.

\begin{definition}
Let $\gamma_1,\gamma_2$ be two paired contour-arcs (with respect to any line $r_{(a,b)}$ that intersects both of them).
If $\gamma_1$ and $\gamma_2$ have a common endpoint $A=(x^1_A,x^2_A)$, it is called an \myemph{annihilation crossing} for $\p$ associated with the contour-arcs $\gamma_1,\gamma_2$.
\end{definition}

By definition, each annihilation crossing for $\p$ belongs to the set $\mathcal{D}(\p)$.
The set of all annihilation crossings for $\p$ is denoted by $\mathcal{A}(\p)$.

\begin{definition}
Let $(a,b)\in{\mathcal {P}}(\Lambda^+)$ and $(\bar u,\bar u)\in\Delta$.
If for every $\delta>0$ a pair $(a',b')\in{\mathcal {P}}(\Lambda^+)$ and a proper cornerpoint $(u',v')\in \dgm\left(\p^*_{(a',b')}\right)\setminus \{\Delta\}$ exist with
$|a-a'|,|b-b'|< \delta$ and $|\bar u-u'|,|\bar u-v'|<\delta$, then $(\bar u,\bar u)$ is called an \myemph{annihilation point} at $(a,b)$ for $\p$.
\end{definition}

In plain words, the annihilation points at $(a,b)$ are the locations on the diagonal $\Delta$ at which the proper cornerpoints of the persistence diagram $\dgm\left(\p_{(a,b)}^*\right)$ can appear and disappear when we slightly change $(a,b)$.

\begin{proposition}\label{propDelta}
A point $A=(x^1_A, x^2_A)\in\R^2$ is an annihilation crossing for $\p$ if and only if, for every
$r_{(a,b)}$ containing $A$, the point $(\bar u,\bar u)$ with $\bar u=\frac{\min\{a,1-a\}}{a}\cdot (x^1_A-b)=\frac{\min\{a,1-a\}}{1-a}\cdot (x^2_A+b)$ is an annihilation point at $(a,b)$ for $\p$.
\end{proposition}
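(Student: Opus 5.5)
We prove the two implications separately. Throughout, the Position Theorem~\ref{GT*} is used to turn cornerpoints of $\dgm\left(\p^*_{(a,b)}\right)$ into intersections of $r_{(a,b)}$ with contour-arcs, and the stability theorems (Theorem~\ref{matchingstabilitythm}, Lemma~\ref{lemmafab}) are used to control how cornerpoints move when $(a,b)$ varies. The map $(a,b,A)\mapsto \frac{\min\{a,1-a\}}{a}(x^1_A-b)$ is continuous, so points of the grid that are close along nearby lines give coordinates that are close.

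\emph{(Only if).} Let $A$ be an annihilation crossing, i.e.\ a common endpoint of two paired contour-arcs $\gamma_1,\gamma_2$, and let $r_{(a,b)}$ pass through $A$. Since contours have negative slope (Assumption~\ref{ass_wan}.4), both arcs leave $A$ on the same side of $r_{(a,b)}$. Because $A$ is only finitely interfering (Remark~\ref{remcontour1}), we can take lines $r_{(a',b')}$ arbitrarily close to $r_{(a,b)}$ that are translated toward that side. Each such line meets both $\gamma_1$ and $\gamma_2$, at points $C,D$ that are arbitrarily close to $A$. By the exercise following the definition of paired arcs, $\gamma_1,\gamma_2$ are paired with respect to $r_{(a',b')}$, so $\dgm\left(\p^*_{(a',b')}\right)$ contains the proper cornerpoint whose coordinates are computed from $C$ and $D$. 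By the continuity noted above, both coordinates tend to $\bar u$. This is exactly the definition of an annihilation point at $(a,b)$.

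\emph{(If).} Assume $(\bar u,\bar u)$ is an annihilation point at $(a,b)$ for every line through $A$. Take $(a_n,b_n)\to(a,b)$ and proper cornerpoints $(u_n,v_n)\to(\bar u,\bar u)$ of $\dgm\left(\p^*_{(a_n,b_n)}\right)$.
\begin{enumerate}
\item By the Position Theorem, $u_n$ and $v_n$ come from points $B_n\neq C_n$ in $r_{(a_n,b_n)}\cap\Gamma(\p)$, lying on contour-arcs $\gamma_1^n$ and $\gamma_2^n$ that are paired with respect to $r_{(a_n,b_n)}$.
\item Since $\Gamma(\p)$ has finitely many contour-arcs, we may pass to a subsequence on which $\gamma_1^n=\gamma_1$ and $\gamma_2^n=\gamma_2$ are fixed. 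Then $B_n$ and $C_n$ both converge to the unique point of $r_{(a,b)}$ with parameter giving $\bar u$, namely $A$. Hence $A\in\overline{\gamma_1}\cap\overline{\gamma_2}$.
\item We must show $\gamma_1\neq\gamma_2$. Two distinct points of one contour cannot lie on a single positive-slope line, because each contour has negative slope. So $B_n\neq C_n$ forces $\gamma_1\neq\gamma_2$.
\item Two distinct contour-arcs can only share a point of closure at an endpoint, since contour-arcs are the components of $\Gamma(\p)\setminus\mathcal{D}(\p)$. Therefore $A$ is a common endpoint of two paired contour-arcs, i.e.\ an annihilation crossing.
\end{enumerate}

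The main obstacle is the step in (Only if) asserting that nearby lines on the correct side meet both arcs near $A$, and that both intersections then give cornerpoints. This requires a local analysis of how the two arcs emanate from $A$. One must also justify that pairing persists for all lines meeting both arcs, which is where we rely on that exercise. If the local geometry is awkward, the fallback is to use stability: pick a line $r_{(a_0,b_0)}$ on which the pairing is known, move it continuously toward $r_{(a,b)}$ while it keeps meeting both arcs, and argue that the corresponding cornerpoint cannot jump, by Theorem~\ref{biparam_matchingstabilitythm} together with the finiteness result Corollary~\ref{corfiniteness}.
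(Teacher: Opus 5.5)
Your overall plan is the one the paper intends: the proposition is left as an exercise, to be solved with the Position Theorem~\ref{GT*} and stability, and here stability enters through the pairing exercise. Your ``if'' direction also correctly needs only one line through $A$. However, three steps are not justified as written.

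First, you apply ``negative slope'' to contour-arcs, but contour-arcs are not contours. For example, one magenta contour-arc in Figure~\ref{figgrid4} is the union of a vertical half-line, a quarter circle and a horizontal half-line. What you need, both in step 3 and in the ``only if'' geometry, is that each contour-arc is a weakly decreasing curve, and hence meets every admissible line at most once. This holds because at a point of multiplicity $1$ the grid is locally a single weakly decreasing curve; branchings and crossings have multiplicity at least $2$ and are removed with $\mathcal{D}(\p)$.

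Second, monotonicity alone does not put $\gamma_1,\gamma_2$ on the same side of $r_{(a,b)}$. The double point $(1,\sqrt3)$ cuts the outer quarter circle in Figure~\ref{figgrid4} into two pieces that share that endpoint and leave it on opposite sides. The missing ingredient is the pairing. Each arc issuing from $A$ lies in $Q^-=\{x^1\le x^1_A,\ x^2\ge x^2_A\}$ or in $Q^+=\{x^1\ge x^1_A,\ x^2\le x^2_A\}$. If one arc were in $Q^-$ and the other in $Q^+$, any segment joining a point of one to a point of the other would have non-positive or undefined slope, so no admissible line could meet both. Once both arcs are, say, in $Q^-$, the obstacle you flagged disappears. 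The distance from $r_{(a,b)}$ is strictly increasing along each arc starting from $A$, so every parallel translate at small distance on that side meets each arc exactly once, at a point tending to $A$. No fallback argument is needed.

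Third, in the ``if'' direction the Position Theorem only gives $B_n,C_n\in\Gamma(\p)$. A point of $\mathcal{D}(\p)$ lies on no contour-arc, so ``paired'' is not yet meaningful there. Since $\mathcal{D}(\p)$ is finite and $B_n,C_n\to A$, for large $n$ the only bad case is $A\in\{B_n,C_n\}$, i.e.\ $r_{(a_n,b_n)}$ passes through $A$. Contours not through $A$ are closed, so they stay away from $A$; those through $A$ are weakly decreasing. Hence near $A$ the grid lies in $Q^-\cup Q^+$, which an admissible line through $A$ meets only at $A$. This would force $B_n=C_n=A$, contradicting $u_n<v_n$. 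Alternatively, perturb $(a_n,b_n)$ by much less than $v_n-u_n$, using Proposition~\ref{ghjywuikdc} and Corollary~\ref{corintorno}, so that the line avoids $\mathcal{D}(\p)$.

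Finally, in step 4 the definition of annihilation crossing requires $\gamma_1,\gamma_2$ to be paired with respect to every line meeting both, not just $r_{(a_n,b_n)}$. You should cite the pairing exercise there as well.
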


\begin{exercise}
By applying the Position Theorem~\ref{GT*} and the Matching Distance Stability Theorem~\ref{matchingstabilitythm} prove Proposition~\ref{propDelta}.
\end{exercise}


This corollary immediately follows.

\begin{corollary}\label{corDelta}
Let $(a(t),b(t))$ be a continuous curve in ${\mathcal {P}}(\Lambda^+)$ such that the value 
$\inf\{\max\{\lvert u-\bar u\rvert, \lvert v-\bar u\rvert\}\mid (u,v)\in \dgm(\p^*_{(a(t), b(t))})\setminus\{\Delta\}\}$
tends to $0$ for $t\to \bar t$.
Then an annihilation crossing $A=(x^1_A,x^2_A)\in \mathcal{A}(\p)$ exists with $\bar u=\frac{\min\{a(\bar t),1-a(\bar t)\}}{a(\bar t)}\cdot (x^1_A-b(\bar t))=\frac{\min\{a(\bar t),1-a(\bar t)\}}{1-a(\bar t)}\cdot (x^2_A+b(\bar t))$.
\end{corollary}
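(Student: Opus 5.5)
The plan is to reduce the statement to the characterization of annihilation crossings in Proposition~\ref{propDelta}. Concretely, I would first show that $(\bar u,\bar u)$ is an annihilation point at $(a(\bar t),b(\bar t))$. Then I would extract the crossing $A$ directly from the Position Theorem~\ref{GT*}, essentially reproving the needed direction of Proposition~\ref{propDelta}.

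\emph{Step 1: annihilation point.} In the infimum in the hypothesis, improper cornerpoints $(u,\infty)$ contribute $\max\{|u-\bar u|,\infty\}=\infty$, so only proper cornerpoints matter. Fix $\delta>0$. By continuity of $t\mapsto(a(t),b(t))$ there is a neighbourhood of $\bar t$ on which $|a(t)-a(\bar t)|,|b(t)-b(\bar t)|<\delta$. By hypothesis there is some $t$ in that neighbourhood with a proper cornerpoint $(u',v')\in\dgm(\p^*_{(a(t),b(t))})\setminus\{\Delta\}$ satisfying $|u'-\bar u|,|v'-\bar u|<\delta$. This is exactly the definition of an annihilation point at $(a(\bar t),b(\bar t))$.

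\emph{Step 2: locating $A$.} Take $\delta_n\to0$ with corresponding $(a_n,b_n)\to(\bar a,\bar b):=(a(\bar t),b(\bar t))$ and proper cornerpoints $(u_n,v_n)\to(\bar u,\bar u)$. By Theorem~\ref{GT*}, each coordinate $u_n,v_n$ comes from a point $A_n$, resp.\ $B_n$, of $r_{(a_n,b_n)}\cap\Gamma(\p)$. For $n$ large, lines avoiding the finite set $\mathcal D(\p)$ are generic enough, so perturbing slightly if needed and using Theorem~\ref{matchingstabilitythm}, I may assume $A_n,B_n$ lie on contour-arcs $\gamma_1^n,\gamma_2^n$. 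These arcs are then paired with respect to $r_{(a_n,b_n)}$ by definition. Since there are finitely many contour-arcs, I pass to a subsequence with $\gamma_1^n=\gamma_1$ and $\gamma_2^n=\gamma_2$ fixed.

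The coordinates of $A_n,B_n$ are recovered from $u_n,v_n$ and $(a_n,b_n)$ by the continuous map $w\mapsto\big(\tfrac{a}{\min\{a,1-a\}}w+b,\ \tfrac{1-a}{\min\{a,1-a\}}w-b\big)$. Hence both $A_n$ and $B_n$ converge to the same point $A$ of $r_{(\bar a,\bar b)}$, given by that map at $w=\bar u$. This gives precisely the stated formula $\bar u=\frac{\min\{\bar a,1-\bar a\}}{\bar a}(x^1_A-\bar b)=\frac{\min\{\bar a,1-\bar a\}}{1-\bar a}(x^2_A+\bar b)$. Thus $A\in\overline{\gamma_1}\cap\overline{\gamma_2}$, and $A\in\Gamma(\p)$ because contours are closed.

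\emph{Step 3: $A$ is a common endpoint.} The arcs $\gamma_1\neq\gamma_2$ are distinct connected components of $\Gamma(\p)\setminus\mathcal D(\p)$, so a point common to both closures cannot lie in either arc. Hence $A\in\mathcal D(\p)$, and $A$ is a common endpoint of the two paired contour-arcs, i.e.\ $A\in\mathcal A(\p)$. One must also exclude $\gamma_1=\gamma_2$. This follows because a contour is strictly monotone (Assumption~\ref{ass_wan}.4) and so meets a positive-slope line at most once, whereas $A_n\neq B_n$ since $u_n<v_n$.

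The step I expect to be most delicate is Step 2's reduction to points lying on contour-arcs rather than on $\mathcal D(\p)$, together with making the limiting argument rigorous when $A_n$ or $B_n$ could be double points. I would handle it first by the genericity argument via Theorem~\ref{matchingstabilitythm}, as in the proof of Proposition~\ref{propldp}. Failing that, I would use the finiteness of $\mathcal D(\p)$ (Remark~\ref{remcontour1}): if infinitely many $A_n$ were double points, a fixed one would repeat, which is at least as good for the limit.
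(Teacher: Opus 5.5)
Your overall route is the paper's. The paper derives the corollary from Proposition~\ref{propDelta}, i.e.\ from Theorem~\ref{GT*} and Theorem~\ref{matchingstabilitythm}. Your direct argument for the single line $r_{(a(\bar t),b(\bar t))}$ is the part of that proposition actually needed, since its ``if'' direction, as stated, quantifies over all lines through $A$.

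Step~1 and the perturbation and limit in Step~2 are sound. Make the perturbation satisfy $\|\p^*_{(a_n,b_n)}-\p^*_{(a'_n,b'_n)}\|_\infty<\frac{v_n-u_n}{2}$, which Proposition~\ref{ghjywuikdc} allows. Then the stability theorem matches $(u_n,v_n)$ to a proper cornerpoint that is $d_\infty$-close to it, and you apply Theorem~\ref{GT*} to that point. Your fallback of letting $A_n$ be a fixed double point is no substitute: it gives $A\in\mathcal D(\p)$, but no pair of paired contour-arcs ending at $A$.

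The gap is the exclusion of $\gamma_1=\gamma_2$, on which all of Step~3 rests. Assumption~\ref{ass_wan}.4 says a \emph{contour} meets a positive-slope line at most once. But $\gamma_1,\gamma_2$ are \emph{contour-arcs}, components of $\Gamma(\p)\setminus\mathcal D(\p)$, and only $\mathcal D(\p)$ is removed, not all of $\mathcal S(\p)$. So a contour-arc can be glued from pieces of several contours. The torus example shows this: in Figure~\ref{figgrid4} one magenta contour-arc is the lower-left arc of radius $2$ together with the half-lines $\{x^1=-2,\ x^2\ge 0\}$ and $\{x^2=-2,\ x^1\ge 0\}$, joined at points of $\mathcal S(\p)\setminus\mathcal D(\p)$. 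Hence your stated reason does not rule out $A_n,B_n$ lying on one contour-arc. In that case Step~3 yields neither $A\in\mathcal D(\p)$ nor two distinct paired arcs.

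One way to close this within the paper's assumptions (with $(a_n,b_n)$ now the perturbed parameters):
\begin{itemize}
\item The sublevel set of $\p^*_{(a,b)}$ at level $w$ is $M^\p_{(at+b,(1-a)t-b)}$ with $t=w/\min\{a,1-a\}$. So passing from level $u_n-\eps$ to $u_n+\eps$ means crossing $A_n$ along the direction $(a_n,1-a_n)$, and Assumption~\ref{defnassumptions}.\ref{defnassumptionscontourarcs}, in the form of Remark~\ref{remcontour2}, applies.
\item By Proposition~\ref{proppropadiscfromcp}, $u_n$ is a discontinuity of $\beta_k(\cdot,v)$ for $u_n<v<v_n$. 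Hence $H_k\bigl(M^{\p^*_{(a_n,b_n)}}_{u_n-\eps}\bigr)\to H_k\bigl(M^{\p^*_{(a_n,b_n)}}_{u_n+\eps}\bigr)$ is not surjective, which forces $d(\gamma_1)=k$ and $s(\gamma_1)=1$.
\item Likewise $v_n$ is a discontinuity of $\beta_k(u,\cdot)$, so the map across $B_n$ is not injective, which forces $s(\gamma_2)=-1$.
\end{itemize}
Thus $\gamma_1\neq\gamma_2$, and the rest of Step~3 goes through.
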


The previous result shows that points of $\dgm\left(\p^*_{(a(t),b(t))}\right)$ can be created or destroyed only when the line $r_{(a(t),b(t))}$ reaches an annihilation crossing $A=(x^1_A,x^2_A)\in \mathcal{A}(\p)$ (see Figure~\ref{figdestruction}).
In this case, the creation or destruction occurs at the annihilation point $(\bar u,\bar u)$ at
$\left(a,b\right)$ with $\bar u=\frac{\min\{a(\bar t),1-a(\bar t)\}}{a(\bar t)}\cdot (x^1_A-b(\bar t))$.

\begin{figure}
\begin{center}
\begin{tikzpicture}[scale=1.0]
\draw [blue,line width=1.2] (2,0.5) -- (4,4);
\draw [blue,line width=1.2] (3,0) -- (6,3.5);
\draw (4.35,3.7) node {$r_{(a,b)}$};
\draw (6.3,3.2) node {$r_{(\bar{a},\bar{b})}$};

\draw [red,line width=1.2] (4.25,1.46) arc (30:70:5);
\draw [red,line width=1.2] (4.25,1.46) arc (50:90:5);

\draw [line width=0.1,black,fill=white] (4.25,1.46) circle (0.04);
\draw (5,1.3) node {$A=(x^1_A,x^2_A)$};

\draw [red] (0.75,2.62) node {$\ldots$};
\draw [red] (1.35,3.65) node {$\ldots$};

\draw [->,line width=0.8] (3.6,2.9) arc (60:35:3);
\end{tikzpicture}
\end{center}\caption{When $(a,b)$ moves towards $(\bar{a},\bar{b})$ and, correspondingly, the line $r_{(a,b)}$ moves and meets an annihilation crossing $A\in \mathcal{A}(\p)$, a proper cornerpoint of $\dgm\left(\p_{(a,b)}^*\right)$ reaches the diagonal $\Delta$ at an annihilation point $(\bar u,\bar u)$ and disappears. By reversing the movement of $r_{(a,b)}$ we get the birth of a proper cornerpoint of $\dgm\left(\p_{(a,b)}^*\right)$. Parts of two contour-arcs of $\p$ are displayed in red.}
\label{figdestruction}
\end{figure}


\section{Monodromy in biparameter persistent homology}
\label{Monodromy}

We conclude this chapter by illustrating the phenomenon of monodromy in biparameter persistent homology.
We warn the reader that this section requires some notions from algebraic topology, such as homotopy and the fundamental group.
None of the subsequent sections depend on this material.

Monodromy occurs whenever a loop in the parameter space $\mathcal{P}(\Lambda^+)$ induces a non-trivial permutation of the cornerpoints of the persistence diagram  \cite{CeEtFr19}. 
This section is devoted to explain the meaning of this and its implications in the transport of matchings between persistence diagrams.

Given a connected open subset $U\subset \mathcal{P}(\Lambda^+)$ and a constant $c>0$, consider all the functions $\p\colon M\to \R^2$, satisfying the properties of the previous sections, such that $U \subseteq \mathrm{Reg}(\p)$ and, 
for any two 
distinct cornerpoints $P, Q$ in $\dgm(\p^*_{(a,b)})$, $d(P, Q)>2c$. 
Let us denote the set of these functions by $\mathcal{F}_{U, c}.$
In the rest of the section we will only consider $\p\in \mathcal{F}_{U,c}$.
By Corollary~\ref{corsp}, the distance between $U$ and $\mathrm{Sing}(\p)$ is positive.
The conditions above guarantee that the persistence diagrams $\dgm(\varphi^*_{(a,b)})$ have finitely many proper and improper cornerpoints, that each of them has multiplicity $1$, and that their distance from the diagonal $\Delta$ is greater than $2c$.

A continuous function $\gamma\colon [0,1]\to A$, where $A$ is a topological space, is called a \myemph{path}.
Its inverse path is denoted by $\pi^{-1}$, and, for any two paths $\pi$ and $\pi'$, $\pi\ast \pi'$ denotes their concatenation.

\begin{definition}
A path $P\colon [0,1]\to \bar \Delta^*$ is said to be \myemph{induced by} the path $\pi\colon [0,1]\to \mathcal{P}(\Lambda^+)$ if 
$P(t)\in \dgm(\p^*_{\pi(t)})$,
for every $t\in [0,1]$.
\end{definition}

Before proceeding, we need the following result \cite{EtFrQuTo23}.

\begin{proposition}\label{ghjywuikdc}
For every $0<a,a'<1$ and every $b,b'\in \R$,
\[
\left\lVert \p^*_{(a,b)}-\p^*_{(a',b')}\right\rVert_{\infty}\le 4\lvert a-a'\rvert (\lVert \p\rVert_\infty+\lvert b\rvert)+ 3\lvert b-b'\rvert.
\]
\end{proposition}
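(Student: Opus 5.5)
The plan is to work pointwise: fix $x\in M$ and bound $\lvert \p^*_{(a,b)}(x)-\p^*_{(a',b')}(x)\rvert$. Write
\[
\p^*_{(a,b)}(x)=\max\{\alpha(a)\,(\p_1(x)-b),\ \beta(a)\,(\p_2(x)+b)\},
\]
where $\alpha(a)=\frac{\min\{a,1-a\}}{a}$ and $\beta(a)=\frac{\min\{a,1-a\}}{1-a}$. Both coefficients lie in $]0,1]$, and one of them equals $1$. As in the proof of Lemma~\ref{lemmafab}, I will use the inequality $\lvert\max\{s,t\}-\max\{s',t'\}\rvert\le\max\{\lvert s-s'\rvert,\lvert t-t'\rvert\}$. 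This reduces the claim to bounding the two quantities $\lvert \alpha(a)(\p_1(x)-b)-\alpha(a')(\p_1(x)-b')\rvert$ and $\lvert \beta(a)(\p_2(x)+b)-\beta(a')(\p_2(x)+b')\rvert$ separately.

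For the first term I insert the intermediate quantity $\alpha(a')(\p_1(x)-b)$ and apply the triangle inequality, which gives
\[
\lvert\alpha(a)-\alpha(a')\rvert\,\lvert\p_1(x)-b\rvert+\alpha(a')\lvert b-b'\rvert\le \lvert\alpha(a)-\alpha(a')\rvert(\lVert\p\rVert_\infty+\lvert b\rvert)+\lvert b-b'\rvert .
\]
The second term is handled symmetrically with $\beta$. It then remains to control the Lipschitz behaviour of $\alpha$ and $\beta$ in $a$.

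The key step is therefore an estimate of the form $\lvert\alpha(a)-\alpha(a')\rvert\le 4\lvert a-a'\rvert$, and likewise for $\beta$. On $]0,1/2]$ we have $\alpha\equiv1$, and on $[1/2,1[$ we have $\alpha(a)=(1-a)/a$, whose derivative $-1/a^2$ has absolute value at most $4$ because $a\ge1/2$. Since $\alpha$ is continuous at $1/2$, it is $4$-Lipschitz on all of $]0,1[$. By the symmetry $a\mapsto 1-a$, the same holds for $\beta$.

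Combining the two bounds yields
\[
\lVert\p^*_{(a,b)}-\p^*_{(a',b')}\rVert_\infty\le 4\lvert a-a'\rvert(\lVert\p\rVert_\infty+\lvert b\rvert)+\lvert b-b'\rvert,
\]
which is even stronger than the stated inequality, since the constant in front of $\lvert b-b'\rvert$ is $1$ rather than $3$. The main obstacle is interpreting $\lVert\p\rVert_\infty$: I read it as $\max_i\sup_x\lvert\p_i(x)\rvert$, consistent with the uniform norm introduced in Chapter~\ref{Chapter0}. The only other point needing care is that the Lipschitz constant $4$ is exactly the bound $1/a^2\le4$ on the non-constant branch; a careless argument near $a\to 0$ or $a\to1$ would otherwise make it seem to blow up.
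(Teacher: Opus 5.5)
Your proof is correct, and it takes a different and more economical route than the paper's.

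\textbf{The paper's route.} The paper splits into three cases: $a,a'\le\frac{1}{2}$, $a,a'\ge\frac{1}{2}$, and $a\le\frac{1}{2}\le a'$.
\begin{itemize}
\item In the first case it writes $\p^*_{(a,b)}=\max\{\p_1-b,\frac{a}{1-a}(\p_2+b)\}$. It then puts $\frac{a}{1-a}(\p_2+b)-\frac{a'}{1-a'}(\p_2+b')$ over the common denominator $(1-a)(1-a')\ge\frac{1}{4}$ and estimates the numerator term by term.
\item The second case follows from the symmetry $\p^*_{(a,b)}=h^*_{(1-a,-b)}$ with $h=(\p_2,\p_1)$.
\item The mixed case uses the triangle inequality through the parameters $(\frac{1}{2},b)$ and $(\frac{1}{2},b')$.
\end{itemize}

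\textbf{Your route.} You isolate the whole dependence on $a$ in the two weights $\alpha,\beta$. These are bounded by $1$ and $4$-Lipschitz on all of $]0,1[$. Your intermediate term $\alpha(a')(\p_1(x)-b)$ decouples the variation in $a$ from the variation in $b$, so $\lvert b-b'\rvert$ is multiplied only by $\alpha(a')\le 1$.

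\textbf{What this buys.}
\begin{itemize}
\item You need no case distinction, since crossing $a=\frac{1}{2}$ is absorbed in the global Lipschitz estimate.
\item You get the sharper constant $1$ instead of $3$ in front of $\lvert b-b'\rvert$. In the paper the $3$ comes from the cross terms $4\lvert ab-a'b'\rvert$ and $4aa'\lvert b-b'\rvert$ produced by clearing denominators.
\item You avoid the mixed case entirely. There, the paper's last leg $\lVert\p^*_{(1/2,b')}-\p^*_{(a',b')}\rVert_\infty$ is naturally controlled by $\lVert\p\rVert_\infty+\lvert b'\rvert$ rather than $\lVert\p\rVert_\infty+\lvert b\rvert$. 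Landing exactly on the stated right-hand side therefore needs a little extra care, for instance routing through $(a',b)$ instead.
\end{itemize}

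The paper's computation is fully explicit and derivative-free. Your derivative bound $1/a^2\le 4$ can be made equally elementary: for $a,a'\ge\frac{1}{2}$, $\lvert 1/a-1/a'\rvert=\lvert a-a'\rvert/(aa')\le 4\lvert a-a'\rvert$.
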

\begin{proof}
We split the proof into three cases: $a,a'\le \frac{1}{2}$, $a,a'\ge \frac{1}{2}$, and $a\le \frac{1}{2}$, $a'\ge \frac{1}{2}$ (the case where $a$ and $a'$ are inverted is symmetrical). 

If $a,a'\le \frac{1}{2}$, then $\min\left\{a,1-a\right\}=a$ and $\min\left\{a',1-a'\right\}=a'$. 
Therefore, recalling that $\lvert \max \left\{\alpha,\beta\right\}-\max\{\gamma,\delta\}\rvert\le \max\{\lvert\alpha-\gamma\rvert,\lvert\beta-\delta\rvert\}$ and observing that $(1-a)(1-a')\ge \frac{1}{4}$,
\begingroup
\allowdisplaybreaks
\begin{align*}
& \left\lVert \p^*_{(a,b)}-\p^*_{(a',b')}\right\rVert_\infty = 
\left\lVert a\max \left\{\frac{\p_1-b}{a},  \frac{\p_2+b}{1-a}\right\}-
a' \max \left\{\frac{\p_1-b'}{a'},\frac{\p_2+b'}{1-a'}\right\}
\right\rVert_\infty \nonumber\\
&= \sup_{x\in M} \left\lvert
\max \left\{\p_1(x)-b,  \frac{a}{1-a}(\p_2(x)+b)\right\}  - \max \left\{\p_1(x)-b', \frac{a'}{1-a'}(\p_2(x)+b')\right\} \right\rvert \nonumber\\
&\le \sup_{x\in M} 
\max \left\{\lvert b-b'\rvert,  \left\lvert\frac{a}{1-a}(\p_2(x)+b)-\frac{a'}{1-a'}(\p_2(x)+b')\right\rvert\right\} \nonumber\\
&=\sup_{x\in M}
\max \left\{\lvert b-b'\rvert,  \frac{\left\lvert a\p_2(x)+ab -aa'b-a'\p_2(x)-a'b'+aa'b'\right\rvert}{(1-a)(1-a')}\right\}
\nonumber\\
&\le \max \left\{\lvert b-b'\rvert,  \frac{\lvert a-a'\rvert\lVert \p_2\rVert_\infty+\lvert ab-a'b'\rvert + \lvert b-b'\rvert aa'}{(1-a)(1-a')}\right\} \nonumber\\
&\le
\max \left\{\lvert b-b'\rvert ,  4\lvert a-a'\rvert \lVert \p_2\rVert_\infty+4\lvert ab-a'b'\rvert+ 4\lvert b-b'\rvert aa'\right\} \nonumber\\
&\le
\max \{\lvert b-b'\rvert ,  4\lvert a-a'\rvert \lVert \p_2\rVert_\infty+4\lvert ab-a'b'\rvert+ \lvert b-b'\rvert\} \nonumber\\
&=
4\lvert a-a'\rvert \lVert \p_2\rVert_\infty+4\lvert ab-a'b'\rvert + \lvert b-b'\rvert \nonumber\\
&\le
4\lvert a-a'\rvert\lVert \p_2\rVert_\infty+4\lvert a-a'\rvert \lvert b\rvert +4\lvert b-b'\rvert a'+ \lvert b-b'\rvert \nonumber\\
&\le
4\lvert a-a'\rvert\lVert \p_2\rVert_\infty+4\lvert a-a'\rvert\lvert b\rvert+ 3\lvert b-b'\rvert \nonumber\\
&\le 
4\lvert a-a'\rvert(\lVert \p_2\rVert_\infty+\lvert b\rvert)+ 3\lvert b-b'\rvert\\
&\le 4\lvert a-a'\rvert(\lVert \p\rVert_\infty+\lvert b\rvert)+ 3\lvert b-b'\rvert. \nonumber
\end{align*}
\endgroup

By observing that, if $\p=(\p_1,\p_2)$ and $h=(\p_2,\p_1)$, then $\p^*_{(a,b)}=h^*_{(1-a,-b)}$,
we obtain the proof for $a,a'\ge \frac{1}{2}$.

If $a \le \frac{1}{2}$ and $a' \ge \frac{1}{2}$ and considering $\left(\frac{1}{2},b\right), \left(\frac{1}{2},b'\right)$, we have that
\begin{align*}
     \left\lVert \p^*_{(a,b)} - \p^*_{(a',b')} \right\rVert_\infty & \le  \left\lVert \p^*_{(a,b)} - \p^*_{(\frac{1}{2},b)} \right\rVert_\infty + \left\lVert \p^*_{(\frac{1}{2},b)} - \p^*_{(\frac{1}{2},b')}\right\rVert_\infty + \\ &+ \left\lVert \p^*_{(\frac{1}{2},b')} - \p^*_{(a',b')} \right\rVert_\infty \\ 
    & \le 4\left\lvert a-\frac{1}{2} \right\rvert \left(\lVert \p\rVert_\infty+\lvert b\rvert\right)+ 3\lvert b-b\rvert + 4\left\lvert \frac{1}{2}- \frac{1}{2}\right\rvert (\lVert \p\rVert_\infty+\lvert b\rvert)+ \\
    & + 3\lvert b-b'\rvert  + 4\left\lvert \frac{1}{2}-a'\right\rvert (\lVert \p\rVert_\infty+\lvert b\rvert)+ 3\lvert b'-b'\rvert \\
    & = 4 \left(\left\lvert a-\frac{1}{2}\right\rvert + \left\lvert \frac{1}{2}-a'\right\rvert\right) (\lVert \p\rVert_\infty+\lvert b\rvert) + 3\lvert b-b'\rvert  \\ & = 4\lvert a-a'\rvert (\lVert \p\rVert_\infty+\lvert b\rvert)+ 3\lvert b-b'\rvert.
\end{align*}
\end{proof}

Next result shows that for any path with values in $\mathrm{Reg}(\p)$, an induced path exists and, given its initial point, it is unique. 

\begin{proposition}\label{sdsuhfrlszdchj}
Let $\varphi \in \mathcal{F}_{U,c}$ and let $\pi\colon [0,1]\to U$ be a path.
For every $X$ in $\dgm(\p^*_{\pi(0)})$, a unique path $P\colon [0,1]\to \bar \Delta^*$ induced by $\pi$ exists such that $P(0)=X$.
Moreover, if $X=\Delta$, then $P([0,1])=\{\Delta\}$, otherwise $P([0,1])\cap \{\Delta\}=\varnothing$.
\end{proposition}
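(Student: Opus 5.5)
The plan is to combine the uniform continuity of $(a,b)\mapsto \p^*_{(a,b)}$ given by Proposition~\ref{ghjywuikdc} with the Matching Distance Stability Theorem~\ref{matchingstabilitythm}, together with the separation hypothesis built into $\mathcal{F}_{U,c}$: distinct cornerpoints of $\dgm(\p^*_{(a,b)})$ are at distance greater than $2c$ from each other and from $\Delta$. First, since $\pi([0,1])$ is compact, $\lvert b\rvert$ is bounded along $\pi$, say by $\beta$. Proposition~\ref{ghjywuikdc} then gives $\lVert \p^*_{\pi(t)}-\p^*_{\pi(t')}\rVert_\infty\le 4\lvert a(t)-a(t')\rvert(\lVert\p\rVert_\infty+\beta)+3\lvert b(t)-b(t')\rvert$. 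By uniform continuity of $\pi$, there is $\delta>0$ such that $\lvert t-t'\rvert<\delta$ implies $\lVert \p^*_{\pi(t)}-\p^*_{\pi(t')}\rVert_\infty<c$, and hence, by Theorem~\ref{matchingstabilitythm}, $d_\match(\dgm(\p^*_{\pi(t)}),\dgm(\p^*_{\pi(t')}))<c$.

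The key local fact follows. For $\lvert t-t'\rvert<\delta$, an optimal matching $\sigma_{t,t'}$ exists by Exercise~\ref{exercise_inf_sup=min_max} and moves every point by less than $c$. Because the cornerpoints of $\dgm(\p^*_{\pi(t')})$ are pairwise more than $2c$ apart and more than $2c$ from $\Delta$, every point $Q\in\dgm(\p^*_{\pi(t)})\setminus\{\Delta\}$ has exactly one point of $\dgm(\p^*_{\pi(t')})$ within distance $c$. This point is a non-trivial cornerpoint of multiplicity one, which I denote $\rho_{t,t'}(Q)$. Since its multiplicity is one, the matching is forced: $\sigma_{t,t'}(Q)=\rho_{t,t'}(Q)$, and $\sigma_{t,t'}$ sends $\Delta$ to $\Delta$, so that $\rho_{t,t'}$ is a bijection between the non-trivial parts. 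Moreover $\rho_{t',t''}\circ\rho_{t,t'}=\rho_{t,t''}$ whenever the three parameters lie in a common interval of length less than $\delta/2$, because the composite moves each point by less than $2c$. The separation bound must be used here to rule out any competing point, although a cleaner version uses a radius $c/2$ and shrinks $\delta$ accordingly.

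Existence then comes from setting $P(t):=\rho_{0,t}(X)$, defined by composing the maps $\rho$ along a partition $0=t_0<\dots<t_N=1$ of mesh less than $\delta/2$. Specifically, $P(t)=\rho_{t_i,t}\circ\cdots\circ\rho_{t_0,t_1}(X)$ for $t\in[t_i,t_{i+1}]$, and the cocycle property shows that this is independent of the partition. For continuity I apply the same argument with $c$ replaced by an arbitrary $\eps<c$: for $\lvert t-t'\rvert$ small, $d(P(t),P(t'))<\eps$. If $X=\Delta$, I take $P\equiv\Delta$, which is clearly induced.

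For uniqueness, I suppose $P'$ is another induced path with $P'(0)=X$. The set of $t$ at which $P(t)=P'(t)$ is closed by continuity, and it is open because for $t'$ near $t$ both $P(t')$ and $P'(t')$ lie within distance $c$ of $P(t)$. By separation they must therefore coincide, since each cornerpoint has a unique partner within distance $c$. By connectedness of $[0,1]$ this set is everything. Applying the same clopen argument to $\{t: P(t)=\Delta\}$ shows that $P([0,1])=\{\Delta\}$ or $P([0,1])\cap\{\Delta\}=\varnothing$, because $\Delta$ is at distance greater than $2c$ from every non-trivial cornerpoint. The main obstacle is making the uniqueness of the close partner rigorous, with the radius bookkeeping ($c$ versus $2c$) and the correct treatment of the multiset and $\Delta$ in the matching, so that the local bijections are well defined and compose consistently.
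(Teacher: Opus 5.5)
Your proof is correct, but it follows a different route from the paper's.

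\textbf{How the two proofs differ.} The paper handles $X=\Delta$ by a connectedness argument. For $X\neq\Delta$ it runs a continuous induction. It takes the supremum $\bar\theta$ of the times up to which a unique lift into $\Delta^*$ exists, and extends past $\bar\theta$ using the finiteness of $\dgm(\p^*_{\pi(\bar\theta)})$ and the minimum distance $\varepsilon$ between its points. It also uses Corollary~\ref{corintorno} (each small ball $B_\eta(Y)$ contains exactly one cornerpoint of a nearby diagram), together with Proposition~\ref{ghjywuikdc} and Theorem~\ref{matchingstabilitythm}.

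You instead work with uniform data:
\begin{itemize}
\item compactness of $[0,1]$ and Proposition~\ref{ghjywuikdc} give a single global step $\delta$;
\item Theorem~\ref{matchingstabilitythm} and the $2c$-separation force the local bijections $\rho_{t,t'}$;
\item the lift is assembled along a finite partition, much as one lifts paths through a covering map;
\item uniqueness and the $\Delta$-dichotomy then follow from one clopen argument.
\end{itemize}

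\textbf{What each approach buys.} Your version is more elementary: it needs neither Corollary~\ref{corintorno} nor Corollary~\ref{corfiniteness}. It also gives uniqueness among \emph{all} induced paths at once. The paper's set $\Theta$ only quantifies over lifts avoiding $\Delta$, so excluding the others tacitly reuses the connectedness argument. As a by-product you obtain the local estimate $d(P(t),P(t'))\le\lVert\p^*_{\pi(t)}-\p^*_{\pi(t')}\rVert_\infty$ for $\lvert t-t'\rvert$ small. The paper's argument is local in time and controls the lift near $\bar\theta$ through the separation $\varepsilon$ of the single diagram $\dgm(\p^*_{\pi(\bar\theta)})$. Your construction leans throughout on the uniform constant $c$ built into $\mathcal{F}_{U,c}$, which is what makes a global step and the cocycle identity available.

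\textbf{Fixes for the write-up.}
\begin{itemize}
\item The $c/2$ version you mention is necessary, not just cleaner. With radius $c$, the composite $\rho_{t',t''}\rho_{t,t'}(Q)$ lies within $2c$ of $Q$ and $\rho_{t,t''}(Q)$ within $c$. So they may be up to $3c$ apart, which the $2c$-separation does not exclude. Choose $\delta$ with $\lVert\p^*_{\pi(t)}-\p^*_{\pi(t')}\rVert_\infty<c/2$, but still define $\rho_{t,t'}(Q)$ as the unique point within distance $c$. Then the composite lies within $c$ of $Q$ and must equal $\rho_{t,t''}(Q)$.
\item The fact that $\rho_{t,t'}(Q)\neq\Delta$ comes from $d(Q,\Delta)>2c$ at time $t$, not from the separation at time $t'$.
\item You do not need an optimal matching. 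A matching with displacement below $c/2$ exists directly from the definition of $d_\match$ as an infimum.
\end{itemize}
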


\begin{proof}

If $X=\Delta$, the existence of the path is proved by setting $P(t):=\Delta$ for every $t\in[0,1]$.
The uniqueness of this path follows from the assumption $\p \in\mathcal{F}_{U,c}$.
Indeed, if $P'$ is another path induced by $\pi$ with $P'(0)=\Delta$, it must map the connected set $[0,1]$ into a connected set $\mathcal{C} \subseteq \bar{\Delta}^*$. Since $P'(0)=\Delta$, $\mathcal{C}$ must contain the point $\Delta$. Moreover, because $P'(t)\in \dgm(\p^*_{\pi(t)})$ for every $t\in [0,1]$ and every non-trivial cornerpoint of $\dgm(\p^*_{\pi(t)})$ has distance greater than $2c$ from $\Delta$, it follows that $\mathcal{C}=\{\Delta\}$.
Indeed, if this were not the case, we could decompose $\mathcal{C}$ into the two non-empty relatively open sets $\{\Delta\}=\{Y\in\mathcal{C} : d(Y,\Delta)<2c\}$ and $\{Y\in\mathcal{C} : d(Y,\Delta)>2c\}$, contradicting the connectedness of $\mathcal{C}$.  Therefore, $P'=P$.

If \(X \neq \Delta\), the existence and uniqueness of the path can be proved as follows.
Let \(\Theta\) be the set of all values \(\theta \in [0,1]\) such that there exists exactly one path \(P_\theta \colon [0,\theta] \to \Delta^*\) satisfying \(P_\theta(0) = X\) and \(P_\theta(t) \in \dgm\bigl(\varphi_{\pi(t)}^*\bigr) \setminus \{\Delta\}\) for every \(t \in [0,\theta]\).
Clearly, \(0 \in \Theta\).
Moreover, if \(\theta_1, \theta_2 \in \Theta\) and \(\theta_1 \le \theta_2\), then \(P_{\theta_1}\) and \(P_{\theta_2}\) coincide on \([0,\theta_1]\).
Set \(\bar\theta := \sup \Theta\).
Our aim is to show that \(\bar\theta = 1\), which will conclude the proof.

First, under our assumptions, \(\dgm\bigl(\varphi_{\pi(\bar\theta)}^*\bigr)\) is finite (Corollary~\ref{corfiniteness}), and each of its points has multiplicity \(1\), since \(\varphi \in \mathcal{F}_{U,c}\).
Therefore, we can consider the minimum distance \(\varepsilon\) between two distinct points in \(\dgm\bigl(\varphi_{\pi(\bar\theta)}^*\bigr)\).
Second, Proposition~\ref{ghjywuikdc} guarantees that 
\(\lim_{\theta \to \bar\theta} \|\varphi_{\pi(\theta)}^* - \varphi_{\pi(\bar\theta)}^*\|_\infty = 0\),
and Corollary~\ref{corintorno} 
implies that for every proper (respectively improper) cornerpoint 
\(Y \in \dgm\bigl(\varphi_{\pi(\bar\theta)}^*\bigr)\) there exists an open ball 
\(B_{\eta}(Y)\) (respectively open interval of length 2$\eta$ centred at $Y$) containing exactly one cornerpoint of 
\(\dgm\bigl(\varphi_{\pi(\theta)}^*\bigr)\) whenever 
\(\|\varphi_{\pi(\bar\theta)}^* - \varphi_{\pi(\theta)}^*\|_\infty < \eta\le\bar\eta\). 
We can assume that \(\bar\eta\le\frac{\varepsilon}{4}\).
Note that all such balls \(B_{\eta}(Y)\) are pairwise disjoint.
From now on, we just state the proof for $Y$ proper cornerpoint not to continue the dichotomy, but the proof is analogous if $Y$ is improper.

By the stability of the matching distance (Theorem~\ref{matchingstabilitythm}), it follows that there exists exactly one point \(\bar Y \in \dgm\bigl(\varphi_{\pi(\bar\theta)}^*\bigr)\) such that \(P_\theta(\theta)\) belongs to \(B_{\eta}(\bar Y)\) for all \(\theta < \bar\theta\) sufficiently close to \(\bar\theta\).
Indeed, if this were not the case, then there would exist two values \(\tilde\theta_1, \tilde\theta_2 < \bar\theta\), arbitrarily close to \(\bar\theta\), such that
\(P_{\tilde\theta_1}(\tilde\theta_1) \in B_{\frac{\varepsilon}{4}}(Y_1)\) and
\(P_{\tilde\theta_2}(\tilde\theta_2) \in B_{\frac{\varepsilon}{4}}(Y_2)\),
with \(Y_1, Y_2 \in \dgm\bigl(\varphi_{\pi(\bar\theta)}^*\bigr)\) and \(Y_1 \neq Y_2\).
Moreover, we may assume that \(\tilde\theta_1 < \tilde\theta_2\) and that the image \(P_{\tilde\theta_2}([\tilde\theta_1,\tilde\theta_2])\) does not intersect any other open ball \(B_{\frac{\varepsilon}{4}}(Y)\), with \(Y \in \dgm\bigl(\varphi_{\pi(\bar\theta)}^*\bigr)\), except for \(B_{\frac{\varepsilon}{4}}(Y_1)\) and \(B_{\frac{\varepsilon}{4}}(Y_2)\).
Since the paths \(P_\theta\) are compatible for \(\theta < \bar\theta\), the map \(\theta \mapsto P_\theta(\theta)\) is continuous on \([0,\tilde\theta_2]\). Therefore, this would imply the existence of points in \(P_{\tilde\theta_2}([0,\tilde\theta_2])\) whose distance from the set \(\dgm\bigl(\varphi_{\pi(\bar\theta)}^*\bigr)\) is greater than \(\frac{\varepsilon}{4}\), contradicting the stability of the matching distance.

We can now extend the paths \(P_\theta\) to a unique path \(P_{\theta'} \colon [0,\theta'] \to \Delta^*\), where \(\theta' = \bar\theta\) if \(\bar\theta = 1\), and \(\bar\theta < \theta' < 1\) otherwise.
This is done by defining \(P_{\theta'}(\theta)\) as the unique point in 
\(B_{\eta}(\bar Y) \cap \dgm\bigl(\varphi_{\pi(\theta)}^*\bigr)\)
for \(\theta\) sufficiently close to \(\theta'\).

By construction, \(P_{\theta'}\) is continuous and unique, satisfies \(P_{\theta'}(0) = X\), and
\(P_{\theta'}(\theta) \in \dgm\bigl(\varphi_{\pi(\theta)}^*\bigr) \setminus \{\Delta\}\) for every \(\theta \in [0,\theta']\), hence \(\theta' \in \Theta\).
Since \(\bar\theta = \sup \Theta\), it follows that \(\theta' = \bar\theta\), and therefore \(\bar\theta = 1\).

\end{proof}

By Proposition~\ref{sdsuhfrlszdchj}, the following definition is well posed. 

\begin{definition}
Given a path $\pi\colon [0,1]\to \mathcal{P}(\Lambda^+)$ and a cornerpoint $X$ in the persistence diagram $\dgm(\p^*_{\pi(0)})$, the path $P$, given as in Proposition~\ref{sdsuhfrlszdchj}, is said to \myemph{transport} $X$ to $X'$ in $\dgm(\p^*_{\pi(1)})$. 
In this case, we write $T_\pi^\p(X)=X'$. 
\end{definition}

The function $T_\pi^\p$ is a bijection between $\dgm(\p^*_{\pi(0)})$ and $\dgm(\p^*_{\pi(1)})$, whose inverse is $T_{\pi^{-1}}^\p$. 
Furthermore, it is immediate to check that $T^\p_{\pi\ast\pi'}=T^\p_{\pi'}T^\p_{\pi}$, for any two paths $\pi, \pi'\colon [0,1]\to U$ with $\pi(1)=\pi'(0)$.

Our next goal is to describe under what circumstances different paths may induce the same transport. 
With this in mind, we show that the transport $T^\p_\pi$ continuously depends on the path $\pi$, and a preliminary result to that. 


Proposition~\ref{ghjywuikdc} allows us to prove the following result, implying that the transport along a path in $U$ is continuous with respect to changes in the path.

\begin{proposition}\label{transcontinuity}
Let $\p$ be in $\mathcal{F}_{U,c}$.
Let $\bar \pi=\left(\bar \pi^a,\bar \pi^b\right)\colon[0,1]\to U\subset ]0,1[\times \R$ be a path.
If $\pi\colon[0,1]\to U$ is a path such that $\pi(0)=\bar\pi(0)$ and 
$\left\|\bar\pi-\pi\right\|_\infty\le \frac{c}{C}$ with $C=4(\lVert \p\rVert_\infty+\lVert \bar \pi^b\rVert_\infty)+3$, then the inequality $d(T^\p_{\bar\pi}(X),T^\p_{\pi}(X))\le C\left\|\bar\pi-\pi\right\|_\infty$ holds for every $X\in \dgm\left(\p_{\bar \pi(0)}^*\right)$.
\end{proposition}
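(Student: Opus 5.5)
The plan is to compare, for each parameter $s\in[0,1]$, the two diagrams $\dgm(\p^*_{\bar\pi(s)})$ and $\dgm(\p^*_{\pi(s)})$, and to show that the transported points $\bar P(s):=T^\p_{\bar\pi|_{[0,s]}}(X)$ and $P(s):=T^\p_{\pi|_{[0,s]}}(X)$ stay within distance $C\|\bar\pi-\pi\|_\infty$ of each other for every $s$. Taking $s=1$ then gives the claim. Here $\bar P$ and $P$ are the induced paths given by Proposition~\ref{sdsuhfrlszdchj}, both starting at $X$. If $X=\Delta$, both paths are constantly $\Delta$ and there is nothing to prove, so assume $X\neq\Delta$.

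The first step is a uniform estimate. By Proposition~\ref{ghjywuikdc}, applied with $(a,b)=\bar\pi(s)$ and $(a',b')=\pi(s)$, we get
\[
\left\|\p^*_{\bar\pi(s)}-\p^*_{\pi(s)}\right\|_\infty\le 4|\bar\pi^a(s)-\pi^a(s)|(\|\p\|_\infty+|\bar\pi^b(s)|)+3|\bar\pi^b(s)-\pi^b(s)|\le C\|\bar\pi-\pi\|_\infty=:\delta\le c.
\]
The Matching Distance Stability Theorem~\ref{matchingstabilitythm} therefore gives, for every $s$, a matching $\sigma_s$ between the two diagrams with displacement at most $\delta\le c$. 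Since $\p\in\mathcal{F}_{U,c}$, distinct points of each diagram are more than $2c$ apart, and non-trivial points are more than $2c$ from $\Delta$. Hence each non-trivial point $Y\in\dgm(\p^*_{\bar\pi(s)})$ has exactly one point of $\dgm(\p^*_{\pi(s)})$ within distance $c$, namely $\sigma_s(Y)$. This point is non-trivial, because $d(\sigma_s(Y),\Delta)\ge 2c-\delta>c$, and it is uniquely determined, independently of the chosen matching. Define $\Phi_s(Y)$ to be this nearest point.

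The second step, which I expect to be the main obstacle, is to show that $s\mapsto\Phi_s(\bar P(s))$ is a continuous path induced by $\pi$. For continuity at $s_0$, I would argue as follows. For $s$ near $s_0$, the point $\bar P(s)$ is close to $\bar P(s_0)$ by continuity of $\bar P$, and by the matching stability theorem together with Proposition~\ref{ghjywuikdc} the diagram $\dgm(\p^*_{\pi(s)})$ is close to $\dgm(\p^*_{\pi(s_0)})$. So $\Phi_s(\bar P(s))$ lies within roughly $\delta+o(1)$ of $\Phi_{s_0}(\bar P(s_0))$ and within $c$ of some point of $\dgm(\p^*_{\pi(s_0)})$. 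The $2c$-separation of that diagram then forces it to be the unique point of $\dgm(\p^*_{\pi(s)})$ near $\Phi_{s_0}(\bar P(s_0))$, in the spirit of Corollary~\ref{corintorno}. Pinning down this uniqueness cleanly, with strict versus non-strict inequalities when $\delta=c$, is the delicate part. If needed, I would first prove the result under $\delta<c$ and then pass to the boundary case by a limiting argument.

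Given that step, $s\mapsto\Phi_s(\bar P(s))$ is induced by $\pi$ and starts at $\Phi_0(X)=X$, since $\pi(0)=\bar\pi(0)$. By the uniqueness in Proposition~\ref{sdsuhfrlszdchj} it coincides with $P$. Therefore $d(\bar P(1),P(1))\le\delta$, that is,
\[
d\bigl(T^\p_{\bar\pi}(X),T^\p_{\pi}(X)\bigr)\le C\|\bar\pi-\pi\|_\infty .
\]
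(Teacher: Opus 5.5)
Your proposal is correct, but it is organized differently from the paper's proof.

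\textbf{How the two routes differ.} The paper argues by a first-failure time. It lets $\theta$ be the largest $\tau$ with $d(T^\p_{\bar\pi_\tau}(X),T^\p_{\pi_\tau}(X))\le C\|\bar\pi-\pi\|_\infty$. Assuming $\theta<1$, it uses matching stability at times $\theta_+\downarrow\theta$ to produce points $Y_{\theta_+}\neq T^\p_{\pi_{\theta_+}}(X)$ lying within $C\|\bar\pi-\pi\|_\infty$ of $T^\p_{\bar\pi_{\theta_+}}(X)$. Their limit $Z$ either differs from $T^\p_{\pi_\theta}(X)$, giving two points of one diagram at distance at most $2c$, or equals it, making it a double point. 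Both cases contradict $\p\in\mathcal{F}_{U,c}$.

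You instead construct the candidate path $s\mapsto\Phi_s(\bar P(s))$ through the nearest-point bijection between the two families of diagrams. The uniqueness part of Proposition~\ref{sdsuhfrlszdchj} then identifies it with $P$. This gives a stronger statement: for every $s$, the point $P(s)$ is the unique point of $\dgm(\p^*_{\pi(s)})$ within $c$ of $\bar P(s)$. It also reuses an already proved uniqueness result instead of a fresh limit-and-multiplicity analysis. The cost is that the paper's local work reappears in your continuity check.

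\textbf{Fixing the continuity step.} The triangle inequality only gives $d(\Phi_s(\bar P(s)),\Phi_{s_0}(\bar P(s_0)))\le 2\delta+o(1)$, not $\delta+o(1)$. With the corrected bound, the boundary case $\delta=c$ needs no limiting argument.

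\begin{enumerate}
\item Take a matching between $\dgm(\p^*_{\pi(s_0)})$ and $\dgm(\p^*_{\pi(s)})$ with displacement $\eta(s)\to 0$. This exists by Proposition~\ref{ghjywuikdc} and Theorem~\ref{matchingstabilitythm}.
\item The point of $\dgm(\p^*_{\pi(s_0)})$ matched to $\Phi_s(\bar P(s))$ lies within $2\delta+\eta(s)+o(1)\le 2c+o(1)$ of $\Phi_{s_0}(\bar P(s_0))$.
\item Since $\dgm(\p^*_{\pi(s_0)})$ is finite (Corollary~\ref{corfiniteness}), its points, $\Delta$ included, have a minimal mutual distance $m_0>2c$.
\item Hence for $s$ close to $s_0$ that matched point is $\Phi_{s_0}(\bar P(s_0))$ itself, and $d(\Phi_s(\bar P(s)),\Phi_{s_0}(\bar P(s_0)))\le\eta(s)\to 0$.
\end{enumerate}

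\textbf{Two minor points.}
\begin{enumerate}
\item Non-triviality of $\sigma_s(Y)$ should read $d(\sigma_s(Y),\Delta)>2c-\delta\ge c$. The strictness comes from $d(Y,\Delta)>2c$, since $2c-\delta>c$ fails when $\delta=c$.
\item You need a matching with displacement at most $\delta$, not merely $d_{\mathrm{match}}\le\delta$. Such a matching comes from the construction in the proof of Theorem~\ref{matchingstabilitythm}, namely Proposition~\ref{propgoodmatching3} combined with Theorem~\ref{CantorBthm}.
\end{enumerate}
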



\begin{proof}
If $X=\Delta$, then both $T^\p_\pi(X)$ and $T^\p_{\bar \pi}(X)$ equal $\Delta$, by Proposition~\ref{sdsuhfrlszdchj}, so the inequality holds.
We can now suppose that $X\neq \Delta$.
By Proposition~\ref{sdsuhfrlszdchj}, there is a unique path $\bar P\colon [0,1]\to\Delta^*$ induced by $\bar \pi$ such that $\bar P(0)=X$ and $\bar P(1)=T_{\bar\pi}^\p(X)$, and a unique path $P\colon[0,1]\to\Delta^*$ induced by $\pi$ such that $P(0)=X$ and $P(1)=T_{\pi}^\p(X)$.
Let us set
\[
\theta:= \max\left\{\tau\in [0,1]:d(T_{\bar\pi_\tau}^\p(X),T_{\pi_\tau}^\p(X))\le
C\|\bar\pi-\pi\|_{\infty} \right\},
\]
where the paths $\bar \pi_\tau,\pi_\tau\colon[0,1]\to U$ are defined by setting $\bar\pi_\tau(t):=\bar\pi(\tau t)$ and
$\pi_\tau(t):=\pi(\tau t)$ for every $t\in [0,1]$. 
This maximum exists because $T^\p_{\gamma_\tau}(X)$ depends continuously on $\tau$ by Proposition~\ref{sdsuhfrlszdchj}, for any $\gamma$.
In plain words, $T_{\bar\pi_\tau}^\p(X)$ and $T_{\pi_\tau}^\p(X)$ represent, respectively, the transport of $X$ along $\bar\pi$ and $\pi$ with respect to $\p$ until time $\tau$ instead of the usual final time $1$.
We observe that $T_{\bar\pi_0}^\p(X)=T_{\pi_0}^\p(X)=X$.
Moreover, for every $\tau\in [0,1]$,
\begin{equation}\label{edoiescrfhnhkerjn}
\left\|\bar\pi_\tau-\pi_\tau\right\|_\infty\le\left\|\bar\pi-\pi\right\|_\infty.
\end{equation}

If $\theta<1$, then on the one hand we can find a $\theta_+\in\ ]\theta,1]$ arbitrarily close to $\theta$ such that
$d(T_{\bar\pi_{\theta_+}}^\p(X),T_{\pi_{\theta_+}}^\p(X))> C\left\|\bar\pi-\pi\right\|_\infty$ and
$d(T_{\bar\pi_{\theta_+}}^\p(X),T_{\pi_{\theta_+}}^\p(X))$ is arbitrarily close to $C\left\|\bar\pi-\pi\right\|_\infty
$.
We recall that $T_{\bar\pi_{\theta_+}}^\p(X)\in \dgm\left(\p^*_{\bar\pi_{\theta_+}(1)}\right)\setminus\{\Delta\}$ and
$T_{\pi_{\theta_+}}^\p(X)\in \dgm\left(\p^*_{\pi_{\theta_+}(1)}\right)\setminus\{\Delta\}$.
On the other hand, for every $t\in[0,1]$ the inequalities 
\begin{equation}\label{eiwjoejwocw}
|\bar \pi^a(t)-\pi^a(t)|,|\bar \pi^b(t)-\pi^b(t)|\le \left\|\bar\pi-\pi\right\|_\infty\le \frac{c}{C}
\end{equation}
hold.
By applying Proposition~\ref{ghjywuikdc} together with Equation~~\ref{edoiescrfhnhkerjn} and~
\ref{eiwjoejwocw}, the stability of the matching distance (Theorem~\ref{matchingstabilitythm}), and the hypothesis $\left\|\bar\pi-\pi\right\|_\infty\le c/
C$
we obtain the inequalities
\begin{equation}\label{sdcwejcweilwc}
d_B\left(\dgm\left(\p_{\bar\pi_{ \theta_+}(1)}^*\right),\dgm\left(\p_{\pi_{\theta_+}(1)}^*\right)\right)
\le C\|\bar\pi-\pi\|_{\infty}\le c,
\end{equation}
so that a cornerpoint $Y_{\theta_+}\in \dgm\left(\p^*_{\pi_{\theta_+}(1)}\right)$ exists such that
$d\left(Y_{\theta_+},T_{\bar\pi_{\theta_+}}^\p(X)\right)\le C\|\bar\pi-\pi\|_{\infty}
\le c$.
Since $\p\in \mathcal{F}_{U,c}$, we have that $d\left(T_{\bar\pi_{\theta_+}}^\p(X),\Delta\right)> 2c$.
Therefore,
$d\left(Y_{\theta_+},\Delta\right)\ge d\left(T_{\bar\pi_{\theta_+}}^\p(X),\Delta\right)-d\left(Y_{\theta_+},T_{\bar\pi_{\theta_+}}^\p(X)\right)>2c-c=c$.
It follows that $Y_{\theta_+}\neq\Delta$ and
\[
d\left(Y_{\theta_+},T_{\bar\pi_{\theta_+}}^\p(X)\right)
\le C \|\bar\pi-\pi\|_{\infty}
<
d(T_{\pi_{\theta_+}}^\p(X),T_{\bar\pi_{\theta_+}}^\p(X)),
\]
so that $Y_{\theta_+}\neq T_{\pi_{\theta_+}}^\p(X)$.
Analogously to Equation~\ref{sdcwejcweilwc}, we have
\[
d_B\left(\dgm\left(\p_{\bar\pi_{ \theta}(1)}^*\right),\dgm\left(\p_{\pi_{\theta}(1)}^*\right)\right)
\le C\|\bar\pi-\pi\|_{\infty}\le c.
\]
Since $\theta_+$ is arbitrarily close to $\theta$, Theorem~\ref{matchingstabilitythm} implies that
a cornerpoint $Z\in \dgm\left(\p^*_{\pi_{\theta}(1)}\right)$ exists such that
the inequality $d\left(Z,T_{\bar\pi_{\theta}}^\p(X)\right)\le C \|\bar\pi-\pi\|_{\infty}$ holds,
where $Z$ is the limit of the previously considered points $Y_{\theta_+}$. 
Furthermore, we have 
$d(T_{\pi_{\theta}}^\p(X),T_{\bar\pi_{\theta}}^\p(X))\le C\|\bar\pi-\pi\|_{\infty}$.
If $Z\neq T_{\pi_{\theta}}^\p(X)$, then $\dgm\left(\p^*_{\pi_{\theta}(1)}\right)$ contains at least two points, $Z$ and $T_{\pi_{\theta}}^\p(X)$, that have a distance less than or equal to $C\|\bar\pi-\pi\|_{\infty}$ from $T_{\bar\pi_{\theta}}^\p(X)$, and hence these two points have a distance less than or equal to $2C\|\bar\pi-\pi\|_{\infty}\le 2c$ from each other. 
If $Z= T_{\pi_{\theta}}^\p(X)$, then this point is double in $\dgm\left(\p^*_{\pi_{\theta}(1)}\right)$,
because of the construction of $Z$ and the inequality $Y_{\theta_+}\neq T_{\pi_{\theta_+}}^\p(X)$.
Both cases contradict the assumption that $\p\in \mathcal{F}_{U,c}$.

Therefore, if $\left\|\bar\pi-\pi\right\|_\infty\le \frac{c}{C}$, then $\theta=1$, and hence, $d(T_{\bar\pi}^\p(X),T_{\pi}^\p(X))=
d(T_{\bar\pi_1}^\p(X),T_{\pi_1}^\p(X))
\le C \left\|\bar\pi-\pi\right\|_\infty$.
\end{proof}


A consequence of Proposition~\ref{transcontinuity} is that the transport
along a path in $U$ is continuous with respect to changes in the path. 
From this and the discreteness of cornerpoints in a persistence diagram, we obtain the following.

\begin{proposition}\label{homotopy}
If two paths $\pi,\pi'$ in $U$ are homotopic to each other relatively to their common extrema, then $T^\p_{\pi}(X)= T^\p_{\pi'}(X)$, for every $X$ in $\dgm(\p^*_{\pi(0)})$.
\end{proposition}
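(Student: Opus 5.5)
Let $H\colon[0,1]\times[0,1]\to U$ be a homotopy relative to endpoints, with $H(\cdot,0)=\pi$, $H(\cdot,1)=\pi'$, $H(0,s)=\pi(0)$ and $H(1,s)=\pi(1)$ for every $s$. Write $\pi_s:=H(\cdot,s)$. The plan is to show that the map $s\mapsto T^\p_{\pi_s}(X)$ is locally constant on $[0,1]$. Since $[0,1]$ is connected, it is then constant, and the claim $T^\p_{\pi}(X)=T^\p_{\pi'}(X)$ follows.

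\textbf{Step 1: uniform continuity of the homotopy.} The image $H([0,1]^2)$ is a compact subset of $U\subseteq\ ]0,1[\,\times\R$, so $K:=\max_{s}\|\pi^b_s\|_\infty$ is finite. Set $C:=4(\lVert \p\rVert_\infty+K)+3$; this constant dominates the constant in Proposition~\ref{transcontinuity} for every $\bar\pi=\pi_s$. Since $H$ is uniformly continuous, there is $\delta>0$ such that $|s-s'|<\delta$ implies $\|\pi_s-\pi_{s'}\|_\infty\le \frac{c}{C}$. The paths $\pi_s$ and $\pi_{s'}$ share the starting point $\pi(0)$. Proposition~\ref{transcontinuity}, applied with $\bar\pi=\pi_s$, therefore gives
\[
d\bigl(T^\p_{\pi_s}(X),T^\p_{\pi_{s'}}(X)\bigr)\le C\|\pi_s-\pi_{s'}\|_\infty\le c .
\]
This step is where the hypothesis that the homotopy stays inside $U$ is used: it makes the bound uniform in $s$.

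\textbf{Step 2: discreteness.} Both $T^\p_{\pi_s}(X)$ and $T^\p_{\pi_{s'}}(X)$ lie in the same diagram $\dgm(\p^*_{\pi(1)})$, because the endpoint $\pi(1)$ is fixed along the homotopy. Since $\p\in\mathcal{F}_{U,c}$, distinct cornerpoints of this diagram are at distance greater than $2c$. The trivial cornerpoint is at distance greater than $2c$ from every non-trivial one, since Proposition~\ref{sdsuhfrlszdchj} says $\Delta$ is transported only to $\Delta$. A distance at most $c<2c$ therefore forces $T^\p_{\pi_s}(X)=T^\p_{\pi_{s'}}(X)$ whenever $|s-s'|<\delta$.

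\textbf{Step 3: conclusion.} Cover $[0,1]$ by finitely many steps $0=s_0<s_1<\dots<s_N=1$ with $s_{i+1}-s_i<\delta$. By Step 2 the transported point agrees at consecutive $s_i$, so chaining gives $T^\p_{\pi}(X)=T^\p_{\pi'}(X)$.

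The main obstacle I expect is in Step 1. Proposition~\ref{transcontinuity} is stated with the constant $C$ depending on $\|\bar\pi^b\|_\infty$, so I must check that a single constant works uniformly across all $s$. Replacing $\|\bar\pi^b\|_\infty$ by the larger $K$ only enlarges $C$, so the threshold $c/C$ shrinks and the inequality still holds. This should need only a remark that the proof goes through with the larger constant.
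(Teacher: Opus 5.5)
Your proof is correct and uses the same ingredients as the paper's---continuity of the transport with respect to the path (Proposition~\ref{transcontinuity}) together with the $2c$-separation of cornerpoints for $\p\in\mathcal{F}_{U,c}$. The difference is that you apply them directly along the homotopy $s\mapsto\pi_s$, whereas the paper argues by contradiction via the null-homotopic loop $\pi'^{-1}\ast\pi$ and leaves implicit the uniform-in-$s$ estimate that your compactness choice of $C$ makes explicit.

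One small correction: the separation of non-trivial cornerpoints from $\Delta$ comes from the definition of $\mathcal{F}_{U,c}$, not from Proposition~\ref{sdsuhfrlszdchj}. What Proposition~\ref{sdsuhfrlszdchj} gives you is that the case $X=\Delta$ is trivial and that otherwise both transported points are non-trivial.
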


\begin{proof}
Suppose, by contradiction, that there exists $X$ in $\dgm(\p^*_{\pi(0)})$ with $Y=T^\p_{\pi}(X)\neq T^\p_{\pi'}(X)=Y'$. 
Because $T^\p_{\pi'}$ is a bijection, there must be $X'\neq X$ in $\dgm(\p^*_{\pi(0)})$ such that $T^\p_{\pi'}(X')=Y$.
We note that the concatenation $\pi'^{-1}\ast \pi$ is a path in $U$ and, because $\pi$ and $\pi'$ are homotopic, it is homotopic to the constant path on $\pi(0)$, $c_{\pi(0)}$ (check it!). 
By Proposition~\ref{sdsuhfrlszdchj}, each path $\pi'^{-1}\ast \pi$ and $c_{\pi(0)}$ induce a unique path on persistence diagrams. 
Thus, because the constant path $c_{\pi(0)}$ induces the identical transport, $T^\p_\pi$
depends continuously on the path $\pi$(Proposition~\ref{transcontinuity}) and the non-trivial cornerpoints of persistence diagrams form discrete sets, then $T^\p_{\pi'^{-1}\ast \pi}$ is also the identical transport, contradicting $X\neq X'$.
\end{proof}



We recall that being homotopic for two paths is an equivalence relation and that the fundamental group of a topological space $U$ at a point $x\in U$ is the group of such equivalence classes of closed paths (loops) in $U$ (see~\cite[Ch. 1]{Ha02}). 

\begin{corollary}\label{loops}
The map $T^\p$ taking each equivalence class $[\pi]$ of homotopic loops at $(a,b)$ to the bijection $T^\p_{\pi}$ is a well-defined homomorphism from the fundamental group of $U$ at $(a,b)\in U$ to the group of permutations of $\dgm\left(\p_{(a,b)}^*\right)$.
\end{corollary}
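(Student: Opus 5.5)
The argument checks three things in turn: that $T^\p$ is well defined on homotopy classes, that each $T^\p_\pi$ really is a permutation of $\dgm\left(\p_{(a,b)}^*\right)$, and that the assignment respects the group operations.

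\emph{Well-definedness.} Let $\pi,\pi'$ be loops at $(a,b)$ in $U$ representing the same class of the fundamental group. By definition they are homotopic relative to their common extrema, which both equal $(a,b)$. Proposition~\ref{homotopy} then gives $T^\p_\pi(X)=T^\p_{\pi'}(X)$ for every $X\in\dgm\left(\p_{(a,b)}^*\right)$. Hence the value $T^\p([\pi]):=T^\p_\pi$ does not depend on the chosen representative.

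\emph{Permutations.} For a loop $\pi$ at $(a,b)$ we have $\pi(0)=\pi(1)=(a,b)$, so $T^\p_\pi$ maps $\dgm\left(\p_{(a,b)}^*\right)$ to itself. We already know that $T^\p_\pi$ is a bijection with inverse $T^\p_{\pi^{-1}}$. I would justify this directly from the uniqueness part of Proposition~\ref{sdsuhfrlszdchj}. If $P$ is the path induced by $\pi$ starting at $X$, then the reversed path $t\mapsto P(1-t)$ is induced by $\pi^{-1}$. By uniqueness it is the path induced by $\pi^{-1}$ from $T^\p_\pi(X)$, so $T^\p_{\pi^{-1}}\bigl(T^\p_\pi(X)\bigr)=X$. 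The symmetric argument gives the other composition. Proposition~\ref{sdsuhfrlszdchj} also shows that $\Delta$ is fixed and that non-trivial cornerpoints are sent to non-trivial ones, so $T^\p_\pi$ is a permutation of the multiset, which by Corollary~\ref{corfiniteness} has finitely many non-trivial points. Since the constant loop $c_{(a,b)}$ is sent to the identity by uniqueness of induced paths, the identity element is preserved.

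\emph{Homomorphism property.} For loops $\pi,\pi'$ at $(a,b)$ we have $[\pi]\cdot[\pi']=[\pi\ast\pi']$. Concatenating the induced paths gives a path induced by $\pi\ast\pi'$, so by uniqueness
\[
T^\p_{\pi\ast\pi'}=T^\p_{\pi'}\,T^\p_{\pi}.
\]
This is exactly the composition rule stated before Proposition~\ref{transcontinuity}.

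The main subtlety is an order-of-composition issue. The rule above reverses order, so with the convention in which $\pi\ast\pi'$ traverses $\pi$ first, $T^\p$ is literally an anti-homomorphism. I would resolve this in one of two ways. The first is to let permutations act on the right, that is, give the permutation group the opposite multiplication $\sigma\cdot\tau:=\tau\sigma$; then $T^\p([\pi][\pi'])=T^\p([\pi])\cdot T^\p([\pi'])$ holds. The second is to observe that $[\pi]\mapsto T^\p_{\pi^{-1}}$ is a genuine homomorphism, since inversion is an anti-automorphism. Either way the assignment is a group homomorphism up to this standard convention, and its image is a subgroup of the permutations of $\dgm\left(\p_{(a,b)}^*\right)$.
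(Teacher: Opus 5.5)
Your proposal is correct and follows the same route as the paper. The paper deduces the corollary from Proposition~\ref{homotopy} together with the bijectivity of $T^\p_\pi$ for loops based at $(a,b)$, using $T^\p_{\pi^{-1}}$ as inverse and the rule $T^\p_{\pi\ast\pi'}=T^\p_{\pi'}T^\p_{\pi}$; you derive the same facts in more detail from the uniqueness in Proposition~\ref{sdsuhfrlszdchj}.

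Your remark on the order of composition is accurate, and the paper's one-line proof does not address it. With $\pi\ast\pi'$ traversing $\pi$ first, the map is an anti-homomorphism for the usual composition of permutations. The statement should therefore be read with permutations acting on the right, or via $[\pi]\mapsto T^\p_{\pi^{-1}}$. Either reading leaves unchanged the image that defines the persistent monodromy group.
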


\begin{proof}
This follows from Proposition~\ref{homotopy}
and the fact that $T_\pi^\p$ gives a bijection between $\dgm(\p^*_{(a,b)})$ and itself because, in this case, $(a,b)=\pi(0)=\pi(1)$.
\end{proof}

The following is a consequence of Corollary~\ref{loops}.

\begin{corollary}\label{generators}
If the set $\{[\pi_j]\}_{j\in J}$ of homotopy classes of loops based at a point $(a,b)\in U$ is a set of generators for the fundamental group of $U$ at $(a,b)$, then the persistent monodromy group of $\p$ with respect to $U$ is generated by the permutations $T^\p_{\pi_j}$.
\end{corollary}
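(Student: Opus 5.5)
The plan is to deduce Corollary~\ref{generators} directly from Corollary~\ref{loops}, using only the elementary fact that the image of a group homomorphism is generated by the images of any generating set of the domain. I read the \emph{persistent monodromy group of $\p$ with respect to $U$} (at $(a,b)$) as the subgroup of permutations of $\dgm\left(\p_{(a,b)}^*\right)$ consisting of all bijections $T^\p_\pi$ with $\pi$ a loop in $U$ based at $(a,b)$. In other words, it is the image $T^\p(\pi_1(U,(a,b)))$ of the homomorphism of Corollary~\ref{loops}.

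First, I will check that this set is the image of $T^\p$ as a set. Every loop $\pi$ based at $(a,b)$ represents a class $[\pi]$, and $T^\p([\pi])=T^\p_\pi$. This is well defined by Proposition~\ref{homotopy}.

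Next, I will show that the image is generated by the $T^\p_{\pi_j}$. Take an arbitrary class $[\pi]$. Since $\{[\pi_j]\}_{j\in J}$ generates the fundamental group, we can write
\[
[\pi]=[\pi_{j_1}]^{\epsilon_1}\cdots[\pi_{j_r}]^{\epsilon_r}
\]
with $\epsilon_i\in\{1,-1\}$. This means $\pi$ is homotopic, relative to the endpoints, to a concatenation of the loops $\pi_{j_i}$ or their inverses $\pi_{j_i}^{-1}$. We then apply the rules $T^\p_{\pi\ast\pi'}=T^\p_{\pi'}T^\p_{\pi}$ and $T^\p_{\pi^{-1}}=(T^\p_\pi)^{-1}$, stated after the definition of transport, together with Proposition~\ref{homotopy}. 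These give $T^\p_\pi$ as a word in the $T^\p_{\pi_{j_i}}^{\pm1}$.

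The only care needed concerns the order reversal $T^\p_{\pi\ast\pi'}=T^\p_{\pi'}T^\p_\pi$. It makes $T^\p$ a homomorphism or an anti-homomorphism depending on the composition convention. Either way, the generated subgroup is the same, since a subgroup generated by a set is closed under products in both orders and under inverses.

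Conversely, each $T^\p_{\pi_j}$ lies in the monodromy group. So the group generated by the $T^\p_{\pi_j}$ coincides with the monodromy group. This proves the statement.

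The main obstacle is not technical. It consists in pinning down the definition of the persistent monodromy group, which the statement uses without having introduced it, and in confirming the composition convention so that it matches Corollary~\ref{loops}.
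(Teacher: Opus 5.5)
Your proposal is correct and follows the paper's route: the paper derives this corollary directly from Corollary~\ref{loops}. The persistent monodromy group is defined immediately afterwards as the image of $T^\p$, and the image of a group homomorphism is generated by the images of any generating set of its domain. Your further point is also correct: with the usual composition convention, the rule $T^\p_{\pi\ast\pi'}=T^\p_{\pi'}T^\p_{\pi}$ actually makes $T^\p$ an anti-homomorphism, and this does not change the subgroup generated by the $T^\p_{\pi_j}$.
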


\begin{definition}
The image of the group homomorphism $T^\p$ is called the \emph{persistent monodromy group} of the filtering function $\p$ with respect to $U$.
\end{definition}

The following is an example of a function $\p$ with non-trivial persistent monodromy group with respect to $U\subseteq \mathrm{Reg}(\p)$.

\begin{ex}\label{ex_monodromy}
Consider the filtering function $\p=(\p_1,\p_2)\colon\R^2\to \R^2$
with
$\p_1(x,y)=x$, and
$$\p_2(x,y) = \left\{
\begin{array}{ll}
-x & \mbox{ if } y=0\\
-x+1 & \mbox{ if } y=1\\
-2x & \mbox{ if } y=2\\
-2x+\frac{5}{4} & \mbox{ if } y=3\\
\end{array}\right.,$$
$\p_2(x,y)$ then being extended linearly for every $x$ on the segments respectively joining $(x,0)$ with $(x,1)$, $(x,1)$ with $(x,2)$, and $(x,2)$ to $(x,3)$. 
On the half-lines $\{(x,y)\in \R^2\mid y<0\}$ and $\{(x,y)\in \R^2\mid y>3\}$, $\p_2$ is then taken with constant slope $-1$ in the variable $y$. 
The graph of $\p_2$ is shown in Figure~\ref{figmon1}.

\begin{figure}
\begin{center}
\includegraphics[width=0.7\textwidth]{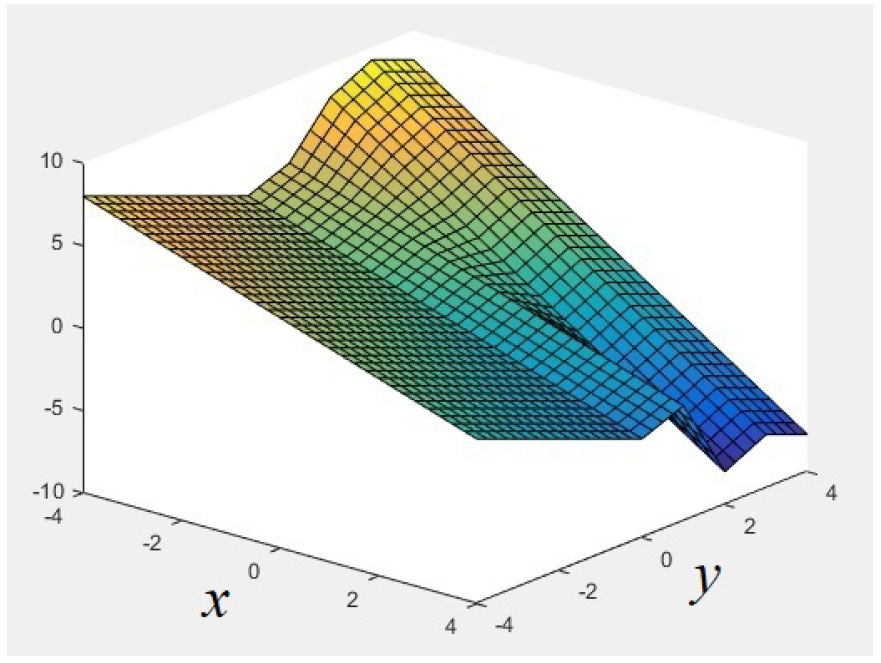}
\end{center}
\caption{The graph of $\p_2$ in Example~\ref{ex_monodromy}.}
\label{figmon1}
\end{figure}

The persistence diagram in degree $0$ of the function $\p_{\left(1/4,0\right)}^*$ contains a double point, so that $\left(1/4,0\right)$ is a singular pair for $\p$. 
Considering a loop $\gamma$ around the point $(1/4,0)$ in the parameter space, we can follow the different trajectories of two points of the persistence diagram $\dgm\left(\p_{(a,b)}^*\right)$. 
Figure~\ref{fig_inversion} shows that each of this trajectories ends at the starting point of the other. 
It is possible to adapt this example and get a smooth filtering function defined on a smooth closed manifold, revealing a similar phenomenon of monodromy, and falling in the setting considered in this chapter. 

\begin{figure}
\begin{center}
\includegraphics[width=\textwidth]{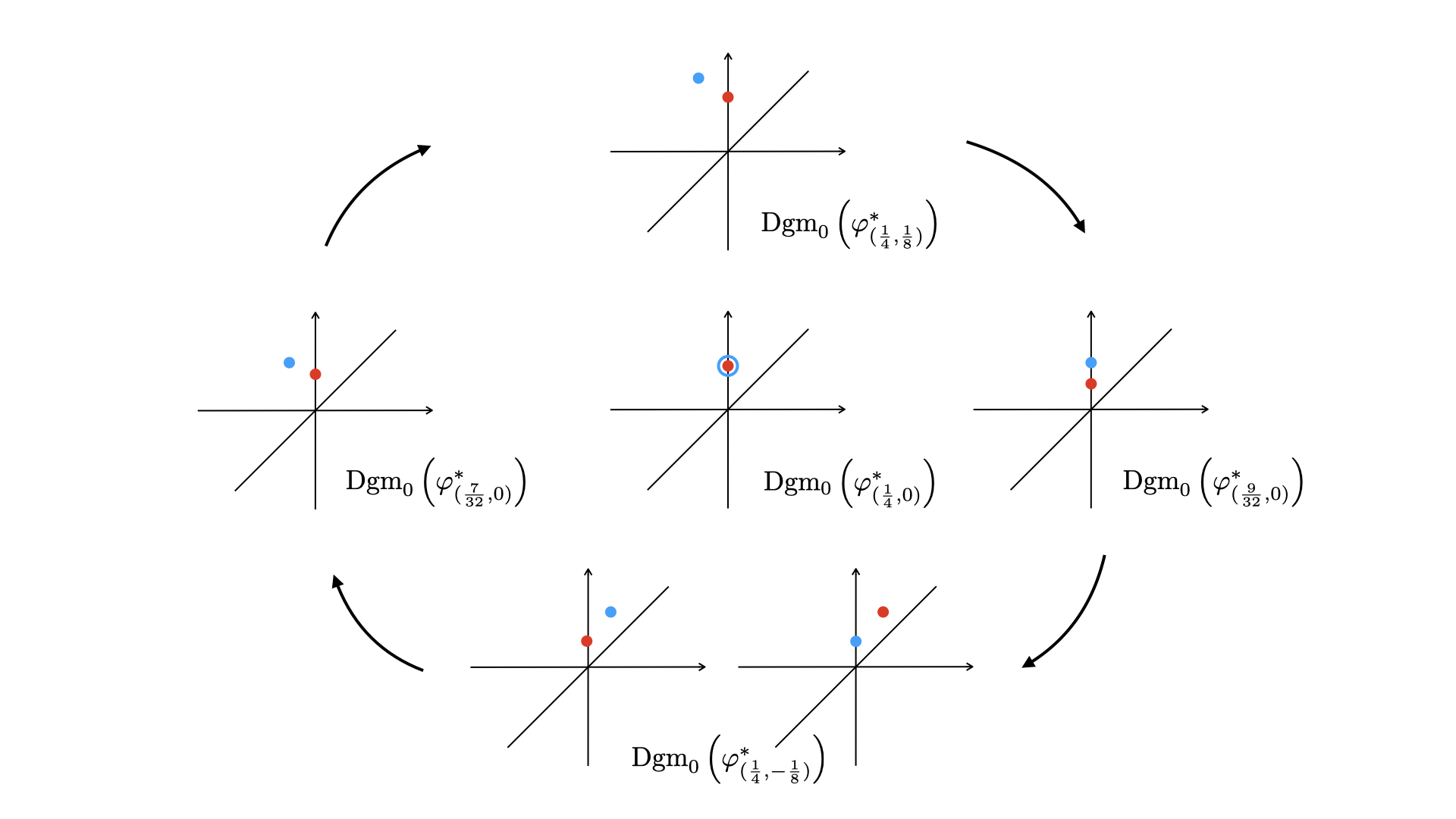}
\end{center}
\caption{Schematic of the evolution of the persistence diagram $\dgm(\p_{\gamma(t)})$ in degree $0$ for
Example~\ref{ex_monodromy} as $t$ goes from $0$ to $1$. The red and blue dots are proper cornerpoints.
The persistence diagrams are not scaled, but the respective positions of cornerpoints are
preserved.
Similarly, the path $\gamma$ in $\mathcal{P}(\Lambda^+)$ does not actually follow a geometric circle,
but is a simple closed curve.}
\label{fig_inversion}
\end{figure}
\end{ex}

\chapter{Natural pseudo-distance and Group Equivariant Non-Expansive Operators}
\chaptermark{Natural pseudo-distance and GENEOs}
\label{NPD&GENEOs}

Persistent homology theory is invariant under the action given by the precomposition of a group of domain homeomorphisms. 
In this chapter, we present a way to introduce a sensitivity to group action into the study of the topology of objects through group equivariant non-expansive operators. 
We present the natural pseudo-distance, which measures the distance between functions by taking into account the action of a given group. 
We then relate these two notions with persistence theory, and study how their metric aspects interlace.


\section{Perception pairs and topological groups}
\label{TG}

In this section, we introduce perception pairs. 
These model the data as a collection of measurements, encoded as a set of continuous functions, and a group of chosen transformations to which we would like our data to be sensitive. 
Preliminarily, we show some
results on the metric structure of the group.

Consider a set $\Phi$ of bounded functions from a set $X$ to $\mathbb{R}^k$ with $k \in \mathbb{N}$. 
Since $X$ is the domain of all functions in $\Phi$, we say that $X$ is the domain of $\Phi$, and write $\mathrm{dom}(\Phi)=X$.
The functions in $\Phi$ are called
\myemph{admissible functions} or
\myemph{admissible measurements}
or \myemph{admissible signals}, and represent the data that can be produced by the measurement tools or observers in which we are interested.


The set $X$ inherits the structure of a topological space from the (extended) pseudo-metric $D_X$ which distinguishes points only if they are seen as different by some measurement in $\Phi$:
\begin{equation}\label{defDX}
D_X(x_1,x_2)=\sup_{\varphi \in \Phi}\lVert
\varphi (x_1) - \varphi (x_2)\rVert_\infty
\end{equation}
for every $x_1,x_2\in X$.
We observe that every function $\varphi\in\Phi$ is non-expansive with respect to $D_X$ and 
the uniform metric on $\mathbb{R}^k$.
Thus, this choice of pseudo-metric on $X$ guarantees that all functions in $\Phi$ are continuous, satisfying the need of modeling stability of data.

In the following, we will denote by $\mathrm{Aut}_\Phi(X)$ the set of all bijections $g \colon X\to X$ such that
$\varphi g,\varphi g^{-1}\in\Phi$ for every $\varphi\in\Phi$. 
This set is a group with respect to the composition of functions. 
Every element in $\mathrm{Aut}_\Phi(X)$ is not just a bijection, but also an isometry:

\begin{proposition}
\label{propgisometry}
Every $g\in \mathrm{Aut}_\Phi(X)$ is an isometry 
with respect to $D_X$.
\end{proposition}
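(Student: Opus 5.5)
The plan is to verify directly that $D_X(g(x_1),g(x_2))=D_X(x_1,x_2)$ for all $x_1,x_2\in X$. Bijectivity of $g$ holds by the definition of $\mathrm{Aut}_\Phi(X)$, so preserving distances is the only thing left to check.

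First, fix $x_1,x_2\in X$ and $g\in\mathrm{Aut}_\Phi(X)$. By definition (\ref{defDX}),
\[
D_X(g(x_1),g(x_2))=\sup_{\varphi\in\Phi}\lVert \varphi g(x_1)-\varphi g(x_2)\rVert_\infty .
\]
Consider the map $R_g\colon\Phi\to\Phi$ given by $R_g(\varphi)=\varphi g$. It is well defined because $\varphi g\in\Phi$ for every $\varphi\in\Phi$.

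The key step is to show that $R_g$ is a bijection of $\Phi$ onto itself, with inverse $R_{g^{-1}}(\varphi)=\varphi g^{-1}$. The map $R_{g^{-1}}$ is well defined because $\varphi g^{-1}\in\Phi$ is also part of the definition of $\mathrm{Aut}_\Phi(X)$. The identities $R_{g^{-1}}R_g=\id_\Phi$ and $R_gR_{g^{-1}}=\id_\Phi$ hold trivially. Consequently the set $\{\varphi g\mid \varphi\in\Phi\}$ equals $\Phi$.

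It follows that the supremum above is taken over the same set of values as the supremum defining $D_X(x_1,x_2)$. Explicitly,
\[
\sup_{\varphi\in\Phi}\lVert \varphi g(x_1)-\varphi g(x_2)\rVert_\infty=\sup_{\psi\in\Phi}\lVert \psi(x_1)-\psi(x_2)\rVert_\infty=D_X(x_1,x_2).
\]
This equality also holds when the value is $\infty$, since $D_X$ may be an extended pseudo-metric, and the argument does not depend on finiteness.

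There is no real obstacle here. The only point needing care is that surjectivity of $R_g$ onto $\Phi$ really requires the hypothesis $\varphi g^{-1}\in\Phi$. With only $\varphi g\in\Phi$ we would get just the inequality $D_X(g(x_1),g(x_2))\le D_X(x_1,x_2)$, that is, non-expansiveness. An alternative route is to prove that inequality for both $g$ and $g^{-1}$ and then apply it to $g^{-1}$ at the points $g(x_1),g(x_2)$, which gives the reverse inequality.
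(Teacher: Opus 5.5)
Your proposal is correct and follows essentially the same route as the paper. The paper also introduces $R_g(\varphi)=\varphi g$, uses $\varphi g^{-1}\in\Phi$ to get $R_g(\Phi)=\Phi$ via $\varphi=R_g(R_{g^{-1}}(\varphi))$, and rewrites the supremum defining $D_X(g(x),g(x'))$ as the one defining $D_X(x,x')$, with bijectivity of $g$ then giving an isometry.
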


\begin{proof}
By definition of $\mathrm{Aut}_\Phi(X)$, 
the map $R_g \colon \Phi\to\Phi$ taking each function $\varphi$ to $\varphi g$ is surjective, since $\varphi=R_g\left(R_{g^{-1}}(\varphi)\right)$.
Hence $R_g(\Phi)=\Phi$. Therefore, 
for every $x, x'\in X$,
\begin{align*}
D_X(g(x),g(x')) &= \sup_{\varphi \in \Phi}\left\lVert \varphi(g(x))-\varphi(g(x'))\right\rVert_\infty
\\&= \sup_{\varphi \in R_g(\Phi)}\lVert\varphi(x)-\varphi(x')\rVert_\infty
\\&= \sup_{\varphi\in \Phi}\lVert\varphi(x)-\varphi(x')\rVert_\infty= D_X(x,x'). 
\end{align*}
Since $g$ is bijective, it follows that $g$ is an isometry with respect to $D_X$.
\end{proof}
\begin{definition}
If $G$ is a subgroup of $\mathrm{Aut}_\Phi(X)$, we say that $(\Phi,G)$ is a \myemph{perception pair}.
\end{definition}
The group $G$ is an extended pseudo-metric space with respect to $D_G$ defined, for every $g_1,g_2\in G$, by setting
\begin{equation} \label{eq:digi}
D_G(g_1,g_2):=\sup_{\varphi \in \Phi}\lVert \varphi g_1 - \varphi g_2\rVert_\infty.
\end{equation}
If $X$ is a bounded pseudo-metric space with respect to $D_X$, then $D_G$ is a bounded pseudo-metric. Indeed, 
\begin{align*}
          D_G(g_1,g_2)
          &= \sup_{\varphi\in\Phi} \lVert \varphi g_1 - \varphi g_2\rVert_\infty
          \\&= \sup_{\varphi\in\Phi} \sup_{x\in X}\lVert\p(g_1(x))-\p(g_2(x))\rVert_\infty
          \\&= \sup_{x\in X}\sup_{\varphi\in\Phi} \lVert\p(g_1(x))-\p(g_2(x))\rVert_\infty
          \\&= \sup_{x\in X}D_X(g_1(x),g_2(x)).
\end{align*}
It also follows that $D_G$ coincides with the usual uniform metric on $G$.

\begin{exercise}
Prove that $D_X$ and $D_G$ are (extended) pseudo-metrics.
\end{exercise}
In addition to the action of $G$ on $X$, the actions of $G$ on $\Phi$ and on $G$ by composition are also isometric, as shown by the following proposition, whose simple proof is left to the reader.

\begin{proposition}\label{propDphidU}
The following statements hold:
\begin{enumerate}
  \item $\lVert \varphi_1 g - \varphi_2 g\rVert_\infty =\lVert \varphi_1 - \varphi_2\rVert_\infty$ for every $\varphi_1,\varphi_2\in\Phi$ and $g\in G$;
  \item $D_G(gg_1 ,gg_2 )=D_G(g_1 g,g_2 g)=D_G(g_1,g_2)$
  for every $g_1,g_2,g\in G$.
\end{enumerate}
\end{proposition}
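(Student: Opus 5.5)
The plan is to prove both statements directly from the definitions of the sup-norm and of $D_G$, using only the fact that every element of $G$ is a bijection of $X$ and that $G$ is closed under composition and inversion.

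For statement 1, I would expand the norm as $\lVert \varphi_1 g - \varphi_2 g\rVert_\infty=\sup_{x\in X}\lVert\varphi_1(g(x))-\varphi_2(g(x))\rVert_\infty$. Since $g$ is a bijection of $X$, the substitution $y=g(x)$ makes $y$ run over all of $X$ exactly when $x$ does. The supremum is therefore equal to $\sup_{y\in X}\lVert\varphi_1(y)-\varphi_2(y)\rVert_\infty=\lVert\varphi_1-\varphi_2\rVert_\infty$. Nothing beyond surjectivity of $g$ is needed here.

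For statement 2, I would treat the two invariances separately.

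\emph{Right multiplication.} By definition, $D_G(g_1g,g_2g)=\sup_{\varphi\in\Phi}\lVert(\varphi g_1)g-(\varphi g_2)g\rVert_\infty$. Applying statement 1 with $\varphi g_1$ and $\varphi g_2$ in place of $\varphi_1,\varphi_2$ removes the trailing $g$ term by term, and what remains is $D_G(g_1,g_2)$. Statement 1 is applicable because $\varphi g_1,\varphi g_2\in\Phi$, as $g_1,g_2\in\mathrm{Aut}_\Phi(X)$. In fact only the bijectivity of $g$ is used, so membership in $\Phi$ does not really matter.

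\emph{Left multiplication.} Here $D_G(gg_1,gg_2)=\sup_{\varphi\in\Phi}\lVert(\varphi g)g_1-(\varphi g)g_2\rVert_\infty$. I would use the argument already given in the proof of Proposition~\ref{propgisometry}: the map $R_g\colon\varphi\mapsto\varphi g$ is a bijection of $\Phi$ onto itself, with inverse $R_{g^{-1}}$, so $R_g(\Phi)=\Phi$. Reindexing the supremum over $\psi=\varphi g$ then gives $\sup_{\psi\in\Phi}\lVert\psi g_1-\psi g_2\rVert_\infty=D_G(g_1,g_2)$.

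As an alternative for the left case, one can use the identity $D_G(g_1,g_2)=\sup_xD_X(g_1(x),g_2(x))$ together with the fact that $g$ is a $D_X$-isometry. Both routes are fine.

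There is no real obstacle. The only point needing care is the left-invariance step: it requires $R_g(\Phi)=\Phi$, not merely $R_g(\Phi)\subseteq\Phi$. This is exactly where the condition $\varphi g^{-1}\in\Phi$ in the definition of $\mathrm{Aut}_\Phi(X)$ is used, since it supplies the inverse map $R_{g^{-1}}$.
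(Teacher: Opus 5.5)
Your proof is correct. The paper gives no proof of its own (it leaves this proposition to the reader as an exercise). Your argument reindexes the supremum via the bijection $g$ for statement 1 and for right-invariance, and via $R_g(\Phi)=\Phi$ for left-invariance; this is the same technique the paper uses in the proof of Proposition~\ref{propgisometry}, so it is the intended route, and you correctly locate the condition $\varphi g^{-1}\in\Phi$ as what is needed for left-invariance.
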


\begin{exercise}
Prove Proposition~\ref{propDphidU}.
\end{exercise}

We recall that a \myemph{topological group} is a group $(\hat G, \star)$ endowed with a topology such that 
composition and inversion
are continuous operations in $\hat G$, 
when the product topology is considered on $\hat G\times\hat G$.
We also recall that a \myemph{(right) action} of a group $\hat G$ on a set $\hat \Phi$ is a map $\rho: \hat \Phi\times \hat G\to \hat \Phi$ such that the following two properties hold:
\begin{enumerate}
  \item $\rho(\varphi,e)=\varphi$ for any $\varphi\in \hat \Phi$ (where $e$ is the identity element of $\hat G$);
  \item $\rho(\rho(\varphi,g_1),g_2)=\rho(\varphi, g_1\star g_2)$ for any $\varphi\in \hat \Phi$ and any $g_1,g_2\in \hat G$.
\end{enumerate}
We say that $\rho$ is a \myemph{continuous (right) action} if $\hat G$ is a topological group, $\hat \Phi$ is a topological space, and the map 
$\rho$
is continuous for the product topology on
$\hat \Phi\times \hat G$. 
In our setting $G$ acts on $\Phi$ by right composition.

\begin{exercise}
Find a group and a topology on it such that the group is not a topological group with respect to such a topology.
\end{exercise}



\begin{theorem}\label{thmGtopgroup}
If $(\Phi,G)$ is a perception pair, then $(G,D_G)$ is a topological group, and the composition on the right by elements of $G$ is a continuous (right) action of $G$ on $\Phi$.
\end{theorem}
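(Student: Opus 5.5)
The plan is to verify continuity of composition, of inversion, and of the action $\rho(\varphi,g)=\varphi g$ directly through metric estimates. Each estimate will use only the definition of $D_G$ and the invariance properties in Proposition~\ref{propDphidU}. Since $D_G$ is an (extended) pseudo-metric, continuity in the induced topology can be checked with $\varepsilon$–$\delta$ arguments, or sequentially. On $\Phi$ we use the uniform (pseudo-)metric $\lVert\cdot\rVert_\infty$, and on products we use the product topology, which is induced by the maximum of the two pseudo-metrics.

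For composition, take $g_1,g_2,g_1',g_2'\in G$. By the triangle inequality and Proposition~\ref{propDphidU}.2,
\[
D_G(g_1g_2,g_1'g_2')\le D_G(g_1g_2,g_1g_2')+D_G(g_1g_2',g_1'g_2')=D_G(g_2,g_2')+D_G(g_1,g_1').
\]
Hence composition is Lipschitz with respect to the sum metric on $G\times G$, and in particular it is continuous. For inversion, apply both invariances of Proposition~\ref{propDphidU}.2: left-multiply by $g_1$ and right-multiply by $g_2$. This gives
\[
D_G(g_1^{-1},g_2^{-1})=D_G(g_1g_1^{-1}g_2,\,g_1g_2^{-1}g_2)=D_G(g_2,g_1).
\]
So inversion is an isometry. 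Before using these identities, I will check carefully how the paper's convention for the product ($g_1g_2$ meaning the composition $g_1\circ g_2$) interacts with the definition $D_G(g_1,g_2)=\sup_\varphi\lVert\varphi g_1-\varphi g_2\rVert_\infty$. In particular I want to confirm that both left and right invariance really hold. This is where I expect the only real care to be needed. Right invariance follows from $\lVert \varphi g_1 g-\varphi g_2 g\rVert_\infty=\lVert\varphi g_1-\varphi g_2\rVert_\infty$, because $g$ is a bijection of $X$. Left invariance follows from the fact that $R_g$ permutes $\Phi$, exactly as in the proof of Proposition~\ref{propgisometry}.

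For the action, first check that $\rho(\varphi,g)=\varphi g$ lies in $\Phi$, which holds by the definition of $\mathrm{Aut}_\Phi(X)$. Next verify the action axioms: $\varphi\,\mathrm{id}=\varphi$ and $(\varphi g_1)g_2=\varphi(g_1g_2)$. Note that this makes it a right action precisely because of the order of composition. For continuity, take $(\varphi,g)$ and $(\varphi',g')$ and estimate
\[
\lVert\varphi g-\varphi' g'\rVert_\infty\le\lVert\varphi g-\varphi' g\rVert_\infty+\lVert\varphi' g-\varphi' g'\rVert_\infty\le\lVert\varphi-\varphi'\rVert_\infty+D_G(g,g').
\]
The first term is handled by Proposition~\ref{propDphidU}.1. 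The second term is bounded by $D_G(g,g')$ directly from the definition of $D_G$, since $\varphi'\in\Phi$. Thus $\rho$ is $1$-Lipschitz with respect to the sum metric on $\Phi\times G$, and hence continuous.

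Finally, a remark on extended values. If $D_G$ can take the value $\infty$, the bounds above remain valid in $[0,\infty]$. Continuity only concerns small distances, so nothing changes.
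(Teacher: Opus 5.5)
Your proposal is correct and follows essentially the same route as the paper: composition is shown to be $1$-Lipschitz for the sum metric using the two invariances of Proposition~\ref{propDphidU}.2, inversion is shown to be an isometry, and the action satisfies $\lVert\varphi g-\varphi' g'\rVert_\infty\le\lVert\varphi-\varphi'\rVert_\infty+D_G(g,g')$. The only differences are cosmetic: you insert the intermediate point $g_1g_2'$ where the paper first rewrites $D_G(g_1g_2,g_1'g_2')=D_G(g_1,g_1'g_2'g_2^{-1})$, and you explicitly justify the left and right invariance of $D_G$ that the paper leaves as an exercise.
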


\begin{proof}
First we prove that the binary composition in $G$ is continuous.
Consider the 1-product metric on $G \times G$, which induces the product topology.
Consider $(g_1,g_2),(g'_1,g'_2)\in G \times G$. Using Proposition \ref{propDphidU}, we have that
\begin{align*}
D_G(g_1  g_2,g'_1 g'_2)&=D_G (g_1,g'_1 g'_2 g_2^{-1})
\\& \le D_G (g_1,g'_1) + D_G (g'_1,g'_1 g'_2 g_2^{-1})
\\& = D_G (g_1,g'_1) +D_G (\mathrm{id}, g'_2 g_2^{-1})
\\& = D_G (g_1,g'_1) +D_G (g_2, g'_2).
\end{align*}
It follows that the binary operation on $G$ is non-expansive, 
and hence continuous.

Observe now that the inverse operation is a bijection from $G$
to itself. 
Consider $h_1,h_2 \in G$.
By Proposition \ref{propDphidU}, 
\begin{align*}
D_G(h_1^{-1},h_2^{-1})  &= D_G(h_1^{-1}h_2,h_2^{-1}h_2)
\\& = D_G(h_1^{-1}h_2,\mathrm{id})
\\& = D_G(h_1^{-1}h_2,h_1^{-1}h_1)
\\& = D_G(h_2,h_1)
\\& = D_G(h_1,h_2).
\end{align*}
Thus,  the function mapping the elements of $G$ to their respective inverses is an isometry, and hence continuous.

The continuity of the action 
of $G$ on $\Phi$ by (right) composition is proved by the following inequalities:
\begin{align*}
\lVert \varphi f - \psi g \rVert_\infty & \le \lVert \varphi f - \varphi g \rVert_\infty + \lVert \varphi g - \psi g \rVert_\infty
\\& = \lVert \varphi f - \varphi g \rVert_\infty + \lVert \varphi - \psi \rVert_\infty
\\& \le D_G(f,g)+ \lVert \varphi - \psi\rVert_\infty.
\end{align*}

\end{proof}




\section{Natural pseudo-distance
}
\label{NPWRTAG}

In this section, we introduce a pseudo-metric between functions that takes into account how a certain given group of isometries acts on the functions space\cite{DoFr04,DoFr07,DoFr09,BeFrGiQu19}. 
This will be considered as the ground truth distance in this chapter.

Consider a perception pair $(\Phi,G)$ with $X=\mathrm{dom}(\Phi)$.

\begin{definition}
The \myemph{natural pseudo-distance} associated with the group $G$ is the function $d_G\colon\Phi\times \Phi\to \mathbb{R}$ defined by setting
\[
d_G(\varphi_1,\varphi_2):=\inf_{g\in G}\lVert \p_1-\p_2 g\rVert_\infty.
\]
\end{definition}

\begin{proposition}
\label{propPM}
The function $d_G$ is a pseudo-metric.
\end{proposition}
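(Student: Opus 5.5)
We verify the defining properties of an (extended) pseudo-metric from Definition~\ref{pseudo} directly. The admissible functions are bounded, so $\lVert \p_1-\p_2 g\rVert_\infty$ is finite for every $g\in G$, and $d_G$ takes values in $[0,\infty)$. For the first property, we choose $g=\id\in G$. This gives $0\le d_G(\p,\p)\le\lVert\p-\p\rVert_\infty=0$.

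For symmetry, we use Proposition~\ref{propDphidU}(1), which says that right composition by an element of $G$ preserves the uniform distance. For every $g\in G$,
\[
\lVert \p_1-\p_2 g\rVert_\infty=\lVert \p_1 g^{-1}-\p_2 g g^{-1}\rVert_\infty=\lVert \p_2-\p_1 g^{-1}\rVert_\infty .
\]
The map $g\mapsto g^{-1}$ is a bijection of the group $G$ onto itself. Taking the infimum over $g\in G$ on the left is therefore the same as taking the infimum over $h=g^{-1}\in G$ on the right, and we obtain $d_G(\p_1,\p_2)=d_G(\p_2,\p_1)$.

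For the triangle inequality, fix $\p_1,\p_2,\p_3\in\Phi$ and arbitrary $g,h\in G$. The product $hg$ lies in $G$, so the triangle inequality for the uniform metric and Proposition~\ref{propDphidU}(1) give
\[
d_G(\p_1,\p_3)\le\lVert\p_1-\p_3 hg\rVert_\infty\le\lVert\p_1-\p_2 g\rVert_\infty+\lVert\p_2 g-\p_3 h g\rVert_\infty=\lVert\p_1-\p_2 g\rVert_\infty+\lVert\p_2-\p_3 h\rVert_\infty .
\]
We first take the infimum over $h$, and then over $g$. This yields $d_G(\p_1,\p_3)\le d_G(\p_1,\p_2)+d_G(\p_2,\p_3)$.

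The one delicate point is the triangle inequality: the two infima are taken independently, so we must compose the group elements ($hg$) rather than try to use one element for both terms. Invariance under right composition is exactly what makes this composition work. Finally, $d_G$ is in general not a metric, since $d_G(\p,\p g)=0$ for every $g\in G$ even when $\p g\neq\p$. So only the pseudo-metric axioms can be expected to hold.
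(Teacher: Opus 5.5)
Your proof is correct and follows essentially the same route as the paper: the identity element gives $d_G(\varphi,\varphi)=0$, right-composition invariance plus the bijection $g\mapsto g^{-1}$ gives symmetry, and composing group elements together with that invariance gives the triangle inequality. The only cosmetic difference is that you bound $d_G(\varphi_1,\varphi_3)$ directly by the term at $hg$, whereas the paper fixes $f$ and re-indexes the infimum via $g\mapsto gf$.
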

\begin{proof}
\begin{enumerate}
  \item $0\le d_G(\varphi,\varphi)=\inf_{g\in G}\|\p-\p  g\|_\infty\le \|\p-\p  \mathrm{id} \|_\infty=\|\p-\p\|_\infty=0$, and hence $d_G(\varphi,\varphi)=0$ for every $\varphi\in\Phi$.
  \item By Proposition~\ref{propDphidU}.1, for every $\p_1,\p_2\in\Phi$,
  \begin{align*}
  d_G(\p_1,\p_2) & =\inf_{g\in G}\|\p_1-\p_2  g\|_\infty\\
  & =\inf_{g\in G}\|\p_1  g^{-1}-\p_2\|_\infty\\
  & =\inf_{g^{-1}\in G}\|\p_2-\p_1  g^{-1}\|_\infty\\
  & =\inf_{g\in G}\|\p_2-\p_1  g\|_\infty\\
  & = d_G(\p_2,\p_1).
  \end{align*}
  \item Again by Proposition~\ref{propDphidU}, 
  for every $\p_1,\p_2, \p_3\in\Phi$ and every $f\in G$
  \begin{align*}
            d_G(\p_1,\p_2)&=\inf_{g\in G}\|\p_1-\p_2  g\|_\infty\\
            &= \inf_{g\in G}\|\p_1-\p_2  g  f\|_\infty\\
            &\le \inf_{g\in G}\left(\|\p_1-\p_3  f\|_\infty+\|\p_3  f-\p_2  g  f\|_\infty\right)\\
            &= \inf_{g\in G}\left(\|\p_1-\p_3  f\|_\infty+\|\p_3-\p_2  g\|_\infty\right)\\
            &= \|\p_1-\p_3  f\|_\infty+\inf_{g\in G}\|\p_3-\p_2  g\|_\infty\\
            &= \|\p_1-\p_3  f\|_\infty+d_G(\p_3,\p_2).
\end{align*}
Since the previous inequality holds for any $f\in G$, it follows that for every $\p_1,\p_2,\p_3\in\Phi$:
\begin{align*}
            d_G(\p_1,\p_2)&=\inf_{f\in G}d_G(\p_1,\p_2)\\
            &\le \inf_{f\in G}\left(\|\p_1-\p_3  f\|_\infty+d_G(\p_3,\p_2)\right)\\
            &= d_G(\p_1,\p_3) + d_G(\p_3,\p_2).
\end{align*}
\end{enumerate}
\end{proof}

If $G$ is the trivial group $\{\mathrm{id}_X\}$, then $d_G(\varphi_1,\varphi_2)=\lVert \varphi_1 - \varphi_2 \rVert_\infty$ for any $\varphi_1,\varphi_2 \in \Phi$. Moreover, if $G_1$ and $G_2$ are subgroups of $\mathrm{Aut}_\Phi(X)$ and $G_1\subseteq G_2$, then
\[
d_{\mathrm{Aut}_\Phi(X)}(\p_1,\p_2)\le d_{G_2}(\p_1,\p_2)\le d_{G_1}(\p_1,\p_2)\le \lVert \p_1 - \p_2 \rVert_\infty
\]
for every $\p_1,\p_2\in \Phi$.

We say that a pseudo-metric $\hat{d}$ on $\Phi$ is strongly \myemph{$G$-invariant} if it is invariant under the action of $G$ with respect to each variable, i.e., if
\[
\hat{d}(\varphi_1, \varphi_2)=\hat{d}(\varphi_1  g, \varphi_2)=\hat{d}(\varphi_1, \varphi_2 g)=\hat{d}(\varphi_1  g, \varphi_2  g)
\]
for every $\varphi_1,\varphi_2 \in \Phi$ and every $g \in G$.
The key property of the natural pseudo-distance $d_G$ is that it is strongly $G$-invariant.
This property is of great use when we wish to compare data ``up to transformations in $G$''.
\begin{exercise}
Prove that the natural pseudo-distance $d_G$ is strongly $G$-invariant.
\end{exercise}

From now on, we assume that the topological space $(X, D_X)$ is compact and that Assumptions~\ref{ass_fingen} and~\ref{ass_right-cont} from Chapter~\ref{ChapterHC} hold for every filtering function under consideration.

Let us restrict our attention to continuous functions with values in $\R$. 
In this case, the natural pseudo-distance is lower-bounded by the matching distance between the persistence diagrams of the functions. 

\begin{proposition}\label{corstabilityofPDswrtNPD}
If $\p,\psi\in\Phi$, then
\[
d_\match (\dgm(\p),\dgm(\psi))\le d_G(\p,\psi).
\]
\end{proposition}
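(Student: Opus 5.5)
The plan is to combine the Matching distance stability theorem (Theorem~\ref{matchingstabilitythm}) with the invariance of persistence diagrams under homeomorphisms (Proposition~\ref{propinvariancePDs}), and then take an infimum over $G$.

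First, fix an arbitrary $g\in G$. By Proposition~\ref{propgisometry}, $g$ is an isometry of $(X,D_X)$. Since it is a bijection, its inverse is also an isometry. Hence $g$ is a homeomorphism of $X$ onto itself with respect to the topology induced by $D_X$. Every function in $\Phi$ is continuous, being non-expansive. Since $G\subseteq \mathrm{Aut}_\Phi(X)$, the function $\psi g$ belongs to $\Phi$, so it is a continuous real-valued function on the compact space $X$. By the standing assumption, Assumptions~\ref{ass_fingen} and~\ref{ass_right-cont} hold for it, so its persistence diagram is defined.

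Next, apply Proposition~\ref{propinvariancePDs} with the homeomorphism $g^{-1}\colon X\to X$. Since $\psi (g^{-1})^{-1}=\psi g$, this gives
\[
\dgm(\psi g)=\dgm(\psi).
\]
Then Theorem~\ref{matchingstabilitythm}, applied to $\p$ and $\psi g$, yields
\[
d_\match(\dgm(\p),\dgm(\psi))=d_\match(\dgm(\p),\dgm(\psi g))\le \|\p-\psi g\|_\infty .
\]

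Finally, the left-hand side does not depend on $g$. Taking the infimum over $g\in G$ on the right therefore gives $d_\match(\dgm(\p),\dgm(\psi))\le d_G(\p,\psi)$.

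The only point needing care is the first step: verifying that each $g\in G$ is a homeomorphism for the topology in which the diagrams are computed, namely the one induced by $D_X$. That every $\p\circ g$ lies in $\Phi$ and hence is continuous is immediate from the definition of $\mathrm{Aut}_\Phi(X)$. The rest is routine.
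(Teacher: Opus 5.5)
Your proof is correct and is essentially the paper's own argument: for each $g\in G$ you combine the invariance $\dgm(\psi g)=\dgm(\psi)$ from Proposition~\ref{propinvariancePDs} with the Matching distance stability theorem, and then take the infimum over $G$. Your extra check, using Proposition~\ref{propgisometry}, that each $g$ is a homeomorphism of $(X,D_X)$ (so that the invariance result applies) is a detail the paper leaves implicit.
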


\begin{proof}
From Proposition~\ref{propinvariancePDs} and
Theorem~\ref{matchingstabilitythm}
it follows that
\begin{align*}
         d_\match (\dgm(\p),\dgm(\psi))
          &=d_\match (\dgm(\p),\dgm(\psi g))\\
          &\le \|\p-\psi g\|_\infty
\end{align*}
for every $g\in G$. This implies the desired inequality.
\end{proof}
Notice that if $G=\{\text{id}_X\}$ in Proposition~\ref{corstabilityofPDswrtNPD}, we obtain Theorem~\ref{matchingstabilitythm}.

\begin{exercise}\label{exercise_natpD>matchd}
Let $S^1$ be the set $\{(x,y)\in\mathbb{R}^2 : x^2 + y^2 = 1\}$. 
Consider the perception pair $(\Phi, G)$, where $\Phi$ is the set of functions from $S^1$ to $[0,1]$ that are $1$-Lipschitz with respect to the metric on $S^1$ induced by the Euclidean distance on $\mathbb{R}^2$, and $G$ is the group of rotations of $S^1$.
Find two functions $\varphi,\psi \in \Phi$ such that
\[
0 = d_\match(\dgm(\varphi), \dgm(\psi)) < d_G(\varphi,\psi).
\]
\end{exercise}

Another relation between persistence and the natural pseudo-distance is the following.

\begin{proposition}\label{corPBNsanddH}
Consider 
two functions $\p,\psi\in\Phi$.
If $(u,v)\in \Delta^*$ 
and $\eta>0$ such that 
$\beta_k^\p{(u-\eta,v+\eta)}> \beta_k^\psi{(u,v)}$,
then $d_G(\p,\psi)\ge\eta$.
\end{proposition}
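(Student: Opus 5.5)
We argue by contraposition. Suppose $d_G(\p,\psi)<\eta$; we show that $\beta_k^\p(u-\eta,v+\eta)\le \beta_k^\psi(u,v)$.

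\textbf{Step 1: choose a good group element.} By the definition of $d_G$ as an infimum, there is $g\in G$ with $\|\p-\psi g\|_\infty<\eta$. Since $\psi g\in\Phi$ is continuous on the compact space $X$, Proposition~\ref{propPBNsandhomeo} applies to the pair $(\p,\psi g)$ with constant $\eta$. It gives
\[
\beta_k^\p(u-\eta,v+\eta)\le\beta_k^{\psi g}(u,v),
\]
with the usual convention $v+\eta=\infty$ when $v=\infty$.

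\textbf{Step 2: remove $g$.} We need $\beta_k^{\psi g}=\beta_k^{\psi}$.

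\begin{itemize}
\item The element $g$ is a bijection of $X$ and, by Proposition~\ref{propgisometry}, an isometry with respect to $D_X$. Hence $g$ is a homeomorphism of $(X,D_X)$.
\item Proposition~\ref{propinvariancebeta}, applied to the function $\psi g$ and the homeomorphism $g$, gives $\beta_k^{\psi g}=\beta_k^{(\psi g)g^{-1}}=\beta_k^{\psi}$.
\end{itemize}

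Concretely, $g$ restricts to homeomorphisms between the sublevel sets $X^{\psi g}_w$ and $X^{\psi}_w$. These commute with the inclusions, so the persistent homology groups are isomorphic.

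\textbf{Conclusion.} Combining the two steps gives $\beta_k^\p(u-\eta,v+\eta)\le\beta_k^\psi(u,v)$. This contradicts the hypothesis, so $d_G(\p,\psi)\ge\eta$.

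There is no real obstacle. The one point to watch is that the infimum in $d_G$ may not be attained. This is why we assume the strict inequality $d_G<\eta$ and pick $g$ with $\|\p-\psi g\|_\infty<\eta$, which is enough for Proposition~\ref{propPBNsandhomeo}. We also need that topological invariance holds for the topology induced by $D_X$, in which the functions of $\Phi$ are continuous; this is exactly what Proposition~\ref{propgisometry} provides.
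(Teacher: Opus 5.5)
Your proof is correct and uses the same argument as the paper: Proposition~\ref{propPBNsandhomeo} combined with the homeomorphism invariance of Proposition~\ref{propinvariancebeta}. The paper states it directly (every $g\in G$ must satisfy $\|\p-\psi g\|_\infty>\eta$, so the infimum is at least $\eta$) rather than by contradiction, and your use of Proposition~\ref{propgisometry} to show that each $g\in G$ is a homeomorphism of $(X,D_X)$ makes explicit a step the paper leaves implicit.
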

\begin{proof}
Proposition~\ref{propinvariancebeta} guarantees that
$\beta_k^{\psi g}{(u,v)}=\beta_k^\psi{(u,v)}$ for every $(u,v)\in \Delta^*$ and every $g\in G$.
Because of Proposition~\ref{propPBNsandhomeo},
the inequality
$\beta_k^\p{(u-\eta,v+\eta)}> \beta_k^\psi{(u,v)}=
\beta_k^{\psi\circ g}{(u,v)}$
implies that
$\|\p-\psi g\|_\infty > \eta$
for every $g\in G$.
It follows that
$d_G(\p,\psi):=\inf_{g\in G}\|\p-\psi g\|_\infty\ge \eta$.
\end{proof}
The next example shows how Proposition~\ref{corPBNsanddH} can be applied.

\begin{ex}
\label{exapplycorollary}
Let us consider the two embeddings of $S^1$ into $\mathbb{R}^2$ represented in Figure \ref{duecurve}. The $y$-coordinate defines two filtering functions $\varphi$ and $\psi$ on $S^1$ (through the identifications given by the embeddings). 
In Figure \ref{duecurve} a homeomorphism $g_\varepsilon:S^1\to S^1$ is displayed, such that $\|\p-\psi g_\varepsilon\|_\infty\le \varepsilon$ (we set $g_\varepsilon(D_\varepsilon)=H_\varepsilon$, $g_\varepsilon(C)=G$ and $g_\varepsilon(E_\varepsilon)=F_\varepsilon$; the first red arc is taken to the second red arc).
The points outside the red arc joining $D_\varepsilon$ to $E_\varepsilon$ are mapped by $g_\varepsilon$ to points having the same $y$-coordinate.
Since $\eps$ can be arbitrarily small, this shows that 
$d_{\Homeo(S^1)}(\p,\psi)\le \frac{y_A-y_B}{2}$.
In Figure~\ref{fig_two_PDs} the persistent Betti numbers functions $\beta_0^{\p}$, $\beta_0^{\psi}$ are displayed. 
The inequality $d_{\Homeo(S^1)}(\p,\psi)\ge\frac{y_A-y_B}{2}$ follows by taking an arbitrarily small $\delta$ with $0<\delta< \frac{y_A-y_B}{2}$ and applying Proposition~\ref{corPBNsanddH} to the point $(u,v)=(\frac{y_A+y_B}{2}-\delta,\frac{y_A+y_B}{2}+\delta)$, after setting $\eta= \frac{y_A-y_B}{2}-2\delta$.
Therefore, 
$d_{\Homeo(S^1)}(\p,\psi)=\frac{y_A-y_B}{2}$. 
\end{ex} 

\begin{figure}
\begin{center}
\includegraphics[width=9cm]{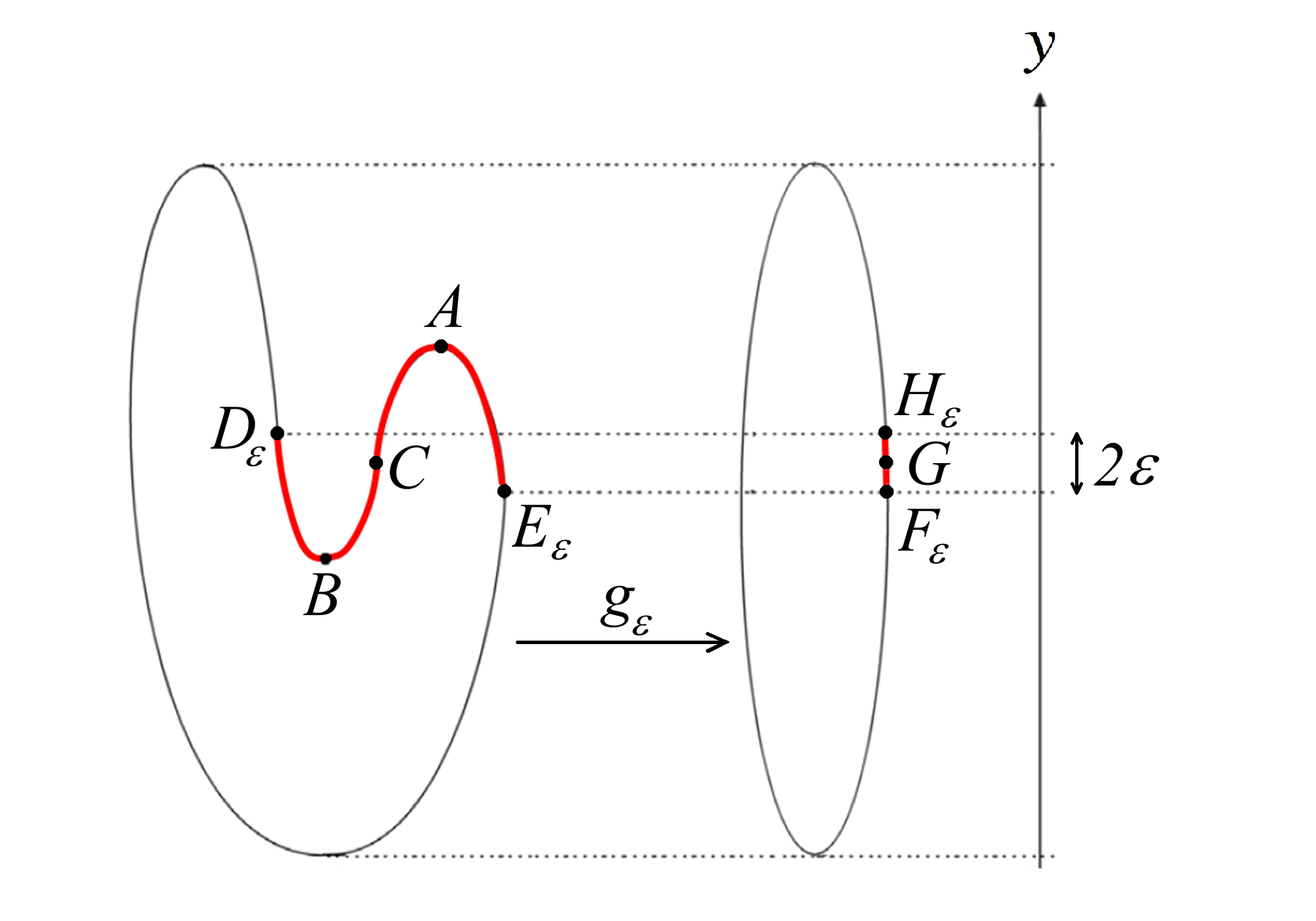}
\caption{The two functions $\p,\psi\colon S^1\to \mathbb{R}$ considered in Example~\ref{exapplycorollary}. They are obtained by taking the $y$-coordinate function corresponding to two different embeddings of $S^1$ into $\mathbb{R}^2$.
The homeomorphism $g_\varepsilon$ is also shown in the figure.
}
\label{duecurve}
\end{center}
\end{figure}

\begin{figure}
\begin{center}
\includegraphics[width=12cm]{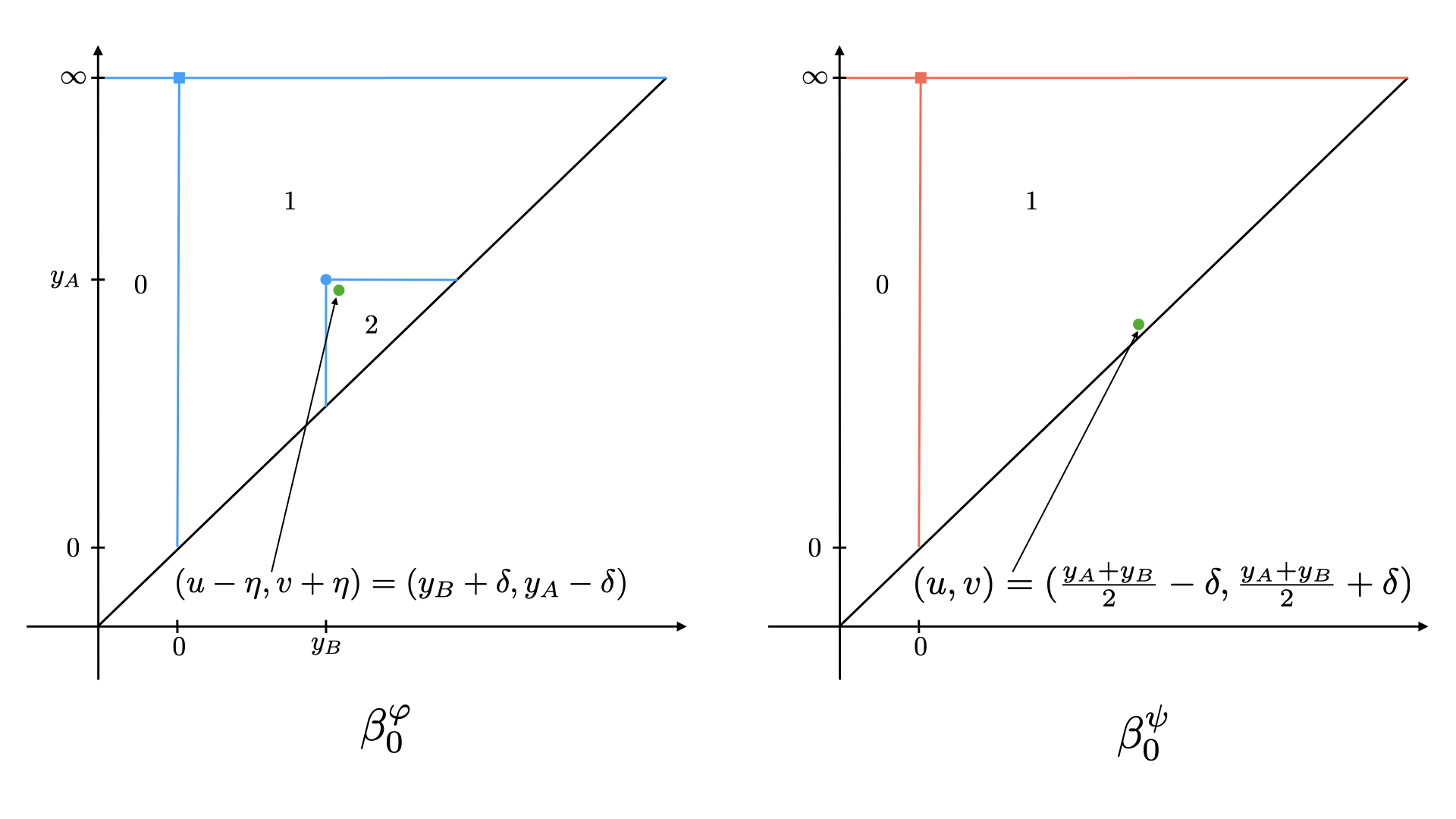}
\caption{The persistent Betti numbers functions in degree $0$ of the filtering functions $\p$, $\psi$ described in Example~\ref{exapplycorollary}. The inequality $d_{\Homeo(S^1)}(\p,\psi)\ge\frac{y_A-y_B}{2}$ follows by applying Proposition~\ref{corPBNsanddH} to the green points.}
\label{fig_two_PDs}
\end{center}
\end{figure}

\section{Group Equivariant Non-Expansive Operators}

\label{section_GENEOs}

Operators that map filtering functions to their persistence diagrams in any degree are powerful tools for data analysis. However, they represent only a particular instance of more general and flexible mathematical constructions. As stated in Proposition~\ref{propinvariancePDs}, the computation of persistence diagrams is invariant under the action of any homeomorphism of the domain of the filtering function. This property may be undesirable in several applications. For instance, Figure~\ref{lettere} shows grayscale images corresponding to some filtering functions defined on the real plane that cannot be distinguished by the computation of their persistence diagrams, in any degree.

This observation naturally leads to the problem of reducing the invariance of persistence diagrams in order to obtain stronger shape discrimination capabilities. One possible approach is provided by the notion of group equivariant non-expansive operators (GENEOs), which combines persistence theory with the theory of group actions. 
In this section we introduce this mathematical tool and discuss some of its most relevant properties.

\begin{figure}
\begin{center}
\includegraphics[width=12cm]{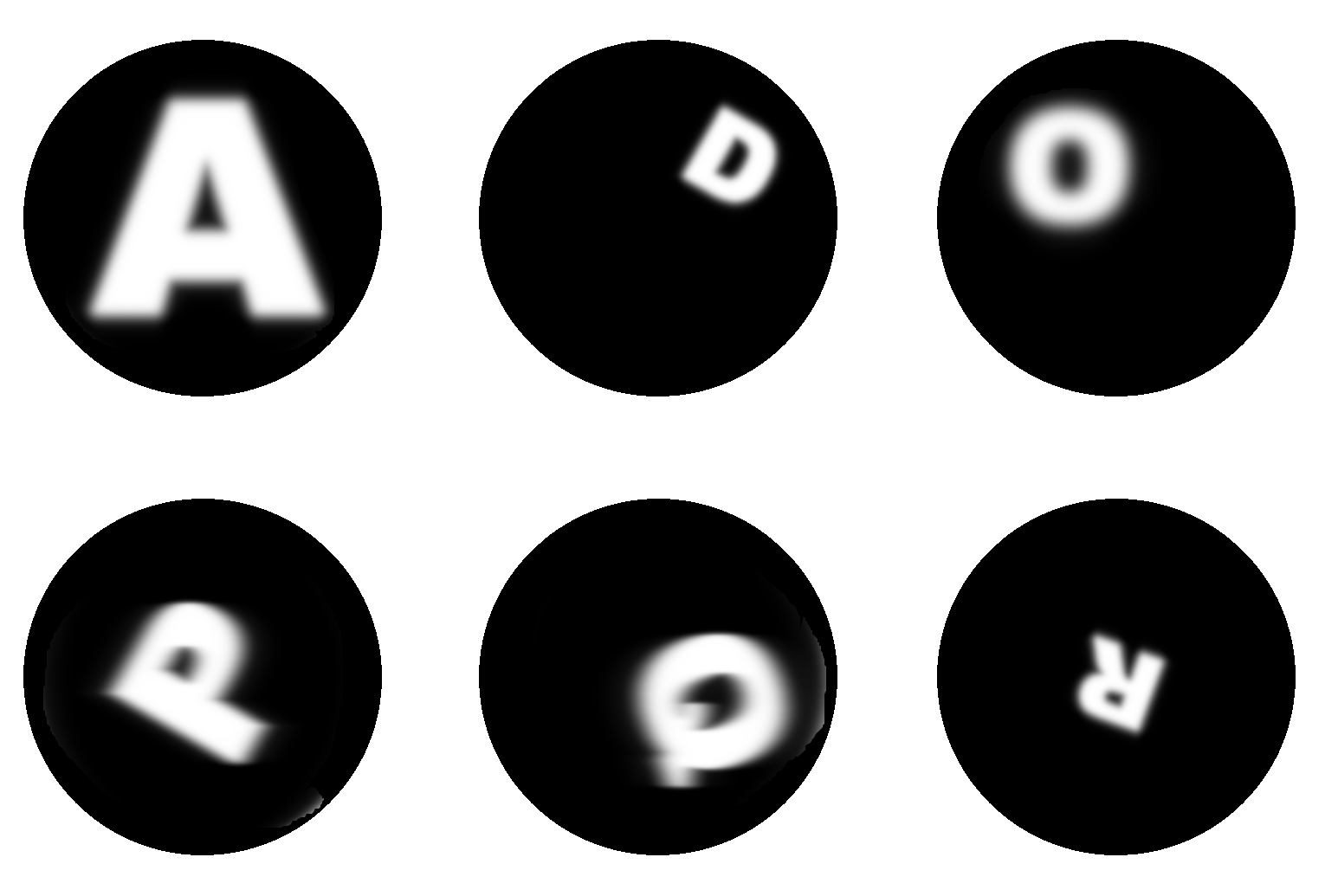}
\caption{The filtering functions associated with these grayscale images have identical persistence diagrams in every degree (white=1, black=0).}
\label{lettere}
\end{center}
\end{figure}


\begin{definition}[Group Equivariant Operator]
Consider two perception pairs $(\Phi,G)$, $(\Psi,H)$.
A pair of functions $(F,T):(\Phi,G)\to(\Psi,H)$ is said to be a \myemph{Group Equivariant Operator} (GEO) from $(\Phi,G)$ to $(\Psi,H)$ if
\begin{itemize}
    \item $T \colon G \to H$ is a group homomorphism;
    \item $F(\varphi g)=F(\varphi) T(g)$ for every $\varphi\in\Phi,g\in G$ (i.e., $F \colon \Phi \to \Psi$ is $T$-equivariant).
\end{itemize}
\end{definition}
We observe that the functions in $\Phi$ and the functions in $\Psi$ are defined on spaces that are generally different from each other ($\mathrm{dom}(\Phi) \ne \mathrm{dom}(\Psi)$), and the groups of invariance can be different as well. 

\begin{definition}
Consider two perception pairs $(\Phi,G)$ and $(\Psi,H)$. If $(F,T)$ is a GEO from $(\Phi,G)$ to $(\Psi,H)$ and $F$ is non-expansive (i.e., $\lVert F(\varphi_1) - F(\varphi_2)\rVert_\infty \le \lVert\varphi_1 - \varphi_2\rVert_\infty$ for every $\varphi_1,\varphi_2\in\Phi$), then $(F,T)$ is called a \myemph{Group Equivariant Non-Expansive Operator} (GENEO) \cite{FrJa16,BeFrGiQu19}.
\end{definition}

\begin{ex}
\label{basicex}
Firstly, we consider $S^2=\{(x,y,z)\in\R^3:x^2+y^2+z^2=1\}$ and $S^1=\{(x,y,z)\in\R^3:x^2+y^2=1\mbox{ and }z=0\}$. We set $\Phi$ as the space of functions  $\varphi\colon S^2 \to [0,1]$ such that $\lvert \varphi(a) - \varphi(b)\rvert \le \lVert a - b\rVert_\infty$ for any $a,b \in S^2$, and $G$ be the group of all rotations of $S^2$ around the $z$-axis. Let $\Psi$ be the set containing all functions from $S^1$ to $[0,1]$ such that $\lvert \varphi(a') - \varphi(b')\rvert \le \lVert a' - b'\rVert_\infty$ for any $a',b' \in S^1$, and $H$ be the group of all rotations of $S^1$.
We observe that $(\Phi,G)$ and $(\Psi,H)$ are two perception pairs. Let us consider the map
$F:\Phi\to\Psi$ that assigns to each function $\varphi\in \Phi$ the function \emph{average on
meridians} $\psi\in \Psi$, defined by
\[
\psi(\theta):=\frac{1}{\pi}\int_{0}^{\pi} \varphi(\theta,\alpha)\, d\alpha,
\]
where, with a slight abuse of notation, the variable of $\varphi$ is expressed in terms of its
azimuthal and polar angles $\theta$ and $\alpha$, while the variable of $\psi$ is denoted by
$\theta$. We also consider the homomorphism $T$ that maps a rotation of $S^{2}$ by $\theta$
radians about the positively oriented $z$-axis to the counterclockwise rotation of $S^{1}$ by
$\theta$ radians.
One can easily verify that $(F,T)$ is a GENEO from $(\Phi,G)$ to $(\Psi,H)$.
\end{ex}

\begin{ex}
The operator mapping each continuous function to its persistence diagram can be regarded as a GENEO.
This can be shown in several ways, depending on the continuous functions considered and on the chosen functional representation of persistence diagrams.
For example, let us consider the space $\tilde{\Phi} \subseteq \Phi$ of functions $\varphi \in \Phi$ such that $\dgm_k(\varphi)\setminus \{\Delta\}$ is finite, contains at least one point at infinity, and has no points of multiplicity greater than $1$. 
Observe that $(\tilde\Phi,G)$ is still a perception pair and that every persistence diagram $\mathcal{D}$ of a function in $\tilde\Phi$ can be identified with the function $\psi_\mathcal{D}\colon\bar\Delta^*\to\R$ that takes each
$P\in\bar\Delta^*$ to $d(P,\mathcal{D})=\min_{Q\in \mathcal{D}}d(P, Q)$.
The set of such continuous functions $\psi_\mathcal{D}$
is denoted by $\Psi$. 
Endowing it with the uniform metric is equivalent to endow the set of persistence diagrams with the Hausdorff distance, $\delta_H$,
associated with $d$.
This is seen by writing an equivalent formulation of the Hausdorff distance
between two compact non-empty sets $A,B\subseteq \bar\Delta^*$: 
$\delta_H(A, B)=\sup_{x\in X}\lvert d(x, A)-d(x,B)\rvert$.
Consider now the perception pair $(\Psi, H)$ where $H$ only contains the identity from $\bar \Delta^*$ to itself, and the trivial group homomorphism $T\colon G\to H$.
Observing that $\delta_H(\dgm_k(\varphi_1),\dgm_k(\varphi_2))\le \|\varphi_1-\varphi_2\|_\infty$ (see Exercise~\ref{denaukhao}), we can conclude that the pair $(F, T)$ where $F$ sends $\p\in \tilde\Phi$ to $\psi_{\dgm_k(\p)}\in \Psi$ is a GENEO.



\end{ex}

\begin{ex}
Let us assume that $(\Phi,G)$ is a perception pair and that the functions in $\Phi$ take values in $\R^2$.
In Chapter~\ref{MPH}, we have seen that the persistent homology of $\p\in \Phi$ is equivalent to the persistent homology of all the filtrations $\p_{(a,b)}^*\colon X\to \R$, for $(a,b)\in ]0,1[\times \R$,
under suitable assumptions.
The pair $(F_{a,b}, T)$, where $F_{a,b}(\varphi) := \varphi^*_{(a,b)}$ and $T$ is the identity from $G$ to $G$,
is a GENEO, for every $(a,b)$. 
In particular, $F_{a,b}$ is non-expansive by Lemma~\ref{lemmafab}.
\end{ex}
The following results state how GENEOs relate to the natural pseudo-distances. 

\begin{proposition}
\label{thmContraction}
If $(F,T)$ is a GENEO from $(\Phi, G)$ to $(\Psi, H)$, then the map $F \colon (\Phi,d_G) \to (\Psi, d_H)$ is non-expansive.
\end{proposition}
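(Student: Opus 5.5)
The plan is to prove directly that $d_H(F(\varphi_1),F(\varphi_2))\le d_G(\varphi_1,\varphi_2)$ for every $\varphi_1,\varphi_2\in\Phi$, by comparing the two infima term by term. Each quantity $\lVert\varphi_1-\varphi_2 g\rVert_\infty$ appearing in the infimum defining $d_G$ will be bounded from below by a quantity appearing in the infimum defining $d_H$.

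Fix $g\in G$. By $T$-equivariance of $F$ we have $F(\varphi_2 g)=F(\varphi_2)T(g)$, and $T(g)\in H$ because $T\colon G\to H$ is a homomorphism. Since $\varphi_2 g\in\Phi$ (as $G\subseteq\mathrm{Aut}_\Phi(X)$), non-expansiveness of $F$ applies and gives
\[
\lVert F(\varphi_1)-F(\varphi_2)T(g)\rVert_\infty=\lVert F(\varphi_1)-F(\varphi_2 g)\rVert_\infty\le \lVert\varphi_1-\varphi_2 g\rVert_\infty .
\]
The left-hand side is at least $\inf_{h\in H}\lVert F(\varphi_1)-F(\varphi_2)h\rVert_\infty=d_H(F(\varphi_1),F(\varphi_2))$. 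Hence
\[
d_H(F(\varphi_1),F(\varphi_2))\le\lVert\varphi_1-\varphi_2 g\rVert_\infty
\]
for every $g\in G$. Taking the infimum over $g\in G$ then yields the claim.

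There is no real obstacle here. The only points to check are that $\varphi_2 g$ lies in $\Phi$, so that non-expansiveness of $F$ can be applied, and that $T(g)$ lies in $H$, so that the left-hand side is a legitimate competitor in the infimum defining $d_H$. Both follow immediately from the definitions of perception pair and GEO.
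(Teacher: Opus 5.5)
Your proof is correct and follows the same chain of inequalities as the paper: the paper restricts the infimum over $H$ to the elements $T(g)$, rewrites $F(\varphi_2)T(g)$ as $F(\varphi_2 g)$ by $T$-equivariance, and applies the non-expansiveness of $F$. You do exactly this for a fixed $g$ and then take the infimum over $g\in G$, and your explicit checks that $\varphi_2 g\in\Phi$ and $T(g)\in H$ are what the paper uses tacitly.
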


\begin{proof}%
Since $F$ is non-expansive, it follows that
\begin{align*}
d_H(F(\varphi_1), F(\varphi_2)) & =\inf_{h \in H}\lVert(F(\varphi_1) - F(\varphi_2) h\rVert_\infty\nonumber\\
& \leq \inf_{g \in G} \lVert( F(\varphi_1) -  F(\varphi_2) T(g)\rVert_\infty\\
& = \inf_{g \in G} \lVert F(\varphi_1) - F(\varphi_2 g)\rVert_\infty\nonumber\\
& \leq \inf_{g \in G} \lVert \varphi_1 - \varphi_2 g\rVert_\infty = d_G(\varphi_1, \varphi_2).\nonumber
\end{align*}
\end{proof}

In other words, GENEOs are not only non-expansive with respect to the uniform metric, but also with respect to the natural pseudo-distances.

\section{Compactness and convexity of the space of GENEOs} 

In what follows we show that if the function spaces are compact and convex, then the space of GENEOs is also compact and convex, whenever the homomorphism between the groups is fixed.
The compactness result guarantees that the space of GENEOs can be approximated by a finite set. 
The convexity implies that new GENEOs can be obtained by convex combinations of given GENEOs.

Given two perception pairs $(\Phi,G)$ and $(\Psi,H)$ and a group homomorphism $T \colon G \to H$, if $\mathcal{F}^{\mathrm{all}}_T$ denotes the set of GENEOs between $(\Phi,G)$ and $(\Psi,H)$ associated with the homomorphism $T: G \to H$, then the following theorem holds:
\begin{theorem}\label{t17}
If $\Phi$ and $\Psi$ are  compact, then $\mathcal{F}^{\mathrm{all}}_T$ is compact with respect to the metric
\begin{equation}
D_\mathrm{GENEO}\left((F_1,T),(F_2,T)\right):=\max_{\varphi\in \Phi}\lVert F_1(\varphi) - F_2(\varphi)\rVert_\infty.
\end{equation}
\end{theorem}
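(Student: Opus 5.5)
The plan is to show that $\mathcal{F}^{\mathrm{all}}_T$ is a closed subset of a compact metric space, via an Arzelà–Ascoli type argument. Every $F\in\mathcal{F}^{\mathrm{all}}_T$ is a non-expansive (hence $1$-Lipschitz, hence equicontinuous) map from the compact metric space $(\Phi,\|\cdot\|_\infty)$ to the compact metric space $(\Psi,\|\cdot\|_\infty)$. The metric $D_\mathrm{GENEO}$ is exactly the uniform metric on maps $\Phi\to\Psi$, and the maximum in its definition exists by continuity and compactness. It therefore suffices to prove that $\mathcal{F}^{\mathrm{all}}_T$ is complete and totally bounded, or equivalently sequentially compact. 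The book recalls both characterizations of compactness for (pseudo-)metric spaces, so I will use whichever is more convenient.

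I would argue by sequential compactness, proving the Ascoli step directly rather than citing it. Let $(F_n)_n$ be a sequence in $\mathcal{F}^{\mathrm{all}}_T$. Since $\Phi$ is compact, it is separable, so fix a countable dense subset $\{\varphi_j\}_j\subseteq\Phi$. Since $\Psi$ is compact, a diagonal extraction yields a subsequence, still called $(F_n)$, such that $F_n(\varphi_j)$ converges in $\Psi$ for every $j$. Next I show that $(F_n)$ is uniformly Cauchy. Given $\varepsilon>0$, choose a finite $\varepsilon$-net $\varphi_{j_1},\dots,\varphi_{j_r}$ among the $\varphi_j$, using total boundedness of $\Phi$ and density. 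Then for $\varphi\in\Phi$ with $\|\varphi-\varphi_{j_i}\|_\infty<\varepsilon$, non-expansiveness gives
\[
\|F_n(\varphi)-F_m(\varphi)\|_\infty\le 2\varepsilon+\|F_n(\varphi_{j_i})-F_m(\varphi_{j_i})\|_\infty ,
\]
which is less than $3\varepsilon$ for large $n,m$. Since $\Psi$ is compact, it is complete. Hence $F_n$ converges uniformly to some map $F\colon\Phi\to\Psi$, and $D_\mathrm{GENEO}(F_n,F)\to0$.

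It remains to check that the limit $(F,T)$ is again a GENEO, i.e.\ that the defining properties are closed conditions. Non-expansiveness passes to pointwise limits: $\|F(\varphi_1)-F(\varphi_2)\|_\infty=\lim_n\|F_n(\varphi_1)-F_n(\varphi_2)\|_\infty\le\|\varphi_1-\varphi_2\|_\infty$. For equivariance, note that for $\varphi\in\Phi$ and $g\in G$ we have $\varphi g\in\Phi$, so $F_n(\varphi g)\to F(\varphi g)$. Moreover, by Proposition~\ref{propDphidU}.1 applied in $(\Psi,H)$, $\|F_n(\varphi)T(g)-F(\varphi)T(g)\|_\infty=\|F_n(\varphi)-F(\varphi)\|_\infty\to0$. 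Passing to the limit in $F_n(\varphi g)=F_n(\varphi)T(g)$ then gives $F(\varphi g)=F(\varphi)T(g)$. The homomorphism $T$ is fixed, so nothing needs to be checked for it.

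The main obstacle is the Cauchy/equicontinuity step. The point to handle with care there is that the subsequence must be extracted so that it converges on the whole dense countable set simultaneously, which is why the diagonal argument is needed. The closedness of the GENEO conditions, by contrast, is routine once the isometric action on $\Psi$ is used. A minor point is that $\|\cdot\|_\infty$ may only be a pseudo-metric if $\Phi$ or $\Psi$ were extended-valued. Here all functions are bounded, so it is a genuine metric and limits are unique.
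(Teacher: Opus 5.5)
Your proof is correct and follows essentially the same route as the paper. Both arguments use the separability of $\Phi$, a diagonal extraction on a countable dense subset, and a finite $\varepsilon$-net together with non-expansiveness to get uniform convergence; both then pass non-expansiveness and $T$-equivariance to the limit. The differences are only streamlinings. You obtain the limit directly from a uniform Cauchy estimate and the completeness of $\Psi$, rather than first defining $\overline{F}$ on the dense set and extending it by continuity. You also check equivariance via $F_n(\varphi g)\to F(\varphi g)$ and the isometric action of $H$ on $\Psi$, instead of the paper's double limit through the dense subset.
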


\begin{proof} We observe that $D_{\GENEO}$ coincides with the usual uniform metric on metric spaces.
Therefore it suffices to prove that $\mathcal{F}^{\mathrm{all}}_T$ is sequentially compact.
To do this, let us consider a sequence $\left(\left(F_i,T\right)\right)_i$ in $\mathcal{F}^{\mathrm{all}}_T$.
Given that $\Phi$ is a compact (and hence separable) metric space,
we can find a countable and dense subset $\Phi^*=\{\varphi_j \}_{j\in \mathbb{N}}$ of $\Phi$.
We can extract a subsequence $\left(\left(F'_{i},T\right)\right)_i$ from $\left(\left(F_i,T\right)\right)_i$, such that for every fixed index $j$ the sequence $\left(F'_{i}(\varphi_j)\right)_i$ uniformly converges to a function in $\Psi$ with respect to $\lVert \cdot \rVert_\infty$. 
We will show it by means of a classical diagonal argument.
Since $\Psi$ is compact, the sequence $(F_i(\varphi_1))_i$ admits a subsequence $\left(F_i^{(1)}(\varphi_1)\right)_i$ that uniformly converges in $\Psi$. Again, since $\Psi$ is compact, the sequence $\left(F_i^{(1)}(\varphi_2)\right)_i$ admits a subsequence $\left(F_i^{(2)}(\varphi_2)\right)_i$ that converges in $\Psi$. Recursively, we can build a family $\left\{\left(F_i^{(k)}\right)_{i}\right\}_{k \in \mathbb{N}}$of subsequences of $\left(F_i\right)_i$ such that, for every $k\in \mathbb{N}$, $\left(F_i^{(k+1)}\right)_{i}$ is a subsequence of $\left(F_i^{(k)}\right)_{i}$ and $\left(F_i^{(k+1)}(\varphi_{k+1})\right)_i$ converges in $\Psi$.

Now we set $\left(F_i',T\right)=\left(F_i^{(i)},T\right)$ for every index $i$.
By definition of $F'_i$, $\lim_{i \to \infty} F_i'(\varphi_k)$ exists in $\Psi$ for every $k\in\mathbb{N}$.
Hence, we define $\overline{F}$ on $\Phi^{*}$ by setting
$\overline{F}(\varphi_j):= \lim_{i \to \infty}F'_{i}(\varphi_j) $ for each $\varphi_j \in \Phi^{*}$.

We extend $\overline{F}$ to $\Phi$ as follows. Since $\Phi^{*}$ is dense in $\Phi$, for every $\varphi \in \Phi$ we can choose a sequence $(\varphi_{j_r})_r$ in $\Phi^{*}$, converging to $\varphi$, and set $\overline{F}(\varphi):= \lim_{r \to \infty} \overline{F}(\varphi_{j_r})$. 
We claim that such a limit exists in $\Psi$ and does not depend on the choice of the sequence. 
To prove that the previous limit exists, observe that $\overline{F}: \Phi^{*} \rightarrow\ \Psi$ is non-expansive:
\begin{align*}
\left\lVert \overline{F}(\varphi_{j_{1}}) - \overline{F}(\varphi_{j_{2}})\right\rVert_\infty & = \left\lVert \lim_{i \to \infty}F'_{i}(\varphi_{j_{1}})-\lim_{i \to \infty}F'_{i}(\varphi_{j_{2}})\right\rVert_\infty 
\\& = \lim_{i \to \infty} \left\rVert F'_{i}(\varphi_{j_{1}}) - F'_{i}(\varphi_{j_{2}})\right\rVert_\infty
\\ & \le  \lim_{i \to \infty} \left \lVert \varphi_{j_{1}} - \varphi_{j_{2}}\right\rVert_\infty\\
& = \left\lVert \varphi_{j_{1}} - \varphi_{j_{2}}\right\rVert_\infty,
\end{align*}
for every $\varphi_{j_{1}},\varphi_{j_{2}} \in \Phi^*$, because every $F'_{i}$ is non-expansive.
Since $\left(\varphi_{j_r}\right)_r$ is a Cauchy sequence, 
it follows that $\left(\overline{F}(\varphi_{j_r})\right)_r$ is a Cauchy sequence with respect to $\lVert \cdot \rVert_\infty$. The  compactness of $\Psi$ implies that $\Psi$ is complete, and hence $\left(\overline{F}\left(\varphi_{j_r}\right)\right)_r$ converges in $\Psi$.
If $\left(\varphi_{k_r}\right)_r$ is another sequence in $\Phi^{*}$ converging to $\varphi \in \Phi$, the non-expansiveness of $\overline{F}$ on $\Phi^*$ implies that 
$\left\lVert\overline{F}(\varphi_{j_r}) - \overline{F}(\varphi_{k_r})\right\rVert\le \left\lVert\varphi_{j_r} -\varphi_{k_r}\right\rVert_\infty$ for every index $r \in \mathbb{N}$.
Since both $\left(\varphi_{j_r}\right)_r$ and $\left(\varphi_{k_r}\right)_r$ converge to $\varphi$, then it follows that
$\lim_{r \to \infty} \overline{F}(\varphi_{j_r})=\lim_{r \to \infty} \overline{F}(\varphi_{k_r})$. Therefore the definition of $\overline{F}(\varphi)$ does not depend on the choice of the sequence.

Next, we show that $\overline{F} \in \mathcal{F}^{\mathrm{all}}_T$. 
For every $\varphi, \varphi'\in\Phi$ we can consider two sequences $\left(\varphi_{j_r}\right)_r$, $\left(\varphi_{k_r}\right)_r$ in $\Phi^{*}$, converging to $\varphi$ and $\varphi'$, respectively. 
Due to the fact that the operator $\overline F$ is non-expansive on $\Phi^*$, we have that
\begin{align*}
 \left\lVert\overline{F}(\varphi) - \overline{F}(\varphi')\right\rVert_\infty & = \left\lVert\lim_{r \to \infty}\overline{F}(\varphi_{j_r}) - \lim_{r \to \infty}\overline{F}(\varphi_{k_r})\right\rVert_\infty \\
 & =  \lim_{r \to \infty}  \left\lVert \overline F(\varphi_{j_r}) - \overline{F}(\varphi_{k_r})\right\rVert_\infty \\
  & \le \lim_{r \to \infty} \left\lVert\varphi_{j_r}-\varphi_{k_r}\right\rVert_\infty \\
 & = \left\lVert\varphi - \varphi'\right\rVert_\infty.
\end{align*}
Therefore, $\overline{F}\colon\Phi\to\Psi$ is non-expansive.

The next step is to show that the sequence $\left(F'_{i}\right)_i$ converges to $\overline{F}$ with respect to the uniform metric.
Consider an arbitrarily small $\varepsilon>0$. Since $\Phi$ is compact and $\Phi^{*}$ is dense in $\Phi$, we can find a finite subset $\left\{ \varphi_{j_1},\dots, \varphi_{j_n} \right\}$ of $\Phi^{*}$  such that for every $\varphi \in \Phi$, there exists an index $r \in \{1, \dots, n \}$, for which $\left\lVert \varphi - \varphi_{j_r} \right\rVert_\infty < \varepsilon$.
Since the sequence  $\left(F'_{i}\right)_i$ converges pointwise to $\overline{F}$ on the set $\Phi^{*}$, an index $\overline\imath$ exists, such that $\left\lVert\overline{F}(\varphi_{j_r}) - F'_{i}(\varphi_{j_r})\right\rVert_\infty < \varepsilon$ for every $i \ge \overline\imath$ and every $r \in \{ 1, \dots , n \}$.
The following inequalities hold for every index $i\ge \overline\imath$ because of the non-expansiveness of $\overline{F}$ and $F'_{i}$:
\begin{align*}
    &\left\lVert\overline{F}(\varphi) - F'_{i}(\varphi)\right\rVert_\infty\\
    & \le\left\lVert\overline{F}(\varphi) - \overline{F}(\varphi_{j_r})\right\rVert_\infty +\left\lVert\overline{F}(\varphi_{j_r}) - F'_{i}(\varphi_{j_r})\right\rVert_\infty +\left\lVert F'_{i}(\varphi_{j_r}) - F'_{i}(\varphi)\right\rVert_\infty \\
    & \le \left\lVert \varphi - \varphi_{j_r}\right\rVert_\infty + \left\lVert\overline{F}(\varphi_{j_r}) - F'_{i}(\varphi_{j_r})\right\rVert_\infty + \left\lVert \varphi_{j_r} - \varphi\right\rVert_\infty < 3 \varepsilon.
\end{align*}

We observe that $\bar\imath$ does not depend on $\varphi$, but only on $\varepsilon$ and on the set $\left\{ \varphi_{j_1},\dots, \varphi_{j_n} \right\}$. It follows that  $\left\lVert\overline{F}(\varphi) - F'_{i}(\varphi)\right\rVert_\infty < 3\varepsilon$ for every $\varphi \in \Phi$ and every $i \ge \overline\imath$.
Hence, $\sup_{\varphi \in \Phi}\left\lVert\overline{F}(\varphi)- F'_{i}(\varphi)\right\rVert_\infty\le 3\varepsilon$ for every $i \ge \overline\imath$.

This proves that the sequence $\left(F'_i\right)_i$ converges to $\overline{F}$ with respect to the uniform metric.
It remains to show that $\overline{F}$ is $T$-equivariant. 
Consider $\varphi \in \Phi$ and a sequence $(\varphi_{j_r})_r$ in $\Phi^*$ converging to $\varphi$, and a $g \in G$.
Since 
$G$ continuously acts on $\Phi$,
$H$ continuously acts on $\Psi$ (see Theorem \ref{thmGtopgroup}),
$\overline F$ is continuous, each $F'_{i}$ is $T$-equivariant, and the sequence $\left(F'_{i}\right)_i$ uniformly converges to $\overline{F}$, we have that 
\begin{align*}
    \overline{F}(\varphi g) & = \overline F((\lim_{r \to \infty} \varphi_{j_r})  g)
    \\ & = \overline F(\lim_{r \to \infty} \varphi_{j_r}  g)
    \\ & = \lim_{r \to \infty}\overline{F}(\varphi_{j_r}   g)
    \\ & = \lim_{r \to \infty}\lim_{i \to \infty}F'_{i}(\varphi_{j_r}   g)\\
    & = \lim_{r \to \infty} \lim_{i \to \infty}  \left(F'_{i}(\varphi_{j_r})   T(g)\right)\\
    & = \left( \lim_{r \to \infty}\lim_{i \to \infty}  F'_{i}(\varphi_{j_r})\right)   T(g)\\
    & = \overline{F}(\varphi)   T(g).
\end{align*}
Therefore, the sequence $\left(\left(F'_{i},T\right)\right)_i$ converges to $\left(\overline{F},T\right)$ with respect to $D_{\GENEO}$.

\end{proof}
\begin{exercise}
Prove that the statement of Theorem~\ref{t17} does not hold for group equivariant operators, by giving a counterexample where $\Phi$, $\Psi$, $X$, $Y$, $G$ and $H$ are compact, but the space of GEOs is not compact with respect to the metric $D_\mathrm{GEO}\left(F_1,F_2\right):=\sup_{\varphi\in \Phi}\lVert F_1(\varphi) - F_2(\varphi)\rVert_\infty$. 
\end{exercise}

Let us assume that $(F_1,T), (F_2,T), \dots , (F_n,T) \in \mathcal{F}^{\mathrm{all}}_T$. 
If  $(a_1, a_2, \dots , a_n) \in \mathbb{R}^n$ with $\sum_{i = 1}^n|a_i|\leq 1$, then we set 
\[
F_{\Sigma}(\varphi) := \sum_{i = 1}^n a_i F_i(\varphi).
\]

\begin{proposition}
\label{propconvex}
If $F_{\Sigma}(\Phi)\subseteq \Psi$, then 
$(F_{\Sigma},T)$ is a GENEO from $(\Phi, G)$ to $(\Psi, H)$.
\end{proposition}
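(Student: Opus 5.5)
The plan is to check the two defining properties of a GENEO for $(F_\Sigma,T)$ directly. The hypothesis $F_\Sigma(\Phi)\subseteq\Psi$ is exactly what makes $F_\Sigma$ a well-defined map $\Phi\to\Psi$. Since $T$ is already a group homomorphism (it is the same $T$ attached to every $(F_i,T)\in\mathcal{F}^{\mathrm{all}}_T$), only two things remain: $T$-equivariance of $F_\Sigma$ and non-expansiveness of $F_\Sigma$.

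For equivariance, fix $\varphi\in\Phi$ and $g\in G$. By the $T$-equivariance of each $F_i$ we have $F_\Sigma(\varphi g)=\sum_{i=1}^n a_iF_i(\varphi g)=\sum_{i=1}^n a_i\,(F_i(\varphi)T(g))$. Right composition with the fixed map $T(g)$ is linear on real (or $\R^k$-valued) functions: for every $y$ in the domain we have $\bigl(\sum_i a_i\,\psi_i T(g)\bigr)(y)=\sum_i a_i\psi_i(T(g)(y))$. Hence the sum equals $\bigl(\sum_i a_iF_i(\varphi)\bigr)T(g)=F_\Sigma(\varphi)T(g)$.

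For non-expansiveness, fix $\varphi_1,\varphi_2\in\Phi$. The triangle inequality and homogeneity of $\lVert\cdot\rVert_\infty$ give
\[
\lVert F_\Sigma(\varphi_1)-F_\Sigma(\varphi_2)\rVert_\infty\le\sum_{i=1}^n|a_i|\,\lVert F_i(\varphi_1)-F_i(\varphi_2)\rVert_\infty .
\]
Each $F_i$ is non-expansive, so this is at most $\bigl(\sum_i|a_i|\bigr)\lVert\varphi_1-\varphi_2\rVert_\infty\le\lVert\varphi_1-\varphi_2\rVert_\infty$, where the last step uses the assumption $\sum_i|a_i|\le1$.

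There is no genuine obstacle here. The only delicate point is that linear combinations may leave $\Psi$, and the hypothesis $F_\Sigma(\Phi)\subseteq\Psi$ is there precisely to handle this. One should also note that the sums are taken pointwise in the vector space of bounded functions on $\mathrm{dom}(\Psi)$, where the sup norm is a genuine norm; this is what justifies the triangle-inequality step.
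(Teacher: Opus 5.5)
Your proposal is correct and follows essentially the same route as the paper. It checks $T$-equivariance through the linearity of right composition with $T(g)$, and non-expansiveness through the triangle inequality, the non-expansiveness of each $F_i$, and the bound $\sum_{i=1}^n|a_i|\le 1$, with the hypothesis $F_\Sigma(\Phi)\subseteq\Psi$ serving only to make $F_\Sigma$ a map into $\Psi$.
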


\begin{proof}
First, we prove that $F_{\Sigma}$ is $T$-equivariant. 
Since $F_i$ is $T$-equivariant for every $i$, we have that:
\begin{align*}
F_{\Sigma}(\varphi g)
 & = \sum_{i=1}^n a_i F_i (\varphi g)
 \\& = \sum_{i=1}^n a_i (F_i (\varphi) T(g))
 \\& = \left(\sum_{i=1}^n a_i F_i (\varphi)\right) T(g)
 \\& = F_{\Sigma}(\varphi) T(g).
\end{align*}
Since $F_i$ is non-expansive for every $i$, $F_{\Sigma}$ is non-expansive:
\begin{align*}
\left \lVert F_{\Sigma}(\varphi_1)-F_{\Sigma}(\varphi_2)\right \rVert_\infty & = \left\lVert\sum_{i=1}^n a_i F_i(\varphi_1) - \sum_{i=1}^n a_i F_i(\varphi_2) \right\rVert_{\infty}
\\& =  \left\lVert\sum_{i=1}^n a_i (F_i(\varphi_1) - F_i(\varphi_2))\right\rVert_{\infty} 
\\& \le  \sum_{i=1}^n |a_i| \left\lVert F_i(\varphi_1) - F_i(\varphi_2)\right\rVert_{\infty} 
\\& \le  \sum_{i=1}^n \lvert a_i \rvert \left\lVert\varphi_1 - \varphi_2 \right\rVert_{\infty}\\
&\le \rVert\varphi_1-\varphi_2\rVert_\infty.
\end{align*}
Therefore $F_{\Sigma}$ is a GENEO.
\end{proof}

\begin{theorem}
\label{thmconvex}
If $\Psi$ is convex, then the set of GENEOs from $(\Phi, G)$ to $(\Psi, H)$ with respect to $T$ is convex.
\end{theorem}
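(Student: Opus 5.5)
The plan is to reduce Theorem~\ref{thmconvex} to Proposition~\ref{propconvex}, applied in the special case of two operators with nonnegative coefficients summing to one. Take two GENEOs $(F_1,T)$ and $(F_2,T)$ in $\mathcal{F}^{\mathrm{all}}_T$ and a parameter $t\in[0,1]$. Set $a_1=1-t$ and $a_2=t$, so that $|a_1|+|a_2|=1\le 1$. The associated operator is
\[
F_\Sigma(\varphi)=(1-t)F_1(\varphi)+tF_2(\varphi).
\]
Convexity of the set of GENEOs means precisely that $(F_\Sigma,T)$ is again a GENEO from $(\Phi,G)$ to $(\Psi,H)$ with respect to the same homomorphism $T$.

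The only hypothesis of Proposition~\ref{propconvex} that still has to be verified is the inclusion $F_\Sigma(\Phi)\subseteq\Psi$, and this is where the convexity of $\Psi$ is used. For every $\varphi\in\Phi$, both $F_1(\varphi)$ and $F_2(\varphi)$ belong to $\Psi$. Since $\Psi$ is a convex subset of the vector space of bounded functions on $\mathrm{dom}(\Psi)$, the convex combination $(1-t)F_1(\varphi)+tF_2(\varphi)$ also lies in $\Psi$.

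Proposition~\ref{propconvex} then gives $T$-equivariance and non-expansiveness of $F_\Sigma$ at once. Equivariance follows from linearity of right composition with $T(g)$, and non-expansiveness from the triangle inequality together with $(1-t)+t=1$. Hence $(F_\Sigma,T)\in\mathcal{F}^{\mathrm{all}}_T$, and the set is convex.

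I do not expect any real obstacle. The one point needing care is that convexity is meant pointwise in the ambient vector space of real- or $\R^k$-valued functions, so that the sum $\sum a_iF_i(\varphi)$ is meaningful. One should also note that $T$ is the same in both GENEOs, so the combination is taken only in the first component of the pair.
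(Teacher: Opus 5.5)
Your proposal is correct and follows the paper's proof: both apply Proposition~\ref{propconvex} with $n=2$ and coefficients $t,1-t$, using the convexity of $\Psi$ only to guarantee $F_\Sigma(\Phi)\subseteq\Psi$. The only difference is that you swap which operator receives the coefficient $t$, which is immaterial.
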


\begin{proof}
It is sufficient to apply Proposition~\ref{propconvex} for $n=2$, by setting $a_1=t$, $a_2=1-t$ for $0\le t\le 1$, and observing that the convexity of $\Psi$ implies $F_{\Sigma}(\Phi)\subseteq \Psi$.
\end{proof}

\section{Pseudo-metrics induced by persistent homology}

In this section, we introduce pseudo-metrics in the space of measurements based on a given collections of GENEOs operating on them. 
One of these pseudo-metrics is based on the matching distance between the persistence diagrams of the measurement functions after the GENEOs are applied.
We show that GENEOs combined with the matching distance allow us to infer the natural pseudo-distance, which is our ground truth.

Consider the perception pairs $(\Phi, G)$ and $(\Psi, H)$, where $\Phi, \Psi$ are two sets of bounded real-valued functions, with $\mathrm{dom}(\Phi)=X$ and $\mathrm{dom}(\Psi)=Y$, and a homomorphism $T\colon G \to H$.
Let us consider a subset $\mathcal{F} \ne \emptyset$ of all GENEOs between $(\Phi, G)$ and $(\Psi, H)$ associated with $T$, $\mathcal{F}^{\mathrm{all}}_T$. 
To compare data under the action of $\mathcal{F}$, one could define
the pseudo-metric 
$D_{{\mathcal{F}},\Phi}$ by setting, for $\varphi_1,\varphi_2 \in \Phi$:
\[
D_{\mathcal{F},\Phi}(\varphi_1,\varphi_2):=\sup_{F\in \mathcal{F}}\|F(\varphi_1)-F(\varphi_2)\|_\infty.
\]
Persistent homology allows us to replace $D_{\mathcal{F},\Phi}$ with a pseudo-metric $\mathcal{D}^{\mathcal{F},k}_{\mathrm{match}}$ computationally more efficient, stable and, above all, strongly invariant with respect to the action of $G$.
For any fixed degree $k$, we define the pseudo-metric $\mathcal{D}^{\mathcal{F},k}_{\mathrm{match}}$ on $\Phi$ as
\begin{equation*}\label{eq:dfkmatch}
\mathcal{D}^{\mathcal{F},k}_{\mathrm{match}}(\varphi_1, \varphi_2):= \sup_{F \in \mathcal{F}} d_{\mathrm{match}}(\dgm_k(F(\varphi_1)),\dgm_k(F(\varphi_2)))
\end{equation*}
for every $\varphi_1,\varphi_2 \in \Phi$. 

\begin{proposition}
\label{stronglyinv}
  $\mathcal{D}^{\mathcal{F},k}_{\mathrm{match}}$ is a strongly $G$-invariant pseudo-metric on $\Phi$.
\end{proposition}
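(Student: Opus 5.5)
The plan has two parts: first show that $\mathcal{D}^{\mathcal{F},k}_{\mathrm{match}}$ is a pseudo-metric, then show that it is strongly $G$-invariant. Both will be reduced to properties of each summand $F\in\mathcal{F}$ separately, using that a supremum of pseudo-metrics is a pseudo-metric and that invariance of every term implies invariance of the supremum.

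For the pseudo-metric part, for each fixed $F\in\mathcal{F}$ set $d_F(\varphi_1,\varphi_2):=d_{\mathrm{match}}(\dgm_k(F(\varphi_1)),\dgm_k(F(\varphi_2)))$. Proposition~\ref{dmatchisadistance} gives $d_F(\varphi,\varphi)=0$, symmetry, and the triangle inequality, since $d_F$ is the pullback of the metric $d_{\mathrm{match}}$ along the map $\varphi\mapsto \dgm_k(F(\varphi))$. Taking the supremum over $F\in\mathcal{F}$, which is nonempty, preserves all three properties. The supremum also takes finite values: by Theorem~\ref{matchingstabilitythm} and the non-expansiveness of $F$,
\[
d_F(\varphi_1,\varphi_2)\le \|F(\varphi_1)-F(\varphi_2)\|_\infty\le\|\varphi_1-\varphi_2\|_\infty ,
\]
and the right-hand side is finite because the functions in $\Phi$ are bounded. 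The triangle inequality passes to the supremum via $d_F(\varphi_1,\varphi_3)\le d_F(\varphi_1,\varphi_2)+d_F(\varphi_2,\varphi_3)\le \mathcal{D}(\varphi_1,\varphi_2)+\mathcal{D}(\varphi_2,\varphi_3)$, where $\mathcal{D}$ abbreviates $\mathcal{D}^{\mathcal{F},k}_{\mathrm{match}}$.

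For strong invariance, fix $g\in G$ and $F\in\mathcal{F}$. Equivariance gives $F(\varphi g)=F(\varphi)T(g)$, with $T(g)\in H\subseteq \mathrm{Aut}_\Psi(Y)$. The key step is to conclude $\dgm_k(F(\varphi)T(g))=\dgm_k(F(\varphi))$ from Proposition~\ref{propinvariancePDs}, applied to the map $h=T(g)^{-1}$. This requires $h$ to be a homeomorphism of $Y$. By Proposition~\ref{propgisometry}, $h$ is a bijective isometry of $(Y,D_Y)$, hence a homeomorphism, where $Y$ carries the topology induced by $D_Y$, which is the topology under which the persistence diagrams of functions in $\Psi$ are computed. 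Writing $F(\varphi)T(g)=F(\varphi)h^{-1}$ then gives the equality of diagrams. Consequently $d_F(\varphi_1 g,\varphi_2)=d_F(\varphi_1,\varphi_2)$, and similarly in the second variable and in both variables simultaneously. Taking the supremum over $F$ yields the four required equalities.

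The only genuinely delicate point is this identification of $T(g)$ as a homeomorphism of the filtered space. Proposition~\ref{propinvariancePDs} is stated for homeomorphisms between topological spaces, so the argument must make sure the topology on $Y$ is the $D_Y$-topology, in which elements of $H$ are isometries by Proposition~\ref{propgisometry}. The rest is routine.
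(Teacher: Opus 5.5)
Your proposal is correct and follows the same route as the paper: finiteness comes from the Matching Distance Stability Theorem~\ref{matchingstabilitythm} combined with non-expansiveness of each $F$, and invariance comes from $T$-equivariance together with Proposition~\ref{propinvariancePDs}. You also spell out two points the paper leaves implicit: a supremum over a nonempty family of pulled-back pseudo-metrics is again a pseudo-metric, and $T(g)$ is a homeomorphism of $Y$ because it is a $D_Y$-isometry (Proposition~\ref{propgisometry}).
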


\begin{proof}
The Matching Distance Stability Theorem~\ref{matchingstabilitythm} and the non-expansiveness of every $F \in \mathcal{F}$ imply that
\begin{align*}
   d_{\mathrm{match}} (\dgm_k(F(\varphi_1)),\dgm_k(F(\varphi_2)))& \le \lVert F(\varphi_1)-F(\varphi_2)\rVert_\infty\\
   & \le \lVert\varphi_1-\varphi_2\rVert_\infty.
\end{align*}
Therefore $\mathcal{D}^{\mathcal{F},k}_{\mathrm{match}}$ always assumes finite values. 
Moreover, for every $\varphi_1,\varphi_2 \in \Phi$ and every $g \in G$
\begin{align*}
  \mathcal{D}^{\mathcal{F},k}_{\mathrm{match}}(\varphi_1,\varphi_2 g)
  & = \sup_{F \in \mathcal{F}} d_{\mathrm{match}}(\dgm_k(F(\varphi_1)),\dgm_k(F(\varphi_2 g)))\\
  & = \sup_{F \in \mathcal{F}} d_{\mathrm{match}}(\dgm_k(F(\varphi_1)),\dgm_k(F(\varphi_2) T( g)))\\
  & = \sup_{F \in \mathcal{F}} d_{\mathrm{match}}(\dgm_k(F(\varphi_1)),\dgm_k(F(\varphi_2))\\
  & =\mathcal{D}^{\mathcal{F},k}_{\mathrm{match}}(\varphi_1,\varphi_2)\\
\end{align*}
because every $F$ is $T$-equivariant and persistent homology is invariant under the action of the homeomorphisms (see Proposition~\ref{propinvariancePDs}). 
Since the function $\mathcal{D}^{\mathcal{F},k}_{\mathrm{match}}$ is symmetric, this is sufficient to guarantee that $\mathcal{D}^{\mathcal{F},k}_{\mathrm{match}}$ is strongly $G$-invariant.
\end{proof}

The pseudo-metric $\mathcal{D}^{\mathcal{F},k}_{\mathrm{match}}$ is stable with respect to both the pseudo-metric $d_G$ and the uniform metric.



\begin{theorem}
\label{t15}
If $\mathcal{F}$ is a non-empty subset of $\mathcal{F}^{\mathrm{all}}_T$, then for any $\varphi_1, \varphi_2 \in \Phi$
\begin{equation*}\label{eq:stability}
\mathcal{D}^{\mathcal{F},k}_{\mathrm{match}}(\varphi_1,\varphi_2) \le d_G(\varphi_1,\varphi_2) \le \lVert \varphi_1 - \varphi_2 \rVert_\infty.
\end{equation*}
\end{theorem}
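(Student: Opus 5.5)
The second inequality, $d_G(\varphi_1,\varphi_2)\le\lVert\varphi_1-\varphi_2\rVert_\infty$, is immediate from the definition of the natural pseudo-distance. We take $g=\mathrm{id}_X\in G$ in the infimum $\inf_{g\in G}\lVert\varphi_1-\varphi_2 g\rVert_\infty$. So the work lies entirely in the first inequality.

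For the first inequality I fix an arbitrary $F\in\mathcal{F}$ and an arbitrary $g\in G$, and I aim to show
\[
d_{\mathrm{match}}(\dgm_k(F(\varphi_1)),\dgm_k(F(\varphi_2)))\le\lVert\varphi_1-\varphi_2 g\rVert_\infty .
\]
Taking the infimum over $g$ and then the supremum over $F$ will give the claim. To get this bound, I first use the strong $G$-invariance already established in Proposition~\ref{stronglyinv}; more directly, I repeat its computation. By $T$-equivariance, $F(\varphi_2 g)=F(\varphi_2)T(g)$. Since $T(g)\in H\subseteq\mathrm{Aut}_\Psi(Y)$ is an isometry of $(Y,D_Y)$ (Proposition~\ref{propgisometry}), hence a homeomorphism, Proposition~\ref{propinvariancePDs} gives $\dgm_k(F(\varphi_2)T(g))=\dgm_k(F(\varphi_2))$. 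Then the Matching Distance Stability Theorem~\ref{matchingstabilitythm}, applied to $F(\varphi_1)$ and $F(\varphi_2 g)$, followed by non-expansiveness of $F$, yields
\[
d_{\mathrm{match}}(\dgm_k(F(\varphi_1)),\dgm_k(F(\varphi_2 g)))\le\lVert F(\varphi_1)-F(\varphi_2 g)\rVert_\infty\le\lVert\varphi_1-\varphi_2 g\rVert_\infty .
\]
Here I use that $\varphi_2 g\in\Phi$, by the definition of $\mathrm{Aut}_\Phi(X)$.

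Alternatively, one could chain Proposition~\ref{thmContraction} with Proposition~\ref{corstabilityofPDswrtNPD} applied in $(\Psi,H)$. That route gives $d_{\mathrm{match}}\le d_H(F(\varphi_1),F(\varphi_2))\le d_G(\varphi_1,\varphi_2)$, which is perhaps the cleanest writeup. Either way, the only point requiring care, and the step I expect to be the main obstacle, is checking that the standing hypotheses are satisfied by the functions $F(\varphi_i)$ on $Y$. Theorem~\ref{matchingstabilitythm} and Proposition~\ref{propinvariancePDs} are used for continuous functions on a compact space satisfying Assumptions~\ref{ass_fingen} and~\ref{ass_right-cont}. Continuity holds because functions in $\Psi$ are non-expansive with respect to $D_Y$. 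Compactness of $Y$ and the assumptions on filtering functions are covered by the standing convention stated before Proposition~\ref{corstabilityofPDswrtNPD}, which I would invoke explicitly for $\Psi$ as well.
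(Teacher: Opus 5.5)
Your proposal is correct and follows the paper's proof essentially step for step. For fixed $F\in\mathcal{F}$ and $g\in G$ you chain invariance of persistence diagrams under homeomorphisms, $T$-equivariance, the Matching Distance Stability Theorem~\ref{matchingstabilitythm} and non-expansiveness of $F$ to get $d_{\mathrm{match}}(\dgm_k(F(\varphi_1)),\dgm_k(F(\varphi_2)))\le\lVert\varphi_1-\varphi_2 g\rVert_\infty$, then take the infimum over $g$ and the supremum over $F$, using $g=\mathrm{id}$ for the second inequality. Your side remarks are also sound: $T(g)$ is a homeomorphism of $Y$ because it is a $D_Y$-isometry (a point the paper leaves implicit), and the alternative route through Propositions~\ref{thmContraction} and~\ref{corstabilityofPDswrtNPD} applied to $(\Psi,H)$ gives the same bound.
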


\begin{proof}
For every $F \in \mathcal{F}$, every $g \in G$ and every $\varphi_1, \varphi_2 \in \Phi$, we have that
\begin{align*}
     d_{\mathrm{match}}(\dgm_k(F(\varphi_1)),\dgm_k(F(\varphi_2)))& =d_{\mathrm{match}}(\dgm_k(F(\varphi_1)),\dgm_k(F(\varphi_2)  T( g)))\\
    & =d_{\mathrm{match}}(\dgm_k(F(\varphi_1)),\dgm_k(F(\varphi_2  g)))\\
    & \le \lVert F(\varphi_1) - F(\varphi_2  g)\rVert_\infty\\
    & \le \lVert \varphi_1 - \varphi_2 g\rVert_\infty.
\end{align*}
The first equality follows from the invariance of persistent homology under the action of $\Homeo(X)$ (Proposition~\ref{propinvariancePDs}), and the second equality follows from the fact that $F$ is $T$-equivariant. 
The first inequality is the Matching Distance Stability Theorem~\ref{matchingstabilitythm}, while the second inequality is the non-expansiveness of $F$.
It follows that, if $\mathcal{F}\subseteq \mathcal{F}^{\mathrm{all}}_T$, then for every $g \in G$ and every $\varphi_1, \varphi_2 \in \Phi$
\begin{equation*}
\mathcal{D}^{\mathcal{F},k}_{\mathrm{match}}(\varphi_1,\varphi_2)\le \lVert \varphi_1 - \varphi_2  g\rVert_\infty, 
\end{equation*}
hence $\mathcal{D}^{\mathcal{F},k}_{\mathrm{match}} \le d_G$.
Moreover, $d_G(\varphi_1,\varphi_2) \le \lVert \varphi_1 - \varphi_2 \rVert_\infty$, by considering $g=\mathrm{id}$ in the definition of $d_G$.
\end{proof}

We observe that the pseudo-metrics $d_G$ and $\mathcal{D}^{\mathcal{F},k}_{\mathrm{match}}$ have different nature. 
While both depend on group actions, they are also different in what they measure.
The natural pseudo-distance $d_G$ is based on a variational approach involving the set of all elements in $G$, and $\mathcal{D}^{\mathcal{F},k}_{\mathrm{match}}$ measures the difference between functions after the information is being condensed into their persistence diagrams. 
Thus, the next result may appear unexpected.

\begin{theorem}
\label{thrEqualityDFall_dmatch}
Let 
$\mathcal{F}^{\mathrm{all}}_{\mathrm{id}}$ be the space of all GENEOs from $(\Phi,G)$ to itself with respect to the identity $\mathrm{id} \colon G \to G$, where every function in $\Phi$ is non-negative, and $\Phi$ contains every constant function $c$ for which a function $\varphi \in \Phi $ exists such that $0\le c \le \|\varphi\|_{\infty}$. 
If the $H_k(X)$ is non-zero, then $\mathcal{D}^{\mathcal{F}^{\mathrm{all}}_\mathrm{id},k}_{\mathrm{match}}=d_G$.
\end{theorem}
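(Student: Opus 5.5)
By Theorem~\ref{t15}, $\mathcal{D}^{\mathcal{F}^{\mathrm{all}}_\mathrm{id},k}_{\mathrm{match}}\le d_G$ already holds. So the plan is to prove the reverse inequality. Fix $\varphi_1,\varphi_2\in\Phi$. I will construct a single GENEO $F\in\mathcal{F}^{\mathrm{all}}_{\mathrm{id}}$ such that $F(\varphi_1)$ and $F(\varphi_2)$ are constant functions. I will arrange that the persistence diagrams of these constants are at matching distance exactly $d_G(\varphi_1,\varphi_2)$.

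The natural candidate is
\[
F(\varphi):=d_G(\varphi,\varphi_2),
\]
viewed as a constant function on $X$. I need to check four things.
\begin{itemize}
\item[(i)] $F(\varphi)\in\Phi$. Since $d_G(\varphi,\varphi_2)\le\|\varphi-\varphi_2\|_\infty\le\max\{\|\varphi\|_\infty,\|\varphi_2\|_\infty\}$ (functions are non-negative), the constant lies in $[0,\|\psi\|_\infty]$ for $\psi=\varphi$ or $\psi=\varphi_2$. The hypothesis on constants then applies.
\item[(ii)] Equivariance with $T=\mathrm{id}$. Strong $G$-invariance of $d_G$ gives $F(\varphi g)=F(\varphi)=F(\varphi)g$, because constants are fixed by precomposition.
\item[(iii)] Non-expansiveness. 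The triangle inequality for $d_G$ (Proposition~\ref{propPM}) gives $|d_G(\varphi,\varphi_2)-d_G(\varphi',\varphi_2)|\le d_G(\varphi,\varphi')\le\|\varphi-\varphi'\|_\infty$.
\item[(iv)] Values. $F(\varphi_2)\equiv 0$ and $F(\varphi_1)\equiv d_G(\varphi_1,\varphi_2)=:\delta$.
\end{itemize}

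Next I compute the diagrams of constants. For a constant $c$ on $X$, the sublevel set is empty for $u<c$ and equals $X$ for $u\ge c$. Hence $\beta_k^c(u,v)=\dim H_k(X)$ if $u\ge c$ and $0$ otherwise. There are no proper cornerpoints, and $\nu$ gives the point $(c,\infty)$ with multiplicity $m=\dim H_k(X)\ge1$; this is where $H_k(X)\neq 0$ is used. Thus $\dgm_k(F(\varphi_1))$ has $(\delta,\infty)$ with multiplicity $m$, and $\dgm_k(F(\varphi_2))$ has $(0,\infty)$ with multiplicity $m$, besides $\Delta$.

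Finally I compare the two diagrams. In any matching, some copy of $(\delta,\infty)$ must go either to $(0,\infty)$, at cost $\tilde d=\delta$, or to $\Delta$, at cost $d((\delta,\infty),\Delta)=\infty$. Hence every matching costs at least $\delta$, and the matching distance equals $\delta$. Taking the supremum over $\mathcal{F}^{\mathrm{all}}_{\mathrm{id}}$ gives $\mathcal{D}\ge d_G$.

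The main point needing care is step (i): the statement that the constant lies in $\Phi$ must be checked against the precise hypothesis on constant functions. Everything else is direct from earlier results.
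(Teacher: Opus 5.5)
Your proposal is correct and follows the paper's proof essentially step for step. The paper also uses the constant-valued GENEO $F_{\varphi'}(\varphi)\equiv d_G(\varphi,\varphi')$, checks equivariance via strong $G$-invariance and non-expansiveness via the reverse triangle inequality, notes that each diagram has a single point at infinity with multiplicity $\dim H_k(X)$, specializes to $\varphi'=\varphi_2$, and closes with Theorem~\ref{t15}. Your step (i), the bound $d_G(\varphi,\varphi_2)\le\max\{\|\varphi\|_\infty,\|\varphi_2\|_\infty\}$ from non-negativity, explicitly verifies that $F(\varphi)\in\Phi$, which the paper only asserts follows from its assumptions.
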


\begin{proof}
For every $\varphi' \in \Phi$ let us consider the operator $F_{\varphi'} \colon \Phi \to \Phi$ defined by setting $F_{\varphi'}(\varphi)$ equal to the constant function taking everywhere the value $d_G(\varphi, \varphi')$ for every $\varphi \in \Phi$ (i.e., $F_{\varphi'}(\varphi)(x)=d_G(\varphi,\varphi')$ for any $x \in X$). 
Our assumptions guarantee that such a constant function belongs to $\Phi$.
We observe that $F_{\varphi'}$ is $\mathrm{id}$-equivariant, because the strong invariance of the natural pseudo-distance $d_G$ with respect to the group $G$ implies that if $\varphi \in \Phi$ and $g \in G$, then for every $x\in X$ \[F_{\varphi'}(\varphi   g)(x) = d_G(\varphi  g,\varphi') = d_G(\varphi,\varphi') = F_{\varphi'}(\varphi)(g(x)) = (F_{\varphi'}(\varphi)  g)(x).\]
Moreover, $F_{\varphi'}$ is non-expansive on $\Phi$, because for every $\varphi_1, \varphi_2 \in \Phi$:
\[
\lVert F_{\varphi'}(\varphi_1) - F_{\varphi'}(\varphi_2)\rVert_\infty  = |d_G(\varphi_1, \varphi') - d_G(\varphi_2,\varphi')| \le d_G(\varphi_1,\varphi_2) \le \lVert \varphi_1 - \varphi_2 \rVert_\infty.
\]
Therefore, $F_{\varphi'}$ is a GENEO. For every $\varphi_1,\varphi_2,\varphi' \in \Phi$ we have that
\begin{equation*}d_{\mathrm{match}}(\dgm_k(F_{\varphi'}(\varphi_1)),\dgm_k(F_{\varphi'}(\varphi_2)))=|d_G(\varphi_1, \varphi') - d_G(\varphi_2,\varphi')|.
\end{equation*}
Indeed, $\dgm_k(F_{\varphi'}(\varphi_1))\setminus \{\Delta\}$ contains at most the point $(d_G(\varphi_1,\varphi'),\infty)$, while $\dgm_k(F_{\varphi'}(\varphi_2))\setminus \{\Delta\}$ contains at most the point $(d_G(\varphi_2,\varphi'),\infty)$. Both points have the same multiplicity, which equals 
the non-zero dimension of $H_k(X)$.

Setting $\varphi'=\varphi_2$, we have that
\begin{equation*}
d_{\mathrm{match}}(\dgm_k(F_{\varphi_2}(\varphi_1)),\dgm_k(F_{\varphi_2}(\varphi_2)))=d_G(\varphi_1,\varphi_2).
\end{equation*}
As a consequence, we have that \begin{equation}
\mathcal{D}^{\mathcal{F}_\mathrm{id}^{\mathrm{all}},k}_{\mathrm{match}}(\varphi_1,\varphi_2)\ge d_G(\varphi_1,\varphi_2).
\end{equation}
By applying Theorem~\ref{t15}, we get
\begin{equation*}
\mathcal{D}^{\mathcal{F}_\mathrm{id}^{\mathrm{all}},k}_{\mathrm{match}}(\varphi_1,\varphi_2)= d_G(\varphi_1,\varphi_2)
\end{equation*}
for every $\varphi_1,\varphi_2$.
\end{proof}
\begin{remark}
The reader might consider replacing the condition “$\Phi$ contains each constant function $c$ for which a function $\varphi \in \Phi $ exists such that $0\le c \le \|\varphi\|_{\infty}$" in Theorem~\ref{thrEqualityDFall_dmatch} with the simpler “$\Phi$ is a space of functions from $X$ to $\R$ containing each constant function $0 \le c \le \mathrm{diam}(\Phi)$". However, we note that in this case, if $\Phi$ contains a function that takes a negative value at some point, it implies that $\Phi$ is unbounded.
\end{remark}
\begin{remark}
We observe that if $\Phi$ is (totally) bounded, the assumption that every function in $\Phi$ is non-negative is not restrictive. 
Indeed, we can obtain it by adding a suitable constant value to every admissible function. 
Moreover, if $X$ is non-empty, then there always exists an index $k$ such that $H_k(X)$ is not zero.
\end{remark}

Next we show how $\mathcal{D}^{\mathcal{F},k}_{\mathrm{match}}$ can be approximated arbitrarily well with a finite subset of operators.

\begin{proposition}
\label{propapprox}
Let $\Phi$ be a compact space and $\mathcal{F}$ be a non-empty subset of $\mathcal{F}^{\mathrm{all}}_T$. For every $\varepsilon>0$, a finite subset $\mathcal{F}'$ of $\mathcal{F}$ exists, such that
\begin{equation*}
\left \lvert \mathcal{D}^{\mathcal{F'},k}_{\mathrm{match}}(\varphi_1, \varphi_2) - \mathcal{D}^{\mathcal{F},k}_{\mathrm{match}}(\varphi_1, \varphi_2)\right \rvert\le \varepsilon
\end{equation*}
for every $\varphi_1,\varphi_2 \in \Phi$.
\end{proposition}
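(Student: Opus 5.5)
The plan is to use the compactness of $\mathcal{F}^{\mathrm{all}}_T$ (Theorem~\ref{t17}), or more directly the total boundedness of $\mathcal{F}$ with respect to $D_{\GENEO}$, together with the Matching Distance Stability Theorem~\ref{matchingstabilitythm}. The idea is that if two operators are uniformly close on all of $\Phi$, then the matching distances they induce are also close, uniformly in $(\varphi_1,\varphi_2)$.

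\emph{Step 1: a finite net.} I would first argue that $\mathcal{F}$ is totally bounded with respect to $D_{\GENEO}$. Theorem~\ref{t17} requires $\Psi$ compact as well, so I would either assume this (as in the setting of that theorem) or replace $\Psi$ by the compact set $\overline{\bigcup_{F}F(\Phi)}$. Alternatively, I would reproduce the Arzelà--Ascoli type argument of Theorem~\ref{t17} using only the compactness of $\Phi$ and the equi-non-expansiveness of the family. A subset of a totally bounded metric space is totally bounded, so for the given $\varepsilon$ there is a finite set $\mathcal{F}'=\{F_1,\dots,F_n\}\subseteq\mathcal{F}$ such that every $F\in\mathcal{F}$ satisfies $D_{\GENEO}(F,F_i)\le\varepsilon/2$ for some $i$. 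This step is the main obstacle: I must ensure the net is taken inside $\mathcal{F}$ itself and that the required compactness of the target is available. I would handle it via total boundedness of the ambient compact space $\mathcal{F}^{\mathrm{all}}_T$, picking net points in $\mathcal{F}$ within $\varepsilon/4$-balls that meet $\mathcal{F}$, which gives radius $\varepsilon/2$.

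\emph{Step 2: Lipschitz dependence on the operator.} For $F\in\mathcal{F}$ and its net point $F_i$, and any $\varphi_1,\varphi_2$, I would use the triangle inequality for $d_\match$ (Proposition~\ref{dmatchisadistance}) and then Theorem~\ref{matchingstabilitythm}:
\[
\bigl|d_\match(\dgm_k(F(\varphi_1)),\dgm_k(F(\varphi_2)))-d_\match(\dgm_k(F_i(\varphi_1)),\dgm_k(F_i(\varphi_2)))\bigr|\le \|F(\varphi_1)-F_i(\varphi_1)\|_\infty+\|F(\varphi_2)-F_i(\varphi_2)\|_\infty\le\varepsilon .
\]

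\emph{Step 3: conclusion.} Since $\mathcal{F}'\subseteq\mathcal{F}$, we have $\mathcal{D}^{\mathcal{F}',k}_{\mathrm{match}}\le\mathcal{D}^{\mathcal{F},k}_{\mathrm{match}}$. Conversely, by Step 2 each term in the supremum defining $\mathcal{D}^{\mathcal{F},k}_{\mathrm{match}}(\varphi_1,\varphi_2)$ is at most $\varepsilon$ plus a term of $\mathcal{D}^{\mathcal{F}',k}_{\mathrm{match}}(\varphi_1,\varphi_2)$. Taking the supremum gives the claimed bound, and it holds uniformly in $\varphi_1,\varphi_2$.
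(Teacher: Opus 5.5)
Your argument is essentially the paper's proof: a finite net $\mathcal{F}'\subseteq\mathcal{F}$ for $D_{\GENEO}$ taken from the compactness of $\mathcal{F}^{\mathrm{all}}_T$ (Theorem~\ref{t17}), the Step~2 estimate via the triangle inequality for $d_\match$ and Theorem~\ref{matchingstabilitythm}, and the trivial inequality $\mathcal{D}^{\mathcal{F}',k}_{\mathrm{match}}\le\mathcal{D}^{\mathcal{F},k}_{\mathrm{match}}$. Like the paper's proof, yours needs $\Psi$ to be compact so that Theorem~\ref{t17} applies. The paper uses this tacitly; you state it explicitly.

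The two fallbacks you offer for dropping that hypothesis do not work as stated. Compactness of $\Phi$ together with equi-non-expansiveness does not make $\{F(\varphi)\}_{F\in\mathcal{F}}$ relatively compact for each fixed $\varphi$, which is what your Arzel\`a--Ascoli fallback needs. For instance, take trivial groups and let $\Psi$ contain all real constant functions. Then the constant operators $\varphi\mapsto c$, $c\in\R$, are GENEOs and form an isometric copy of $\R$ under $D_{\GENEO}$. So $\overline{\bigcup_F F(\Phi)}$ is not compact, and $\mathcal{F}^{\mathrm{all}}_T$ is not even totally bounded.

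If you want the statement with only $\Phi$ compact, run the total-boundedness argument on the real-valued functions $h_F(\varphi_1,\varphi_2):=d_\match(\dgm_k(F(\varphi_1)),\dgm_k(F(\varphi_2)))$ rather than on the operators. Your Step~2 reasoning shows that each $h_F$ is $1$-Lipschitz on $\Phi\times\Phi$ with the sum metric. Each $h_F$ is also bounded by $\mathrm{diam}\,\Phi$. Arzel\`a--Ascoli then gives finitely many $F_1,\dots,F_n\in\mathcal{F}$ with $\sup_{F\in\mathcal{F}}h_F\le\max_i h_{F_i}+\varepsilon$, and this needs no condition on $\Psi$.
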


\begin{proof}
Let us consider the closure $\overline{\mathcal{F}}$ of $\mathcal{F}$ in $\mathcal{F}^{\mathrm{all}}_T$. Let us also consider the covering $\mathcal{U}$ of $\overline{\mathcal{F}}$ obtained by taking all the open balls of radius $\frac{\varepsilon}{2}$ centred at points of $\mathcal{F}$, with respect to $D_{\GENEO}$. 
Theorem~\ref{t17} guarantees that $\mathcal{F}^{\mathrm{all}}_T$ is compact, hence $\overline{\mathcal{F}}$ is also compact. 
Therefore we can extract a finite covering $\{B_1, \dots, B_m \}$ of $\overline{\mathcal{F}}$ from $\mathcal{U}$. We can set $\mathcal{F}'$ equal to the set of centers of the balls $B_1, \dots, B_m$.

Now, for every $F\in\mathcal{F}$, a $F'\in\mathcal{F}'$ exists such that
$D_{\GENEO}(F,F')< \frac{\varepsilon}{2}$.
The definition of $D_{\GENEO}$ implies that $\lVert F(\varphi) - F'(\varphi)\rVert_\infty<\frac{\varepsilon}{2}$ for every $\varphi\in \Phi$. From the Matching Distance Stability Theorem~\ref{matchingstabilitythm} it follows that
\begin{equation*}
d_{\mathrm{match}}(\dgm_k(F(\varphi_1)),\dgm_k(F'(\varphi_1))) < \frac{\varepsilon}{2}
\end{equation*}
and
\begin{equation*}
d_{\mathrm{match}}(\dgm_k(F(\varphi_2)),\dgm_k(F'(\varphi_2)))<\frac{\varepsilon}{2}
\end{equation*}
for every $\varphi_1,\varphi_2 \in \Phi$.

By using the reverse triangle inequality and the previous inequalities we can show that 
$d_{\mathrm{match}}(\dgm_k(F(\varphi_2)), \dgm_k(F'(\varphi_1)))$ has a distance less than $\frac{\varepsilon}{2}$ from both the values 
$$d_{\mathrm{match}}(\dgm_k(F(\varphi_1)), \dgm_k(F(\varphi_2)))$$
and 
$$d_{\mathrm{match}}(\dgm_k(F'(\varphi_1)), \dgm_k(F'(\varphi_2))).$$ It follows that
\[
\lvert d_{\mathrm{match}}(\dgm_k(F(\varphi_1)),\dgm_k(F(\varphi_2)))-d_{\mathrm{match}}(\dgm_k(F'(\varphi_1)),\dgm_k(F'(\varphi_2)))\rvert
 < \varepsilon.
\]
Hence,
$\mathcal{D}^{\mathcal{F},k}_{\mathrm{match}}(\varphi_1,\varphi_2)\le \mathcal{D}^{\mathcal{F}',k}_{\mathrm{match}}(\varphi_1,\varphi_2)+ \varepsilon$. 
Since $\mathcal{F}'\subseteq \mathcal{F}$, we also have 
$\mathcal{D}^{\mathcal{F}',k}_{\mathrm{match}}(\varphi_1,\varphi_2)\le \mathcal{D}^{\mathcal{F},k}_{\mathrm{match}}(\varphi_1,\varphi_2)$, 
and the proof follows.
\end{proof}

Proposition~\ref{propapprox} states that the approximation of $\mathcal{D}^{\mathcal{F},k}_{\mathrm{match}}(\varphi_1, \varphi_2)$ can be reduced to the computation of $\mathcal{D}^{\mathcal{F}',k}_{\mathrm{match}}(\varphi_1, \varphi_2)$, i.e. the maximum of a finite set of bottleneck distances between persistence diagrams. Combining Theorem~\ref{thrEqualityDFall_dmatch} and Proposition~\ref{propapprox}, we obtain the following approximation result:
\begin{corollary}
Let $\Phi$ be a compact space. For every $\varepsilon>0$, a finite subset $\mathcal{F}'$ of $\mathcal{F}^\mathrm{all}_\mathrm{id}$ exists such that
\begin{equation*}
\left \lvert d_G(\varphi_1, \varphi_2) - \mathcal{D}^{\mathcal{F}',k}_{\mathrm{match}}(\varphi_1, \varphi_2)\right\rvert\le \varepsilon
\end{equation*}
for every $\varphi_1,\varphi_2 \in \Phi$.
\end{corollary}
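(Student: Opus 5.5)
This corollary follows by chaining two earlier results with a triangle inequality. First, I will check that the hypotheses of Theorem~\ref{thrEqualityDFall_dmatch} hold in the intended setting. The functions in $\Phi$ should be non-negative, and $\Phi$ should contain the required constant functions. I also need a degree $k$ with $H_k(X)\neq 0$. These are implicitly the standing assumptions under which the corollary is stated; if necessary I would make them explicit, or note that such a $k$ always exists when $X\neq\emptyset$. Under these hypotheses, Theorem~\ref{thrEqualityDFall_dmatch} gives the identity $\mathcal{D}^{\mathcal{F}^{\mathrm{all}}_{\mathrm{id}},k}_{\mathrm{match}}=d_G$ on $\Phi\times\Phi$.

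Second, I will apply Proposition~\ref{propapprox} with $\mathcal{F}=\mathcal{F}^{\mathrm{all}}_{\mathrm{id}}$ and $T=\mathrm{id}$. This requires $\Phi$ to be compact, which is assumed, and $\mathcal{F}$ to be non-empty. Non-emptiness holds because the identity operator $\varphi\mapsto\varphi$ is a GENEO from $(\Phi,G)$ to itself with respect to $\mathrm{id}$. The compactness argument behind Proposition~\ref{propapprox} relies on Theorem~\ref{t17}, which requires both source and target spaces to be compact. Here both spaces are $\Phi$, so this is automatically satisfied. For a given $\varepsilon>0$, Proposition~\ref{propapprox} then produces a finite $\mathcal{F}'\subseteq\mathcal{F}^{\mathrm{all}}_{\mathrm{id}}$ such that
\[
\left\lvert\mathcal{D}^{\mathcal{F}',k}_{\mathrm{match}}(\varphi_1,\varphi_2)-\mathcal{D}^{\mathcal{F}^{\mathrm{all}}_{\mathrm{id}},k}_{\mathrm{match}}(\varphi_1,\varphi_2)\right\rvert\le\varepsilon
\]
uniformly in $\varphi_1,\varphi_2\in\Phi$.

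Finally, substituting $d_G$ for $\mathcal{D}^{\mathcal{F}^{\mathrm{all}}_{\mathrm{id}},k}_{\mathrm{match}}$ via the first step gives the claimed inequality. There is no genuine computational obstacle in this argument. The only delicate point is bookkeeping: I must make sure the hypotheses of Theorem~\ref{thrEqualityDFall_dmatch} (non-negativity, constants in $\Phi$, $H_k(X)\neq0$) are in force, since the corollary as stated only mentions compactness. I would state them explicitly as inherited assumptions.
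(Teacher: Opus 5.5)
Your proposal is correct and follows exactly the paper's route: the corollary is obtained by combining Theorem~\ref{thrEqualityDFall_dmatch}, which gives $\mathcal{D}^{\mathcal{F}^{\mathrm{all}}_{\mathrm{id}},k}_{\mathrm{match}}=d_G$, with Proposition~\ref{propapprox} applied to $\mathcal{F}=\mathcal{F}^{\mathrm{all}}_{\mathrm{id}}$, which is legitimate because here $\Psi=\Phi$ is compact and Theorem~\ref{t17} applies. You also point out that the hypotheses of Theorem~\ref{thrEqualityDFall_dmatch} (non-negativity, presence of the constant functions, $H_k(X)\neq 0$) are tacitly inherited by the corollary, which is correct and useful bookkeeping that the paper leaves implicit.
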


\begin{exercise}
Assume that $(F_1,T_1)\colon (\Phi_1,G_1) \to (\Phi_2,G_2)$ and $F_2\colon (\Phi_2,G_2) \to (\Phi_3,G_3)$ are two GENEOs.
Prove that $(F_2 F_1,T_2 T_1)$ is a GENEO.
\end{exercise}

\begin{exercise}
Assume that $(F_1,T)$ and $(F_2,T)$ are GENEOs from $(\Phi,G)$ to $(\Psi,H)$. Prove that $(\max \{F_1,F_2\},T)$ is a GENEO, provided that the inclusion $\max \{F_1,F_2\}(\Phi)\subseteq \Psi$ holds. 
\end{exercise}

\begin{exercise}
Assume that $(F_1,T)$ and $(F_2,T)$ are GENEOs from $(\Phi,G)$ to $(\Psi,H)$. 
Prove that $(\min \{F_1,F_2\}, T)$ is a GENEO, provided that the inclusion $\min \{F_1,F_2\}(\Phi)\subseteq \Psi$ holds.
\end{exercise}

\begin{exercise}
Assume that $(F_1, T), \dots, (F_n, T)$ are GENEOs from $(\Phi,G)$ to $(\Psi,H)$ and $L$ is a 1-Lipschitz map from $\mathbb{R}^n$ to $\mathbb{R}$, where $\mathbb{R}^n$ is endowed with the uniform norm. Consider the map $L^*(F_1,\dots, F_n):\Phi\to C^0(X,\R)$ defined as 
$$L^*(F_1,\dots, F_n)(\varphi):=[L(F_1(\varphi),\dots, F_n(\varphi))], $$
where $[L(F_1(\varphi),\dots, F_n(\varphi))](x):=L(F_1(\varphi)(x),\dots, F_n(\varphi)(x)).$ 

\noindent Prove that $L^*(F_1,\dots, F_n)$ is a GENEO from $(\Phi,G)$ to $(\Psi,H)$, provided that $L^*(F_1,\dots, F_n)(\Phi) \subseteq \Psi$.
\end{exercise}

\begin{exercise}
Assume that $(F_1, T_1)$ is a GENEO from $(\Phi_1,G_1)$ to $(\Psi_1,H_1)$, and $(F_2, T_2)$ is a GENEO from $(\Phi_2,G_2)$ to $(\Psi_2,H_2)$. 
Prove that $(F_1\times F_2, T_1\times T_2)$ is GENEO from $(\Phi_1\times\Phi_2,G_1\times G_2)$ to $(\Psi_1\times \Psi_2,H_1\times H_2)$.
\end{exercise}


\chapter{Further readings}
\label{FR}

The basic ideas underlying what is now known as Topological Data Analysis began to be studied in the early 1990s, and the literature on their subsequent development is now vast. This book does not aim to provide an account of these developments; rather, we include a selection of references that may be useful to readers interested in deepening their understanding of Topological Data Analysis in its various aspects. The list of books below is by no means exhaustive; it is intended simply as a guide to further reading on some of the topics discussed in this text.

\begin{itemize}
\item H. Edelsbrunner, J. Harer, \textit{Computational Topology: An Introduction} (American Mathematical Society, 2009)
\item S. Y. Oudot, \textit{Persistence Theory: From Quiver Representations to Data Analysis} (American Mathematical Society, 2015)
\item J. Tierny, \textit{Topological Data Analysis for Scientific Visualization} (Springer, 2017)
\item F. Chazal, J.-D. Boissonnat, M. Yvinec, \textit{Geometric and Topological Inference} (Cambridge University Press, 2018)
\item R. Rabadan, A. Blumberg, \textit{Topological Data Analysis for Genomics and Evolution} (Cambridge, 2019)
\item G. Carlsson, M. Vejdemo-Johansson, \textit{Topological Data Analysis with Applications} (Cambridge, 2021)
\item T. Dey, Y. Wang, \textit{Computational Topology for Data Analysis} (Cambridge University Press, 2022)
\item H. Schenck, \textit{Algebraic Foundations for Applied Topology and Data Analysis} (Springer, 2022)
\item A. Clark, \textit{The Shape of Data in Chemistry: An Introduction to Graphs and Topological Data Analysis} (Wiley, 2022)
\item P. Joharinad, J. Jost, \textit{Mathematical Principles of Topological and Geometric Data Analysis} (Springer, 2023)
\item M. Olejniczak, \textit{Topological Data Analysis for Quantum Chemistry Practices} (Wiley, 2023)
\end{itemize}
\bibliographystyle{plain}
\bibliography{Arxiv/refsLNTDA}

@article{BaBrHaLaMaSt22,
title = "Computing the Matching Distance of 2-Parameter Persistence Modules from Critical Values",
author = "Asilata Bapat and Robyn Brooks and Celia Hacker and Claudia Landi and Mahler, \{Barbara I.\} and Stephenson, \{Elizabeth R.\}",
year = "2022",
month = oct,
day = "23",
doi = "10.48550/arXiv.2210.12868",
language = "English",
journal = "arXiv e-prints",
publisher = "Cornell University",
}

@article{CeFr20,
 ISSN = {02549409, 19917139},
 URL = {https://www.jstor.org/stable/48581606},
 author = {Andrea Cerri and Patrizio Frosini},
 journal = {Journal of Computational Mathematics},
 number = {2},
 pages = {pp. 291--309},
 publisher = {Institute of Computational Mathematics and Scientific/Engineering Computing},
 title = {A NEW APPROXIMATION ALGORITHM FOR THE MATCHING DISTANCE IN MULTIDIMENSIONAL PERSISTENCE},
 urldate = {2026-03-29},
 volume = {38},
 year = {2020}
}

@article{BjKe23, 
title={Asymptotic improvements on the exact matching distance for $2$-parameter persistence}, 
volume={14}, 
url={https://jocg.org/index.php/jocg/article/view/3341}, 
DOI={10.20382/jocg.v14i1a12}, 
abstractNote={&amp;lt;p&amp;gt;In the field of topological data analysis, persistence modules are used to express geometrical features of data sets. The matching distance $d_\mathcal{M}$ measures the difference between $2$-parameter persistence modules by taking the maximum bottleneck distance between $1$-parameter slices of the modules. The previous fastest algorithm to compute $d_\mathcal{M}$ exactly runs in $O(n^{8+\omega})$, where $n$ is the number of generators and relations of the modules and $\omega$ is the matrix multiplication constant. We improve significantly on this by describing an algorithm with expected running time $O(n^5 \log^3 n)$ and using $O(n^2)$ space. We first solve the decision problem $d_\mathcal{M}\leq \lambda$ for a constant $\lambda$ in $O(n^5\log n)$ by traversing a line arrangement in the dual plane, where each point represents a slice. Then we lift the line arrangement to a plane arrangement in $\mathbb{R}^3$ whose vertices represent possible values for $d_\mathcal{M}$, and use a randomized incremental method to search through the vertices and find $d_\mathcal{M}$. The expected running time of this algorithm is $O((n^4+T(n))\log^2 n)$, where $T(n)$ is an upper bound for the complexity of deciding if $d_\mathcal{M}\leq \lambda$. Moreover, we show how to compute the matching distance using only linear space, to the price of a much worse time complexity.&amp;lt;/p&amp;gt;}, 
number={1}, 
journal={Journal of Computational Geometry}, 
author={Bjerkevik, Havard Bakke and Kerber, Michael}, 
year={2023}, 
month={Dec.}, 
pages={309–342} 
}

@inproceedings{KeLeOu19,
  TITLE = {{Exact computation of the matching distance on 2-parameter persistence modules}},
  AUTHOR = {Kerber, Michael and Lesnick, Michael and Oudot, Steve},
  URL = {https://inria.hal.science/hal-01966666},
  BOOKTITLE = {{SoCG 2019 - International Symposium on Computational Geometry}},
  ADDRESS = {Portland, Oregon, United States},
  YEAR = {2019},
  MONTH = Jun,
  HAL_ID = {hal-01966666},
  HAL_VERSION = {v1},
}

@InProceedings{KeNi20,
  author =	{Kerber, Michael and Nigmetov, Arnur},
  title =	{{Efficient Approximation of the Matching Distance for 2-Parameter Persistence}},
  booktitle =	{36th International Symposium on Computational Geometry (SoCG 2020)},
  pages =	{53:1--53:16},
  series =	{Leibniz International Proceedings in Informatics (LIPIcs)},
  ISBN =	{978-3-95977-143-6},
  ISSN =	{1868-8969},
  year =	{2020},
  volume =	{164},
  editor =	{Cabello, Sergio and Chen, Danny Z.},
  publisher =	{Schloss Dagstuhl -- Leibniz-Zentrum f{\"u}r Informatik},
  address =	{Dagstuhl, Germany},
  URL =		{https://drops.dagstuhl.de/entities/document/10.4230/LIPIcs.SoCG.2020.53},
  URN =		{urn:nbn:de:0030-drops-122116},
  doi =		{10.4230/LIPIcs.SoCG.2020.53}
}

@article{FrGaQuTo25,
author = {Frosini, Patrizio and Garc\'{\i}a, Eloy M\'{o}sig and Quercioli, Nicola and Tombari, Francesca},
title = {Matching {D}istance via the {E}xtended {P}areto {G}rid},
journal = {SIAM Journal on Applied Algebra and Geometry},
volume = {9},
number = {3},
pages = {554-576},
year = {2025},
doi = {10.1137/24M1680398},
URL = {https://doi.org/10.1137/24M1680398},
eprint = {https://doi.org/10.1137/24M1680398}
}

@article{EtFrQuTo23,
author={Ethier, Marc
and Frosini, Patrizio
and Quercioli, Nicola
and Tombari, Francesca},
title={Geometry of the matching distance for 2D filtering functions},
journal={Journal of Applied and Computational Topology},
year={2023},
month={Dec},
day={01},
volume={7},
number={4},
pages={815-830},
issn={2367-1734},
doi={10.1007/s41468-023-00128-7},
url={https://doi.org/10.1007/s41468-023-00128-7}
}

@book{Sp95,
    AUTHOR = {Spanier, Edwin H.},
     TITLE = {Algebraic topology},
      NOTE = {Corrected reprint of the 1966 original},
 PUBLISHER = {Springer-Verlag, New York},
      YEAR = {[1995]},
     PAGES = {xvi+528},
      ISBN = {0-387-94426-5},
   MRCLASS = {55-01},
  MRNUMBER = {1325242},
}

@article{milnor,
    author = {Milnor, John},
    title = {Morse theory},
    journal = {Princeton University Press, NJ},
    year = {1963},
}

@book{milnor1997topology,
  title={Topology from the differentiable viewpoint},
  author={Milnor, John Willard and Weaver, David W},
  volume={21},
  year={1997},
  publisher={Princeton university press}
}

@incollection{tu2011manifolds,
  title={Manifolds},
  author={Tu, Loring W},
  booktitle={An Introduction to Manifolds},
  pages={47--83},
  year={2011},
  publisher={Springer}
}

@book{munkres2018analysis,
  title={Analysis on manifolds},
  author={Munkres, James R},
  year={2018},
  publisher={CRC Press}
}

@article {CeFr15,
    AUTHOR = {Cerri, Andrea and Frosini, Patrizio},
     TITLE = {Necessary conditions for discontinuities of multidimensional
              persistent {B}etti numbers},
   JOURNAL = {Math. Methods Appl. Sci.},
  FJOURNAL = {Mathematical Methods in the Applied Sciences},
    VOLUME = {38},
      YEAR = {2015},
    NUMBER = {4},
     PAGES = {617--629},
      ISSN = {0170-4214},
   MRCLASS = {55N35 (68U05)},
  MRNUMBER = {3310145},
MRREVIEWER = {Barbara Di Fabio},
       DOI = {10.1002/mma.3093},
       URL = {https://doi-org.ezproxy.unibo.it/10.1002/mma.3093},
}

@incollection {EdHa04,
    AUTHOR = {Edelsbrunner, Herbert and Harer, John},
     TITLE = {Jacobi sets of multiple {M}orse functions},
 BOOKTITLE = {Foundations of computational mathematics: {M}inneapolis, 2002},
    SERIES = {London Math. Soc. Lecture Note Ser.},
    VOLUME = {312},
     PAGES = {37--57},
 PUBLISHER = {Cambridge Univ. Press, Cambridge},
      YEAR = {2004},
   MRCLASS = {58E05 (57R45 68U05)},
  MRNUMBER = {2189626},
MRREVIEWER = {Janko Latschev},
}

@article{Wa75,
title = {Morse theory for two functions},
journal = {Topology},
volume = {14},
number = {3},
pages = {217--228},
year = {1975},
issn = {0040-9383},
doi = {https://doi.org/10.1016/0040-9383(75)90002-6},
url = {https://www.sciencedirect.com/science/article/pii/0040938375900026},
author = {Wan, Y.H.}
}

@book {KuMo68,
    AUTHOR = {Kuratowski, K. and Mostowski, A.},
     TITLE = {Set theory},
      NOTE = {Translated from the Polish by M. Maczy\'{n}ski},
 PUBLISHER = {PWN---Polish Scientific Publishers, Warsaw; North-Holland
              Publishing Co., Amsterdam},
      YEAR = {1968},
     PAGES = {xi+417},
   MRCLASS = {04.00},
  MRNUMBER = {0229526},
}

@Article{BeFrGiQu19,
author={Bergomi, Mattia G.
and Frosini, Patrizio
and Giorgi, Daniela
and Quercioli, Nicola},
title={Towards a topological--geometrical theory of group equivariant non-expansive operators for data analysis and machine learning},
journal={Nature Machine Intelligence},
year={2019},
month={Sep},
day={01},
volume={1},
number={9},
pages={423-433},
issn={2522-5839},
doi={10.1038/s42256-019-0087-3},
url={https://doi.org/10.1038/s42256-019-0087-3}
}

@article{BiCeFrGi11,
title = {A new algorithm for computing the 2-dimensional matching distance between size functions},
journal = {Pattern Recognition Letters},
volume = {32},
number = {14},
pages = {1735-1746},
year = {2011},
issn = {0167-8655},
doi = {https://doi.org/10.1016/j.patrec.2011.07.014},
url = {https://www.sciencedirect.com/science/article/pii/S0167865511002273},
author = {Silvia Biasotti and Andrea Cerri and Patrizio Frosini and Daniela Giorgi}
}

@article {CaZo09,
    AUTHOR = {Carlsson, Gunnar and Zomorodian, Afra},
     TITLE = {The theory of multidimensional persistence},
   JOURNAL = {Discrete Comput. Geom.},
  FJOURNAL = {Discrete \& Computational Geometry. An International Journal
              of Mathematics and Computer Science},
    VOLUME = {42},
      YEAR = {2009},
    NUMBER = {1},
     PAGES = {71--93},
      ISSN = {0179-5376},
     CODEN = {DCGEER},
   MRCLASS = {52C35 (68U05)},
  MRNUMBER = {2506738},
       DOI = {10.1007/s00454-009-9176-0},
       URL = {http://dx.doi.org/10.1007/s00454-009-9176-0},
}

@article {CeDFFeal13,
    AUTHOR = {Cerri, Andrea and Di Fabio, Barbara and Ferri, Massimo and
              Frosini, Patrizio and Landi, Claudia},
     TITLE = {Betti numbers in multidimensional persistent homology are
              stable functions},
   JOURNAL = {Math. Methods Appl. Sci.},
  FJOURNAL = {Mathematical Methods in the Applied Sciences},
    VOLUME = {36},
      YEAR = {2013},
    NUMBER = {12},
     PAGES = {1543--1557},
      ISSN = {0170-4214},
   MRCLASS = {55N35},
  MRNUMBER = {3083259},
MRREVIEWER = {Clara L{\"o}h},
       DOI = {10.1002/mma.2704},
       URL = {http://dx.doi.org/10.1002/mma.2704},
}

@Article{CeEtFr19,
author={Cerri, Andrea
and Ethier, Marc
and Frosini, Patrizio},
title={On the geometrical properties of the coherent matching distance in 2D persistent homology},
journal={Journal of Applied and Computational Topology},
year={2019},
month={Dec},
day={01},
volume={3},
number={4},
pages={381-422},
issn={2367-1734},
doi={10.1007/s41468-019-00041-y},
url={https://doi.org/10.1007/s41468-019-00041-y}
}

@article {CSEdHa07,
    AUTHOR = {Cohen-Steiner, David and Edelsbrunner, Herbert and Harer,
              John},
     TITLE = {Stability of persistence diagrams},
   JOURNAL = {Discrete Comput. Geom.},
  FJOURNAL = {Discrete \& Computational Geometry. An International Journal
              of Mathematics and Computer Science},
    VOLUME = {37},
      YEAR = {2007},
    NUMBER = {1},
     PAGES = {103--120},
      ISSN = {0179-5376},
     CODEN = {DCGEER},
   MRCLASS = {68U05 (55N05)},
  MRNUMBER = {2279866},
       DOI = {10.1007/s00454-006-1276-5},
       URL = {http://dx.doi.org/10.1007/s00454-006-1276-5},
}

@article {DoFr04,
    AUTHOR = {Donatini, Pietro and Frosini, Patrizio},
     TITLE = {Natural pseudodistances between closed manifolds},
   JOURNAL = {Forum Math.},
  FJOURNAL = {Forum Mathematicum},
    VOLUME = {16},
      YEAR = {2004},
    NUMBER = {5},
     PAGES = {695--715},
      ISSN = {0933-7741},
   MRCLASS = {58C05 (57R99)},
  MRNUMBER = {2096683},
MRREVIEWER = {Janko Latschev},
       DOI = {10.1515/form.2004.032},
       URL = {https://doi-org.ezproxy.unibo.it/10.1515/form.2004.032},
}

@article {DoFr07,
    AUTHOR = {Donatini, Pietro and Frosini, Patrizio},
     TITLE = {Natural pseudodistances between closed surfaces},
   JOURNAL = {J. Eur. Math. Soc. (JEMS)},
  FJOURNAL = {Journal of the European Mathematical Society (JEMS)},
    VOLUME = {9},
      YEAR = {2007},
    NUMBER = {2},
     PAGES = {331--353},
      ISSN = {1435-9855},
   MRCLASS = {53C23 (58E20)},
  MRNUMBER = {2293959},
MRREVIEWER = {Mario Bonk},
}

@article {DoFr09,
    AUTHOR = {Donatini, Pietro and Frosini, Patrizio},
     TITLE = {Natural pseudo-distances between closed curves},
   JOURNAL = {Forum Math.},
  FJOURNAL = {Forum Mathematicum},
    VOLUME = {21},
      YEAR = {2009},
    NUMBER = {6},
     PAGES = {981--999},
      ISSN = {0933-7741},
   MRCLASS = {58C05 (53A04 58E35)},
  MRNUMBER = {2574144},
MRREVIEWER = {Janko Latschev},
       DOI = {10.1515/FORUM.2009.049},
       URL = {https://doi-org.ezproxy.unibo.it/10.1515/FORUM.2009.049},
}

@book{CT_Edels,
  author    = {Herbert Edelsbrunner and
               John Harer},
  title     = {Computational Topology - an Introduction},
  publisher = {American Mathematical Society},
  year      = {2010},
  url       = {http://www.ams.org/bookstore-getitem/item=MBK-69},
  isbn      = {978-0-8218-4925-5},
  bibsource = {dblp computer science bibliography, https://dblp.org}
}

@article {FrJa16,
    AUTHOR = {Frosini, Patrizio and Jab{\l}o{\'n}ski, Grzegorz},
     TITLE = {Combining persistent homology and invariance groups for shape
              comparison},
   JOURNAL = {Discrete Comput. Geom.},
  FJOURNAL = {Discrete \& Computational Geometry. An International Journal
              of Mathematics and Computer Science},
    VOLUME = {55},
      YEAR = {2016},
    NUMBER = {2},
     PAGES = {373--409},
      ISSN = {0179-5376},
   MRCLASS = {55N35 (47H09 54H15 57S10 65D18 68U05)},
  MRNUMBER = {3458602},
       DOI = {10.1007/s00454-016-9761-y},
       URL = {http://dx.doi.org/10.1007/s00454-016-9761-y},
}

@book {Ha02,
    AUTHOR = {Hatcher, Allen},
     TITLE = {Algebraic topology},
 PUBLISHER = {Cambridge University Press, Cambridge},
      YEAR = {2002},
     PAGES = {xii+544},
      ISBN = {0-521-79160-X; 0-521-79540-0},
   MRCLASS = {55-01 (55-00)},
  MRNUMBER = {1867354},
MRREVIEWER = {Donald W. Kahn},
}

@article{TuMiMuHa14,
author={Turner, Katharine
and Mileyko, Yuriy
and Mukherjee, Sayan
and Harer, John},
title={Fr{\'e}chet Means for Distributions of Persistence Diagrams},
journal={Discrete {\&} Computational Geometry},
year={2014},
month={Jul},
day={01},
volume={52},
number={1},
pages={44-70},
issn={1432-0444},
doi={10.1007/s00454-014-9604-7},
url={https://doi.org/10.1007/s00454-014-9604-7}
}

@book{Greub,
  title={Linear algebra},
  author={Greub, Werner H},
  volume={23},
  year={2012},
  publisher={Springer Science \& Business Media}
}

@book{Munkres,
  title={Elements of algebraic topology},
  author={Munkres, James R},
  year={2018},
  publisher={CRC press}
}

\backmatter
\printindex
\clearpage
\thispagestyle{empty}
\null\vfill
\begin{center}
  This draft manuscript is currently under review for publication as a book.\\
  We would be grateful to readers who report any errors, inaccuracies, or misprints they may encounter to the following email address:\\
\href{patrizio.frosini@unipi.it}{patrizio.frosini@unipi.it}.
\end{center}
\end{document}